\newtheorem{nbc}{PLACEHOLDER}[section]
\newtheorem{theorem}[nbc]{Theorem}
\newtheorem{proposition}[nbc]{Proposition}
\newtheorem{lemma}[nbc]{Lemma}
\newtheorem{corollary}[nbc]{Corollary}
\newtheorem*{conjecture*}{Conjecture}
\newtheorem*{theorem*}{Theorem}
\theoremstyle{definition}
\newtheorem{definition}[nbc]{Definition}
\newtheorem{remark}[nbc]{Remark}
\newtheorem{example}[nbc]{Example}
\newcommand{\C}{\mathbb{C}}
\newcommand{\Z}{\mathbb{Z}}
\newcommand{\N}{\mathbb{N}}
\newcommand{\Eb}{\mathbb{E}}
\newcommand{\Lb}{\mathbb{L}}
\newcommand{\Tb}{\mathbb{T}}
\newcommand{\Cr}{\mathrm{C}}
\newcommand{\Nr}{\mathrm{N}}
\newcommand{\Ac}{\mathcal{A}}
\newcommand{\Ec}{\mathcal{E}}
\newcommand{\Fc}{\mathcal{F}}
\newcommand{\Gc}{\mathcal{G}}
\newcommand{\Hc}{\mathcal{H}}
\newcommand{\Ic}{\mathcal{I}}
\newcommand{\Kc}{\mathcal{K}}
\newcommand{\Nc}{\mathcal{N}}
\newcommand{\Oc}{\mathcal{O}}
\newcommand{\Tc}{\mathcal{T}}
\newcommand{\Xc}{\mathcal{X}}
\newcommand{\Yc}{\mathcal{Y}}
\newcommand{\Zc}{\mathcal{Z}}
\newcommand{\Cf}{\mathfrak{C}}
\newcommand{\Mf}{\mathfrak{M}}
\newcommand{\es}{\mathsf{e}}
\newcommand{\As}{\mathsf{A}}
\newcommand{\Fs}{\mathsf{F}}
\newcommand{\Ss}{\mathsf{S}}
\newcommand{\Ts}{\mathsf{T}}
\newcommand{\Zs}{\mathsf{Z}}
\newcommand{\indgffct}{\mathbf{e}}
\newcommand{\indgfset}{E}
\newcommand{\indgfel}{e}
\newcommand{\vir}{\mathrm{vir}}
\newcommand{\num}{\mathrm{num}}
\newcommand{\cok}{\mathrm{cok}}
\newcommand{\col}{\mathrm{col}}
\newcommand{\homc}{\Hc om}
\newcommand{\CoCh}{\mathrm{CoCh}}
\newcommand{\Coh}{\mathrm{Coh}}
\newcommand{\Db}{\mathrm{D}^b}
\newcommand{\Dm}{\mathrm{D}^-}
\newcommand{\DT}{\mathrm{DT}}
\newcommand{\exc}{\mathrm{exc}}
\newcommand{\hilb}{\mathrm{Hilb}}
\newcommand{\HC}{\mathrm{HC}}
\newcommand{\Hom}{\mathrm{Hom}}
\newcommand{\im}{\mathrm{im}}
\newcommand{\ind}{\mathsf{index}}
\newcommand{\loc}{\mathrm{loc}}
\newcommand{\PT}{\mathrm{PT}}
\newcommand{\pr}{\mathrm{pr}}
\newcommand{\pt}{\mathrm{pt}}
\newcommand{\qcoh}{\mathrm{QCoh}}
\newcommand{\slg}{\mathrm{SL}}
\newcommand{\sym}{\mathrm{Sym}}
\newcommand{\id}{\mathrm{id}}
\newcommand{\syst}{\mathbf{Syst}}
\newcommand{\cTor}{\Tc \mathit{or}}
\newcommand{\BG}[1]{\mathrm{B}{#1}}
\newcommand{\dv}[1]{\mathbf{#1}}
\newcommand{\restr}[2]{\left.{#1}\right\vert_{#2}}
\renewcommand{\vec}[1]{\bm{#1}}
\newcommand{\pexp}[1]{\mathrm{PExp}\left(#1\right)}
\newcommand{\pexpI}[1]{\mathrm{PExp}^I\left(#1\right)}
\newcommand{\abs}[1]{\left| #1 \right|}
\DeclareMathOperator{\rk}{rk}
\newcommand{\extp}{\@ifnextchar^\@extp{\@extp^{\,}}}
\def\@extp^#1{\mathop{\bigwedge\nolimits^{\!#1}}}
      \string\usetikzlibrary{decorations.markings} to use arrows with markings}{}}{}%
\title{K-Theoretic Donaldson--Thomas Theory of $[\mathbb{C}^2/\mu_r]\times\mathbb{C}$ and Factorization}
\author{Felix Thimm}
\begin{document}

\maketitle

\begin{abstract}
We compute the equivariant K-theoretic Donaldson--Thomas invariants of $[\C^2/\mu_r]\times \C$ using factorization and rigidity techniques. For this, we develop a generalization of Okounkov's factorization technique that applies to Hilbert schemes of points on orbifolds. We show that the (twisted) virtual structure sheaves of Hilbert schemes of points on orbifolds satisfy the desired factorization property. We prove that the generating series of Euler characteristics of such factorizable systems are the plethystic exponential of a simpler generating series. For $[\C^2/\mu_r]\times \C$, the computation is then completed by a rigidity argument, involving an equivariant modification of Young's combinatorial computation of the corresponding numerical Donaldson--Thomas invariants.
\end{abstract}

\tableofcontents

\section{Introduction}

\subsection{Orbifold Donaldson--Thomas Theory}

Donaldson--Thomas (DT) theory, originally formulated in \cite{thomas2001holomorphic}, studies generating series of the degrees of virtual classes of Hilbert schemes of curves and points on a given Calabi--Yau 3-fold. Similarly, one can study the same generating series for a Calabi--Yau orbifold of dimension three.

We focus here on the invariants for Hilbert schemes of points. For a given class $\alpha$ in the numerical Grothendieck-group $\Nr_0(\Xc)$ of $0$-dimensional sheaves on $\Xc$, \cite{os08_quot_dm} define a Hilbert scheme
\begin{equation*}
    \hilb^\alpha(\Xc)
\end{equation*}
of substacks $\Zc\subset \Xc$ of class $[\Oc_\Zc]=\alpha$. This Hilbert scheme can be equipped with a perfect obstruction theory by describing it as a moduli space of ideal sheaves. Using \cite{bf97_intrinsic_normal_cone}, this defines a virtual cycle and a virtual structure sheaf $\Oc^\vir_{\hilb^\alpha(\Xc)}\in K\left(\hilb^\alpha(\Xc)\right)$. After twisting by a square root $\Kc_\vir^{\frac{1}{2}}$ of the virtual canonical bundle, we obtain the twisted virtual structure sheaf
\begin{equation*}
    \widehat{\Oc}^\vir_{\hilb^\alpha(\Xc)}\coloneqq \Oc^\vir_{\hilb^\alpha(\Xc)}\otimes \Kc_\vir^{\frac{1}{2}}.
\end{equation*}
This twist was introduced and motivated in \cite{nekrasov2014membranes}. We study DT generating series
\begin{equation}
    \Zs(\Xc) := \sum_{\alpha \in \Nr_0(\Xc)} q^{\alpha} \chi\left(\hilb^\alpha(\Xc), \widehat{\Oc}^\vir\right).
\end{equation}
Here we take $\Xc$ to be either projective or toric. In the latter case, the Euler characteristic is defined via localization.

\subsubsection{Main Result}
Our main theorem is a computation of the equivariant K-theoretic DT generating series $\Zs\left([\C^3/\mu_r],q_0,\dots,q_r\right)$.
\begin{theorem*}[Theorem \ref{rig:thm:main_thm}]
    Let $\Ts=(\C^*)^3$, acting on $\C^3$ with weights $(t_1,t_2,t_3)$. The $\Ts$-equivariant K-theoretic degree $0$ DT generating series for $[\C^3/\mu_r]$, with $\mu_r$ acting on $\C^3$ with weight $(1,r-1,0)$, is
    \begin{equation*}
        \Zs\left([\C^3/\mu_r],q_0,\dots,q_{r-1}\right) = \pexp{\Fs_r(q)+\Fs_r^\col(q_0,\dots,q_{r-1})},
    \end{equation*}
    where
    \begin{align*}
        \Fs(q) &\coloneqq \frac{[t_2t_3][t_1t_3][t_1t_2]}{[t_1][t_2][t_3]}\frac{1}{[\kappa^{1/2}q][\kappa^{1/2}q^{-1}]},\\
        \Fs_r(q) &\coloneqq \sum_{k=0}^{r-1} \Fs\left(t_1^{r-k}t_2^{-k},t_1^{-r+k+1}t_2^{k+1},t_3,q\right),\\
        \Fs_r^\col(q) &\coloneqq \frac{[t_1t_2]}{[t_3]}\frac{1}{[\kappa^{1/2}q][\kappa^{1/2}q^{-1}]}\left(\sum_{0< i \leq j < r} \left(q_{[i,j]}+q_{[i,j]}^{-1}\right)\right),
    \end{align*}
    where $\kappa=t_1t_2t_3$, $q=q_0\cdots q_{r-1}$, $q_{[i,j]}=q_i\cdots q_j$, $[w]=w^{1/2}-w^{-1/2}$, and $\pexp{-}$ denotes the plethystic exponential, whose precise definition we give in Section \ref{fact:sec:pexp}.
\end{theorem*}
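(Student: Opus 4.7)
The plan is to combine the orbifold factorization theorem established earlier in the paper with a rigidity argument and an equivariant refinement of Young's combinatorial count of numerical DT invariants. Since $[\C^3/\mu_r]=[\C^2/\mu_r]\times \C$ and the twisted virtual structure sheaves of orbifold Hilbert schemes form a factorizable system with respect to the $\C^*$ scaling the $\C$-factor, the factorization theorem yields an identity
\[\Zs\left([\C^3/\mu_r],q_0,\dots,q_{r-1}\right) = \pexp{\mathsf{G}(t_1,t_2,t_3,q_0,\dots,q_{r-1})}\]
for a rational ``seed'' function $\mathsf{G}$ whose poles and $q$-dependence are tightly constrained by the factorization structure. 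The remainder of the theorem is the identification $\mathsf{G}=\Fs_r+\Fs_r^\col$.

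To determine $\mathsf{G}$, I would exploit rigidity of equivariant K-theoretic integrals. The twist by $\Kc_\vir^{1/2}$ together with the Calabi--Yau structure forces invariance under $\kappa\mapsto\kappa^{-1}$ and the symmetry $q_i\leftrightarrow q_i^{-1}$; these symmetries together with the analytic restrictions imposed by factorization produce the denominator factors $[\kappa^{1/2}q][\kappa^{1/2}q^{-1}]$ characteristic of the Nekrasov--Okounkov formula. The remaining ambiguity is reduced by matching against the contributions of $\Ts$-fixed ideals: the pointlike contributions at the orbifold origin of $[\C^2/\mu_r]$ split into $r$ $\mu_r$-isotypic components, each of which should reproduce the $\C^3$ vertex $\Fs$ with shifted equivariant parameters, summing to $\Fs_r$; the infinite ``column'' contributions propagating along $\C$ in $\mu_r$-weight intervals $[i,j]$ for $0<i\leq j<r$ should reassemble into $\Fs_r^\col$, whose prefactor $\frac{[t_1t_2]}{[t_3]}$ reflects the fact that $t_3$ is the weight along the propagation direction while $t_1t_2$ records the transverse $\Ts$-character.

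Finally, I would verify this matching through an equivariant extension of Young's enumeration of colored $3$d partitions parametrising the $\Ts$-fixed orbifold ideals. Each such fixed point has at most one infinite leg along the $\C$-direction, and the $\Ts$-equivariant K-theoretic virtual tangent space decomposes into a finite ``bulk'' piece supported at the orbifold point and a ``leg'' piece supported along $\C$, which after plethystic exponentiation must respectively recover $\Fs_r$ and $\Fs_r^\col$. The hardest step of the proof will be this final matching: it requires careful bookkeeping of equivariant weight shifts by $\mu_r$-characters across all $r$ color sectors, verifying cancellations between terms attached to distinct colored partitions that are invisible at the level of any single fixed point, and confirming that the specific combination $\sum_{0<i\leq j<r}(q_{[i,j]}+q_{[i,j]}^{-1})$ emerges from the column contributions exactly as predicted by rigidity.
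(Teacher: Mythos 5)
Your proposal correctly identifies the three pillars of the argument -- the orbifold factorization theorem, Okounkov-style rigidity, and an equivariant upgrade of Young's slicing -- but several of the connecting steps are off in ways that matter, and one is a genuine error. First, the term $\Fs_r$ does \emph{not} arise by splitting the orbifold origin into $\mu_r$-isotypic pieces of a single vertex; it arises from the \emph{crepant resolution} $Y=\Ac_{r-1}\times\C$ via the compatible-factorization mechanism of Theorem \ref{thm:fact:comp_fact} and Proposition \ref{pp:prop:general_open_locus_form}. One writes $\Zs(\Xc)=\Zs(Y,q)\cdot\pexp{\text{(correction supported on $S$)}}$, computes $\Zs(Y,q)=\pexp{\Fs_r(q)}$ by localization over the $r$ toric charts of $Y$ and Nekrasov's formula, and only then isolates the correction factor. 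Skipping the crepant resolution eliminates the clean split into $\Fs_r$ plus an $S$-supported term, and with it the prefactor $\frac{[t_1t_2]}{[t_3]}$ in $\Fs_r^\col$, which really comes from localizing on $S=\C$ rather than from ``propagation direction'' heuristics.

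Second, your description of the fixed loci is wrong: these are Hilbert schemes of \emph{points} on $[\C^3/\mu_r]$, so the $\Ts$-fixed ideals correspond to \emph{finite} colored plane partitions; there are no infinite legs and no bulk/leg decomposition of the virtual tangent space at a fixed point. You appear to be conflating this with the vertex/edge decomposition of stable-pairs or curve-counting DT theory. Third, rigidity here is not enforced by the formal symmetries $\kappa\leftrightarrow\kappa^{-1}$, $q_i\leftrightarrow q_i^{-1}$; it is the statement (Lemma \ref{pp:lemma:rigidity}) that the Laurent polynomials $h_{\dv{n}}$, once $\frac{[t_1t_2]}{[t_3]}$ is factored out, remain bounded under every limit $t_i^{\pm1}\to\infty$ fixing $\kappa$, hence lie in $\Z[\kappa^{\pm1}]$. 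It is this boundedness, propagated by induction over the factorization index set, that reduces the problem to evaluating a single limit. Finally, the equivariant Young slicing (with the added limit-weight operator $K_\pm\ket{\lambda}=(\kappa^{1/2})^{\pm|\lambda|}\ket{\lambda}$) computes the full limit $\overrightarrow{\Zs}$ directly, not just the ``column'' piece; matching that closed form against the already-known limit of $\pexp{\Fs_r}$ is what pins down the $h_{\dv{n}}$ and hence $\Fs_r^\col$. Without the crepant-resolution step and the precise boundedness form of rigidity, your outline has no mechanism for producing the separated form $\Fs_r + \Fs_r^\col$ or the specific poles $[\kappa^{1/2}q][\kappa^{1/2}q^{-1}]$.
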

This formula was conjectured in \cite[Conj. 2]{Cirafici_2022}. In \cite[Cor. 6.2]{Cao_2023} it was shown to follow from corresponding conjectural formula for 4-folds. We follow here the notation in \cite[Cor. 6.2]{Cao_2023}. Note that we don't require a sign in front of our variable $q_0$ as our chosen twist for the twisted virtual structure sheaf includes the sign $(-1)^{n_0}$.

\subsubsection{Colored Plane Partitions}
The generating series we study are (equivariant) K-theoretic refinements of more classical numerical Donaldson--Thomas invariants
\begin{equation}
    \Zs^\num(\Xc) := \sum_{\alpha \in \Nr_0(\Xc)} q^{\alpha} \chi\left(\hilb^\alpha(\Xc), \nu\right),
\end{equation}
defined using Euler characteristics weighted by the Behrend function $\nu$. If $\Xc=[\C^3/G]$ for some finite abelian subgroup $G\subset \slg(3)$, these generating series turn out to be signed counts of colored plane partitions, as seen in \cite[Appendix A]{young10}.

Plane partitions are finite subsets $\pi$ of $\Z_{\geq 0}^3$, such that if $(i+1,j,k)$, $(i,j+1,k)$ or $(i,j,k+1)$ are contained in $\pi$, then so is $(i,j,k)$. By labeling each box $(i,j,k)$ by the monomial $x^iy^jz^k$, these correspond to $\Ts$-fixed $0$-dimensional subschemes of $\C^3$, which must be cut out by monomial ideals in $\C[x,y,z]$. The action of $G$ on $\C^3$ corresponds to an action on $\C[x,y,z]$ whose weight spaces are spanned by $\C x^iy^jz^k$. The boxes $(i,j,k)$ of the plane partition are then colored by the characters of $G$ of the corresponding $\C x^iy^jz^k$.

For $G=\mu_r$ acting with weights $(1,-1,0)$ \cite{young10} computes the generating series to be
\begin{equation*}
    \Zs^\num\left([\C^3/\mu_r],-q_0,q_1,\dots,q_{r-1}\right) = \pexp{\frac{-1}{[q][q^{-1}]}\left(r+\sum_{0<i\leq j<r}\left(q_{[i,j]}+q_{[i,j]}^{-1}\right)\right)},
\end{equation*}
where $q=q_0\cdots q_{r-1}$, $q_{[i,j]}=q_i\cdots q_j$, $[w]=w^{1/2}-w^{-1/2}$, and $\pexp{-}$ denotes the plethystic exponential, whose precise definition we give in Section \ref{fact:sec:pexp}. As explained in \cite[Appendix A]{young10}, the sign in front of the formal variable $q_0$ come from relating the combinatorial count of colored plane partitions to the Behrend-function weighted count. Our main computation, Theorem \ref{rig:thm:main_thm}, refines that result to equivariant K-theoretic DT invariants.

\subsubsection{Orbifold Crepant Resolution Conjecture}

An important conjecture in orbifold DT theory is the crepant resolution conjecture \cite{bryan2010orbifold}. Given a $3$-dimensional Calabi--Yau orbifold $\Xc$, take a crepant resolution
\begin{equation*}
    \begin{tikzcd}[column sep=small]
        \Xc\ar[dr,"\pi"] & & Y\ar[dl,"\nu"]\\
        & X. &
    \end{tikzcd}
\end{equation*}
This exists using \cite{bridgeland2000mukai}, who also give an equivalence between the derived categories of coherent sheaves on $\Xc$ and $Y$. This lets us identify the numerical Grothendieck groups $\Nr_c(\Xc)$ and $\Nr_c(Y)$. However, this identification does not respect the filtration by dimension of supports. For $\Xc$ satisfying the Hard Lefschetz condition, the crepant resolution conjecture identifies a version of the DT generating series of $\Xc$ with the one of $Y$. We state here the conjecture for the DT invariants of Hilbert schemes of points on $\Xc$
\begin{conjecture*}[{{\cite[Conj. 2]{bryan2010orbifold}}}]
    \begin{equation*}
        \Zs^\DT_0(\Xc) = \Zs^\DT_0(Y)\Zs^\PT_\exc(Y)\widetilde{\Zs}^\PT_\exc(Y).
    \end{equation*}
\end{conjecture*}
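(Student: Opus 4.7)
My plan is to prove the conjecture first in the toric local case $\Xc=[\C^3/\mu_r]$ with Hard Lefschetz crepant resolution $Y=\widetilde{A}_{r-1}\times\C$, by computing both sides as explicit plethystic exponentials and matching them under the variable change induced by the Bridgeland--King--Reid derived equivalence. The left-hand side is given directly by Theorem \ref{rig:thm:main_thm}, which expresses it as $\pexp{\Fs_r(q)+\Fs_r^\col(q_0,\dots,q_{r-1})}$, so all the work is on the right-hand side.

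\textbf{Computational plan.} I would compute the three right-hand factors separately. The resolution $Y$ is a smooth toric Calabi--Yau $3$-fold with $r$ torus-fixed points, so I would reapply this paper's factorization framework to Hilbert schemes of points on $Y$: factorizability together with rigidity reduces $\Zs^\DT_0(Y)$ to a sum of $r$ shifted copies of the single-vertex function $\Fs$, wrapped inside a plethystic exponential. For $\Zs^\PT_\exc(Y)$ and the twisted version $\widetilde{\Zs}^\PT_\exc(Y)$, I would use the K-theoretic stable-pair edge contributions of Nekrasov--Okounkov along the chain of exceptional $(-2)$-curves on $\widetilde{A}_{r-1}$; on each $\PP^1$ with its normal bundle in $Y$ the edge term is explicit, and summing over connected sub-chains $[i,j]$ produces a plethystic exponential whose exponent is linear in the curve variables $q_{[i,j]}$ (and their inverses, for the twisted version). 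Finally, I would use the McKay correspondence to identify $\Nr_c(\Xc)\cong \Nr_c(Y)$ and translate the orbifold variables $q_0,\dots,q_{r-1}$ into a point variable $q=q_0\cdots q_{r-1}$ on $Y$ together with curve variables $q_{[i,j]}$, and then check term-by-term that the sum of the three right-hand exponents equals $\Fs_r(q)+\Fs_r^\col(q_0,\dots,q_{r-1})$, using that the plethystic exponential sends sums to products.

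\textbf{Main obstacle.} The hardest step is precisely this final matching. All three right-hand series have closed plethystic-exponential forms coming from the vertex/edge formalism, but proving that their combination equals the orbifold plethystic exponential reduces to an identity of formal power series that K-theoretically refines Young's numerical check. The additional subtlety is that the square-root twist $\Kc_\vir^{1/2}$ entering $\widehat{\Oc}^\vir$ and the twists used to define $\widetilde{\Zs}^\PT_\exc$ must be normalized compatibly across the derived equivalence; a careless sign or a different choice of $\kappa$-shift destroys the equality, and I expect most of the bookkeeping effort to concentrate here. Extending beyond the local toric case to a global Hard Lefschetz orbifold $\Xc$ would furthermore require an equivariant degeneration / gluing argument propagating the local identity to the global setting, in the spirit of the Maulik--Oblomkov approach to the cohomological crepant resolution for $[\C^2/\mu_r]\times\C$, now adapted to the K-theoretic invariants defined here.
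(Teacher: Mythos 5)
The first thing to say is that the paper does not prove this statement. It is quoted verbatim from \cite[Conj. 2]{bryan2010orbifold} purely as motivation; the surrounding discussion records that the numerical version is already a theorem (Calabrese, Beentjes--Calabrese--Rennemo via wall-crossing, Ross in the toric transverse-$A$ case) and only says it is ``natural to expect'' refined versions to hold. What the paper actually establishes in this direction is (i) the left-hand side for $\Xc=[\C^3/\mu_r]$, namely Theorem \ref{rig:thm:main_thm}, and (ii) Proposition \ref{pp:prop:general_open_locus_form}, which shows via compatible factorization that $\Zs^\DT_0(Y)$ divides $\Zs^\DT_0(\Xc)$ with quotient a plethystic exponential of classes supported on the singular locus $S$. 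So there is no proof in the paper to compare yours against, and your proposal targets a strictly stronger assertion (a K-theoretic refinement, and in your last paragraph a global one) than anything the paper claims.

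Judged on its own terms, the plan has concrete gaps. The conjecture as quoted is for numerical invariants; its K-theoretic refinement is not formulated anywhere in the paper, and in particular $\widetilde{\Zs}^\PT_\exc(Y)$ --- which numerically is obtained by a change of variables inverting the curve parameters --- has no specified K-theoretic definition, so your ``term-by-term check'' has no precise target. The K-theoretic stable-pair edge contributions along the $(-2)$-chains of $\Ac_{r-1}\times\C$ are not computed in this paper, and the factorization machinery here is built exclusively for Hilbert schemes of points (classes in $\Nr_0$), not for pairs supported on exceptional curves, so you cannot simply ``reapply the framework'' to the two PT factors. Most importantly, the variable identification under the derived equivalence is where the entire content of the conjecture sits: as the paper notes, the equivalence does not respect the filtration by dimension of support, so $\Nr_0(\Xc)$ mixes point classes and exceptional curve classes on $Y$; the dictionary $q=q_0\cdots q_{r-1}$, $q_{[i,j]}\leftrightarrow$ exceptional curve classes must be derived from the equivalence together with a wall-crossing or vertex/edge argument, not assumed. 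Finally, verifying the local toric identity would not prove the statement as quoted, which is for an arbitrary Hard Lefschetz orbifold; the degeneration/gluing step you defer to is itself a substantial open problem in the K-theoretic setting. Your instinct that the twist normalization is delicate is correct --- Remark \ref{appl:rmk:sign} shows the sign $(-1)^{n_0}$ is forced by factorizability --- but that is a small part of what is missing.
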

Here $\Zs^\PT_\exc(Y)$ is the PT generating series of curves in $Y$ which are contracted to points in the singular locus of $X$, and $\widetilde{\Zs}^\PT_\exc(Y)$ is related to it by a change of variables.

For numerical DT-invariants the above crepant resolution conjecture has been shown using wall-crossing in \cite{calabrese2014crepant}. The curve-counting version was shown in the case of toric orbifolds with transverse $A$-singularities in \cite{ross_toric_ocr}, and for general orbifolds using wall-crossing in \cite{beentjes2018proof}. It is natural to expect refined versions of this conjecture to hold as well.

Our main computation gives the equivariant K-theoretic count for the left-hand side in the case $\Xc=[\C^3/\mu_r]$. Moreover, we introduce a general setup of factorization techniques to study DT invariants of Hilbert schemes of points on orbifolds, which allows us to show in Proposition \ref{pp:prop:general_open_locus_form} that $\Zs^\DT_0(Y)$ divides $\Zs^\DT_0(\Xc)$ in a controlled way for any $\Xc$.

\subsection{Strategy}

We use two main tools in computing the generating series of our main result: A version for orbifolds of the factorization techniques used in \cite{ok15} to prove Nekrasov's formula, and a semi-equivariant extension of the argument used in \cite{young10}, which allows us to compute a limit of the generating series in the equivariant parameters. By the rigidity principle, introduced in \cite{ok15}, this determines our generating series.

\subsubsection{Factorization for Orbifolds}
In \cite{ok15} and \cite{kr}, factorization for schemes is used in the following way. Take (twisted) virtual structure sheaves, defined using quiver-theoretic descriptions of Hilbert schemes of points on $X=\C^3$. It is shown that their pushforwards to $\sym^n(X)$ along the Hilbert-Chow morphism satisfy a factorization property. That roughly means the following. Take the open locus 
\begin{equation*}
    \begin{tikzcd}
        U\ar[r,open] & \sym^{n_1}(X)\times \sym^{n_2}(X)\ar[r] & \sym^{n_1+n_2}(X)
    \end{tikzcd}
\end{equation*}
where the two collections of points are disjoint from each other. Then on $U$ we have 
\begin{equation*}
    \HC_*\widehat{\Oc}_{n_1}^\vir\boxtimes\HC_*\widehat{\Oc}_{n_2}^\vir\cong \HC_*\widehat{\Oc}_{n_1+n_2}^\vir 
\end{equation*}
with certain compatibilities under consecutive splittings. Under this condition, it can be shown that the generating series of Euler characteristics of the virtual structure sheaves is the plethystic exponential of a generating series of Euler characteristics of certain classes $\Gc_n$ on $X$
\begin{equation*}
    \Zs(X) = 1+\sum_{n>0} q^{n} \chi\left(\hilb^{n}(X),\widehat{\Oc}^{\vir}_{\hilb^{n}(X)}\right) = \pexp{\sum_{n>0}q^{n} \chi(X,\Gc_{n})}.
\end{equation*}

In Section \ref{fact:sec:fact} we develop a generalization of the notion of factorizable systems, which applies in the orbifold setting. One of the difficulties is to figure out a suitable replacement for the number of points and the symmetric product. For orbifolds, we use $0$-dimensional effective numerical K-theory classes $\alpha\in \Nr_0(\Xc)$ instead of $n\in \N$. Fixing a coarse moduli space of the orbifold $\pi:\Xc\to X$, we use the symmetric product $\sym^{\pi_*(\alpha)}(X)$, where $\pi_*(\alpha)$ can be identified with an integer. We find a factorization property for virtual structure sheaves pushed forward along the morphisms
\begin{equation*}
    \hilb^{\alpha}(\Xc) \to \sym^{\pi_*(\alpha)}(X).
\end{equation*}
Importantly, the factorization property is only satisfied with respect to splittings of the $0$-dimensional K-theory class $\alpha$ and not its image $\pi_*(\alpha)$ in $X$.

A K-theory class of a substack of an orbifold can potentially be written as a sum of effective K-theory classes, which are not K-theory classes of substacks. This is also apparent when thinking of colored (plane) partitions, where we may have the following situation. The set of boxes of a colored (plane) partition may potentially be partitioned into two subsets in such a way that the count of colored boxes in (at least) one of the subsets can not itself be obtained from a colored (plane) partition, see for example Figure \ref{fig:factorization_index_set}. So we have to additionally restrict to classes in a factorization index set
\begin{equation*}
    I := \left\{\alpha\ |\ \hilb^{\alpha}(\Xc)\neq \emptyset\right\},
\end{equation*}
so that we only allow splittings of K-theory classes into K-theory classes which also come from the Hilbert scheme of points. This is not necessary in the scheme case.

A version of our main theorem about such factorizable system is the following, which allows us to compute generating series of their Euler characteristics in a simple way.

\begin{theorem*}[Corollary \ref{fact:cor:comp_pexp_fact}]
    Let $I$ be a factorization index semigroup. For a factorizable system $\Fc_{\alpha}$ on $\sym^{b(\alpha)}(X)$, there exist classes $[\Gc_{\alpha}]$ on $X$ such that
    \begin{equation*}
        1 + \sum_{\alpha\in I} q^{\alpha} \chi\left(\sym^{b(\alpha)}(X),\Fc_{\alpha}\right) = \pexp{\sum_{\alpha\in I}q^{\alpha} \chi(X,\Gc_{\alpha})}.
    \end{equation*}
\end{theorem*}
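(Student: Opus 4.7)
The plan is to adapt the classical exponential-formula / stratification argument for factorizable systems on symmetric products of schemes (cf.\ \cite{ok15, kr}) to the orbifold setting, where the role of $\N$ is played by the factorization index semigroup $I$. For each $\alpha\in I$ and each unordered $I$-decomposition $\lambdabf=(\alpha_j)_{j\in J}$ of $\alpha$ (a multiset with each $\alpha_j\in I$ and $\sum_j\alpha_j=\alpha$), consider the locally closed stratum $S_\lambdabf\subset \sym^{b(\alpha)}(X)$ of $0$-cycles of the form $\sum_j b(\alpha_j)\cdot p_j$ with the $p_j\in X$ pairwise distinct and labeled by the $\alpha_j$. These piece together via the symmetric group $\prod_\beta S_{m_\beta(\lambdabf)}$ into a stratification of $\sym^{b(\alpha)}(X)$ in which any stratum corresponding to an integer partition of $b(\alpha)$ not arising from an $I$-decomposition receives no contribution, by the very definition of $I$.

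I would then construct the classes $\Gc_\alpha\in K(X)$ inductively on $b(\alpha)$, mimicking the plethystic logarithm. Iterating the factorization isomorphism on the open locus where all $p_j$ remain disjoint and descending along the étale cover $\prod_\beta X^{m_\beta(\lambdabf)}\to S_\lambdabf/\prod_\beta S_{m_\beta(\lambdabf)}$ identifies the contribution of $\Fc_\alpha$ to $S_\lambdabf$ with the symmetrized external product of classes already placed at the points $p_j$. In the base case ($\alpha$ has no non-trivial $I$-decomposition) one sets $[\Gc_\alpha]$ to be the small-diagonal restriction $[\iota_\alpha^*\Fc_\alpha]$, where $\iota_\alpha\colon X\hookrightarrow \sym^{b(\alpha)}(X)$ sends $x\mapsto b(\alpha)\cdot x$. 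In the inductive step one subtracts off the contributions of the already-defined strictly smaller $\Gc_\beta$'s to $[\iota_\alpha^*\Fc_\alpha]$.

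Summing Euler characteristics stratum-by-stratum and applying the standard identity
\begin{equation*}
    \sum_{N\geq 0} q^N \chi\bigl(\sym^N X,\sym^N \Gc\bigr) = \pexp{q\,\chi(X,\Gc)}
\end{equation*}
independently for each colour $\beta\in I$ then reorganises $1+\sum_{\alpha\in I} q^\alpha\chi(\sym^{b(\alpha)}(X),\Fc_\alpha)$ into the defining product expansion of $\pexp{\sum_{\alpha\in I}q^\alpha\chi(X,\Gc_\alpha)}$.

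The main technical obstacle is the iterated factorization / descent step: the factorization isomorphisms have to satisfy cocycle-type compatibilities strong enough to globally identify $\Fc_\alpha|_{S_\lambdabf}$ with the symmetrized external product on all of $S_\lambdabf$, rather than only locally on the open complement of further diagonals inside $S_\lambdabf$. This should follow from the higher-order compatibility axioms built into the definition of a factorizable system, but requires careful bookkeeping across nested strata. A secondary delicate point is that a single integer-partition stratum of $\sym^{b(\alpha)}(X)$ may correspond to several different $I$-decompositions of $\alpha$; the factorization provides a separate description on each resulting piece, and verifying that no piece outside $I$ contributes is precisely where the restriction to $I$ becomes essential.
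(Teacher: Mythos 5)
Your proposal takes a genuinely different route from the paper's, and it has a real gap at its core step. The paper proves the stronger statement (Theorem \ref{fact:thm:fact}) that the plethystic identity already holds in $\bigoplus_{\alpha\in I}q^\alpha K\bigl(\sym^{b(\alpha)}(X)\bigr)$, and the corollary is then immediate by applying $\chi$. To get there, it constructs a ``tree complex'' $\Gc_\alpha$ indexed by $\alpha$-index trees, uses a filtration by the number of root children to show $[\Gc_\alpha]=[\Fc_\alpha]-\sum_{\lambda}[\Gc_\lambda]$ (Lemma \ref{fact:lemma:complex_ktheory_class}), replaces the associated level system by a strict one without changing K-classes (Lemma \ref{fact:lemma:strict_generation_ktheory}), equips the tree complex with a contraction differential, and proves acyclicity away from the small diagonal (Lemma \ref{fact:lemma:complex_support_diagonal}); d\'evissage then produces the classes on $X$. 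Your approach instead tries to bypass all of this and argue purely at the level of Euler characteristics by stratifying $\sym^{b(\alpha)}(X)$ and summing over strata.

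The gap is the ``summing Euler characteristics stratum-by-stratum'' step. Coherent (K-theoretic) Euler characteristics are not additive over locally closed decompositions: for an open-closed pair $U\hookrightarrow M\hookleftarrow Z$ there is no identity $\chi(M,\Fc)=\chi(U,\Fc|_U)+\chi(Z,\Fc|_Z)$, and indeed $\chi(U,\Fc|_U)$ is not even defined for non-proper $U$ (the localization sequence $K(Z)\to K(M)\to K(U)\to 0$ is not split and admits no extension-by-zero). This is exactly what distinguishes the K-theoretic problem from the numerical one, where the Behrend-weighted (constructible) Euler characteristic does scissor over strata. Your argument would therefore need to live entirely inside $K(\sym^{b(\alpha)}(X))$ — which is precisely what the paper's tree-complex-plus-d\'evissage machinery is designed to do, and why it cannot be avoided by a naive stratification. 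A secondary issue is the inductive definition of $\Gc_\alpha$ as ``$[\iota_\alpha^*\Fc_\alpha]$ minus smaller contributions'': the classes $[\Gc_\alpha]$ satisfying the plethystic identity obey $i_*[\Gc_\alpha]=[\Fc_\alpha]-\sum_\lambda[\Gc_\lambda]$, and since $\iota_\alpha^*\iota_{\alpha *}$ is not the identity but introduces an Euler-class correction from the normal bundle of the small diagonal, you cannot read $[\Gc_\alpha]$ off from $[\iota_\alpha^*\Fc_\alpha]$ by simple subtraction without tracking these factors; this is another place where the naive induction departs from what the theorem actually requires.
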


The construction of the classes $\Gc_\alpha$ involves tracking possible sequences of splittings of $\alpha$ into various parts. The resulting classes $\Gc_\alpha$ in the plethystic exponential are classes on $X$, the coarse space of the orbifold $\Xc$. One additional feature in the orbifold case is that the construction of these classes allows us to restrict the support of many of these classes to the complement of the non-stacky locus in $X$. As this is often much smaller, the possible poles of these functions computed by localization can be restricted.

In Section \ref{sect:factorization_application} we prove that (twisted) virtual structure sheaves are factorizable and obtain a strong result about the generating series of Euler characteristics 
\begin{equation*}
    \Zs(\Xc) = 1+\sum_{\alpha\in I} q^{\alpha} \chi\left(\hilb^{\alpha}(\Xc),\widehat{\Oc}^{\vir}_{\hilb^{\alpha}(\Xc)}\right) = \pexp{\sum_{\alpha\in I}q^{\alpha} \chi(X,\Gc_{\alpha})}
\end{equation*}
in Corollary \ref{fact:cor:comp_pexp_fact}. Note here a technical requirement that $I$ is closed under addition, which is satisified in the examples we consider.

Finally, we develop a technique of compatible factorizations, which allows us to compare the classes $\Gc_\alpha$ constructed using different factorizable sequences, which are compatible along an embedding in a suitable sense. This yields relations between the generating series for $\Xc$ and its crepant resolution in Proposition \ref{pp:prop:general_open_locus_form}.

\subsubsection{Limit-Equivariant Slicing}
As in \cite{ok15}, we use the rigidity principle to show that, except for certain fixed factors, which we compute, the functions in the plethystic exponential depend only on $\kappa=t_1t_2t_3$ and not on the individual $t_i$. This means it suffices to compute any limit
\begin{equation*}
    \overrightarrow{\Zs}\left([\C^3/\mu_r],\dv{q}\right) \coloneqq \lim_{t_i^{\pm 1}\to \infty}\Zs\left([\C^3/\mu_r],\dv{q}\right)
\end{equation*}
in the parameters $t_i$ that keeps $\kappa$ fixed. \cite{ok15} suggests a particular limit of this kind
\begin{equation}\label{intro:eq:limit}
    t_1,t_3\to 0,\ \lvert t_1 \rvert\ll\lvert t_3\rvert,\ \kappa\ \mathrm{fixed},
\end{equation}
which admits a very simple formula in Proposition \ref{ppc:partition_limit_contribution} for the limit contributions of each plane partition, computed using \cite[Appendix A]{nekrasov2014membranes}.

In \cite{young10}, Young uses a slicing argument, where he decomposes a colored plane partition into monochrome slices, which are partitions. Then, working in a vector space $\big(\extp^{\infty/2}\big)_0V$, which has an orthonormal basis given by all partitions $\ket{\lambda}$, he uses operators
\begin{equation*}
    \Gamma_\pm(x) = \mathrm{exp}\left(\sum_l \frac{x^l}{l}\alpha_{\pm l}\right)
\end{equation*}
which sum over all possible next or previous slices within a plane partition. To compute the desired count of colored plane partitions he introduces weight operators
\begin{equation*}
    Q_i\ket{\lambda} = q_i^{\lvert\lambda\rvert}\ket{\lambda},
\end{equation*}
which multiply by the formal variables $q_i$. Computing commutators of these operators allows the full computation of the generating series
\begin{equation*}
    \Zs^{\mathrm{num}}\left([\C^3/\mu_r],\dv{q}\right) = \bra{\phi}\cdots \bar{A}_+(1)\bar{A}_+(1)\bar{A}_+(1)\bar{A}_-(1)\bar{A}_-(1)\bar{A}_-(1)\cdots\ket{\phi},
\end{equation*} 
where $\ket{\phi}$ is the empty partition and $\bar{A}_\pm(x)\coloneqq \Gamma_\pm(x) Q_{r-1} \cdots \Gamma_\pm(x) Q_1 \Gamma_\pm(x) Q_0$.

A fully equivariant version of this slicing argument is not known, because it is not clear that the equivariant weight of each colored plane partition in the generating series can be computed from certain weights of each slice. However, using the particular limit \eqref{intro:eq:limit}, we can compute the limit contributions of each colored plane partition from simple weights on each slice in Proposition \ref{ppc:partition_limit_contribution}. So, we introduce a limit weight operator
\begin{equation*}
    K_\pm\ket{\lambda} = \left(\kappa^{1/2}\right)^{\pm\lvert\lambda\rvert}\ket{\lambda}.
\end{equation*}
into Young's slicing argument, which allow us to use the argument to compute the desired limit $\overrightarrow{\Zs}\left([\C^3/\mu_r],\dv{q}\right)$ of the generating series in Section \ref{rig:sec:eq_lim_comp} to complete the proof of our main theorem.

\subsection{Future Directions}

\subsubsection{Extension to $\mu_2\times \mu_2$}

A similar formula was proven in the unrefined case for $[\C^3/\mu_2\times \mu_2]$ in \cite{young10}, where $G=\mu_2\times \mu_2$ acts as the group of diagonal matrices with determinant $1$ that square to the identity. Refined formulas were computed in \cite[Cor. 6.2]{Cao_2023}, again assuming a corresponding conjectural formula for 4-folds. It would be interesting to adapt the techniques of this paper to also compute that case. The main additional challenge is to adapt the limit-slicing argument to this case, which seems to be non-trivial.

\subsubsection{Calabi--Yau 4-folds}
Donaldson--Thomas invariants for Calabi--Yau 4-folds have recently been introduced by \cite{Borisov_2017} and \cite{oh2022counting}. Factorization techniques have been used by \cite{kr} to compute the equivariant K-theoretic DT invariants of Hilbert schemes of points on $\C^4$ confirming a conjecture by Nekrasov and Piazzalunga. They use a combination of factorization techniques and localization, and finally relate a specialization of their generating series to the DT generating series of Hilbert schemes of points on $\C^3$, which is given by Nekrasov's formula, proven in \cite{ok15}.

We are working on extending the computation here to the case of $[\C^4/\mu_r]$, with $\mu_r$ acting on the first two coordinates. In fact, following the argument in \cite{kr}, our main theorem must be used in place of Nekrasov's formula for an orbifold computation.

\subsection{Acknowledgements}
I would like to thank my supervisor Jørgen Rennemo for many helpful discussions about this project, and for sharing and explaining an early draft of \cite{kr}, which inspired parts of this work. I would also like to thank Martijn Kool, Nick Kuhn, and Reinier Schmiermann for helpful discussions related to this project.
\section{Setup}

\subsection{Orbifolds}

\begin{definition}
    We fix some conventions. Throughout, we write \textbf{orbifold} for a smooth separated finite type DM-stack with generically trivial stabilizers over $\C$. We work with various types of orbifolds. We consider an orbifold $\Xc$ to be
    \begin{itemize}
        \item \textbf{(quasi-)projective} if it is (quasi-)projective in the sense of \cite[Def. 5.5]{kresch_geom_dm}. In particular, it has a (quasi-)projective coarse moduli space $\pi:\Xc\to X$,
        \item equipped with a $\Ts$\textbf{-action} if it comes equipped with an action by a connected reductive algebraic group $\Ts$, which will often be a torus,
        \item \textbf{Calabi--Yau} if $\Kc_\Xc\cong \Oc_\Xc$. In the $\Ts$-equivariant case, we write $\kappa$ for the $\Ts$-weight of $\Kc_\Xc$, such that $\Kc_\Xc\cong \kappa \Oc_\Xc$ as $\Ts$-equivariant sheaves.
        \item \textbf{toric} if $\Xc$ is a smooth separated toric DM-stack in the sense of \cite{Borisov_2004, fantechi2009smooth}. In this case, there is a $\Ts=(\C^*)^3$-action on $\Xc$.
    \end{itemize}

    In the $\Ts$-equivariant and the toric cases, we will always assume that the stack of fixed points $\Xc^\Ts$ is non-empty. If $\Xc$ is Calabi--Yau, that makes $\kappa$ uniquely defined.
\end{definition}

In the toric case, we have the following well-known local description, which allows us to understand the $\Ts$-fixed points in the Hilbert scheme of points and simplify computations.

\begin{lemma}\label{setup:lemma:toric_loc_str}
    A $d$-dimensional toric orbifold $\Xc$ with $\Xc^\Ts\neq \emptyset$ is locally isomorphic to $[\C^d/G]$ for some finite abelian diagonally embedded subgroup $G\subset \slg(d)$.
\end{lemma}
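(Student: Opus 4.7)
The plan is to combine the local structure theory for smooth toric DM stacks around a torus-fixed point with the $\slg$-constraint coming from the Calabi--Yau hypothesis (which is implicit in the later applications). Fix $x\in \Xc^\Ts$ and let $G_x$ denote its stabilizer; since $\Xc$ is a smooth DM stack with generically trivial stabilizers and carries a faithful $\Ts$-action fixing $x$, the group $G_x$ is finite, and (in the toric setting) abelian.

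First I would invoke the structure theorem for smooth toric DM stacks in the sense of \cite{Borisov_2004, fantechi2009smooth}: $\Xc$ admits a Zariski-open cover by charts of the form $[U_\sigma/G_\sigma]$ indexed by the cones $\sigma$ of its stacky fan, with each $U_\sigma$ an affine toric variety and each $G_\sigma$ a finite abelian subgroup of the torus of $U_\sigma$. A $\Ts$-fixed point corresponds to a full-dimensional cone $\sigma$, so $U_\sigma \cong \C^d$ and the chart around $x$ automatically has the desired form $[\C^d/G_x]$ with $G_x$ sitting inside the diagonal torus $(\C^*)^d\subset \gl(d)$.

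Next I would upgrade the containment $G_x\subset \gl(d)$ to $G_x\subset \slg(d)$. This is where the Calabi--Yau assumption enters: the one-dimensional $G_x$-representation $\det T_x\Xc$ is the fiber at $x$ of $\Kc_\Xc^{-1}$, and triviality of $\Kc_\Xc$ forces it to be the trivial character, so each element of $G_x$ has determinant one in its diagonal action on $T_x\Xc\cong \C^d$.

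The main obstacle is obtaining a genuine \emph{Zariski}-local (as opposed to étale- or formal-local) model of the form $[\C^d/G]$, which is precisely the nontrivial content of the structure theorem being invoked; once that is in hand, the remaining arguments are essentially bookkeeping. A more self-contained but more delicate alternative would be to apply Luna's étale slice theorem to produce an étale chart $[T_x\Xc/G_x]\to \Xc$, then use $\Ts$-equivariance together with a Sumihiro-type argument to descend to a Zariski neighborhood; doing this cleanly is the main reason to prefer the appeal to Borisov--Chen--Smith / Fantechi--Mann--Nironi.
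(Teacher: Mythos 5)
Your proposal follows essentially the same skeleton as the paper's proof — reduce to a full-dimensional cone of the stacky fan via $\Xc^\Ts\neq\emptyset$, then read off the local quotient model $[\C^d/G]$ from the Borisov--Chen--Smith / Fantechi--Mann--Nironi structure theory — but it diverges on a point worth flagging. You explicitly invoke a Calabi--Yau hypothesis to upgrade $G\subset\gl(d)$ to $G\subset\slg(d)$, whereas the paper's proof derives the whole statement from \cite[Prop. 4.3]{Borisov_2004} together with \cite[Eq. (3)]{borisov2005ktheory} with no appeal to $\Kc_\Xc\cong\Oc_\Xc$. Your instinct here is sound: for a full-dimensional cone $\sigma=\mathrm{cone}(b_1,\dots,b_d)$, the stabilizer group $G=N/\sum\Z b_i$ acts diagonally on $\C^d$ through the character group $\Z^d/B^*(M)$, and the determinant character is trivial on $G$ precisely when $(1,\dots,1)\in B^*(M)$, i.e.\ when there exists $m\in M$ with $\langle m,b_i\rangle=1$ for all $i$ — the Gorenstein condition. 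Absent that condition the $\slg(d)$ claim fails (e.g.\ $\mu_2$ acting on $\C^3$ with weights $(1,1,-1)$). Since the definition of ``toric'' in Section 2 does not impose Calabi--Yau, and since every application of this lemma in the paper is in the CY case, your version with the explicit CY/Gorenstein reduction is in fact the more careful one, and is the argument actually needed for the later local computations. Two smaller remarks: you assert rather than argue that $\Ts$-fixed points correspond to full-dimensional cones, whereas the paper proves this equivalence via the factorization $\Xc\cong\Xc'\times(\C^*)^k$ when no such cone exists; and your concern about Zariski- vs.\ \'etale-locality is not load-bearing here, as the lemma is only used to compute $\Ts$-characters of the obstruction theory at isolated fixed points, for which a formal or \'etale chart suffices.
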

\begin{proof}
    Let $\Xc$ be a $d$-dimensional toric orbifold with associated stacky fan $\Sigma$. Then
    \begin{equation*}
        \Xc^\Ts\neq \emptyset\ \Leftrightarrow\ \Sigma\text{ has a }d\text{-dimensional cone}.
    \end{equation*}
    The $\Leftarrow$ direction follows immediately from \cite[Prop. 4.3]{Borisov_2004}. For the $\Rightarrow$ direction, if $\Sigma$ has no $d$-dimensional cone, then $\Xc\cong \Xc'\times(\C^*)^k$ for some $k>0$ with $\Ts$ acting by $(t_{d-k+1},\dots,t_{d})$ on $(\C^*)^k$, so $\Xc^\Ts=\emptyset$.

    Now the lemma follows by combining \cite[Prop. 4.3]{Borisov_2004} and \cite[Eq. (3)]{borisov2005ktheory}.
\end{proof}

\begin{definition}
    Because we require an orbifold $\Xc$ to have generically trivial stabilizers, there exists an open dense subscheme $U$ in $\Xc$. We call this the \textbf{non-stacky locus}. The coarse moduli space $\pi:\Xc\to X$ restricts to the identity on $U$, making $U$ an open subscheme of $X$. We denote by $S$ its complement with the reduced subscheme structure.
\end{definition}

\begin{remark}
    Throughout, we consider the choice of $U$ and $S$ as given. In examples, the choice of $U$ will be clear. For example, for any global quotient orbifold $[V/G]$, we take $U$ to be $[\tilde{U}/G]$, where $\tilde{U}$ is the open subscheme where $G$ acts freely.
\end{remark}

\subsection{Moduli Spaces on Orbifolds}\label{sect:orbifold_moduli}

Studying generating series of orbifold DT invariants, we encounter various types of moduli spaces on orbifolds. Let $\Xc$ be an orbifold, possibly equipped with a $\Ts$-action.

\subsubsection{Hilbert Schemes of Points on Orbifolds}
Similar to \cite[Section 5.1]{bcz_nef_divisors_for_moduli_of_complexes_with_cpt_supp}, we consider the Grothendieck group $K_c(\Xc)$ of $D^b_c(\Xc)$, the compactly supported objects in the bounded derived category $D^b(\Xc)$ of coherent sheaves on $\Xc$. The numerical Grothendieck group $\Nr_c(\Xc)$ is $K_c(\Xc)$ modulo the kernel of the Euler pairing 
\begin{equation*}
    \chi(-,-) : K(\Db_{\mathrm{perf}}(\Xc))\times K_c(\Xc) \to \Z
\end{equation*}
on $K_c(\Xc)$. We will focus on $\Nr_0(\Xc)$, the subgroup generated by 0-dimensional sheaves. For a given class $\alpha\in \Nr_0(\Xc)$, \cite{os08_quot_dm} define a Hilbert scheme
\begin{equation*}
    \hilb^\alpha(\Xc)
\end{equation*}
of substacks $\Zc\subset \Xc$ of class $[\Oc_\Zc]=\alpha$. Even though $\Xc$ is an orbifold, this turns out to be a scheme, as the substacks $\Zc$, which it parametrizes, do not have automorphisms. Although such substacks are more than just collections of points, we will refer to these $\hilb^\alpha(\Xc)$ as Hilbert schemes of points on $\Xc$, writing $\hilb(\Xc)\coloneqq \bigsqcup_{\alpha\in\Nr_0(\Xc)}\hilb^\alpha(\Xc)$.

Consider the subset of effective classes
\begin{equation*}
    \Cr_0(\Xc) \coloneqq \left\{\alpha=[E]\in \Nr_0(\Xc)\ |\ 0\neq E \text{ zero-dimensional}\right\}
\end{equation*}
Note that by definition, $\hilb^0(\Xc)$ is a point, and for $\alpha\neq 0$ the Hilbert scheme $\hilb^\alpha(\Xc)$ is only non-empty if $\alpha$ is effective.

\begin{example}\label{ex:global_quotient_1}
    Given a finite abelian group $G$ of order $r$ acting on a variety $\C^d$ acting as a diagonally embedded subgroup $G\subset \slg(d)$, we consider the global quotient stack $\Xc=[\C^d/G]$. Its numerical Grothendieck group of 0-dimensional sheaves is $\Nr_0(\Xc)=\Z^{\oplus r}$. We see this as follows.

    Take the $G$-equivariant embedding of the origin into $\C^d$ and view it as a closed embedding
    \begin{equation*}
        p:\BG{G}\hookrightarrow \Xc.
    \end{equation*}
    As $K(\BG{G})\cong \Z^{\oplus r}$ spanned by the irreducible representations $\rho_0,\dots,\rho_{r-1}$ of $G$, it suffices to show that $p_*$ is an isomorphism.

    To show that $p_*$ is surjective, take the class $[\Fc]$ of a $0$-dimensional coherent sheaf. Any sheaf can be deformed algebraically to be supported at the fixed point, so we may assume $\Fc$ is supported at the origin. We show it is in the image of $p_*$ by induction on the length of $\Fc$. This is trivial for length $0$. Consider the short exact sequence
    \begin{equation*}
        0\to \Fc' \to \Fc \to p_*p^*\Fc \to 0,
    \end{equation*}
    where $\Fc'$ is the kernel of the adjunction. By induction $[\Fc']$ is in the image of $p_*$ as a sheaf of lower length than $\Fc$. Hence, $[\Fc]=[\Fc']+[p_*p^*\Fc]$ is also in the image of $p_*$.

    To show that $p_*$ is injective, it suffices to show that $p_*(\rho_i)\neq 0$ for all $i$. We take $q:\Xc\to\BG{G}$ given by the $G$-equivariant morphism to the point. This yields the non-vanishing classes $q^*\rho_j$. Using $p\circ q=\id$ and the projection formula, we get
    \begin{equation*}
        \chi\left(q^*\rho_j,p_*\rho_i\right)=\delta_{ij},
    \end{equation*}
    which shows that the class $p_*(\rho_i)$ doesn't vanish for any $i$.
\end{example}

\subsubsection{Moduli of 0-dimensional Sheaves on Orbifolds}\label{setup:sec:moduli_0_dim_sheaves}

We additionally consider moduli of 0-dimensional sheaves on orbifolds. For schemes, \cite[Ex. 4.3.6]{hl10} shows that the moduli spaces of 0-dimensional sheaves are exactly the symmetric products of the base scheme. Following this, we use moduli of 0-dimensional sheaves on orbifolds as our analogues of symmetric products. The existence of such a moduli space can be collected as follows from the literature.

\begin{proposition}
    Let $\Xc$ be an orbifold, possibly equipped with a $\Ts$-action. Then there exists an algebraic stack $\Mf_0$ of zero-dimensional sheaves on $\Xc$, which is locally of finite presentation and has affine diagonal. If $\Xc$ is equipped with a $\Ts$-action, so is $\Mf_0$. Moreover, this stack has a good moduli space
    \begin{equation*}
        \Mf_0 \to M_0,
    \end{equation*}
    which also comes equipped with a $\Ts$-action, compatible with the good moduli space maps.
\end{proposition}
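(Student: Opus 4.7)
The plan is to realize $\Mf_0$ as the open substack of the stack $\Coh(\Xc)$ of coherent sheaves on $\Xc$ parametrizing the $0$-dimensional ones, and then to invoke a general existence theorem for good moduli spaces. Algebraicity of $\Coh(\Xc)$, local finite presentation, and affine diagonal are standard for a separated DM stack $\Xc$ of finite type over $\C$: one applies Artin's criteria (as carried out by Lieblich and Olsson in the DM-stack setting) or extracts the result from Hall--Rydh's algebraicity of coherent Hom-stacks. Being $0$-dimensional is an open condition on flat families by upper semicontinuity of the dimension of support, so $\Mf_0 \subset \Coh(\Xc)$ is open and inherits these properties. For the affine diagonal, given $0$-dim sheaves $\Fc, \Gc$ over a base $T$, properness of $\Fc$ makes $\underline{\Hom}_T(\Fc,\Gc)$ an affine $T$-scheme, and $\underline{\mathrm{Isom}}_T(\Fc,\Gc)$ sits inside as an affine subscheme. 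A $\Ts$-action on $\Xc$ induces one on $\Coh(\Xc)$ by pulling back along the action map; since the $0$-dimensionality condition is preserved, this action restricts to $\Mf_0$.

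For the good moduli space $M_0$, I would invoke the Alper--Halpern-Leistner--Heinloth existence theorem: a finite-type algebraic stack with affine diagonal admits a good moduli space once $\Theta$-reductivity and S-completeness are verified. Both are valuative criteria about extending flat families of $0$-dim sheaves across a DVR. Every $0$-dimensional sheaf is automatically semistable (there is no slope or reduced Hilbert polynomial to violate), so the criteria reduce to standard Langton-style extension statements and uniqueness of S-equivalent limits for flat families of finite-length sheaves. Compatibility of $\Ts$ with the good moduli space map then follows from the universal property: the $\Ts$-action on $\Mf_0$ descends uniquely to $M_0$.

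The main obstacle is that $\Xc$ need not be proper, preventing a direct application of GIT to construct $M_0$. I would handle this by restricting to the bounded open substacks $\Mf_0^{\leq n}$ of sheaves of length at most $n$, which are quasi-compact and exhaust $\Mf_0$, and by verifying the AHLH criteria étale-locally on $\Xc$. Around any closed point of $\Xc$ the local structure theorem for DM stacks produces an étale chart of the form $[V/G]$ with $V$ affine and $G$ finite, in which $0$-dim sheaves correspond to finite-length $G$-equivariant $\Oc_V$-modules. On such a chart the moduli stack is a classical quiver-type quotient and admits a good moduli space via ordinary GIT; these local good moduli spaces then glue by étale descent to yield the required $M_0$.
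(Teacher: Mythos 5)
Your high-level strategy coincides with the paper's: realize $\Mf_0$ as an open substack of $\Coh(\Xc)$ (so algebraicity, local finite presentation, and affine diagonal come from the standard theorems), and then invoke the Alper--Halpern-Leistner--Heinloth existence criterion via $\Theta$-reductivity and S-completeness. Your workaround for non-quasi-compactness — decomposing $\Mf_0$ into the clopen finite-type pieces $\Mf_0^{\leq n}$ indexed by length — is a legitimate alternative to what the paper does (it instead applies the DVR-local version, Theorem~4.1 of that paper, which handles locally-finite-presentation stacks directly at the price of also verifying an ``unpunctured inertia'' condition).

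The gap is in your verification of the AHLH criteria. You propose to check $\Theta$-reductivity and S-completeness ``\'etale-locally on $\Xc$'' and, alternatively, to build good moduli spaces on \'etale charts $[V/G]$ of $\Xc$ by GIT and glue. Neither version goes through as stated. An \'etale atlas of $\Xc$ does \emph{not} give an \'etale atlas of $\Mf_0$: a flat family of $0$-dimensional sheaves over a DVR can have support that specializes out of any single chart, and the criteria are valuative conditions on $\Mf_0$, not on $\Xc$. For the GIT-and-glue version, good moduli spaces do not glue along arbitrary \'etale maps of stacks (that requires strongly \'etale cover data and is itself the content of AHLH's local-to-global machinery, which one cannot assume as input). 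What the paper actually does at this step is quote Lemmas~7.16 and 7.17 of the same AHLH paper, which prove $\Theta$-reductivity and S-completeness for the moduli stack of coherent sheaves on any noetherian algebraic stack with affine diagonal, intrinsically and without any local-to-global argument on $\Xc$. If you substitute those two lemmas for your \'etale-local step (and keep your bounded-piece device or switch to Theorem~4.1 as the paper does), your argument is complete.

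A very minor imprecision: equating $\Theta$-reductivity/S-completeness with ``Langton-style'' semistable reduction is misleading; these concern filling in families over $\Theta_R$ and $\overline{\mathrm{ST}}_R$, not replacing an unstable limit by a stable one. It happens not to affect the outcome here because you ultimately cite or sketch the correct statements.
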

\begin{proof}
    Note that the $\Ts$-action on good moduli space is induced by universality of a good moduli space if one exists.

    Note first that the moduli stack of coherent sheaves on $\Xc$ in \cite[Section 8]{hall2013openness} and the moduli stack $\Mf_{QCoh(\Xc)}$ of \cite[Def. 7.8]{alper2022existence} only differ by the proper support assumption of the former. So, for the substack $\Mf_0$ of sheaves with zero-dimensional support these stacks agree. Hence, $\Mf_0$ is an algebraic stack locally of finite presentation and with affine diagonal by \cite[Theorem 8.1]{hall2013openness}.

    To show, that $\Mf_0$ has a good moduli space, we follow the proof of \cite[Theorem 7.23]{alper2022existence}. As $\Mf_0$ is only locally of finite presentation, the proof needs to be adapted, so that instead of \cite[Theorem A]{alper2022existence} we use \cite[Theorem 4.1]{alper2022existence}, which, over $\C$ gives the following conditions for the existence of a good moduli space:
    \begin{itemize}
        \item $\Mf_0$ has affine diagonal,
        \item closed points have linearly reductive stabilizers,
        \item $\Mf_0$ is $\Theta$-reductive with respect to DVRs essentially of finite type over $\C$, and
        \item $\Mf_0$ has unpunctured inertia with respect to DVRs essentially of finite type over $\C$.
    \end{itemize}
    We have already seen that $\Mf_0$ has affine diagonal. That closed points have linearly reductive stabilizers is implied by \cite[Prop. 3.47]{alper2022existence} as in the proof of \cite[Theorem 7.23]{alper2022existence}. By \cite[Lemma 7.16, 7.17]{alper2022existence} $\Mf_0$ is with $\Theta$-reductive and $\Ss$-complete with respect to essentially finite type DVRs. The proof of \cite[Theorem 5.3(2)]{alper2022existence} then shows that $\Mf_0$ has unpunctured inertia with respect to DVRs essentially of finite type.
\end{proof}

Given a class $\alpha\in \Nr_0(\Xc)$, we consider the subspace
\begin{equation*}
    M_\alpha(\Xc)\subset M_0(\Xc)
\end{equation*}
of sheaves on $\Xc$ of class $\alpha$. This comes with a Hilbert-Chow morphism
\begin{equation*}
    \HC_\alpha :\hilb^\alpha(\Xc) \to M_\alpha(\Xc),\ [\Zc]\mapsto [\Oc_\Zc].
\end{equation*}

\subsubsection{Coarse Spaces}\label{setup:sec:coarse}

Let $\pi:\Xc\to X$ be a coarse moduli space of $\Xc$. We assume $X$ is connected. Pushforward of sheaves maps $\Nr_0(\Xc)$ into $\Nr_0(X)$. For a connected $X$, we have $\Nr_0(X)=\Z$. We write
\begin{equation*}
    b(\alpha) := \pi_*(\alpha)\in\Z
\end{equation*} 
for any $\alpha\in\Nr_0(\Xc)$. In fact, $\pi_*$ admits a section $\Nr_0(X)=\Z\to \Nr_0(\Xc)$ by taking a point $p$ in the non-stacky locus of $\Xc$ and mapping $1\in\Z$ to $[p]$. Hence, we get a splitting
\begin{equation*}
    \Nr_0(\Xc)\cong \Z\oplus\widetilde{\Nr}_0(\Xc),
\end{equation*}
where the projection to the first component is just $b=\pi_*$. Pushforward of sheaves induces a morphism
\begin{equation*}
    \xi_\alpha:M_\alpha(\Xc) \to \sym^{b(\alpha)}(X),\ [\Fc]\mapsto [\pi_*\Fc],
\end{equation*}
where we use the identification of moduli of 0-dimensional sheaves with symmetric products from \cite[Ex. 4.3.6]{hl10}.

\begin{example}\label{ex:global_quotient_2}
    In the global quotient setup of Example \ref{ex:global_quotient_1}, $b(\alpha)$ of an integer vector $\alpha=\mathbf{n}=(n_0,\dots,n_{r-1})$ is exactly $n_0$.
\end{example}

\subsection{Virtual Structure}

We are interested in computing virtual invariants of the above Hilbert schemes of points on orbifolds. These are equipped with an obstruction theory, which yields a (twisted) virtual structure sheaf. Let $\Xc$ be an orbifold, and $\alpha\in \Nr_0(\Xc)$ a given class. We assume $\Xc$ is Calabi--Yau of dimension 3.

\subsubsection{Obstruction Theory}\label{setup:sec:obstruction_theory}

\begin{definition}
    Let $M$ be a Deligne-Mumford stack. An \textbf{obstruction theory} is an object $\Eb\in\Dm_\qcoh(M)$ together with a morphisms
    \begin{equation*}
        \phi: \Eb \to \tau_{\geq -1}\Lb_{M},
    \end{equation*}
    such that $h^0(\phi)$ is an isomorphism and $h^{-1}(\phi)$ is surjective. An obstruction theory is
    \begin{itemize}
        \item \textbf{perfect} if it is a perfect complex of amplitude $[-1,0]$,
        \item \textbf{symmetric} if $\Eb$ is a perfect complex and there is an isomorphism $\Theta: \Eb \xrightarrow{\sim} \Eb^\vee[1]$ (or $\Theta: \Eb \xrightarrow{\sim} \kappa\otimes\Eb^\vee[1]$ in the equivariant case) satisfying $\Theta^\vee[1]=\Theta$.
    \end{itemize}
    The dual of a perfect obstruction theory is referred to as the \textbf{virtual tangent sheaf} $\Tb^\vir$.
\end{definition}

Identifying the Hilbert schemes of points $\hilb^\alpha(\Xc)$ with a moduli space of ideal sheaves of class $[\Oc_\Xc]-\alpha$ in $\Xc$, \cite{ht13_atiyah_obstruction_theory} gives us a perfect obstruction theory
\begin{equation*}
    \Eb = \pi_{\hilb,*}\left(\homc(\Ic,\Ic)_0 \right)[2]\to \tau_{\geq -1}\Lb_{\hilb^\alpha(\Xc)},
\end{equation*} 
where $\Ic$ is the universal ideal sheaf and $\homc(\Ic,\Ic)_0$ denotes the traceless part. For a CY3 orbifold $\Xc$, possibly equipped with a $\Ts$-action, this perfect obstruction theory is symmetric by Grothendieck-Verdier duality.

In the case of a global quotient stack $\Xc=[W/G]$, where $G$ is a finite group, we can describe the obstruction theory for $\Xc$ using the one on $W$ as follows. First note that ideal sheaves on $\Xc$ are exactly ideal sheaves on $W$ equipped with a $G$-equivariant structure. These correspond exactly to $G$-invariant subschemes of $W$. Hence, we find that $\hilb(\Xc)$ is the $G$-fixed part of $\hilb(W)$
\begin{equation*}
    \hilb(\Xc)=\hilb(W)^G.
\end{equation*}
The $G$-action on $W$ equips $\hilb(W)$ with a $G$-action and the obstruction theory $\Eb_W$ above naturally comes with a $G$-equivariant structure\cite{ricolfi2020equivariant}. We can naturally identify the obstruction theories
\begin{equation}\label{setup:eq:obstruction_theory_fixed_part}
    \Eb_\Xc\cong \restr{\Eb_W}{\hilb(\Xc)}^f,
\end{equation}
where $\restr{\Eb_W}{\hilb(\Xc)}^f$ is the $G$-fixed part of the restriction of $\Eb_W$ to $\hilb(\Xc)$.

\subsubsection{Virtual Structure Sheaf}\label{sec:vir_str_sh}
Given a perfect obstruction theory $\Eb$ on a moduli space as above, together with a presentation as a 2-term complex of vector bundles $\Eb \cong [E^{-1}\to E^0]$, \cite{bf97_intrinsic_normal_cone} construct a virtual fundamental class and a virtual structure sheaf. This turns out to be independent of the specific chosen presentation $E^\bullet$. We are interested in the virtual structure sheaf. Consider the embedding of the intrinsic normal cone $\Cf\hookrightarrow h^1/h^0(E^{\bullet\vee})$ given by the perfect obstruction theory. Then by existence of the resolution $E^\bullet$, we get an induced cone $C\subset E_1$. The virtual structure sheaf is
\begin{equation*}
    \Oc^\vir = \bigoplus_i \cTor^i_{E_1}\left(\Oc_C,\Oc_X\right)[i]\in \Db(X),
\end{equation*}
where $\Oc_X$ becomes a sheaf on $E_1$ via the $0$-section. Note that \cite[Remark 5.4]{bf97_intrinsic_normal_cone} define this as a graded commutative sheaf of algebras, whereas we define it as an element in the derived category, which is reflected in our notation as shifts, which are implicit in their notation.

\subsubsection{Twisted Virtual Structure Sheaf}
For computations, it is useful to consider a twisted virtual structure sheaf, that is the virtual structure sheaf tensored by a specific choice of line bundle. A specific choice of such a twist
\begin{equation*}
    \widehat{\Oc}^\vir = \Oc^\vir \otimes \det(\Eb)^{1/2}\in K(\hilb^\alpha(\Xc))
\end{equation*}
allows the use of the so-called rigidity principle, which we will recall and use in Section \ref{sect:rigidity}. This choice and technique were introduced originally by Okounkov in \cite{ok15}. There, a quiver-theoretic description of the moduli space is used to introduce the K-theory class of the desired twist. In our case, we use a sheaf theoretic description, because this makes it easier to prove a factorization property of the resulting (twisted) virtual structure sheaves on Hilbert schemes of points on orbifolds. For this to work, we use a description of this twist as a line bundle given in \cite{levine23_twist}. Note that any shift of the chosen line bundle $\det(\Eb)^{1/2}$ is still a root of $\det(\Eb)$ in $\Db_{\Z/2}(-)$ and in K-theory, but the shift makes the resulting sheaves factorizable. Otherwise, the diagrams which are required to commute in the definition of a factorizable system might only commute up to a sign. First, we need the following Lemma.

\begin{lemma}\label{setup:lemma:twist_loc_free_rank}
    Let $\Xc$ be an CY3 orbifold, and let $\hilb(\Xc)$ be its Hilbert scheme of points with universal family $\Zc$. Take $p_\Zc$ to be the morphism $\Zc\hookrightarrow\hilb(\Xc)\times\Xc\to\hilb(\Xc)$. Then
    \begin{equation*}
        p_{\Zc *}(\Oc_\Zc)
    \end{equation*}
    is a locally free sheaf of rank $b(\alpha)$ on the component $\hilb^\alpha(\Xc)$.
\end{lemma}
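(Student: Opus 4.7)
The plan is to show that $p_\Zc: \Zc \to \hilb^\alpha(\Xc)$ is a finite flat morphism, from which local freeness follows, and then to compute the rank fiberwise using the class $\alpha$.

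First I would establish that $p_\Zc$ is finite and flat. Flatness is built into the definition of the Hilbert scheme: the universal family $\Zc \subset \hilb^\alpha(\Xc)\times \Xc$ is by construction flat over the base, as $\hilb^\alpha(\Xc)$ represents the functor of flat families of substacks of class $\alpha$ (as constructed in \cite{os08_quot_dm}). For properness, note that every fiber $\Zc_t \subset \Xc$ is a $0$-dimensional substack of $\Xc$ representing a class in $\Nr_0(\Xc)$, hence compactly supported and proper. Properness of the total family $\Zc \to \hilb^\alpha(\Xc)$ follows because $\Zc$ is a closed substack of $\hilb^\alpha(\Xc)\times \Xc$ whose projection to the first factor has proper fibers of relative dimension $0$, so the morphism is proper with finite fibers, hence finite.

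Given that $p_\Zc$ is finite and flat, the pushforward $p_{\Zc *}\Oc_\Zc$ is a coherent sheaf on $\hilb^\alpha(\Xc)$ that is flat over the base, hence locally free. It remains only to compute the rank. By cohomology and base change (which applies thanks to flatness and finiteness), the fiber of $p_{\Zc *}\Oc_\Zc$ at a point $t \in \hilb^\alpha(\Xc)$ is canonically $H^0(\Zc_t,\Oc_{\Zc_t}) = \chi(\Xc,\Oc_{\Zc_t})$, using that $\Zc_t$ is $0$-dimensional. Since $[\Oc_{\Zc_t}] = \alpha$ in $\Nr_0(\Xc)$, pushing forward along the coarse moduli map $\pi:\Xc \to X$ gives $[\pi_*\Oc_{\Zc_t}] = b(\alpha) \in \Nr_0(X) = \Z$. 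The coarse moduli morphism of a tame DM stack has $R^{>0}\pi_*$ vanishing on coherent sheaves, so $\chi(\Xc,\Oc_{\Zc_t}) = \chi(X,\pi_*\Oc_{\Zc_t}) = b(\alpha)$, where the last equality uses that the Euler characteristic of a $0$-dimensional coherent sheaf on $X$ equals its class in $\Nr_0(X) = \Z$.

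The main subtlety I anticipate is the rank calculation: one must carefully justify the identification of the Euler characteristic on $\Xc$ with $b(\alpha)$ via the coarse moduli map. On a connected base this then gives constant rank $b(\alpha)$, completing the proof. No technical obstacle beyond this is expected, since finiteness and flatness of the universal family over $\hilb^\alpha(\Xc)$ are standard consequences of the moduli-theoretic definition.
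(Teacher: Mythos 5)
Your overall strategy is sound and reaches the same conclusion, but it takes a slightly different route than the paper and has one step that needs repair.

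The paper does not argue that $p_\Zc$ is a \emph{finite flat morphism}. Instead it applies cohomology and base change (\cite[Theorem A]{hall_cohom_base_change_stacks}) directly to $\Oc_\Zc$, which it notes is flat, properly supported over $\hilb^\alpha(\Xc)$, and has $0$-dimensional fibers; since there are therefore no higher direct images, local freeness of $p_{\Zc *}\Oc_\Zc$ and base change in degree $0$ both follow. Your finite+flat route is equally good once the hypotheses are secured, and arguably more transparent. The rank computation is essentially identical in both proofs: identify the fiber with $H^0(\Zc_p,\Oc_{\Zc_p})=\chi(\Zc_p,\Oc_{\Zc_p})$, push forward along $\pi:\Xc\to X$, and use $\Nr_0(X)\cong\Z$ via Euler characteristic. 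Your additional invocation of $R^{>0}\pi_*=0$ for tame stacks is redundant here because $\pi_*$ in the definition of $b(\alpha)=\pi_*(\alpha)$ is pushforward in K-theory, which is automatically derived, and $\chi$ is invariant under it.

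The genuine gap is in your justification of properness. You write that $\Zc\to\hilb^\alpha(\Xc)$ is proper ``because $\Zc$ is a closed substack of $\hilb^\alpha(\Xc)\times\Xc$ whose projection to the first factor has proper fibers of relative dimension $0$.'' Having proper fibers does not imply a morphism is proper: any quasi-finite morphism has proper fibers, yet open immersions are quasi-finite and typically not proper. Properness of the universal family is \emph{not} a consequence of $0$-dimensionality of fibers over a non-proper $\Xc$; rather, it is part of the moduli problem that $\hilb^\alpha(\Xc)$ represents (the Hilbert functor of \cite{os08_quot_dm} parametrizes closed substacks that are flat \emph{and proper} over the base with the given class). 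The paper simply takes this as definitional: ``$\Oc_\Zc$ is properly supported and flat over $\hilb^\alpha(\Xc)$.'' Replace your fibre-wise deduction with an appeal to the definition of the Hilbert functor, and the rest of your argument goes through.
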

\begin{proof}
    To prove this, we consider the following cartesian diagram for a point $p$ in $\hilb^\alpha(\Xc)$.
    \begin{equation*}
        \begin{tikzcd}
            \Zc|_p \ar[r]\ar[d,"\restr{p_\Zc}{p}"] & \Zc \ar[d, "p_\Zc"]\\
            p \ar[r] & \hilb^\alpha(\Xc).
        \end{tikzcd}
    \end{equation*}
    By definition, the family of substacks of $\Xc$ over $\kappa(p)$ is $0$-dimensional and $\Oc_\Zc$ is properly supported and flat over $\hilb^\alpha(\Xc)$. We use \cite[Theorem A]{hall_cohom_base_change_stacks} to prove our Lemma. In fact, because $p_\Zc$ has $0$-dimensional fibers and hence no higher pushforwards, note that for $q=0$, conditions (1)(c) and (2)(a) of \cite[Theorem A]{hall_cohom_base_change_stacks} are satisfied, so that
    \begin{equation*}
        b^0(p): \restr{p_{\Zc *}(\Oc_\Zc)}{p} \xrightarrow{\sim} \left(\restr{p_{\Zc}}{p}\right)_*\left(\Oc_{\Zc|_p}\right) 
    \end{equation*}
    is an isomorphism and $p_{\Zc *}(\Oc_\Zc)$ is locally free in an open neighborhood of $p$. Since $p$ was chosen arbitrarily, this makes $p_{\Zc *}(\Oc_\Zc)$ locally free. We can use the isomorphism $b^0(p)$ to compute its rank
    \begin{align*}
        \rk\left(p_{\Zc *}(\Oc_\Zc)\right) &= \dim_{\kappa(p)}\left(\restr{p_{\Zc *}(\Oc_\Zc)}{p}\right)&\\
        &= \chi\left(p, \restr{p_{\Zc *}(\Oc_\Zc)}{p}\right)&\\
        &= \chi\left(p, \left(\restr{p_{\Zc}}{p}\right)_*\left(\Oc_{\Zc|_p}\right)\right)& (\text{using }b^0(p))\\
        &= \chi\left(\Zc|_p, \Oc_{\Zc|_p}\right)& (\text{pushforward-invariance of }\chi).
    \end{align*}
    Since we have already shown $p_{\Zc *}(\Oc_\Zc)$ is locally free, we may assume $p$ is a closed point of $\hilb^\alpha(\Xc)$, so that $\Zc|_p$ is a closed substack of $\Xc$ with $\left[\Oc_{\Zc|_p}\right]=\alpha$. Then we may again use invariance of $\chi$ under pushforward along the closed embedding and the coarse moduli space $\pi:\Xc\to X$ to get
    \begin{equation*}
        \rk\left(p_{\Zc *}(\Oc_\Zc)\right) = \chi\left(X, \pi_*\Oc_{\Zc|_p}\right),
    \end{equation*}
    which is just $b(\alpha)$ by definition of $b(-)$.
\end{proof}

Using this Lemma, we can explicitly define the twisted virtual structure sheaf.

\begin{definition}\label{setup:def:twisted_vir_str_sh}
    Let $\Xc$ be an CY3 orbifold, and let $\hilb(\Xc)$ be its Hilbert scheme of points with universal family $\Zc$. We define the twisted virtual structure sheaf as
    \begin{equation*}
        \widehat{\Oc}^\vir = \Oc^\vir \otimes \det\left(p_{\Zc *}(\Oc_\Zc)\right)^{-1}[b(\alpha)]
    \end{equation*}
    in the projective case, and
    \begin{equation*}
        \widehat{\Oc}^\vir = \Oc^\vir \otimes \det\left(\kappa^{\frac{1}{2}}p_{\Zc *}(\Oc_\Zc)\right)^{-1}[b(\alpha)]
    \end{equation*}
    in the toric case, where $\kappa$ is the $\Ts$-weight of the canonical bundle of $\Xc$.
\end{definition}

\begin{remark}
    To work with expressions like $\kappa^{\frac{1}{2}}$, we implicitly work in a cover of the torus $\Ts$, so that all square roots of $\Ts$-characters exist. 
\end{remark}

In the (equivariant) projective case \cite{levine23_twist} shows that this is a root of the determinant of the obstruction theory. Note that the computations of determinants (but not necessarily the rank computations) in their proof still work for projective DM stacks using \cite[Cor. 2.10]{nironi09_duality} instead of Grothendieck-Serre duality for proper morphisms of schemes. In the case of toric Calabi--Yau orbifolds, we show that the twist defined above is a root of $\det(\Eb)$ at $\Ts$-fixed points of the Hilbert scheme of points. As $\Ts$-equivariant Euler characteristics are defined via localization, this suffices for computational purposes.

\begin{lemma}
    Let $\Xc$ be a toric CY3 orbifold. Consider the Hilbert scheme of points $\hilb (\Xc)$ with its universal family $\Zc$. Let $\Eb$ be the $\Ts$-equivariant obstruction theory on $\hilb(\Xc)$. For any closed point $p$ in the $\Ts$-fixed locus $\hilb (\Xc)$, there is a $\Ts$-equivariant identification of K-theory classes
    \begin{equation*}
        \det([\Eb|_p]) = \left[\restr{\kappa^{-\rk(p_{\Zc *}(\Oc_\Zc))}\det\left(p_{\Zc *}(\Oc_\Zc)\right)^{-2}}{p}\right] = \left[\restr{\det\left(\kappa^{\frac{1}{2}}p_{\Zc *}(\Oc_\Zc)\right)^{-2}}{p}\right].
    \end{equation*}
\end{lemma}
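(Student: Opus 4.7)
The plan is to reduce to a direct $\Ts$-equivariant K-theoretic character computation at the fixed point $p$. By Lemma~\ref{setup:lemma:toric_loc_str}, $\Xc$ is locally isomorphic to $[\C^3/G]$ near $p$ for some finite $G \subset \slg(3)$, and the restriction of $\Eb$ at $p$ is $R\Hom_\Xc(I,I)_0[2]$ for $I$ the ideal sheaf of the substack $\Zc|_p$; on the local model this agrees with the $G$-fixed part of the corresponding scheme-theoretic obstruction theory on $\C^3$ as in~\eqref{setup:eq:obstruction_theory_fixed_part}. Since the shift by $[2]$ does not affect determinants, it suffices to compute the $\Ts$-character of $\det R\Hom_\Xc(I,I)_0$.

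I would then expand $[R\Hom(I,I)]$ in equivariant K-theory using the ideal sheaf sequence $0 \to I \to \Oc_\Xc \to \Oc_{\Zc|_p} \to 0$. Letting $V' = [R\Gamma(\Oc_{\Zc|_p})]$ as a $G \times \Ts$-representation, so that its $G$-invariants $V = (V')^G$ recover $\restr{p_{\Zc*}\Oc_\Zc}{p}$, Grothendieck--Verdier duality on the CY3 orbifold (using $K_\Xc \cong \kappa\Oc_\Xc$) gives $[R\Hom(\Oc_{\Zc|_p}, \Oc_\Xc)] = -\kappa^{-1}[(V')^\vee]$, while the Koszul resolution of the residue at $p$ yields $[R\Hom(\Oc_{\Zc|_p}, \Oc_{\Zc|_p})] = [V' \otimes (V')^\vee]\prod_{i=1}^{3}(1 - t_i^{-1})$. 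Combining these and absorbing the trace contribution,
\begin{equation*}
[R\Hom(I,I)_0] = -[V'] + \kappa^{-1}[(V')^\vee] + [V' \otimes (V')^\vee]\prod_{i=1}^{3}(1 - t_i^{-1}).
\end{equation*}

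Taking determinants, the first two summands combine to $\kappa^{-\dim V'}(\det V')^{-2}$ via the standard identity $\det(L \otimes F) = L^{\rk F}\det F$ with $L = \kappa^{-1}$, while the bilinear term is rank zero and has determinant $1$ by the key Calabi--Yau K-theoretic identity. This identity is proven by multilinearity in $V'$ together with the observation that tensoring a rank-zero virtual representation by a character leaves its determinant unchanged, reducing to the case $V' = 1$; in that case a direct expansion of $\prod_i(1 - t_i^{-1})$ and the relation $t_1 t_2 t_3 = \kappa$ collapses the weight product to $1$. Passing to $G$-invariants, the CY condition $G \subset \slg(3)$ ensures $\kappa$ is $G$-invariant so the identity descends, yielding $\det(\Eb|_p) = \kappa^{-\rk V}(\det V)^{-2}$; the second claimed equality $\kappa^{-\rk V}(\det V)^{-2} = \det(\kappa^{1/2} V)^{-2}$ is immediate from $\det(\kappa^{1/2} V) = \kappa^{\rk V/2}\det V$. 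The main obstacle will be carefully tracking the $G$-isotypic decomposition of $V'$ and the tangent weights in this last step, so that the $G$-fixed determinant on $\Xc$ matches the $G$-trivial part of the scheme-level determinant computed on $\C^3$.
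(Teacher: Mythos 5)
The proposal follows the same path as the paper: reduce to the local model $[\C^3/G]$ at a $\Ts$-fixed point, expand $R\Hom(I,I)_0$ through $I_Z = \Oc - \Oc_Z$ into $\chi(\Oc_Z,\Oc_Z) - \chi(\Oc,\Oc_Z) - \chi(\Oc_Z,\Oc)$, dualize $\chi(\Oc_Z,\Oc)$ by equivariant Serre/Verdier duality, show $\det\chi(\Oc_Z,\Oc_Z)$ is trivial via the Koszul structure at the point, and assemble. The differences are cosmetic: you write the Koszul factor as $\prod_i(1-t_i^{-1})$ where the paper writes $\prod_k(1-t_k)$ (both have trivial determinant, so the $\det$-computation is unaffected), and your ``multilinearity plus rank-zero character-twist invariance'' reduction is a cleaner articulation of exactly the same collapse the paper performs when it says $\chi_{\Ts\times G}(\Oc_Z,\Oc_Z)=\chi(\Oc_Z)\chi(\Oc_Z)^\vee\prod_k(1-t_k)$ ``becomes $1$ after taking the determinant.'' One point worth highlighting: your closing sentence correctly flags that passing from the $\Ts\times G$-equivariant determinant of $\chi_{\C^3}(\Oc_Z,\Oc_Z)$ to the $\Ts$-determinant of its $G$-invariant part $\chi_\Xc(\Oc_Z,\Oc_Z)$ is the delicate step, since taking determinants does not commute with taking $G$-invariants of a virtual representation. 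The paper passes through this step quickly, and your proposal names the obstacle without resolving it, so in the end both arguments leave the same work to be done there; you should supply this step explicitly (tracking which $\Ts$-monomials in $\prod_k(1-t_k)$ pair with which isotypic pieces of $V'\otimes(V')^\vee$ to produce a $G$-trivial summand) rather than asserting the $\C^3$ identity ``descends.''
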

\begin{proof}
    The proof is similar to the one of \cite[Theorem 5.2]{levine23_twist}, except that we work at a fixed point $p=[Z]$ in $\hilb (\Xc)$ directly making some computations easier in the toric case. From now on we consider only K-theory classes, omitting $[-]$ from the notation. Using the push-pull formula, we get
    \begin{align*}
        \Eb|_p &= R\Hom (I_Z,I_Z)_0[2],\\
        \restr{p_{\Zc *}(\Oc_\Zc)}{p} &= R\Gamma (\Oc_Z).
    \end{align*}
    We write $\chi(-,-)$ for $R\Hom (-,-)$ and $\chi(-)$ for $R\Gamma (-)$. Then
    \begin{equation*}
        R\Hom (I_Z,I_Z)_0=\chi(I_Z,I_Z)-\chi(\Oc,\Oc) = \chi(\Oc_Z,\Oc_Z)-\chi(\Oc,\Oc_Z)-\chi(\Oc_Z,\Oc).
    \end{equation*}
    Using that $Z$ is of codimension $2$, we get $\det(\chi(\Oc_Z,\Oc_Z))=\Oc$. It remains to check the equivariant weight of this trivial line bundle. This is a local computation at the fixed point, so we may assume $\Xc=[\C^3/G]$ by Lemma \ref{setup:lemma:toric_loc_str}, and $Z$ corresponds to some colored partition $\pi$. We can write the structure sheaf of the fixed point $Z$ as $\Oc_Z=\sum_{\dv{n}\in\pi}\dv{t}^\dv{n} \Oc_0(\rho_{C(\dv{n})})$, where $\rho_{C(\dv{n})}$ is the character determined by the coloring of $\pi$. Tensoring the standard $\Ts$-equivariant resolution of $\Oc_0$ with an irreducible representation $\rho_{j}$, we compute the character
    \begin{equation*}
        \chi_{\Ts\times G}(\Oc_0(\rho_i),\Oc_0(\rho_j)) = \rho_i\rho_j\chi_\Ts(\Oc_0,\Oc_0) = \rho_i\rho_j\prod_{k=1}^3 (1-t_k).
    \end{equation*}
    Together with the presentation of $\Oc_Z$, this gives us
    \begin{equation*}
        \chi_{\Ts\times G}(\Oc_Z,\Oc_Z) = \chi_{\Ts\times G}(\Oc_Z)\chi_{\Ts\times G}(\Oc_Z)^\vee\prod_{k=1}^3 (1-t_k),
    \end{equation*}
    which becomes $1$ after taking the determinant.
    
    We have $\chi(\Oc,\Oc_Z)=\chi(\Oc_Z)$ and
    \begin{equation*}
        \chi(\Oc_Z,\Oc)=\chi(\Oc_Z^\vee) = -\kappa^{-1}\chi(\Oc_Z)^\vee
    \end{equation*}
    by equivariant Serre duality for compactly supported sheaves. Putting together the above identifications to compute determinants, we get
    \begin{align*}
        \det(\Eb|_p) &= \det(\kappa^{-1}\chi(\Oc_Z)^\vee-\chi(\Oc_Z))\\
        &= \kappa^{-\rk(\chi(\Oc_Z))}\det(\chi(\Oc_Z))^{-2}\\
        &=\restr{\kappa^{-\rk(p_{\Zc *}(\Oc_\Zc))}\det\left(p_{\Zc *}(\Oc_\Zc)\right)^{-2}}{p}\\
        &=\restr{\det\left(\kappa^{\frac{1}{2}}p_{\Zc *}(\Oc_\Zc)\right)^{-2}}{p}.
    \end{align*}
\end{proof}

\subsection{Invariants}

\subsubsection{Generating Series}
With the obstruction theory and twisted virtual structure sheaves above, we are interested in computing the generating series of degree $0$ DT invariants
\begin{equation}
    \Zs(\Xc) := 1+ \sum_{\alpha \in \Cr_0(\Xc)} q^{\alpha} \chi\left(\hilb^\alpha(\Xc), \widehat{\Oc}^\vir\right)\in K(\pt)[\Cr_0(\Xc)],
\end{equation}
where $q^{\alpha}$ is the standard notation for the semigroup ring element associated to $\alpha\in\Cr_0(\Xc)$. This definition works as stated only for proper $\Xc$. For a quasi-compact $\Xc$ with an action of a group $\Ts$, so that the fixed loci of $\hilb^\alpha(\Xc)$ are proper, we take the equivariant Euler characteristic $\chi_\Ts$, which is defined via localization.

\begin{example}
    Continuing Example \ref{ex:global_quotient_1} of global quotient orbifolds, the sum above simplifies as $\Nr_0([\C^d/G])=\Z^r$ and $\Cr_0([\C^d/G])=\N^r\setminus \{0\}$.
    \begin{equation*}
        \Zs(\Xc) := 1+ \sum_{\dv{n}\in\N^r\setminus \{0\}} q_0^{n_0}\cdots q_{r-1}^{n_{r-1}} \chi\left(\hilb^\dv{n}([\C^d/G]), \widehat{\Oc}^\vir\right).
    \end{equation*}
\end{example}

\subsubsection{Localization}\label{setup:sec:loc}
In the case of a projective orbifold $\Xc$, the moduli spaces under consideration will also be projective, making $\chi$ well-defined. If $\Xc$ is not necessarily projective, but is equipped with a $\Ts$-action, the moduli spaces have an induced $\Ts$-action, and we define $\chi$ via localization as follows. For a noetherian separated scheme $M$ with a $\Ts$-action, the localization theorem \cite[Theorem 2.2]{thomason92loc} tells us that
\begin{equation*}
    i_*:K_\Ts\left(M^\Ts\right)_\loc \to K_\Ts\left(M\right)_\loc
\end{equation*}
is an isomorphism, where $i:M^\Ts\hookrightarrow M$ denotes the embedding of the fixed locus, and $K_\Ts(-)_\loc$ denotes equivariant K-theory with the equivariant parameters inverted. If the fixed locus $M^\Ts$ is proper, we define for a K-theory class $F$ on $M$
\begin{equation*}
    \chi\left(M,F\right) \coloneqq \chi\left(M^\Ts,i_*^{-1}(F)\right) \in K_\Ts(\pt)_\loc.
\end{equation*}
For a $\Ts$-equivariant morphism $\pi:M\to N$, consider the commutative diagram
\begin{equation*}
    \begin{tikzcd}
        M^\Ts \ar[r, hookrightarrow, "i^M"]\ar[d, "\pi^\Ts"] & M \ar[d,"\pi"]\\
        N^\Ts \ar[r, hookrightarrow, "i^N"] & N.
    \end{tikzcd}
\end{equation*}
By commutativity, we can check $i^N_* \pi^\Ts_* (i^M_*)^{-1}=\pi_*$, which implies that $\chi$ defined by localization is still invariant under pushforward if $M^\Ts$ and $N^\Ts$ are proper.

If $M$ is smooth, then we can find an explicit inverse for $i_*$. This is often also simply referred to as localization. To define this inverse, consider the normal bundle $\Nc_{M^\Ts/M}$ of $M^\Ts$ in $M$. Then
\begin{equation*}
    i_*^{-1} = \frac{i^*(-)}{\es(\Nc_{M^\Ts/M})},
\end{equation*}
where $\es(-)$ is the K-theoretic Euler class defined by setting $\es(\Ec)\coloneqq \left[\bigwedge^\bullet \left(\Ec^\vee\right)\right]\in K_\Ts(M^\Ts)$ for a locally free sheaf $\Ec$ and extending by $\es(\Ec_1-\Ec_2)\coloneqq \es(\Ec_1)/\es(\Ec_2)$.

In the course of this paper, we will evaluate Euler characteristics by localization on coarse spaces $X$ of toric orbifolds $\Xc$, symmetric products thereof, components of moduli spaces of zero-dimensional sheaves on $\Xc$, and Hilbert schemes of points on $\Xc$. For a toric orbifold $\Xc$, its coarse space $X$ is a toric variety and hence has isolated fixed points. Consequently, any symmetric product of $X$ also has isolated fixed points. The Hilbert scheme of points on $\Xc$ also has isolated fixed points\cite[Lemma 13]{bryan2010orbifold}. We describe these in the local case more detail in Section \ref{sec:colored_vertex} below. Any Euler characteristic on $M_0(\Xc)$, which we evaluate will be of a class pushed forward via the Hilbert-Chow morphism making the Euler characteristic well-defined by further pushforward to $\sym^{b(\alpha)}(X)$.

\subsubsection{Virtual Localization}
Another case, in which an explicit inverse to $i_*$ is known is virtual localization\cite{graber1997localization, ciocanfontanine2007virtual}. Given a moduli space $M$ with a $\Ts$-action as above, which is additionally equipped with a perfect obstruction theory $\Eb$, we can restrict the perfect obstruction theory to the fixed locus. It splits into a $\Ts$-fixed and a $\Ts$-moving part
\begin{equation*}
    \Eb|_{M^\Ts} = \Eb|_{M^\Ts}^f \oplus \Eb|_{M^\Ts}^m.
\end{equation*}
Then $\Eb|_{M^\Ts}^f$ turns out to be an obstruction theory on $M^\Ts$ and $\Nc^\vir\coloneqq \Eb^\vee|_{M^\Ts}^m$ is the virtual normal bundle. The virtual localization formula is then
\begin{equation*}
    \widehat{\Oc}^\vir_M = i_*\left(\frac{\widehat{\Oc}^\vir_{M^\Ts}}{\hat{\es}(\Nc^\vir)}\right),
\end{equation*}
where $\hat{\es}(-)\coloneqq \es(-)\otimes \det(-)^{1/2}$ is the symmetrized K-theoretic Euler class.

\subsubsection{Colored Vertex}\label{sec:colored_vertex}
We consider specifically the case $\Xc=[\C^3/G]$, where $G$ is a finite abelian diagonally embedded subgroup of $\slg(3)$. We consider this equivariantly with the natural action by $\Ts=(\C^*)^3$. In this case, the $\Ts$-fixed locus of $\hilb(\Xc)$ consists of isolated fixed points, corresponding to plane partitions colored by the irreducible representations of $G$. This is explained for example in \cite[Appendix A.2]{young10}. As in Example \ref{ex:global_quotient_1}, the color vector corresponds to the K-theory class of the structure sheaf of the substack. For later use, if $\pi$ is a colored plane partition corresponding to a fixed point, we denote by $\pi^G$ the collection of its $0$-colored boxes.

We study the contribution to the generating series $\Zs(\Xc)$ at each such fixed point. 
\begin{proposition}
    Let $\Xc=[\C^3/G]$ with $G$ is a finite abelian diagonally embedded subgroup of $\slg(3)$. For each colored plane partition $\pi$ corresponding to a fixed point of the Hilbert scheme of points $\hilb(\Xc)$, there is a subset $W(\pi)$ of the $\Ts$-weights of $\restr{\Eb_\Xc^\vee}{[I_\pi]}$, such that we get the local contributions 
    \begin{align*}
        \left[\widehat{\Oc}^\vir\right] &= \sum_{\pi} (-1)^{\abs{\pi_0}} \left[\Oc_{[I_\pi]}\right]\hat{a}(\pi)\in K_\Ts(\hilb(\Xc))_\loc,\\
        \hat{a}(\pi) &= \prod_{w\in W} \frac{(\kappa/w)^{1/2}-(w/\kappa)^{1/2}}{w^{1/2}-w^{-1/2}} = \prod_{w\in W} \frac{[\kappa/w]}{[w]},
    \end{align*}
    where we write $[w]\coloneqq w^{1/2}-w^{-1/2}$.
\end{proposition}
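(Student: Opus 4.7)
The plan is to combine K-theoretic virtual localization with the preceding determinant lemma, and then exploit the symmetric structure of the obstruction theory at each isolated fixed point. Since $\Ts$-fixed points of $\hilb([\C^3/G])$ are isolated (corresponding to colored plane partitions), the $\Ts$-fixed part of $\Eb$ vanishes and the entire $\Eb^\vee|_{[I_\pi]}$ is the virtual normal bundle. Virtual localization then gives
\begin{equation*}
    \Oc^\vir = \sum_\pi i_*\frac{\Oc_{[I_\pi]}}{\es(\Eb^\vee|_{[I_\pi]})}.
\end{equation*}

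Next, I would apply the definition of $\widehat{\Oc}^\vir$ from Definition \ref{setup:def:twisted_vir_str_sh}. The homological shift $[b(\alpha)]$ contributes a factor $(-1)^{b(\alpha)}$, and at the fixed point $[I_\pi]$ the coarse-space pushforward counts exactly the $G$-invariant (i.e.\ $0$-colored) boxes, so $b(\alpha) = |\pi^G|$, producing the sign $(-1)^{|\pi^G|}$. For the line-bundle part of the twist I would invoke the previous lemma, which identifies $\det\!\bigl(\kappa^{1/2} p_{\Zc*}\Oc_\Zc\bigr)|_{[I_\pi]}^{-2}$ with $\det(\Eb|_{[I_\pi]})$; taking square roots (consistently, in the chosen cover of $\Ts$) yields $\det\!\bigl(\kappa^{1/2} p_{\Zc*}\Oc_\Zc\bigr)|_{[I_\pi]}^{-1} = \det(\Eb^\vee|_{[I_\pi]})^{-1/2}$. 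Combining this with the denominator from virtual localization and using $\hat{\es}(-) = \es(-)\otimes\det(-)^{1/2}$, the contribution at $\pi$ rewrites as
\begin{equation*}
    \frac{\det(\Eb^\vee|_{[I_\pi]})^{-1/2}}{\es(\Eb^\vee|_{[I_\pi]})} = \frac{1}{\hat{\es}(\Eb^\vee|_{[I_\pi]})}.
\end{equation*}

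The final step is to rewrite this using the symmetric obstruction theory $\Eb\cong\kappa\otimes\Eb^\vee[1]$. Restricted to the isolated fixed point, this descends to an equality of virtual $\Ts$-representations $\Eb^\vee|_{[I_\pi]} = -\kappa\cdot(\Eb^\vee|_{[I_\pi]})^\vee$ in $K_\Ts(\pt)$. On characters this forces the multiplicity of a weight $w$ and the weight $\kappa/w$ to be opposite, so after discarding paired cancellations the virtual class splits as $V - \kappa V^\vee$ for an honest virtual representation $V$; I take $W(\pi)$ to be the (multi)set of weights of $V$, which is by construction a subset of the weights appearing in $\Eb^\vee|_{[I_\pi]}$. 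Then
\begin{equation*}
    \frac{1}{\hat{\es}(V - \kappa V^\vee)} = \frac{\hat{\es}(\kappa V^\vee)}{\hat{\es}(V)} = \prod_{w\in W(\pi)} \frac{[\kappa/w]}{[w]} = \hat{a}(\pi),
\end{equation*}
using the single-line-bundle computation $\hat{\es}(L_w) = w^{1/2}-w^{-1/2} = [w]$.

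The main obstacle is the last step: producing an unambiguous splitting $\Eb^\vee|_{[I_\pi]} = V - \kappa V^\vee$ with weights of $V$ truly lying among the weights of $\Eb^\vee|_{[I_\pi]}$. For generic equivariant parameters no weight is its own Serre partner and the pairing $w\leftrightarrow\kappa/w$ is fixed-point free, so the choice of $W(\pi)$ is transparent; self-dual weights (where $w^2 = \kappa$) would need to be excluded or handled separately, but they do not occur for the diagonal $(\C^*)^3$-action on $[\C^3/G]$ with $G\subset\slg(3)$ once we work in the cover of $\Ts$ used throughout. With that in hand the product formula follows, establishing the local contribution stated in the proposition.
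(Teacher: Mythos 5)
Your proposal follows essentially the same route as the paper: virtual localization to turn the twisted virtual structure sheaf into a sum over isolated fixed points, the preceding determinant lemma to identify the twist at each fixed point with $\det(\Eb^\vee|_{[I_\pi]})^{-1/2}$ (so that the contribution becomes $1/\hat{\es}(\Eb^\vee|_{[I_\pi]})$), and the Calabi--Yau symmetry of the obstruction theory to extract the set $W(\pi)$. The sign bookkeeping through the homological shift $[b(\alpha)]$ is also correct and matches the paper.

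There is, however, a genuine gap in your very first step. You assert that because the fixed points are isolated, ``the $\Ts$-fixed part of $\Eb$ vanishes and the entire $\Eb^\vee|_{[I_\pi]}$ is the virtual normal bundle.'' Isolation of fixed points only forces $h^0(\Eb^f) \cong \Omega_{\pt} = 0$; it does not force $h^{-1}(\Eb^f)$ to vanish. A perfect obstruction theory on a point can have nonzero obstruction space, and if it did, $\Oc^\vir_{\hilb^\Ts}$ at that point would be $\Lambda^\bullet$ of a nonzero vector space rather than the structure sheaf, and your displayed localization formula would fail. The symmetry $\Eb \cong \kappa \otimes \Eb^\vee[1]$ does not rescue this on its own, because $\kappa$ is a nontrivial $\Ts$-weight and hence mixes weight spaces when you pass to fixed parts. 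What the paper actually uses is an external input: identifying $\Eb_\Xc$ with the $G$-invariant part of the obstruction theory on $\hilb(\C^3)$ via \eqref{setup:eq:obstruction_theory_fixed_part} and citing \cite[Section~4.5]{maulik2004gromovwitten} for the absence of trivial $\Ts$-weights in $\Eb^\vee_{\C^3}$ at fixed points. You need this citation (or an equivalent argument) to justify that the moving part is all of $\Eb^\vee|_{[I_\pi]}$ and that none of the resulting $[w]$ in the denominator vanish.

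Separately, your discussion of the ``main obstacle'' is pointed at the wrong subtlety. Self-dual weights $w$ with $w^2 = \kappa$ automatically have multiplicity zero: the symmetry forces $c_w = -c_{\kappa/w} = -c_w$ in the K-theory class, so $c_w = 0$. It is not correct to say they ``do not occur... once we work in the cover of $\Ts$''; the cover is exactly what makes $\kappa^{1/2}$ an honest character, so such a weight \emph{could} appear a priori---it just always cancels. The obstruction that actually needs an argument is $w = 1$, which is what the MNOP citation excludes. With that fixed, the rest of your proof is sound.
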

\begin{proof}
    If $\pi$ is a colored plane partition corresponding to a fixed point, we let $\Oc_{[I_\pi]}$ be the skyscraper sheaf at the corresponding fixed point $[I_\pi]$. Then the virtual localization formula yields
    \begin{equation*}
        \left[\widehat{\Oc}^\vir\right] = \sum_{\pi} \left[\Oc_{[I_\pi]}\right]\frac{1}{\hat{\es}\left(\Eb_\Xc^\vee|_{[I_\pi]}^m\right)}\in K_\Ts(\hilb(\Xc))_\loc.
    \end{equation*}
    The local contribution $\frac{1}{\hat{\es}\left(\restr{\Eb_\Xc^\vee}{[I_\pi]}^m\right)}$ can be understood as follows. First, write $\restr{\Eb_\Xc^\vee}{[I_\pi]}$ as a sum of its $\Ts$-weights as a class in $K_\Ts(\pt)$. Then, as $\Xc$ is Calabi--Yau, the obstruction theory $\Eb_\Xc$ is symmetric, which means there is a subset $W(\pi)$ of the $\Ts$-weights such that
    \begin{equation*}
        \restr{\Eb_\Xc^\vee}{[I_\pi]} = \sum_{w\in W} \left(w-\frac{\kappa}{w}\right).
    \end{equation*}
    We have seen in Section \ref{setup:sec:obstruction_theory} that the obstruction theory on $\hilb(\Xc)$ is the $G$-fixed part of the obstruction theory on $\hilb(\C^3)$. Then \cite[Section 4.5]{maulik2004gromovwitten} shows that $\Eb_\Xc^\vee$ has no trivial $\Ts$-weights at any fixed point, so $\restr{\Eb_\Xc^\vee}{[I_\pi]}$ does not have any trivial $\Ts$-weights either. Inserting the above weight decomposition into the localization formula gives us
    \begin{equation*}
        \left[\widehat{\Oc}^\vir\right] = \sum_{\pi} \left[\Oc_{[I_\pi]}\right]\det\left(\restr{\Eb^\vee_\Xc}{[I_\pi]}\right)^{-1/2}\prod_{w\in W(\pi)}\left(\frac{1-w/\kappa}{1-w^{-1}}\right)\in K_\Ts(\hilb(\Xc))_\loc.
    \end{equation*}
    The twist at $[I_\pi]$ can be written in terms of weights as
    \begin{equation*}
        \det\left(\restr{\Eb^\vee_\Xc}{[I_\pi]}\right)^{-1/2} = (-1)^{\abs{\pi^G}} \left(\prod_{w\in W}\frac{w^2}{\kappa}\right)^{-1/2} = (-1)^{n_0} \prod_{w\in W}\frac{(\kappa/w)^{1/2}}{w^{1/2}},
    \end{equation*}
    where the sign comes from our choice of shift in Definition \ref{setup:def:twisted_vir_str_sh} of the twist of the virtual structure sheaf. Multiplying by this twist yields exactly the desired formula.
\end{proof}
\section{Factorization}\label{fact:sec:fact}

The constructions and proofs in this section follow closely the treatment of factorization for schemes in \cite{kr}, but the theory for orbifolds is more complicated in several ways. We set up a more general framework using $0$-dimensional K-theory classes of an orbifold instead of numbers of points in a scheme. This applies in the orbifold case, but the combinatorics, which are involved in the proof of the main Theorem \ref{fact:thm:fact} of this section, become more complicated.

\subsection{Factorizable Systems}
\subsubsection{Setup}
We develop a theory of factorization for orbifolds. This works equivariantly with respect to a group action on the underlying orbifold, for example the action of a torus. For clarity, we suppress this equivariance from the notation. The proofs in the equivariant setting are the same. Following \cite[5.3]{ok15}, factorization for schemes $X$ is a property of systems of sheaves $\Fc_n$ on $\sym^n(X)$, which allows us to compute $1+\sum_{n\geq 1}\chi(\sym^n(X),\Fc_n)q^n$ as the plethystic exponential of a series $\sum_{n\geq 1}\chi(X,\Gc_n)q^n$ where $\Gc_i$ is a sequence of sheaves on $X$. This simplifies the problem as $X$ is easier to work with than $\sym^n(X)$. Moreover, in the equivariant case, $\chi(X,\Gc_n)$ can be computed as the product of a fixed localization contribution with a Laurent polynomial in the equivariant parameters. As described in Section \ref{setup:sec:loc}, the definition of $\chi$ in the equivariant case is slightly more involved. In this section, we set up a theory of factorization for orbifolds. In the $\Ts$-equivariant case, the $\Ts$-equivariant structure of the sheaves is preserved by all constructions of this section. In Corollary \ref{fact:cor:comp_pexp_fact}, $\chi$ is defined via localization in the equivariant case.

We will define a notion of a factorizable system in a more general setup. To motivate this, we consider the following morphisms
\begin{equation}\label{fact:cd:setups}
    \begin{tikzcd}
        & & \left[X^{b(\alpha)}/S_{b(\alpha)}\right] \arrow[d, "p"]\\
        \hilb^{\alpha}(\Xc) \arrow[r, "HC_{\alpha}"] & M_{\alpha}(\Xc) \arrow[r, "\xi_\alpha"] & \sym^{b(\alpha)}(X).
    \end{tikzcd}
\end{equation}
Recall from Section \ref{setup:sec:coarse} that $b(\alpha)$ is the pushforward of $\alpha$ to $X$. The vertical map is the quotient morphism. In the cases we are interested in, we will show that the twisted virtual structure sheaves yield a factorizable system of sheaves $\hat{\Oc}^{vir}$ on $\hilb^{\alpha}(\Xc)$. By then pushing forward, we obtain a factorizable system on $\sym^{b(\alpha)}(X)$. Finally pulling back to the stack $\left[X^{b(\alpha)}/S_{b(\alpha)}\right]$, we obtain a factorizable system of $S_{b(\alpha)}$-equivariant sheaves on $X^{b(\alpha)}$. This third factorizable system will be used to prove the main result of this section, Theorem \ref{fact:thm:fact} about generating series of $K$-theory classes of factorizable systems of sheaves.

Before we formulate our notion of factorization, we define the notion of a factorization index set.
\begin{definition}\label{fact:def:fact_ind_set}
    Given a subset $I$ of $\Cr_0(\Xc)$, an element $\alpha$ of $I$ is called $I$\textbf{-indecomposable} if there are no elements $\alpha_1,\alpha_2$ in $I$ such that $\alpha=\alpha_1+\alpha_2$.

    A subset $I$ of $\Cr_0(\Xc)$ is called a \textbf{factorization index set} if
    \begin{itemize}
        \item $b(\alpha)>0$ for every $\alpha\in I$, and
        \item every $I$-indecomposable element $\alpha$ has $b(\alpha)=1$
    \end{itemize}
    Note that we don't require $I$ to be closed under addition. If $I$ is closed under addition, we call $I$ a \textbf{factorization index semigroup}.

    Take a point $p$ in the non-stacky locus of $\Xc$ and write $\Delta_I$ for the subset $\Z_{>0}[\Oc_p]$ of $\Cr_0(\Xc)$. 
\end{definition}

\begin{remark}
    Let $I\subseteq \Cr_0(\Xc)$ be a factorization index set. By the first condition, any decomposition of some $\alpha\in I$ into $\alpha_1+\alpha_2$ with $\alpha_i\in I$ must have $b(\alpha_i)<b(\alpha)$. This in turn implies that for every factorization index set $I\subseteq\Cr_0(\Xc)$:
    \begin{itemize}
        \item Any $\alpha\in I$ with $b(\alpha)=1$ must be $I$-indecomposable, since any decomposition would require a summand with $b(-)=0$.
        \item Any $\alpha\in I$ can be written as a finite sum of $I$-indecomposable elements, because $b(-)>0$ on $I$.
    \end{itemize}
\end{remark}

The condition for a factorization index set roughly says that an $I$-indecomposable class can only have one underlying point in the coarse space, possibly together with some stacky contribution. We will use this in the proof of Theorem \ref{fact:thm:fact} below, to guarantee the existence of splittings of classes $\alpha$ in $I$ whenever $b(\alpha)>1$. The reader should have the following example in mind, which is the example we use in all applications.

\begin{lemma}\label{fact:lemma:hilb_fact_index_set}
    For an orbifold $\Xc$, set
    \begin{equation*}
        I := \{\alpha\ |\ \hilb^{\alpha}(\Xc)\neq \emptyset\}.
    \end{equation*}
    This is a factorization index set. If $\Xc=\Xc'\times \C$, then $I$ is closed under addition.
\end{lemma}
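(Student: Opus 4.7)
The plan is to verify the two defining conditions of a factorization index set for $I$, and then to establish closure under addition in the product case. First, for positivity $b(\alpha)>0$: any $\alpha \in I$ is realized by a non-empty substack $\Zc \subset \Xc$ with $[\Oc_\Zc] = \alpha$, so the pushforward $\pi_*\Oc_\Zc$ is a non-zero $0$-dimensional coherent sheaf on $X$ of length $b(\alpha) \geq 1$; this also places $I$ inside $\Cr_0(\Xc)$.

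For the indecomposability condition, the plan is to prove the contrapositive: if $b(\alpha) \geq 2$, then $\alpha$ is $I$-decomposable. Let $\Zc$ realize $\alpha$ and let $Z = \pi(\Zc) \subset X$ be its coarse image, of length $\geq 2$. If $\supp(Z)$ contains more than one closed point of $X$, then $\Zc$ splits as a disjoint union $\Zc_1 \sqcup \Zc_2$ of non-empty substacks over a corresponding partition of the support, and $\alpha = [\Oc_{\Zc_1}] + [\Oc_{\Zc_2}]$ gives the required $I$-decomposition. Otherwise $\supp(Z) = \{p\}$ is a single closed point. At a non-stacky $p$, the substack $\Zc$ is a length-$b(\alpha)$ fat point at a smooth point of $X$ with class $\alpha = b(\alpha)[\Oc_p]$, and one writes $\alpha = [\Oc_p] + (b(\alpha)-1)[\Oc_p]$ with both summands realized by subschemes of the appropriate length at $p$. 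At a stacky $p$ with local model $\Xc \simeq [\operatorname{Spec} R / G_p]$ and $\Zc$ the substack defined by a $G_p$-invariant ideal $J \subset R$ with $\dim_\C(R/J)^{G_p} = b(\alpha) \geq 2$, one picks any ideal $J \cap R^{G_p} \subsetneq K \subsetneq R^{G_p}$ (which exists by length $\geq 2$) and sets $\tilde{K} = KR + J$; the corresponding closed substack $\tilde{\Zc} \subsetneq \Zc$ is a non-trivial proper sub-substack with K-theory class $\alpha_1 \in I$ satisfying $1 \leq b(\alpha_1) < b(\alpha)$. Realizing the complementary class $\alpha_2 = \alpha - \alpha_1$ as an actual substack is the main technical step and can be carried out either by explicitly constructing an alternative $G_p$-invariant ideal with the required isotypic decomposition, or by moving part of the support to a free orbit in the non-stacky locus of $\Xc$.

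For the second statement, given $\alpha_1, \alpha_2 \in I$ with realizers $\Zc_1, \Zc_2$ in $\Xc = \Xc' \times \C$, the plan is to translate $\Zc_2$ along the $\C$-factor by some $t \in \C$ large enough that the translate $\Zc_2^t$ is disjoint from $\Zc_1$. Since translation by $t$ is an automorphism of $\Xc$, it preserves K-theory classes, so $\Zc_1 \sqcup \Zc_2^t$ is a substack of $\Xc$ of class $\alpha_1 + \alpha_2$, giving $\alpha_1 + \alpha_2 \in I$. The hardest step in the overall argument is the stacky sub-case of the indecomposability condition, where realizing the complementary class $\alpha_2$ as a substack requires genuine input beyond the formal K-theoretic splitting.
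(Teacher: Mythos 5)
Your first condition ($b(\alpha)>0$) and your closure-under-addition argument for $\Xc=\Xc'\times\C$ both essentially agree with the paper. However, your indecomposability argument has a genuine gap in the stacky single-point case, which you yourself flag but do not close. Producing a proper, non-empty closed substack $\tilde\Zc\subset\Zc$ gives you $\alpha_1=[\Oc_{\tilde\Zc}]\in I$ with $0<b(\alpha_1)<b(\alpha)$, but you then need $\alpha_2=\alpha-\alpha_1\in I$, and this is \emph{false in general}: not every proper closed substack of $\Zc$ works. For instance, on $[\C^2/\mu_2]\times\C$ (with $\mu_2$ acting by $\pm 1$ on the first two coordinates) the class $\alpha=(2,2)$ is realized by a monomial ideal, and it has a closed substack of class $(2,1)$; but the complementary class $(0,1)$ is not in $I$, since any non-empty substack has invariant unit $1\in(R/J)^{G_p}$ and hence $b\geq 1$. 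So your decomposition has to be produced much more carefully than ``take any intermediate invariant ideal $K$''; the second of your suggested fixes (moving part of the support to the non-stacky locus) also fails, since a free-orbit point only ever produces classes in $\Delta_I=\Z_{>0}[\Oc_p]$, which need not match $\alpha_2$.

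The paper avoids this difficulty entirely by arguing in the \emph{direct} direction rather than by contrapositive: suppose $\alpha=[\Oc_\Zc]$ is $I$-indecomposable. Any $f\in\Hom_\Xc(\Oc_\Xc,\Oc_\Zc)$ gives $\alpha=[\im f]+[\cok f]$, and both $\im f$ and $\cok f$ are quotients of $\Oc_\Xc$, hence structure sheaves of substacks, hence their classes lie in $I$ \emph{automatically}. Indecomposability then forces one of them to vanish, so every non-zero $f:\Oc_\Xc\to\Oc_\Zc$ is surjective. By adjunction along $\Zc\hookrightarrow\Xc$ the same is true for every non-zero $f\in\Hom_\Zc(\Oc_\Zc,\Oc_\Zc)$, and finite length makes such an $f$ an isomorphism; therefore $\Hom_\Zc(\Oc_\Zc,\Oc_\Zc)=\C$, so $b(\alpha)=h^0(\Oc_\Zc)=\dim\Hom_\Zc(\Oc_\Zc,\Oc_\Zc)=1$. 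The crucial point, and the one your approach does not capture, is that the image/cokernel splitting along a global section lands in $I$ for structural reasons, so one never has to solve the realization problem for a prescribed complementary K-theory class. I would recommend replacing your contrapositive argument with this one.
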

\begin{proof}
    The first condition of a factorization index set holds immediately, because, by definition, all classes $\alpha\in I$ can be written as $\alpha=[\Oc_\Zc]$, where $\Zc$ is some substack. Take a class $\alpha=[\Oc_\Zc]$ in $I$, which is $I$-indecomposable. Any morphism $f:\Oc_\Xc\to\Oc_\Zc$ gives us a splitting
    \begin{equation*}
        \alpha= [\Oc_\Zc] = [\im(f)] + [\cok(f)].
    \end{equation*}
    Both $[\im(f)]$ and $[\cok(f)]$ are in $I$ as $\im(f)$ and $\cok(f)$ are sheaves with surjective morphisms from $\Oc_\Xc$. By $I$-indecomposability of $\alpha$, $\im(f)$ or $\cok(f)$ must vanish. Hence, every non-zero morphism $f\in\Hom_\Xc\left(\Oc_\Xc,\Oc_\Zc\right)$ must be surjective. By the adjunction of $i:\Zc\hookrightarrow\Xc$, which preserves surjections, this means that every non-zero morphism $f\in\Hom_\Zc\left(\Oc_\Zc,\Oc_\Zc\right)$ must be surjective. Using finite length of $\Zc$ and additivity of the length function on the short exact sequence
    \begin{equation*}
        0\to \ker(f) \to \Oc_\Zc \xrightarrow{f} \Oc_\Zc \to \cok(f) \to 0
    \end{equation*}
    associated to $f$, we see that $f$ is surjective if and only if it is injective. So, every non-zero morphism $f\in\Hom_\Zc\left(\Oc_\Zc,\Oc_\Zc\right)$ must be an isomorphism, which gives us $\Hom_\Zc\left(\Oc_\Zc,\Oc_\Zc\right)=\C$ as in \cite[Cor. 1.2.8]{hl10}. Therefore, 
    \begin{equation*}
        b(\alpha)=h^0(\Oc_\Zc)=\dim \Hom_\Zc\left(\Oc_\Zc,\Oc_\Zc\right) =1.
    \end{equation*}

    To prove the second part, take $\alpha_1=[\Oc_{\Zc_1}]$ and $\alpha_2=[\Oc_{\Zc_2}]$ in $I$. As $\Xc$ is $\Xc'\times \C$, we may move $\Zc_1$ and $\Zc_2$ apart along the trivial direction without changing the K-theory classes $\alpha_i$, so that we may assume the $\Zc_i$ are disjoint. Then $\left[\Oc_{\Zc_1\sqcup\Zc_2}\right]$ has class $\alpha_1+\alpha_2$ and is contained in $I$.
\end{proof}
\begin{figure}[!ht]
    \centering
    \includegraphics[width=0.7\linewidth]{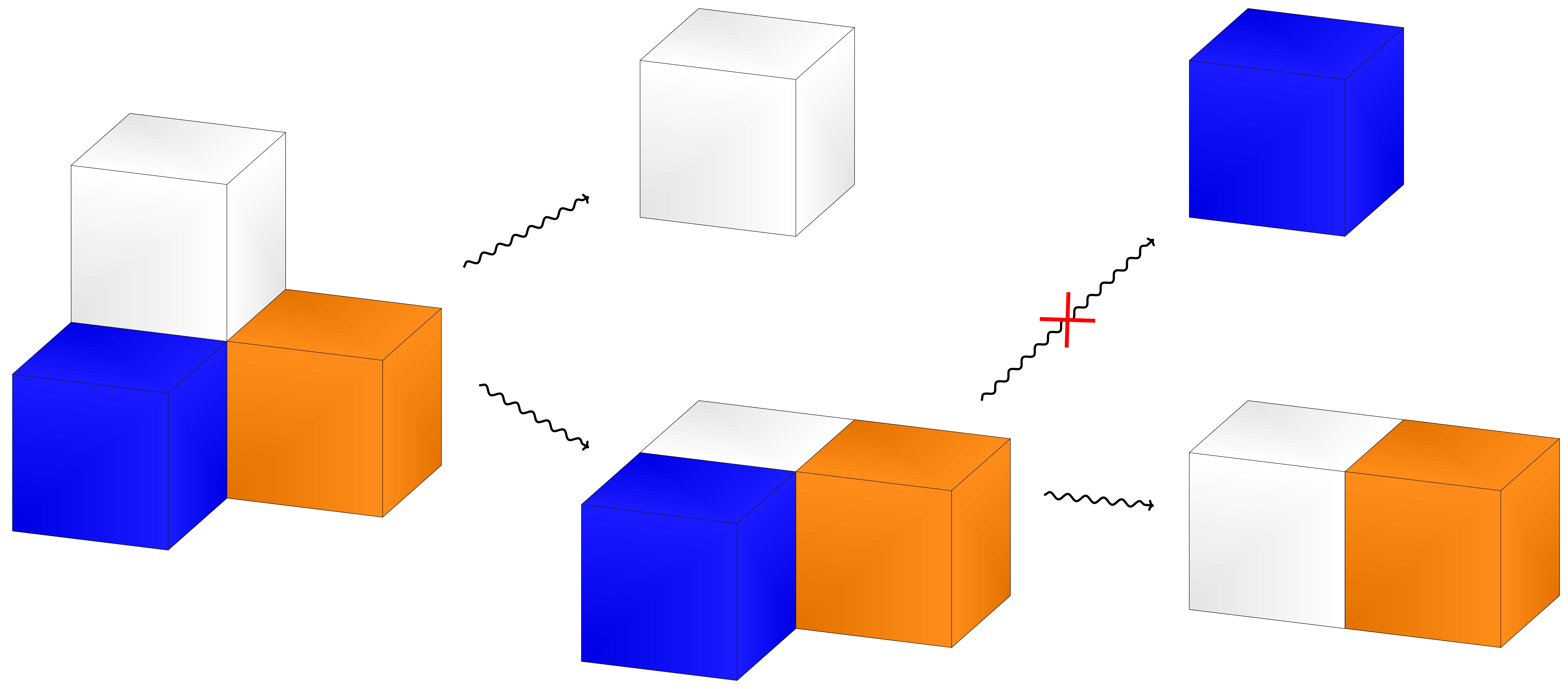}
    \caption{Possible splittings of the class $(2,1,1)$ in terms of decompositions of colored plane partitions. A splitting into invalid colored plane partitions is not a splitting in $I$.}
    \label{fig:factorization_index_set}
\end{figure}

\begin{example}
    If we consider colored plane partitions by studying an orbifold $[\C^3/G]$, then color vectors of plane partitions correspond to K-theory classes, and $I$ will contain all color vectors which occur in this setup. For example the box $(0,0,0)$ is always colored with the color $0$, so the vector $(1,0,0,\dots)$ is in $I$, but the vector $(0,1,0,0,\dots)$ is not. The restrictions on splittings of K-theory classes imposed by the factorization index set from Lemma \ref{fact:lemma:hilb_fact_index_set} are visualized in Figure \ref{fig:factorization_index_set} above. The class $(1,1,1)$ can not split off a class $(0,1,0)$ in $I$, because, while this is the class of a $0$-dimensional sheaf on $\Xc=[\C^2/\mu_3]\times\C$, it is not the structure sheaf of a $0$-dimensional substack. This is reflected by the associated box not being a valid colored plane partition when split off.
\end{example}

\subsubsection{General Factorizable Systems}\label{fact:sec:gen_fact_sys}
Now we give a general definition of a factorizable system. In practice, we only use this definition in the various ways explained in Section \ref{fact:sec:fact_examples}. Factorizable systems in the below definitions consist of (possibly 2-periodic) complexes of coherent sheaves, i.e. elements of $\CoCh(\Coh(-))$ or $\CoCh_{\Z/2}(\Coh(-))$.

Given complexes of sheaves $\Fc_1,\dots,\Fc_k$ on any scheme, an operation $\odot\in\left\{\otimes,\oplus\right\}$, and a permutation $\sigma\in S_k$, we denote by
\begin{equation*}
    S: \Fc_1\odot\cdots\odot\Fc_k \xrightarrow{\sim}\Fc_{\sigma(1)}\odot\cdots\odot\Fc_{\sigma(k)} 
\end{equation*}
the standard isomorphism.

\begin{definition}\label{def:factorizable_system_general}
    Consider the following data: 
    \begin{itemize}
        \item Let $I$ be a factorization index set.
        \item Let $\indgffct:I\to \indgfset$ be an additive morphism to a subset $\indgfset$ of a monoid, in the sense that, for any $\alpha_1+\alpha_2=\alpha$ in $I$, $\indgffct(\alpha_1)+\indgffct(\alpha_2)$ equals $\indgffct(\alpha)$ and is in particular in $\indgfset$.
        \item For any $\indgfel\in\indgfset$ we have a scheme $M_\indgfel(\Xc)$ with an action of a finite group $G_\indgfel$.
        \item For any $\indgfel_1+\indgfel_2=\indgfel$ in $\indgfset$, we have an embedding $G_{\indgfel_1}\times G_{\indgfel_2}\subseteq G_{\indgfel}$.
        \item For any $\indgfel_1+\indgfel_2=\indgfel$ in $\indgfset$, we have $G_{\indgfel_1}\times G_{\indgfel_2}$-equivariant open subschemes $U_{\indgfel_1\indgfel_2}\subseteq M_{\indgfel_1}(\Xc)\times M_{\indgfel_2}(\Xc)$. Taking $\sum_{i=1}^r \indgfel_i=\indgfel$ in $\indgfset$, set $U_{\indgfel_1\dots\indgfel_r}\subseteq \prod_i M_{\indgfel_i}(\Xc)$ to be the intersection of the pullbacks of all the $U_{\indgfel_i\indgfel_j}$. Then we assume that there are $G_{\indgfel_1}\times \cdots \times G_{\indgfel_r}$-equivariant flat morphisms $U_{\indgfel_1\dots\indgfel_r}\to M_{\indgfel}(\Xc)$ and for any permutation $\tau$ natural isomorphisms $\tau:U_{\indgfel_1\dots\indgfel_r}\xrightarrow{\sim}U_{\indgfel_{\tau_1}\dots\indgfel_{\tau_r}}$ making the diagram
        \begin{equation}\label{fact:eq:U_switching_comm_diag}
            \begin{tikzcd}
                U_{\indgfel_1\dots\indgfel_r} \arrow[dr] \arrow[r, "\tau"] & U_{\indgfel_{\tau_1}\dots\indgfel_{\tau_r}} \arrow[d]\\
                & M_{\indgfel}(\Xc)
            \end{tikzcd}
        \end{equation}
        commute.
        \item Fix $\boxdot$ to be either $\boxtimes$ or $\boxplus$.
    \end{itemize} 
    
    With this data, a collection $\{\Fc_{\alpha}\}_{\alpha\in I}$ of (possibly 2-periodic) complexes of sheaves on $M_{\indgffct(\alpha)}(\Xc)$ is called \textbf{factorizable} with respect to this data if there exists a collection of morphisms of $G_{\indgffct(\alpha_1)}\times G_{\indgffct(\alpha_2)}$-equivariant complexes of sheaves for $\alpha_1+\alpha_2=\alpha$ in $I$
    \begin{equation*}
        \phi_{\alpha_1\alpha_2}:\restr{\left(\Fc_{\alpha_1}\boxdot \Fc_{\alpha_2}\right)}{U_{\indgffct(\alpha_1)\indgffct(\alpha_2)}}\rightarrow \restr{\Fc_{\alpha}}{U_{\indgffct(\alpha_1)\indgffct(\alpha_2)}},
    \end{equation*}
    such that for any $\alpha\in I$ and $\indgfel_1,\indgfel_2\in \indgfset$ with $\indgfel_1+\indgfel_2=\indgffct(\alpha)$, the morphism
    \begin{equation}\label{fact:eq:sum_isom_fact}
        \bigoplus_{\substack{\alpha_1+\alpha_2=\alpha\\\indgffct(\alpha_i)=\indgfel_i}} \phi_{\alpha_1\alpha_2}: \bigoplus_{\substack{\alpha_1+\alpha_2=\alpha\\\indgffct(\alpha_i)=\indgfel_i}} \restr{\left(\Fc_{\alpha_1}\boxdot \Fc_{\alpha_2}\right)}{U_{\indgfel_1\indgfel_2}}\rightarrow \restr{\Fc_{\alpha}}{U_{\indgfel_1\indgfel_2}}
    \end{equation}
    is an isomorphism of $G_{\indgfel_1}\times G_{\indgfel_2}$-equivariant complexes of sheaves. Moreover, we require the $\phi_{\alpha_1\alpha_2}$ to satisfy the following requirements.

    \begin{itemize}
        \item Associativity: Given $\alpha_1,\alpha_2,\alpha_3\in I$, such that all sums of them are in $I$, we have
        \begin{equation*}
            \phi_{\alpha_1+\alpha_2,\alpha_3} \circ \left(\phi_{\alpha_1\alpha_2}\boxdot \id_{\Fc_{\alpha_3}}\right) = \phi_{\alpha_1,\alpha_2+\alpha_3} \circ \left(\id_{\Fc_{\alpha_1}}\boxdot \phi_{\alpha_2\alpha_3}\right)
        \end{equation*}
        as morphisms
        \begin{equation*}
            \restr{\left(\Fc_{\alpha_1}\boxdot \Fc_{\alpha_2}\boxdot \Fc_{\alpha_3}\right)}{U_{\indgffct(\alpha_1)\indgffct(\alpha_2)\indgffct(\alpha_3)}} \rightarrow \restr{\Fc_{\alpha_1+\alpha_2+\alpha_3}}{U_{\indgffct(\alpha_1)\indgffct(\alpha_2)\indgffct(\alpha_3)}} .
        \end{equation*}
        \item Commutativity: Because the diagram \eqref{fact:eq:U_switching_comm_diag} commutes (here we use $r=2$), we get an isomorphism $v:\tau^*\left(\restr{(-)}{U_{\indgfel_2\indgfel_1}}\right)\cong \restr{(-)}{U_{\indgfel_1\indgfel_2}}$. We require that the diagram
        \begin{equation}\label{fact:eq:gen_comm_diagram}
            \begin{tikzcd}
                \restr{\left(\Fc_{\alpha_1}\boxdot \Fc_{\alpha_2}\right)}{U_{\indgffct(\alpha_1)\indgffct(\alpha_2)}} \arrow[d] \arrow[r, "\phi_{\alpha_1\alpha_2}"] & \restr{\Fc_{\alpha_1+\alpha_2}}{U_{\indgffct(\alpha_1)\indgffct(\alpha_2)}} \arrow[d, "v"]\\
                \tau^*\left(\restr{\left(\Fc_{\alpha_2}\boxdot \Fc_{\alpha_1}\right)}{U_{\indgffct(\alpha_2)\indgffct(\alpha_1)}}\right) \arrow[r, "\tau^*\phi_{\alpha_2\alpha_1}"] & \tau^*\left(\restr{\Fc_{\alpha_1+\alpha_2}}{U_{\indgffct(\alpha_2)\indgffct(\alpha_1)}}\right)
            \end{tikzcd}
        \end{equation} 
        commutes, where the first vertical arrow arises by composition of the natural isomorphisms $S$ and $v$.
    \end{itemize}
\end{definition}

\subsubsection{Examples}\label{fact:sec:fact_examples}

We explain the various examples of Definition \ref{def:factorizable_system_general}, which we use in this paper. For applications, we consider only the factorization index set given by Lemma \ref{fact:lemma:hilb_fact_index_set}, but for the following examples, the specific choice of factorization index set is unimportant.

Note that virtual structure sheaves, as defined in Section \ref{sec:vir_str_sh} are elements of $\Db_{\Z/2}(\Coh(-))$ of the form $\bigoplus_i \Hc^i(\Ec)[i]$ for some $\Ec\in \Db_{\Z/2}(\Coh(-))$. For the purpose of factorization, we can view them as genuine complexes $\bigoplus_i \Hc^i(\Ec)[i]$ with trivial differential and all factorization morphisms from Section \ref{sec:fact_vir_str_sheaf} are well-defined morphisms of complexes.

\begin{remark}
    More generally, we have a functor
    \begin{equation*}
        \Db_{\Z/2}(\Coh(-)) \ni \Ec \mapsto \bigoplus_i \Hc^i(\Ec)[i] \in \CoCh_{\Z/2}(\Coh(-)),
    \end{equation*}
    which preserves the class of $\Ec$ in $K(\Coh(-))$. We can then define factorizable systems of objects in $\Db_{\Z/2}(\Coh(-))$ in an analogous way and use the above functor to prove the main result for such systems.
\end{remark}

\begin{example}\label{fact:ex:0_dim_sheaves}
    We consider factorization for moduli $M_\alpha(\Xc)$ of $0$-dimensional sheaves from Section \ref{setup:sec:moduli_0_dim_sheaves}. Here, we take $\indgffct=\id$, $G_\alpha$ the trivial group, and the operation $\boxdot=\boxtimes$. The open subsets $U_{\alpha_1\dots\alpha_r}$ in $M_{\alpha_1}(\Xc)\times\cdot\times M_{\alpha_r}(\Xc)$ contain the points $\left(\Fc_1,\dots,\Fc_r\right)$, where the $\Fc_i$ have pairwise disjoint support. The flat morphisms are induced by the natural direct sum morphism
    \begin{equation*}
        \prod_{i=1}^{k}M_{\alpha_i}(\Xc) \to M_{\sum_i\alpha_i}(\Xc).
    \end{equation*}
\end{example}

\begin{example}\label{fact:ex:hilb}
    To examine factorization properties of obstruction theories and virtual structure sheaves, we consider factorization for Hilbert schemes of points $\hilb^\alpha(\Xc)$. We take $\indgffct=\id$, $G_\alpha$ the trivial group, and either operation $\boxdot\in\{\boxplus,\boxtimes\}$. The open subsets and flat morphisms are defined by fiber product, as shown in the commutative diagram
    \begin{equation*}
        \begin{tikzcd}
            \overline{U}_{\alpha_1 \alpha_2} \ar[r, open, "j"]\ar[rr, bend left, open, "j'"]\ar[d] & \hilb^{\alpha_1}(\Xc)\times \hilb^{\alpha_1}(\Xc) \ar[d] & \hilb^{\alpha_1+\alpha_2}(\Xc) \ar[d] \\
            U_{\alpha_1 \alpha_2} \ar[r, open] & M_{\alpha_1}(\Xc)\times M_{\alpha_2}(\Xc) \ar[r] & M_{\alpha_1+\alpha_2}(\Xc).
        \end{tikzcd}
    \end{equation*}
    with cartesian squares.
\end{example}

Passing to the coarse space along $\pi:\Xc\to X$, we need to take $\indgffct=b$ from Section \ref{setup:sec:coarse}.

\begin{example}\label{fact:ex:sym_coarse}
    We also consider factorization on $\sym^{b}(X)$, where we take $\indgffct=b$, $G_b$ the trivial group, and the operation $\boxdot=\boxtimes$. Viewing $\sym^{b}(X)$ as a moduli space of $0$-dimensional sheaves on $X$, following \cite[Ex. 4.3.6]{hl10}, the open subsets $U_{b_1\dots b_r}$ in $\sym^{b_1}(X)\times\cdot\times \sym^{b_r}(X)$ contain the points $\left(\Fc_1,\dots,\Fc_r\right)$, where the $\Fc_i$ have pairwise disjoint support. As in Example \ref{fact:ex:0_dim_sheaves}, the flat morphisms to $\sym^{b_1+\cdots+b_r}$ come from the direct sum morphism.
\end{example}

\begin{example}\label{fact:ex:product_coarse}
    For the proof of the main factorization theorem below, $S_b$-equivariant factorization on $X^b$ plays an important role. Here, we take $\indgffct=b$, $G_b=S_b$ the symmetric group. The open subsets $U_{b_1b_2}$ of $X^{b_1}\times X^{b_2}$ exactly contain the points $\left(y_1,\dots,y_{b_1},z_1,\dots,z_{b_2}\right)$, where the two subsets of points $(y_1,\dots,y_{b_1})$ and $(z_1,\dots,z_{b_2})$ are disjoint. The flat morphisms are induced by the identity $X^{b_1}\times X^{b_2}=X^{b_1+b_2}$.
\end{example}

\subsubsection{Pushforward \& Pullback Compatibility}\label{fact:sec:fact_functoriality}

We establish that, under some hypotheses, factorizable systems are preserved under pushforward and pullback.

\begin{lemma}\label{fact:lemma:comp_setup_cd}
    Let $I$ be a factorization index set and $\boxdot\in\{\boxplus,\boxtimes\}$. Let $\indgffct:I\to\indgfset$, $M_\indgfel(\Xc)$, $G_\indgfel$, $U_{\indgfel_1\indgfel_2}$ and $\indgffct':I\to\indgfset'$, $N_{\indgfel'}(\Xc)$, $G'_{\indgfel'}$, $U_{\indgfel'_1\indgfel'_2}$ be two collections of factorization data. Let $f:\indgfset\to \indgfset'$ be a surjective additive morphism such that $f\circ \indgffct=\indgffct'$. Let $f^G_\indgfel:G_\indgfel\to G'_{f(\indgfel)}$ be a group morphism and $f^M_\indgfel:M_\indgfel(\Xc)\to N_{f(\indgfel)}(\Xc)$ be a morphism that is equivariant with respect to $f^G_\indgfel$. Assume for every $\indgfel_1+\indgfel_2=\indgfel$ in $\indgfset$ mapping to $\indgfel'_1+\indgfel'_2=\indgfel'$ in $\indgfset'$ under $f$, the open subschemes and flat morphisms are given by fiber product as follows
    \begin{equation}\label{fact:eq:functoriality_f_diagram}
        \begin{tikzcd}[row sep=large]
            U_{\indgfel_1 \indgfel_2} \ar[rr, hookrightarrow, "i_{\indgfel_1 \indgfel_2}"]\ar[drr]\ar[d, open']\ar[dr, swap, "f^U_{\indgfel_1 \indgfel_2}"] & & \overline{U}_{\indgfel'_1\indgfel'_2} \ar[dl, swap, pos=0.35, "f^{\overline{U}}_{\indgfel'_1\indgfel'_2}"]\ar[d] \\
            M_{\indgfel_1}(\Xc)\times M_{\indgfel_2}(\Xc) \ar[d,swap,"f^M_{\indgfel_1}\times f^M_{\indgfel_2}"] & U_{\indgfel'_1\indgfel'_2} \ar[dr]\ar[dl, open'] & M_{\indgfel}(\Xc) \ar[d, "f^M_{\indgfel}"]\\
            N_{\indgfel'_1}(\Xc)\times N_{\indgfel'_2}(\Xc) & & N_{\indgfel'}(\Xc),
        \end{tikzcd}
    \end{equation}
    where the left- and right-most squares are cartesian, $i_{\indgfel_1 \indgfel_2}$ is an embedding of connected components, and all $i_{\indgfel_1 \indgfel_2}$ jointly cover $\overline{U}_{\indgfel'_1\indgfel'_2}$. Then
    \begin{enumerate}[(a)]
        \item Take $\boxdot=\boxtimes$. If $\left\{\Fc_\alpha\right\}_{\alpha\in I}$ is a $G_{\indgffct(\alpha)}$-equivariant factorizable system of sheaves on $M_{\indgffct(\alpha)}(\Xc)$, and all $\left(f^M_{\indgffct(\alpha)}\right)_*\Fc_\alpha$ are complexes of coherent sheaves, then $\left\{\left(f^M_{\indgffct(\alpha)}\right)_*\Fc_\alpha\right\}_{\alpha\in I}$ is a $G'_{f(\indgffct(\alpha))}$-equivariant factorizable system of sheaves on $N_{f(\indgffct(\alpha))}(\Xc)$.
        \item Take $f=\id$, so that $\indgffct=\indgffct'$ and $i_{\indgfel_1\indgfel_2}=\id$ by the condition on $i_{\indgfel_1\indgfel_2}$ above. If $\left\{\Fc_\alpha\right\}_{\alpha\in I}$ is a $G'_{\indgffct(\alpha)}$-equivariant factorizable system of sheaves on $N_{\indgffct(\alpha)}(X)$, then $\left\{\left(f^M_{\indgffct(\alpha)}\right)^*\Fc_\alpha\right\}_{\alpha\in I}$ is a $G_{\indgffct(\alpha)}$-equivariant factorizable system of sheaves on $M_{\indgffct(\alpha)}(\Xc)$.
    \end{enumerate}
\end{lemma}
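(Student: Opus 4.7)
The plan is to build the new factorization morphisms by pushing (resp. pulling) the given ones through the maps in (\ref{fact:eq:functoriality_f_diagram}), using flat base change along the two cartesian squares and the projection formula to identify sources and targets. All the compatibilities required of a factorizable system (the sum isomorphism, associativity, and commutativity) then transfer from the original system by functoriality.

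For part (a), fix $\alpha_1+\alpha_2=\alpha$ in $I$, set $\indgfel_i=\indgffct(\alpha_i)$ and $\indgfel'_i=f(\indgfel_i)$. Flat base change along the right cartesian square of (\ref{fact:eq:functoriality_f_diagram}) yields
\begin{equation*}
\restr{(f^M_{\indgfel})_*\Fc_\alpha}{U_{\indgfel'_1\indgfel'_2}} \cong (f^{\overline{U}}_{\indgfel'_1\indgfel'_2})_*\restr{\Fc_\alpha}{\overline{U}_{\indgfel'_1\indgfel'_2}}.
\end{equation*}
Combining the analogous statement for the left cartesian square with the Künneth identity
\begin{equation*}
(f^M_{\indgfel_1}\times f^M_{\indgfel_2})_*(\Fc_{\alpha_1}\boxtimes\Fc_{\alpha_2})\cong (f^M_{\indgfel_1})_*\Fc_{\alpha_1}\boxtimes (f^M_{\indgfel_2})_*\Fc_{\alpha_2}
\end{equation*}
(the projection formula applied to the flat coordinate projections, producing no derived corrections thanks to the coherence hypothesis) gives the analogous identification for the source. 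I then define $\phi'_{\alpha_1\alpha_2}$ as the pushforward of $\phi_{\alpha_1\alpha_2}$ along $f^{\overline{U}}_{\indgfel'_1\indgfel'_2}\circ i_{\indgfel_1\indgfel_2}$, extended by zero on the remaining clopen components of $\overline{U}_{\indgfel'_1\indgfel'_2}$ provided by the joint-covering hypothesis on the $i_{\indgfel_1\indgfel_2}$. Naturality of the identifications makes this $G'_{\indgfel'_1}\times G'_{\indgfel'_2}$-equivariant.

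The sum isomorphism (\ref{fact:eq:sum_isom_fact}) is verified by grouping pairs $\alpha_1+\alpha_2=\alpha$ with $f(\indgffct(\alpha_i))=\indgfel'_i$ according to the underlying $\indgfset$-level splittings $(\indgfel_1,\indgfel_2)$: on each clopen piece $U_{\indgfel_1\indgfel_2}\subseteq \overline{U}_{\indgfel'_1\indgfel'_2}$ the corresponding partial sum is $(f^{\overline{U}}_{\indgfel'_1\indgfel'_2})_*$ applied to the original sum isomorphism for $\Fc_\alpha$, which is an isomorphism by hypothesis. Associativity and the commutativity diagram (\ref{fact:eq:gen_comm_diagram}) for the family $\phi'_{\alpha_1\alpha_2}$ follow from those for the family $\phi_{\alpha_1\alpha_2}$ by pushforward functoriality and the naturality of all identifications used.

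For part (b), the hypothesis $f=\id$ together with the connected-component description of $i_{\indgfel_1\indgfel_2}$ and the joint-cover condition force $i_{\indgfel_1\indgfel_2}=\id$ and hence $\overline{U}_{\indgfel_1\indgfel_2}=U_{\indgfel_1\indgfel_2}$. Pullback commutes naturally with both $\boxplus$ and $\boxtimes$ and with restrictions to open subschemes, so I simply set the new morphism equal to $(f^U_{\indgfel_1\indgfel_2})^*\phi_{\alpha_1\alpha_2}$ after applying the natural pullback identifications of source and target. Exactness of pullback, together with its preservation of direct sums and isomorphisms, makes the sum isomorphism, associativity, and commutativity transfer for free. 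The main technical point across the whole lemma is the Künneth identification in (a): justifying that no higher-derived corrections appear is precisely where the coherence hypothesis on $(f^M_{\indgffct(\alpha)})_*\Fc_\alpha$ is used.
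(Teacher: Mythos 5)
Your proposal follows essentially the same route as the paper: push (resp.\ pull) the factorization morphisms through the two cartesian squares in \eqref{fact:eq:functoriality_f_diagram}, identify the commutation of pushforward with $\boxtimes$ as in \eqref{fact:eq:boxdot_push_commute}, and regroup the direct sum decomposition \eqref{fact:eq:sum_isom_fact} according to the $\indgfset$-level splittings, with associativity and commutativity transferring by functoriality and part (b) handled by pullback. One small correction: the coherence hypothesis on $\left(f^M_{\indgffct(\alpha)}\right)_*\Fc_\alpha$ is there so that the pushforward stays a complex of \emph{coherent} sheaves (the category in which factorizable systems are required to live), not to suppress derived corrections in the K\"unneth step, which the paper itself asserts via the pullbacks defining $\boxtimes$ without deriving.
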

\begin{proof}
    The proofs of both statements are analogous, so here, we only show the more complicated case of pushing forward with $\boxdot=\boxtimes$. For the operation $\boxtimes$, we have $\left(f^M_{\indgfel_1}\times f^M_{\indgfel_2}\right)_*$ commutes with $\boxtimes$ as follows
    \begin{equation}\label{fact:eq:boxdot_push_commute}
        \left(f^M_{\indgfel_1}\times f^M_{\indgfel_2}\right)_*\left(-\boxtimes -\right) \cong \left(f^M_{\indgfel_1}\right)_*(-)\boxtimes\left(f^M_{\indgfel_2}\right)_*(-),
    \end{equation}
    by using the pullbacks in the definition of $\boxtimes$.

    As pushforward preserves isomorphisms and commutative diagrams, the only statement to check is how pushing forward along $f^M_{\indgffct(\alpha)}$ interacts with the direct sum splitting in \eqref{fact:eq:sum_isom_fact}. Consider the base change $f^U_{\indgfel_1\indgfel_2}$ of the morphism $f^M_{\indgfel_1}\times f^M_{\indgfel_2}$. We push the factorization morphisms $\phi_{\alpha_1\alpha_2}$ forward along $f^U_{\indgffct(\alpha_1)\indgffct(\alpha_2)}$. Take a given $\alpha$ and $\indgfel'_1+\indgfel'_2=\indgfel'$ in $\indgfset'$ with $\indgfel'=\indgffct'(\alpha)$. Take $\indgfel_i$ such that $f(\indgfel_i)=\indgfel'_i$. By assumption, on the components $U_{\indgfel_1\indgfel_2}$, we have the isomorphisms  
    \begin{equation*}
        \bigoplus_{\substack{\alpha_1+\alpha_2=\alpha\\\indgffct(\alpha_i)=\indgfel_i}} \phi_{\alpha_1\alpha_2}: \bigoplus_{\substack{\alpha_1+\alpha_2=\alpha\\\indgffct(\alpha_i)=\indgfel_i}} \restr{\left(\Fc_{\alpha_1}\boxtimes \Fc_{\alpha_2}\right)}{U_{\indgfel_1\indgfel_2}} \rightarrow \restr{\Fc_{\alpha}}{U_{\indgfel_1\indgfel_2}}.
    \end{equation*}
    Now sum these isomorphisms over the various components $U_{\indgfel_1 \indgfel_2}$ of $\overline{U}_{\indgfel'_1\indgfel'_2}$, for any splitting $\indgfel_1+\indgfel_2=\indgfel$ with $f(\indgfel_i)=\indgfel'_i$. We get isomorphisms
    \begin{equation*}
        \bigoplus_{\substack{\indgfel_1+\indgfel_2=\indgfel\\ f(\indgfel_i)=\indgfel'_i}}\bigoplus_{\substack{\alpha_1+\alpha_2=\alpha\\\indgffct(\alpha_i)=\indgfel_i}} i_{\indgfel_1 \indgfel_2 *}\phi_{\alpha_1 \alpha_2}: \bigoplus_{\substack{\indgfel_1+\indgfel_2=\indgfel\\ f(\indgfel_i)=\indgfel'_i}}\bigoplus_{\substack{\alpha_1+\alpha_2=\alpha\\\indgffct(\alpha_i)=\indgfel_i}} i_{\indgfel_1 \indgfel_2 *}\left(\restr{\left(\Fc_{\alpha_1}\boxtimes \Fc_{\alpha_2}\right)}{U_{\indgfel_1 \indgfel_2}}\right) \rightarrow \restr{\Fc_{\alpha}}{\overline{U}_{\indgfel'_1\indgfel'_2}}.
    \end{equation*}
    Reorder the double direct sum to a direct sum over $\alpha_i$ with $\indgffct(\alpha_i)=\indgfel_i$, and use the uniquely determined $\indgfel_i=\indgffct(\alpha_i)$. We also push forward along $f^{\overline{U}}_{\indgfel'_1\indgfel'_2}$ to obtain isomorphisms
    \begin{equation*}
        \bigoplus_{\substack{\alpha_1+\alpha_2=\alpha\\\indgffct'(\alpha_i)=\indgfel'_i}} \left(f^{U}_{\indgffct(\alpha_1)\indgffct(\alpha_2)}\right)_*\phi_{\alpha_1 \alpha_2}
    \end{equation*}
    between
    \begin{equation*}
        \bigoplus_{\substack{\alpha_1+\alpha_2=\alpha\\\indgffct'(\alpha_i)=\indgfel'_i}}\restr{\left( \left(f^M_{\indgffct(\alpha_1)}\times f^M_{\indgffct(\alpha_2)}\right)_*\left(\Fc_{\alpha_1}\boxtimes \Fc_{\alpha_2}\right)\right)}{U_{\indgfel'_1 \indgfel'_2}} \rightarrow \restr{\left(\left(f^M_{\indgffct(\alpha)}\right)_*\Fc_{\alpha}\right)}{U_{\indgfel'_1\indgfel'_2}},
    \end{equation*}
    where we use the push-pull formula along the cartesian squares above. Using the canonical isomorphism \eqref{fact:eq:boxdot_push_commute} on the left-hand side yields the desired factorization isomorphisms.
\end{proof}

We use this Lemma for pushing forward and pulling back along the morphisms in the diagram (\ref{fact:cd:setups}) as follows.

\begin{example}\label{fact:ex:hilb_to_sym}
    We first consider the morphism
    \begin{equation*}
        \eta_\alpha\coloneqq \xi_\alpha\circ \HC_\alpha: \hilb^\alpha(\Xc)\to\sym^{b(\alpha)}(X).
    \end{equation*}
    A quasi-projective orbifold (in our sense) $\Xc$ admits an open embedding into a projective finite type DM-stack $\overline{\Xc}$ by \cite[Sec. 5]{kresch_geom_dm}, which in particular has projective coarse moduli space $\overline{X}$. As coarse moduli are Zariski-local, we get a cartesian diagram
    \begin{equation*}
        \begin{tikzcd}
            \Xc \ar[r,open]\ar[d] & \overline{\Xc}\ar[d]\\
            X \ar[r,open] & \overline{X},
        \end{tikzcd}
    \end{equation*}
    which in turn yields the cartesian diagram
    \begin{equation*}
        \begin{tikzcd}
            \hilb(\Xc) \ar[r,open]\ar[d,"\eta"] & \hilb\left(\overline{\Xc}\right)\ar[d]\\
            \sym(X) \ar[r,open] & \sym\left(\overline{X}\right).
        \end{tikzcd}
    \end{equation*}
    By \cite[Theorem 1.5]{os08_quot_dm}, the Hilbert scheme of points $\hilb\left(\overline{\Xc}\right)$ is projective, making $\eta$ proper.

    Hence, $\eta_\alpha$ preserves coherence and $\hilb^\alpha(\Xc)$ and $\sym^{b(\alpha)}(X)$ come with factorization data from Examples \ref{fact:ex:hilb} and \ref{fact:ex:sym_coarse} respectively. 
    
    Taking $f=b$, we argue that the conditions of Lemma \ref{fact:lemma:comp_setup_cd} are satisified, so that we may push forward factorizable systems from $\hilb^\alpha(\Xc)$ to $\sym^{b(\alpha)}(X)$. The only thing to check are the conditions on the open subschemes in diagram \eqref{fact:eq:functoriality_f_diagram}. By Example \ref{fact:ex:hilb} the open subschemes for the Hilbert scheme are defined via pullback from the ones for moduli of $0$-dimensional sheaves in Example \ref{fact:ex:0_dim_sheaves}, so we may consider these.

    Recall the fixed isomorphism $\Nr_0(\Xc)\cong \Z\oplus\widetilde{\Nr}_0(\Xc)$. Write $\alpha=(b(\alpha),\tilde{\alpha})$ for classes on $\Xc$, and consider factorizable systems on the moduli $M_{(b,\tilde{\alpha})}(\Xc)$ as in Examples \ref{fact:ex:0_dim_sheaves}. For given $\alpha\in I$ and $b_1,b_2$, the loci $U_{(b_1,\tilde{\alpha}_1),(b_2,\tilde{\alpha}_2)}$ in $M_{(b_1,\tilde{\alpha}_1)}(\Xc)\times M_{(b_2,\tilde{\alpha}_2)}(\Xc)$ are exactly the subschemes of two $0$-dimensional sheaves with disjoint support of classes $(b_i,\tilde{\alpha}_i)$. The coarse moduli space $\pi:\Xc \to X$ is bijective on geometric points, so the supports of two $0$-dimensional sheaves on $\Xc$ is disjoint if and only if the supports of their pushforwards to $X$ are disjoint. Hence, $U_{(b_1,\tilde{\alpha}_1),(b_2,\tilde{\alpha}_2)}$ are exactly the fiber product in the left cartesian square in \eqref{fact:eq:functoriality_f_diagram}. Analogously, the fiber product in right cartesian square in \eqref{fact:eq:functoriality_f_diagram} is the filled in by the disjoint union of the $U_{(b_1,\tilde{\alpha}_1),(b_2,\tilde{\alpha}_2)}$, varying $\tilde{\alpha}_1$ and $\tilde{\alpha}_2$, as required.
\end{example}

\begin{example}\label{fact:ex:sym_to_Xb}
    With $f=\id$, the quotient morphism $X^{b(\alpha)}\to\sym^{b(\alpha)}$ satisfies the conditions of Lemma \ref{fact:lemma:comp_setup_cd}(b), so that we may pull back factorizable systems from $\sym^{b(\alpha)}$ to $S_{b(\alpha)}$-equivariant factorizable systems on $X^{b(\alpha)}$.
\end{example}

\subsubsection{Extension to More General Morphisms}

We note here, that the above theory of factorization extends to the following setting. Take a morphism
\begin{equation*}
    f:\Yc \to X,
\end{equation*}
from an orbifold $\Yc$ to a connected scheme $X$. Then $\Nr_0(X)\cong \Z$. Take
\begin{equation*}
    \Nr_\exc(\Yc)\coloneqq \left\{\alpha\in\Nr(\Yc)\ |\ f_*(\alpha)\in \Nr_0(X)\right\}.
\end{equation*}
We assume there is a splitting of $f_*:\Nr_\exc(\Yc)\to \Nr_0(X)$, such that
\begin{equation*}
    \Nr_\exc(\Yc) \cong \Z\oplus \widetilde{\Nr}_\exc(\Yc),
\end{equation*}
where the first projection is just $f_*$ and is denoted by $b(-)$. Then we can analogously define a factorization index set to be a subset $I\subseteq \Cr_\exc(\Yc)$ satisfying the same two conditions of Definition \ref{fact:def:fact_ind_set}. Similarly, we can define factorizable systems on $\sym^{b(\alpha)}(X)$ and on $\left[X^{b(\alpha)}/S_{b(\alpha)}\right]$ as in Examples \ref{fact:ex:sym_coarse} and \ref{fact:ex:product_coarse}.

As the proof of the main factorization Theorem \ref{fact:thm:fact} below works entirely on $\sym^{b(\alpha)}(X)$ and on $\left[X^{b(\alpha)}/S_{b(\alpha)}\right]$, we get an analogous result in this more general setting. For example, this applies to the following interesting situations:
\begin{itemize}
    \item For $f$ the coarse moduli space of an orbifold, this recovers the above notion of factorization for orbifolds.
    \item For $f$ the crepant resolution of an orbifold, this allows us to factorize generating series indexed by exceptional classes.
    \item For $f$ a fibration, this allows us to factorize generating series indexed by fiber classes.
\end{itemize}
We plan to explore these applications in future work.

\subsection{Plethystic Exponentials \& Generating Series}

\subsubsection{Plethystic Exponentials}\label{fact:sec:pexp}

First, we define two notions of a partition of $\alpha$, which will come up in the statement and proof of Theorem \ref{fact:thm:fact}. One should think of the difference between partitions of an integer $n$ and the set $[n]$.

\begin{definition}
    Given a factorization index set $I$, an $I$-\textbf{partition} of $\alpha\in I$ is a multiset $\lambda=\{\alpha_1,\dots,\alpha_k\}\subset I$, such that $\sum_i \alpha_i = \alpha$. We denote the set of $I$-partitions of $\alpha$ by $P_I(\alpha)$ or just $P(\alpha)$ if the factorization index set is clear. The length of a partition $\lambda=\{\alpha_1,\dots,\alpha_k\}$ is $k$. We denote the set of $I$-partitions of $\alpha$ of length $k$ by $P_{I,k}(\alpha)$.
\end{definition}

\begin{definition}
    Given $\alpha$, denote by $[\alpha]$ the tuple $([b(\alpha)],\tilde{\alpha})$. Given a factorization index set $I$, an $I$-\textbf{partition} of $[\alpha]$ is a collection
    \begin{equation*}
        \{(B_1,\tilde{\alpha}_1), \dots, (B_k,\tilde{\alpha}_k)\},
    \end{equation*}
    where $B_i\subseteq [b(\alpha)]$, such that $\{B_1,\dots,B_k\}\in P[b(\alpha)]$, and $\tilde{\alpha}_i\in \widetilde{\Nr}_0(\Xc)$ classes such that the collection $\{(\abs{B_1},\tilde{\alpha}_1),\dots,(\abs{B_k},\tilde{\alpha}_k)\}$ is an $I$-partition of $\alpha$ in the above sense.
    
    We denote the set of $I$-partitions of $[\alpha]$ by $P_I[\alpha]$ or just $P[\alpha]$ if the factorization index set is clear. The length of a partition $\Xi=\{(B_1,\tilde{\alpha}_1), \dots, (B_k,\tilde{\alpha}_k)\}$ is $k$.

    Given an element $\Xi=\{(B_1,\tilde{\alpha}_1), \dots, (B_k,\tilde{\alpha}_k)\}$ of $P_I[\alpha]$, we denote by $B(\Xi)$ the underlying partition $\{B_1, \dots, B_k\}$ of $[b(\alpha)]$.

    Note that an element $\Xi=\{(B_1,\tilde{\alpha}_1), \dots, (B_k,\tilde{\alpha}_k)\}$ of $P_I[\alpha]$ yields an element $\abs{\Xi}=\{(\abs{B_1},\tilde{\alpha}_1),\dots,(\abs{B_k},\tilde{\alpha}_k)\}$ of $P_I(\alpha)$ by definition. We write $P_I^\lambda[\alpha]=\{\Xi\in P_I[\alpha]\ |\ \abs{\Xi}=\lambda\}$.
\end{definition}

Now we can define a $K$-theoretic version of the plethystic exponential. Note that throughout this paper, we consider the K-theory class of a (possibly 2-periodic) complex of sheaves to be the class in $K(\Coh(-))$, rather than one in $K(\CoCh^{(\Z/2)}\Coh(-))$.

\begin{definition}\label{fact:def:G_partition_sheaf}
    Let $\Xi=\{(B_1,\tilde{\alpha}_1), \dots, (B_k,\tilde{\alpha}_k)\}$ be a partition of $[\alpha]$. For each $\beta\in I$, let $\Gc_{\beta}$ be an $S_{b(\beta)}$-equivariant complex of sheaves on $X^{b(\beta)}$. Assume $\Xi$ is ordered such that $\min(B_1) \leq \dots \leq \min(B_k)$. We define
    \begin{equation*}
        \Gc_{\Xi} := \Gc_{(\abs{B_1},\tilde{\alpha}_1)} \boxtimes \dots \boxtimes \Gc_{(\abs{B_k},\tilde{\alpha}_k)},
    \end{equation*}
    as a complex of sheaves on $X^{b(\alpha)}$, where each pullback is along the map $\pr_{b_i^1,\dots,b_i^{l_i}}:X^{b(\alpha)} \to X^{\abs{B_i}}$ specified by the subset $B_i=\{b_i^1,\dots,b_i^{l_i}\}$, where we order $B_i$ such that $b_i^1<\dots <b_i^{l_i}$.

    For a partition $\lambda\in P(\alpha)$ of $\alpha$, we define the complex of sheaves on $X^{b(\alpha)}$
    \begin{equation*}
        \Gc_\lambda := \bigoplus_{\Xi\in P^\lambda[\alpha]} \Gc_{\Xi},
    \end{equation*}
    which inherits a natural $S_{b(\alpha)}$-equivariant structure, as $S_{b(\alpha)}$ permutes the various projections $X^{b(\alpha)} \to X^{\abs{B_i}}$ by permuting the partitions $\{B_i\}_i$ of $[b(\alpha)]$ realizing the partition $\{\abs{B_i}\}_i$ of $b(\alpha)$. For compatibility with constructions in Section \ref{fact:sec:combinatorics}, the equivariant structure involves an additional sign. We will explain this sign and details about the equivariant structure in more generality in Definition \ref{fact:def:partition_sheaf_via_tree}.
\end{definition}

Analogously, we make the following definition on the symmetric product.

\begin{definition}\label{fact:def:lambda_eq_sheaf_sym}
    For each $\beta\in I$, let $\Gc_{\beta}$ be a complex of sheaves on $\sym^{b(\alpha)}(X)$. Let $\lambda=\{\alpha_1,\dots,\alpha_k\}$ in $P(\alpha)$ be a partition of $\alpha$. Take a partition $\Xi=\{(B_1,\tilde{\alpha}_1), \dots, (B_k,\tilde{\alpha}_k)\}$ in $P^\lambda[\alpha]$. Assume $\Xi$ is ordered such that $\min(B_1) \leq \dots \leq \min(B_k)$. Set
    \begin{equation*}
        \Gc_{\Xi} := \Gc_{(\abs{B_1},\tilde{\alpha}_1)} \boxtimes \dots \boxtimes \Gc_{(\abs{B_k},\tilde{\alpha}_k)},
    \end{equation*}
    as a complex of sheaves on $\prod_{i=1}^k\sym^{\abs{B_i}}(X)$. Take $S^B_\lambda$ to be the group $\prod_i S_{l_i}$, where $l_i$ is the number of occurrences of $\alpha_i$ in $\lambda$. It acts on $\prod_{i=1}^k\sym^{\abs{B_i}}(X)$ by permuting the blocks $\sym^{\abs{B_i}}(X)$ with the same $\alpha_i$. $\Gc_{\Xi}$ carries a natural $S^B_\lambda$-equivariant structure. We have the composition of morphisms
    \begin{equation*}
        \vartheta: \left[\left(\prod_{i=1}^k\sym^{\abs{B_i}}(X)\right)/S^B_\lambda\right] \to \prod_{\alpha_i\in \lambda} \sym^{l_i}\left(\sym^{b(\alpha_i)}(X)\right) \to \sym^{b(\alpha)}(X),
    \end{equation*}
    where the product in the second term goes over every $\alpha_i$ in $\lambda$ viewed as a set instead of a multiset, the first morphism is the coarse space, and the second morphism adds up the different points in $X$. Set
    \begin{equation*}
        \Gc_\lambda \coloneqq \vartheta_*\left(\Gc_\Xi\right).
    \end{equation*}
    We will see in Lemma \ref{fact:lemma:lambda_pieces_push_pull_identities}(b) below that the construction of $\Gc_\lambda$ is independent of the choice of $\Xi$.
\end{definition}

We now show some important compatibilities and properties of the previous two constructions.

\begin{lemma}\label{fact:lemma:lambda_pieces_push_pull_identities}
    For each $\beta\in I$, let $\Gc_{\beta}$ be a complex of sheaves on $\sym^{b(\alpha)}(X)$, and let $\Ec_{\beta}$ be an $S_{b(\beta)}$-equivariant complex of sheaves on $X^{b(\beta)}$. Let $\lambda=\{\alpha_1,\dots,\alpha_k\}$ in $P(\alpha)$ be a partition of $\alpha$. By abuse of notation, denote the quotient morphism $[X^n/S_n]\to \sym^n(X)$ by $p$ for every $n$. Then
    \begin{enumerate}[(a)]
        \item $p_*\left(\Ec_\lambda\right)\cong \left(p_*\Ec\right)_\lambda$,
        \item $\Gc_\lambda\cong p_*\left((p^*\Gc)_\lambda\right)$,
        \item $p_*\left(\Ec_\lambda\right)\cong p_*\left((p^*p_*\Ec)_\lambda\right)$, and
        \item the operation $\left\{\Gc_\beta\right\}\mapsto \Gc_\lambda$ descends to K-theory.
    \end{enumerate}
\end{lemma}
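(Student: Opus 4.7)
My plan is to address the four parts in the order (d), (a), (b), (c), since (b) and (c) reduce quickly to (a).

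For (d), both operations used to construct $\Gc_\lambda$ descend to K-theory. The external box product in $\Gc_\Xi$ is bi-exact because the relevant projections $\prod_i \sym^{\abs{B_i}}(X) \to \sym^{\abs{B_j}}(X)$ are flat, and $\vartheta$ is a finite morphism: its first component is a finite coarse moduli map for an $S^B_\lambda$-quotient, and the unordering map $\prod_j \sym^{l_j}\bigl(\sym^{b(\alpha_j)}(X)\bigr) \to \sym^{b(\alpha)}(X)$ is finite because its fibers parametrize the finitely many ways to regroup a given $0$-cycle. Both operations therefore preserve short exact sequences of coherent complexes, so $\Gc_\lambda$ depends on the $\Gc_\beta$ only via their K-theory classes.

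For (a), the plan is to recognize both sides as the pushforward of a single box product $\Ec_{\Xi_0}$ along two canonically isomorphic morphisms. Fix $\Xi_0 \in P^\lambda[\alpha]$ and let $H \leq S_{b(\alpha)}$ be its stabilizer; explicitly $H \cong \prod_j S_{b(\alpha_j)} \wr S_{l_j}$, with the product running over the distinct classes $\alpha_j$ in $\lambda$ of multiplicity $l_j$. The $S_{b(\alpha)}$-action identifies $S_{b(\alpha)}/H$ with $P^\lambda[\alpha]$, so $\Ec_\lambda = \bigoplus_\Xi \Ec_\Xi$ is naturally the induction $\mathrm{Ind}_H^{S_{b(\alpha)}}\Ec_{\Xi_0}$; equivalently, writing $\iota:[X^{b(\alpha)}/H]\to [X^{b(\alpha)}/S_{b(\alpha)}]$, we have $\Ec_\lambda \cong \iota_*\Ec_{\Xi_0}$, so $p_*(\Ec_\lambda) \cong (p\circ\iota)_*\Ec_{\Xi_0}$. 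Separately, Künneth and flat base change for the cartesian square of product coarse quotients give $(p_*\Ec)_\Xi \cong (\prod_i p_i)_*\Ec_{\Xi_0}$ as an $S^B_\lambda$-equivariant sheaf on $\prod_i \sym^{\abs{B_i}}(X)$, and then $\vartheta_*$ expresses $(p_*\Ec)_\lambda$ as the pushforward of $\Ec_{\Xi_0}$ along
\[
[X^{b(\alpha)}/H] \to \left[\prod_i \sym^{\abs{B_i}}(X)/S^B_\lambda\right] \xrightarrow{\vartheta} \sym^{b(\alpha)}(X).
\]
I would finish by verifying that this composite agrees with $p\circ\iota$; both are $H$-invariant morphisms $X^{b(\alpha)}\to \sym^{b(\alpha)}(X)$ factoring through the $S_{b(\alpha)}$-quotient, hence equal by universality.

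Parts (b) and (c) then follow. For (b), apply (a) to $\Ec = p^*\Gc$ to get $p_*((p^*\Gc)_\lambda) \cong (p_*p^*\Gc)_\lambda$; since $p$ is a good moduli space morphism, the projection formula combined with $p_*\Oc_{[X^n/S_n]} \cong \Oc_{\sym^n(X)}$ gives $p_*p^*\Gc_\beta \cong \Gc_\beta$ for every $\beta$, so the right hand side is $\Gc_\lambda$. For (c), chain (a) and (b): $p_*(\Ec_\lambda) \cong (p_*\Ec)_\lambda \cong p_*((p^*p_*\Ec)_\lambda)$. The main obstacle is the bookkeeping in part (a): the $S_{b(\alpha)}$-equivariant structure on $\Ec_\lambda$ in Definition \ref{fact:def:G_partition_sheaf} involves the additional sign promised in reference to Definition \ref{fact:def:partition_sheaf_via_tree}, and I will need to check that this sign is exactly the one produced by the natural induction structure on $\mathrm{Ind}_H^{S_{b(\alpha)}}\Ec_{\Xi_0}$, so that the identification is an honest isomorphism of equivariant complexes rather than merely up to sign.
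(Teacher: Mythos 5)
Your proposal follows essentially the same approach as the paper: parts (b) and (c) reduce to (a) via the coarse-moduli adjunction $p_*p^*\cong\id$, part (d) follows from flatness of the projections and exactness of $\vartheta_*$, and part (a) is proved by identifying $\Ec_\lambda$ with the pushforward of $\Ec_{\Xi_0}$ along $[X^{b(\alpha)}/H]\to[X^{b(\alpha)}/S_{b(\alpha)}]$ (the paper's $q_\Xi$ for the stabilizer $S_\Xi = H$), then factoring $p\circ q_\Xi$ through $\vartheta$ via the same commutative square. The sign issue you flag at the end is exactly the one the paper defers to Remark~\ref{fact:rmk:eq_str_id}, so your concern is well placed but no obstacle.
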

\begin{proof}
    Note first that the coarse space $p$ satisfies that the adjunction $p_*p^*\cong \id$ is a natural isomorphism. Assuming (a), we show (b), (c), and (d). For (b), we see
    \begin{equation*}
        p_*\left((p^*\Gc)_\lambda\right) \cong \left(p_*p^*\Gc\right)_\lambda \cong \Gc_\lambda,
    \end{equation*}
    where we first use (a) and then the adjunction isomorphism. For (c), we see
    \begin{equation*}
        p_*\left((p^*p_*\Ec)_\lambda\right) \cong \left(p_*p^*p_*\Ec\right)_\lambda \cong \left(p_*\Ec\right)_\lambda \cong p_*\left(\Ec_\lambda\right),
    \end{equation*}
    where we first use (a), then the adjunction isomorphism, and then (a) again. For (d), note that the construction of $\Gc_\lambda$ in Definition \ref{fact:def:lambda_eq_sheaf_sym} involves only pullbacks along flat morphisms and a pushforward along $\vartheta$, which is the composition of a finite morphism and a proper coarse moduli space, making $\vartheta_*$ exact. Hence, the operation $\left\{\Gc_\alpha\right\}\mapsto \Gc_\lambda$ descends to K-theory.

    It remains to prove (a). Take $\Xi=\{(B_1,\tilde{\alpha}_1), \dots, (B_k,\tilde{\alpha}_k)\}$ in $P^\lambda[\alpha]$ and assume $\Xi$ is ordered such that $\min(B_1) \leq \dots \leq \min(B_k)$. Consider $\Ec_\Xi$ on $X^{b(\alpha)}$. It comes with a natural $\prod_i S_{b(\alpha_i)}$-equivariant structure, where $\prod_i S_{b(\alpha_i)}$ acts on $X^{b(\alpha)}$ by permuting the indices within the blocks indexed by the $B_i$. As in Definition \ref{fact:def:lambda_eq_sheaf_sym} $S^B_\lambda$ acts on $X^{b(\alpha)}$ by permuting entire blocks with indices $B_i$ and $B_j$ with $\alpha_i=\alpha_j$. Define the subgroup
    \begin{equation*}
        S_\Xi  
    \end{equation*}
    to be the subgroup of $S_{b(\alpha)}$ fixing the partition $\Xi$. $\Ec_\Xi$ comes with a natural $S_\Xi$-equivariant structure. We can identify this group as
    \begin{equation*}
        S_\Xi = \left(\textstyle\prod_i S_{b(\alpha_i)}\right)\rtimes S^B_\lambda.
    \end{equation*}
    By Proposition \ref{qst:prop:morphism}, the inclusion of $S_\Xi$ in $S_{b(\alpha)}$ gives us a morphism $q_\Xi:\left[X^{b(\alpha)}/S_\Xi\right]\to \left[X^{b(\alpha)}/S_{b(\alpha)}\right]$. We have a bijection $S_{b(\alpha)}/S_\Xi\xrightarrow{\sim}P^\lambda(\alpha)$ given by $[\sigma]\mapsto \sigma(\Xi)$, which together with Proposition \ref{qst:prop:induced_eq_str} gives us an isomorphism
    \begin{equation}\label{fact:eq:quotient_pushforward_equals_G_lambda}
        \left(q_\Xi\right)_* \Ec_\Xi \cong \Ec_\lambda.
    \end{equation}
    Details about the equivariant structures are explained in Remark \ref{fact:rmk:eq_str_id} after the equivariant structure of $\bar{\Gc}_\lambda$ is introduced in detail in Definition \ref{fact:def:partition_sheaf_via_tree}. Consider the commutative diagram
    \begin{equation*}
        \begin{tikzcd}
            & \left[\left(\prod_{i=1}^k\sym^{\abs{B_i}}(X)\right)/S^B_\lambda\right] \ar[d]\ar[dd, bend left=78, "\vartheta"] \\
            \left[X^{b(\alpha)}/S_\Xi\right] \ar[d,"q_\Xi"]\ar[ur, "\textstyle\prod_i p"]\ar[r] & \prod_{\alpha_i\in \lambda} \sym^{l_i}\left(\sym^{b(\alpha_i)}(X)\right) \ar[d] \\
            \left[X^{b(\alpha)}/S_{b(\alpha)}\right] \ar[r, "p"] & \sym^{b(\alpha)}(X).
        \end{tikzcd}
    \end{equation*}
    This gives us
    \begin{equation*}
       p_*\left(\Ec_\lambda\right) \cong p_*\left(q_\Xi\right)_*\left(\Ec_\Xi\right)\cong \vartheta_*\left(\textstyle\prod_i p\right)_*\left(\Ec_\Xi\right).
    \end{equation*}
    But by construction $\left(\textstyle\prod_i p\right)_*\left(\Ec_\Xi\right)$ is the same as $\left(p_*\Ec\right)_\Xi$, finishing the proof of (a).
\end{proof}

Using the pieces in Definitions \ref{fact:def:G_partition_sheaf} and \ref{fact:def:lambda_eq_sheaf_sym}, together with Lemma \ref{fact:lemma:lambda_pieces_push_pull_identities}(d), we can now define plethystic exponentials in K-theory.

\begin{definition}\label{fact:def:pexp}
    For a system $\Gc_{\alpha}$ of $S_{b(\alpha)}$-equivariant complexes of sheaves on $X^{b(\alpha)}$, we define the plethystic exponential to be
    \begin{equation*}
        \pexpI{\sum_{\alpha\in I}q^{\alpha} [\Gc_{\alpha}]} := 1+ \sum_{\alpha\in I} q^{\alpha} \sum_{\lambda\in P(\alpha)} [\Gc_\lambda],
    \end{equation*}
    where $1$ denotes the unit in the K-theory of $X^0=\pt$.

    For a system $\Gc_{\alpha}$ of complexes of sheaves on $\sym^{b(\alpha)}(X)$, we define the plethystic exponential to be
    \begin{equation*}
        \pexpI{\sum_{\alpha\in I}q^{\alpha} [\Gc_{\alpha}]} := p_\ast\pexpI{\sum_{\alpha\in I}q^{\alpha} [p^*\Gc_{\alpha}]},
    \end{equation*}
    where $p:[X^{b(\alpha)}/S_{b(\alpha)}] \to \sym^{b(\alpha)}(X)$ denotes the quotient morphism. Here we use Lemma \ref{fact:lemma:lambda_pieces_push_pull_identities}(d) for well-definedness in K-theory.

    For a system $\Gc_{\alpha}$ of complexes of sheaves on $X$, we define the plethystic exponential to be
    \begin{equation*}
        \pexpI{\sum_{\alpha\in I}q^{\alpha} [\Gc_{\alpha}]} := \pexpI{\sum_{\alpha\in I}q^{\alpha} [i_\ast\Gc_{\alpha}]},
    \end{equation*}
    where $i:X \to \sym^{b(\alpha)}(X)$ denotes the diagonal embedding.
\end{definition}

The plethystic exponential defined above can in general depend on $I$, which is reflected in the notation $\mathrm{PExp}^I$. We will show its relation to the definition in \cite{ok15}. If the factorization index set $I$ is closed under addition, as is the case for schemes, the definitions agree.
\begin{lemma}\label{fact:lemma:comp_formula}
    For a system $\Gc_{\alpha}$ of complexes of sheaves on $\sym^{b(\alpha)}(X)$, we have
    \begin{equation*}
        \chi\left(\pexpI{\sum_{\alpha\in I}q^\alpha [\Gc_\alpha]}\right)= \restr{\Ss^\bullet\left(\sum_{\alpha\in I}q^\alpha \chi\left([\Gc_\alpha]\right)\right)}{I},
    \end{equation*}
    where $\Ss^\bullet$ denotes the morphism in K-theory induced by the symmetric product of vector spaces, and $\restr{-}{I}$ restricts the right-hand side to all terms $(\cdots)q^\alpha$ with $\alpha\in I$. In the $\Ts=(\C^*)^3$-equivariant case, this identification gives us the explicit computational formula
    \begin{equation*}
        \chi_\Ts\left(\pexpI{\sum_{\alpha\in I}q^\alpha [\Gc_\alpha]}\right)= \restr{\exp\left(\sum_{n>0}\frac{1}{n}\sum_{\alpha\in I}q^{n\alpha}\chi_\Ts([\Gc_\alpha],t_1^n,t_2^n,t_3^n)\right)}{I},
    \end{equation*}
    where $t_1,t_2,t_3$ is a basis for the characters of $\Ts$. For $I$ closed under addition, for example in the scheme case of Lemma \ref{fact:lemma:hilb_fact_index_set}, the restriction $\restr{-}{I}$ is not necessary. In particular, in this case $\chi\left(\pexpI{-}\right)$ is multiplicative under addition of the argument. If $I$ is closed under addition, by abuse of notation, we will sometimes write $\mathrm{PExp}$ for the function $\Ss^\bullet$.
\end{lemma}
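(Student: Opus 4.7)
The plan is to unwind the defining formula for $\mathrm{PExp}^I$ and recognize each term as a symmetric product.

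First, Definition \ref{fact:def:pexp} reduces the claim to the symmetric-product case: by Lemma \ref{fact:lemma:lambda_pieces_push_pull_identities}(b), $p_*(p^*\Gc)_\lambda \cong \Gc_\lambda$, so
\begin{equation*}
\pexpI{\sum_{\alpha\in I}q^{\alpha}[\Gc_{\alpha}]} = 1 + \sum_{\alpha\in I}q^{\alpha}\sum_{\lambda\in P(\alpha)}[\Gc_\lambda]
\end{equation*}
as a K-theory class on $\sym^{b(\alpha)}(X)$. Applying $\chi$ term-by-term, it then suffices to identify $\chi(\Gc_\lambda)$ with the coefficient of $q^\alpha$ in $\Ss^\bullet\big(\sum_{\beta\in I}q^\beta\chi(\Gc_\beta)\big)$.

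Next, for $\lambda=\{\alpha_1,\dots,\alpha_k\}\in P(\alpha)$, choose a partition $\Xi\in P^\lambda[\alpha]$. By Definition \ref{fact:def:lambda_eq_sheaf_sym}, $\Gc_\lambda=\vartheta_\ast(\Gc_\Xi)$, where $\vartheta$ factors through the quotient stack $[\prod_i\sym^{|B_i|}(X)/S^B_\lambda]$. Invariance of $\chi$ under pushforward and the Künneth formula yield
\begin{equation*}
\chi(\Gc_\lambda) = \chi\!\left(\Big[\textstyle\prod_i\sym^{|B_i|}(X)/S^B_\lambda\Big],\,\Gc_\Xi\right) = \left(\prod_i \chi(\Gc_{\alpha_i})\right)^{S^B_\lambda},
\end{equation*}
where $S^B_\lambda$ permutes factors corresponding to equal $\alpha_i$'s. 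Grouping $\lambda$ by multiplicities $m_\beta(\lambda)$, so that $S^B_\lambda = \prod_\beta S_{m_\beta(\lambda)}$, the invariant part equals $\prod_\beta \sym^{m_\beta(\lambda)}\chi(\Gc_\beta)$, where the $\sym^{m}$ is to be interpreted as the symmetric power in the super-K-theory sense; here the sign built into the equivariant structure on $\Gc_\lambda$ (mentioned in Definition \ref{fact:def:G_partition_sheaf} and defined in detail in Section \ref{fact:sec:combinatorics}) is precisely what converts Koszul signs arising from the permutation action on the $\Z/2$-graded tensor product into the super-symmetric projection.

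Summing over $\lambda\in P(\alpha)$ and then over $\alpha\in I$ reproduces exactly the expansion of $\Ss^\bullet\big(\sum_{\beta\in I}q^\beta\chi(\Gc_\beta)\big)$, but only the monomials $q^\alpha$ with $\alpha\in I$ appear, because $P(\alpha)$ is empty for $\alpha\notin I$; this accounts for the restriction $\restr{-}{I}$. If $I$ is closed under addition then every multi-index produced by $\Ss^\bullet$ automatically lies in $I$ and the restriction is vacuous, giving multiplicativity of $\chi\circ\mathrm{PExp}^I$. Finally, the explicit toric formula follows from the standard Adams-operation identity $\Ss^\bullet(V)=\exp\!\big(\sum_{n>0}\psi^n(V)/n\big)$ in (completed) equivariant K-theory, together with the fact that $\psi^n$ acts on a torus character by raising $t_i$ to $t_i^n$.

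The main obstacle will be the careful bookkeeping of the sign in the $S_{b(\alpha)}$-equivariant structure on $\Gc_\lambda$ and the compatibility of box products with the permutation action, i.e.\ showing that the invariants of $\prod_i\chi(\Gc_{\alpha_i})$ under $S^B_\lambda$ genuinely compute the super-symmetric power rather than a mixture of symmetric and exterior summands; once this is done, the rest of the computation is a direct expansion.
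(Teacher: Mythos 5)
Your proposal is correct and follows essentially the same route as the paper: reduce via Lemma \ref{fact:lemma:lambda_pieces_push_pull_identities}(b) to the quotient stack $\left[\prod_i\sym^{\abs{B_i}}(X)/S^B_\lambda\right]$, identify the Euler characteristic there with the $S^B_\lambda$-invariant part of the tensor product of factors, match term-by-term with the expansion of $\Ss^\bullet$, and derive the explicit $\Ts$-equivariant formula from the Adams/geometric-series identity. The paper streamlines the final step by reducing to $\chi(\Gc_\alpha)$ being a single character via multiplicativity rather than invoking $\psi^n$ abstractly, but these are the same computation, and your flagging of the Koszul sign in the super-symmetric projection correctly identifies where the sign built into Definition \ref{fact:def:partition_sheaf_via_tree} is used (the paper hides this by working directly with K-theory classes).
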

\begin{proof}
    Take $\alpha\in I$, $\lambda=\{\alpha_1,\dots,\alpha_k\}$ in $P(\alpha)$, and $\Xi\in P^\lambda[\alpha]$. By invariance of Euler characteristics under pushforward and Lemma \ref{fact:lemma:lambda_pieces_push_pull_identities} (b), we obtain the identity
    \begin{equation}\label{fact:eq:sym_mod_SB}
        \chi\left(\pexpI{\sum_{\alpha\in I}q^\alpha [\Gc_\alpha]}\right)= 1+ \sum_{\alpha\in I} q^{\alpha} \sum_{\lambda\in P(\alpha)} \chi\left(\left[\left(\textstyle\prod_i \sym^{b(\alpha_i)}(X)\right)/S^B_\lambda\right],\Gc_\Xi\right),
    \end{equation}
    where we've chosen a representative $\Xi$ for every $\lambda$.

    The coefficient of $q^\alpha$ in $\Ss^\bullet\left(\sum_{\alpha\in I}q^\alpha \chi\left([\Gc_\alpha]\right)\right)$ is
    \begin{equation*}
        \sum_{\lambda\in P(\alpha)}\left(\textstyle\prod_i \chi\left(\Gc_{\alpha_i}\right)\right)^{S_\lambda^B},
    \end{equation*}
    where we have again chosen a representative $\Xi\in P^\lambda(\alpha)$ which determines the order of the product. As above, $S_\lambda^B$ acts by permuting the various multiples of the same factors and we take the $S_\lambda^B$-invariant part. Evaluating the Euler characteristics in the right-hand side of \eqref{fact:eq:sym_mod_SB} is the same as taking the $S_\lambda^B$-invariant part of the pushforward of the class of $\Gc_\Xi$ to $BS_\lambda^B$. Summing over $\lambda\in P(\alpha)$, this yields exactly the terms of $\Ss^\bullet\left(\sum_{\alpha\in I}q^\alpha \chi\left([\Gc_\alpha]\right)\right)$ above, proving the first identification. 

    We can now use that both sides behave the same under addition of arguments. So, we may assume all $\chi\left([\Gc_\alpha]\right)$ are simply characters $\dv{t}^{\vec \mu_\alpha}$. Then we have by definition
    \begin{equation*}
        \chi\left(\pexpI{\sum_{\alpha\in I}q^\alpha [\Gc_\alpha]}\right) = 1+ \sum_{\alpha\in I} q^\alpha \sum_{\lambda\in P(\alpha)} \dv{t}^{\sum_{\alpha_i\in \lambda}\vec \mu_{\alpha_i}}.
    \end{equation*}
    We can rewrite this sum as
    \begin{equation*}
        \restr{\prod_{\alpha\in I} \left(\sum_{k\geq 0} \left(q^\alpha \dv{t}^{\vec \mu_{\alpha}}\right)^k\right)}{I} = \restr{\prod_{\alpha\in I} \left(1-q^\alpha \dv{t}^{\vec \mu_\alpha}\right)^{-1}}{I},
    \end{equation*}
    where we identified the geometric series. The formal power series of the logarithm now allows us to rewrite this in the desired form.
    \begin{equation*}
        \chi\left(\pexpI{\sum_{\alpha\in I}q^\alpha [\Gc_\alpha]}\right) = \restr{\exp\left(\sum_{n>0}\frac{1}{n}\sum_{\alpha\in I}q^{n\alpha} \dv{t}^{n \vec \mu_\alpha}\right)}{I}.
    \end{equation*}
\end{proof}

\subsubsection{Factorization Theorem}

Now we can state the main result of this section.
\begin{theorem}\label{fact:thm:fact}
    Let $I$ be a factorization index semigroup. For a factorizable system $\Fc_{\alpha}$ on $\sym^{b(\alpha)}(X)$, there exist classes $[\Gc_{\alpha}]$ on $X$ such that
    \begin{equation*}
        1 + \sum_{\alpha\in I} q^{\alpha} [\Fc_{\alpha}] = \pexpI{\sum_{\alpha\in I}q^{\alpha} [\Gc_{\alpha}]}
    \end{equation*} 
    in $\bigoplus_{\alpha\in I}q^\alpha K(\sym^{b(\alpha)}(X))$.

    Let $U$ be the non-stacky locus, and let $S$ be its complement in $X$. If the system is given such that for any $\alpha\in I\setminus \Delta_I$, $\restr{\Fc_{\alpha}}{\sym^{b(\alpha)}(U)}$ vanishes, then the classes $[\Gc_{\alpha}]$ can be chosen so that they are supported on $S\subset X$ for $\alpha\in I\setminus \Delta_I$.
\end{theorem}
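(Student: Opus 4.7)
The plan is to work entirely on $X^{b(\alpha)}$ with $S_{b(\alpha)}$-equivariant sheaves, where the combinatorics of partitions $\Xi\in P_I[\alpha]$ is most transparent, and to construct the classes $[\Gc_\alpha]$ by induction on $b(\alpha)$. First I would pull the system $\Fc_\alpha$ back along the quotient morphism $p:[X^{b(\alpha)}/S_{b(\alpha)}]\to\sym^{b(\alpha)}(X)$. By Example \ref{fact:ex:sym_to_Xb} and Lemma \ref{fact:lemma:comp_setup_cd}(b) the collection $\{p^*\Fc_\alpha\}$ is $S_{b(\alpha)}$-equivariantly factorizable, and by Definition \ref{fact:def:pexp} together with Lemma \ref{fact:lemma:lambda_pieces_push_pull_identities}(a)--(c) it suffices to produce $S_{b(\alpha)}$-equivariant classes $\Gc_\alpha$ on $X$, supported on the small diagonal when viewed on $X^{b(\alpha)}$, such that $[p^*\Fc_\alpha]=\sum_{\lambda\in P_I(\alpha)}[\Gc_\lambda]$ holds in $S_{b(\alpha)}$-equivariant K-theory of $X^{b(\alpha)}$.

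\textbf{Inductive construction.} For $\alpha$ that is $I$-indecomposable, the second axiom of a factorization index set forces $b(\alpha)=1$, so $X^{b(\alpha)}=X$ and I would simply set $[\Gc_\alpha]\coloneqq [p^*\Fc_\alpha]$. For $b(\alpha)>1$, assume inductively that $[\Gc_\beta]$ has been constructed for all $\beta\in I$ with $b(\beta)<b(\alpha)$, and that the identity $[p^*\Fc_\beta]=\sum_{\mu\in P_I(\beta)}[\Gc_\mu]$ holds. Since $I$ is a semigroup and $\alpha$ is not $I$-indecomposable, every partition $\lambda\in P_I(\alpha)$ of length at least $2$ consists of parts of strictly smaller $b$, so each $[\Gc_\lambda]$ with $|\lambda|>1$ is already available. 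Define
\begin{equation*}
    [\Gc_\alpha] \coloneqq [p^*\Fc_\alpha] - \sum_{\substack{\lambda\in P_I(\alpha)\\|\lambda|>1}}[\Gc_\lambda]
\end{equation*}
a priori as an $S_{b(\alpha)}$-equivariant class on $X^{b(\alpha)}$, and prove it is supported on the small diagonal $\Delta\colon X\hookrightarrow X^{b(\alpha)}$, so that it canonically arises from a class on $X$.

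\textbf{Diagonal support via factorization.} To see the diagonal-support claim, I would stratify $X^{b(\alpha)}$ by the diagonals $\Delta_B$ indexed by set-partitions $B$ of $[b(\alpha)]$ and check that $[\Gc_\alpha]$ vanishes on the complement of the deepest diagonal. The key computation is that on the open stratum $\Delta_{B}^\circ$ where only coordinates in the same block of $B$ coincide and blocks are pairwise disjoint, iterated application of the factorization isomorphism \eqref{fact:eq:sum_isom_fact} expresses $[p^*\Fc_\alpha]|_{\Delta_B^\circ}$ as the sum over all $\Xi\in P_I[\alpha]$ with $B(\Xi)$ refining $B$ of the restricted box-products $[\Gc_\Xi]|_{\Delta_B^\circ}$. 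Comparing with the inductive formula, exactly those $\Xi$ with $B(\Xi)=B$ contribute to $[\Gc_\alpha]|_{\Delta_B^\circ}$, and for $|B|>1$ these are accounted for by the $|\lambda|>1$ terms, forcing $[\Gc_\alpha]|_{\Delta_B^\circ}=0$. The main obstacle is precisely this combinatorial bookkeeping: showing that the two ways of organizing partitions (by the set partition $B$ of $[b(\alpha)]$ and by the K-theoretic partition $\lambda$ of $\alpha$) match, and that associativity and commutativity of the $\phi_{\alpha_1\alpha_2}$ (diagrams \eqref{fact:eq:gen_comm_diagram} and the associativity axiom) make the restriction of $[\Gc_\alpha]$ to each stratum well-defined independently of the refinement tree used. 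This is where the semigroup hypothesis on $I$ is used essentially: iteratively splitting within $I$ must stay in $I$.

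\textbf{Support refinement.} For the last statement, assume $\Fc_\alpha|_{\sym^{b(\alpha)}(U)}=0$ for all $\alpha\in I\setminus\Delta_I$, and proceed by induction. Since $\Delta_I=\Z_{>0}[\Oc_p]$ is itself closed under addition, any $\alpha\in I\setminus\Delta_I$ with $\alpha=\alpha_1+\cdots+\alpha_k\in P_I(\alpha)$ nontrivially decomposed must have at least one part $\alpha_i\notin\Delta_I$; by the inductive support hypothesis, the corresponding factor of $\Gc_\lambda$ is supported on $S\subset X$, so every term with $|\lambda|>1$ in the defining expression for $\Gc_\alpha$ is supported on the closed subscheme of $X^{b(\alpha)}$ where at least one coordinate lies in $S$. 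Combined with the vanishing of $[p^*\Fc_\alpha]$ on $U^{b(\alpha)}\subset X^{b(\alpha)}$, the diagonal restriction of $[\Gc_\alpha]$ to $X$ must vanish on $U$, hence is supported on $S$ as required.
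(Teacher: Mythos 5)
Your inductive definition $[\Gc_\alpha] = [p^*\Fc_\alpha] - \sum_{|\lambda|>1}[\Gc_\lambda]$ reproduces exactly the key structural identity the paper establishes in Lemma \ref{fact:lemma:complex_ktheory_class}, and your outline of the diagonal-cancellation is the right heuristic. But there is a genuine gap in the main technical step: you argue that $[\Gc_\alpha]$ is supported on the small diagonal by verifying vanishing of its restriction stratum-by-stratum on the $\Delta_B^\circ$ (or, essentially equivalently, on the open sets $U_B$ for nontrivial $B$). This does not suffice, because $K_0$ does not satisfy Zariski descent: for a cover $V=\bigcup U_B$ of the complement of the small diagonal, the Mayer--Vietoris sequence shows $K_0(V)\to\prod K_0(U_B)$ is in general not injective, so vanishing of each $[\Gc_\alpha]|_{U_B}$ gives no control over $[\Gc_\alpha]|_V$. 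And it is precisely $[\Gc_\alpha]|_V=0$ in $K_0(V)$ that one needs to conclude that $[\Gc_\alpha]$ lies in the image of $K_0(X)$.

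The paper's proof confronts this head on by refusing to work purely with K-theory classes. It builds an actual complex $\Gc_\alpha$ whose total K-theory class is the one your induction produces, then \emph{equips it with a differential} assembled from the factorization data, and shows the complex is acyclic away from the small diagonal (Lemma \ref{fact:lemma:complex_support_diagonal}). Acyclicity is local and can be checked on an open cover, so the stratum-by-stratum argument becomes valid at the level of complexes; diagonal support then follows by d\'evissage. Constructing the differential is itself nontrivial: the factorization morphisms $\phi_{\alpha_1\alpha_2}$ are only defined on open loci, so the paper first passes through the machinery of $\alpha$-level systems and strictification (Definitions \ref{fact:def:level_system}--\ref{fact:def:D_I_strictness_proof} and Lemma \ref{fact:lemma:strict_generation_ktheory}) to extend these morphisms globally while keeping track of K-theory classes, and then proves acyclicity by a carefully devised filtration $\Upsilon$ on index trees together with Lemma \ref{fact:lemma:bijection_grading_tree}. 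This combinatorial and homological core is exactly what your proposal elides; without some replacement for it, the diagonal-support claim is not established. The same issue infects the support-refinement step at the end: knowing $[p^*\Fc_\alpha]$ vanishes on $U^{b(\alpha)}$ as a $K_0$-class does not by itself let you choose a representative $[\Gc_\alpha]$ on the diagonal $X$ supported on $S$; the paper again uses the complex-level vanishing (Lemma \ref{fact:lemma:S_support_lemma_G_complex}) before invoking d\'evissage.
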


The second case of the theorem is fulfilled for pushforwards of virtual structure sheaves from the Hilbert scheme, which is our main application.

\begin{remark}
    Note we require $I$ to be closed under addition. The dependence on this condition is subtle, and we plan to study the necessary corrections to the above Theorem to make it work with $I$ that are not closed under addition.
\end{remark}

\begin{remark}
    The last part of the above theorem would work for any subset that is closed under addition instead of $\Delta_I$ and any subscheme of $X$.
\end{remark}

By applying Euler characteristics on both sides, we obtain the following computational formula, which can be combined with Lemma \ref{fact:lemma:comp_formula} to compute the left-hand side in simpler terms.

\begin{corollary}\label{fact:cor:comp_pexp_fact}
    Let $I$ be a factorization index semigroup. For a factorizable system $\Fc_{\alpha}$ on $\sym^{b(\alpha)}(X)$, there exist classes $[\Gc_{\alpha}]$ on $X$ such that
    \begin{equation*}
        1 + \sum_{\alpha\in I} q^{\alpha} \chi\left(\sym^{b(\alpha)}(X),\Fc_{\alpha}\right) = \pexpI{\sum_{\alpha\in I}q^{\alpha} \chi(X,\Gc_{\alpha})},
    \end{equation*}
    where the right-hand side can be computed using Lemma \ref{fact:lemma:comp_formula}.

    As in the theorem above, if the system is given such that for any $\alpha\in I\setminus \Delta_I$, $\restr{\Fc_{\alpha}}{\sym^{b(\alpha)}(U)}$ vanishes, then the classes $[\Gc_{\alpha}]$ can be chosen so that they are supported on $S\subset X$ for $\alpha\in I\setminus \Delta_I$.
\end{corollary}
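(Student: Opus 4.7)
The proof proposal is short because this corollary is essentially $\chi$ applied to both sides of Theorem \ref{fact:thm:fact}, with some careful bookkeeping about the three-stage definition of $\mathrm{PExp}^I$.

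First I would invoke Theorem \ref{fact:thm:fact} directly to obtain classes $[\Gc_\alpha]\in K(X)$ (supported on $S$ for $\alpha\in I\setminus\Delta_I$ whenever the vanishing hypothesis on $\restr{\Fc_\alpha}{\sym^{b(\alpha)}(U)}$ is satisfied) such that
\begin{equation*}
    1+\sum_{\alpha\in I}q^\alpha [\Fc_\alpha] = \pexpI{\sum_{\alpha\in I}q^\alpha [\Gc_\alpha]}
\end{equation*}
holds coefficient-by-coefficient in $\bigoplus_\alpha q^\alpha K(\sym^{b(\alpha)}(X))$.

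Next, apply $\chi(\sym^{b(\alpha)}(X),-)$ to the coefficient of each $q^\alpha$ on both sides. The left-hand side becomes the desired series $1+\sum_{\alpha\in I}q^\alpha\chi(\sym^{b(\alpha)}(X),\Fc_\alpha)$ by definition. For the right-hand side, unfold Definition \ref{fact:def:pexp} one stage at a time: the plethystic exponential of an $X$-system is by definition the plethystic exponential of the diagonal pushforwards $[i_\ast \Gc_\alpha]$ on $\sym^{b(\alpha)}(X)$, to which Lemma \ref{fact:lemma:comp_formula} applies and gives
\begin{equation*}
    \chi\left(\pexpI{\sum_{\alpha\in I}q^\alpha [i_\ast \Gc_\alpha]}\right) = \restr{\Ss^\bullet\!\left(\sum_{\alpha\in I}q^\alpha \chi([i_\ast \Gc_\alpha])\right)}{I}.
\end{equation*}
Then invoke pushforward invariance of $\chi$ (in the projective setting from properness of $i$, and in the $\Ts$-equivariant localization setting from the discussion in Section \ref{setup:sec:loc}, using that $i$ is $\Ts$-equivariant and the relevant fixed loci are proper) to rewrite $\chi([i_\ast\Gc_\alpha])=\chi(X,\Gc_\alpha)$. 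This last expression is exactly the evaluation of $\mathrm{PExp}^I$ on the characters $\chi(X,\Gc_\alpha)$ by the formula of Lemma \ref{fact:lemma:comp_formula}, giving the right-hand side of the corollary.

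Finally, for the support statement, the $\Gc_\alpha$ produced by Theorem \ref{fact:thm:fact} are already supported on $S$ for $\alpha\in I\setminus\Delta_I$, and this property is preserved under $\chi$ (in the equivariant case this further means the computation of $\chi(X,\Gc_\alpha)$ via localization only sees the $\Ts$-fixed points of $S$). There is no real obstacle here; the only subtle point is the coherent bookkeeping between the three levels at which $\mathrm{PExp}^I$ is defined ($X$, $\sym^{b(\alpha)}(X)$, and via characters in Lemma \ref{fact:lemma:comp_formula}), and making sure pushforward invariance of $\chi$ is justified in both the projective and equivariant settings.
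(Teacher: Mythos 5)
Your proposal is correct and takes essentially the same (one-line) route as the paper: the corollary is obtained by applying $\chi$ to both sides of Theorem \ref{fact:thm:fact}, unwinding Definition \ref{fact:def:pexp} from $X$ to $\sym^{b(\alpha)}(X)$ via $i_\ast$, and then using pushforward-invariance of $\chi$ together with Lemma \ref{fact:lemma:comp_formula}; the support statement is inherited unchanged from the theorem.
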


\subsection{Proof of Theorem \ref{fact:thm:fact}}

The proof will work with an $S_{b(\alpha)}$-equivariant factorizable system on $X^{b(\alpha)}$. By Lemma \ref{fact:lemma:comp_setup_cd} and Example \ref{fact:ex:sym_to_Xb}, a factorizable system on $\sym^{b(\alpha)}(X)$ gives this by pullback. 

\subsubsection{K-Theory Class}

The (2-periodic) complexes of sheaves of the given factorizable system $\Fc_\alpha$ factor into simpler parts according to partitions of $\alpha$. Roughly, the $\Gc_\alpha$ are defined as (double) complexes tracking the possible splittings of $\alpha$ into smaller parts by systems of subordinate partitions. For now, we will only define the complexes $\Gc_\alpha$ with trivial differential and examine their K-theory classes. Later in Section \ref{fact:sec:combinatorics}, we will introduce a non-trivial differential and use that K-theory classes are independent of the differential.

From now on, we will assume given a certain choice of factorization index semigroup $I$ and suppress it from the notation whenever reasonable.

To keep track of the different ways to split $[\alpha]$ into smaller parts by different partitions, we use the following notion of an $\alpha$-index tree.

\begin{definition}
    An $\alpha$\textbf{-index tree} is a rooted tree, with each non-leaf having at least two children, together with a tuple $(B_l,\tilde{\alpha}_l)$ for every leaf $l$ such that $\{(B_l,\tilde{\alpha}_l)\ |\ l\ \mathrm{leaf}\}$ is an $I$-partition of $[\alpha]$. We denote the set of all $\alpha$-index trees by $\Tc(\alpha)$, and we denote the set of $\alpha$-index trees with exactly $k$ non-leaf nodes by $\Tc(\alpha,k)$.

    For every node $v$, we set $B_v$ to be the union of all $B_l$ with $l$ a leaf descendant of $v$. Similarly, we set $\tilde{\alpha}_v$ to be the sum of all $\tilde{\alpha}_l$ with $l$ a leaf descendant of $v$.

    Given a partition $\Xi=\{(B_1,\tilde{\alpha}_1), \dots, (B_k,\tilde{\alpha}_k)\}$ of $[\alpha]$, we denote by $\Tc_\Xi(\alpha)$ the set of $\alpha$-index trees with partition $\Xi$ at the level of root children.
\end{definition}

For several constructions we need a specified ordering on the index tree or on a given partition of $[\alpha]$.
\begin{definition}
    An \textbf{ordering} of an $\alpha$-index tree is a total order on its set of nodes, compatible with the tree structure in the following way. If $v$ is a descendant of $w$, then we require $v \leq w$, and if $v_1$ and $v_2$ are siblings with $v_1 < v_2$, then we require all descendants of $v_1$ to be smaller than all descendants of $v_2$.
\end{definition}

\begin{definition}
    Let $\leq_1, \leq_2$ be two orderings on an index tree $T$. Order the relevant nodes of $T$ such that $v_1 <_1 \cdots <_1 v_k$. There is a unique $\sigma\in S_k$ such that $v_{\sigma(1)} <_2 \cdots <_2 v_{\sigma(k)}$ (the one defined by exactly this condition). We define $s(\leq_1,\leq_2)=\mathrm{sgn}(\sigma)$.
\end{definition}

\begin{definition}
    The standard ordering of an $\alpha$-index tree $T$, which we denote by $\leq_T$, is defined recursively as follows. Take the root and make it the unique maximal node. Order the root children $\{v_1,\dots,v_k\}$ by
    \begin{eqnarray*}
        v_i \leq_T v_j :\Leftrightarrow \min(B_{v_i}) \leq \min(B_{v_j}).
    \end{eqnarray*}
    For each root child $v_i$, order all its descendants recursively as above (pretending that $v_i$ is the root node). Finally for any pair of root children $v_i \leq_T v_j$ order all descendants of $v_i$ to be smaller than all descendants of $v_j$.
\end{definition}

\begin{example}\label{fact:ex:part_std_order}
    Note that given a partition $\Xi=\{(B_1,\tilde{\alpha}_1), \dots, (B_k,\tilde{\alpha}_k)\}$ of $[\alpha]$, the standard ordering on the $\alpha$-index tree
    \begin{equation*}
        \begin{tikzcd}
            & & \left[\alpha\right] \arrow[dash, dll] \arrow[dash, dl] \arrow[dash, d] \arrow[dash, dr] \arrow[dash, drr] & & \\
            (B_1,\tilde{\alpha}_1) & (B_2,\tilde{\alpha}_2) & \dots & (B_{k-1},\tilde{\alpha}_{k-1}) & (B_k,\tilde{\alpha}_k)
        \end{tikzcd}
    \end{equation*}
    defines an ordering of the $k$ subsets of the partition. From now on, when we specify a partition $\Xi=\{(B_1,\tilde{\alpha}_1), \dots, (B_k,\tilde{\alpha}_k)\}$ of $[\alpha]$, we will assume, the $(B_i,\tilde{\alpha}_i)$ are ordered by this standard ordering.
\end{example}

\begin{definition}\label{fact:def:partition_sheaf_via_tree}
    Given an ordered index tree $(T,\leq)$ with leaf partition $\Xi=\{(B_1,\tilde{\alpha}_1), \dots, (B_k,\tilde{\alpha}_k)\}$ of $[\alpha]$ ordered by $\leq$, and a system of $S_{b(\beta)}$-equivariant (2-periodic) complexes of sheaves $\Gc_{\beta}$ on $X^{b(\beta)}$, we define
    \begin{equation*}
        \Gc_{T,\leq} := \Gc_{(\abs{B_1},\tilde{\alpha}_1)} \boxtimes \dots \boxtimes \Gc_{(\abs{B_k},\tilde{\alpha}_k)}
    \end{equation*}
    as a complex of sheaves on $X^{b(\alpha)}$, where each pullback is along the map $\pr_{b_i^1,\dots,b_i^{l_i}}:X^{b(\alpha)} \to X^{\abs{B_i}}$ specified by the subset $B_i=\{b_i^1,\dots,b_i^{l_i}\}$, where we order $B_i$ such that $b_i^1<\dots <b_i^{l_i}$.

    Given a partition $\Xi=\{(B_1,\tilde{\alpha}_1), \dots, (B_k,\tilde{\alpha}_k)\}$ of $[\alpha]$, and a system of $S_{b(\beta)}$-equivariant (2-periodic) complexes of sheaves $\Gc_{\beta}$ on $X^{b(\beta)}$, we define
    \begin{equation*}
        \Gc_{\Xi} := \Gc_{T,\leq}
    \end{equation*}
    as a complex of sheaves on $X^{b(\alpha)}$, where $(T,\leq)$ is the standard ordered index tree associated to the partition $\Xi$ from Example \ref{fact:ex:part_std_order}.

    For a partition $\lambda$ of $\alpha$, we define
    \begin{equation*}
        \Gc_\lambda := \bigoplus_{\Xi\in P^\lambda[\alpha]} \Gc_{\Xi},
    \end{equation*}
    which inherits a natural $S_{b(\alpha)}$-equivariant structure as follows. An element $\sigma$ of $S_{b(\alpha)}$ permutes the various projections $X^{b(\alpha)} \to X^{\abs{B_i}}$ by acting on a partition $\Xi=\{(B_1,\tilde{\alpha}_1), \dots, (B_k,\tilde{\alpha}_k)\}$ of $[\alpha]$ by $\sigma\left(\Xi\right)= \{(\sigma(B_1),\tilde{\alpha}_1), \dots, (\sigma(B_k),\tilde{\alpha}_k)\}$. This induces a permutations $\sigma(T,\leq)=(\sigma(T),\sigma(\leq))$ of the standard ordered index trees $(T,\leq)$ associated to the partitions $\Xi$, which permutes the labels while preserving the ordering. The $S_{b(\beta)}$-equivariant structure of the $\Gc_{\beta}$ on $X^{b(\beta)}$ then induces canonical isomorphisms
    \begin{equation}\label{fact:eq:eq_str_def_morphism_1}
        \sigma^*\left(\Gc_{T,\leq}\right) \xrightarrow{\sim} \Gc_{\sigma\left(T,\leq\right)}.
    \end{equation}
    The $S_{b(\alpha)}$-equivariant structure on $\Gc_\lambda$ is defined as the sum over $\Xi\in P^\lambda[\alpha]$ of the compositions
    \begin{equation}\label{fact:eq:eq_str_def_morphisms}
        \sigma^*\left(\Gc_{T,\leq}\right) \xrightarrow{\sim} \Gc_{\sigma(T),\sigma(\leq)} \xrightarrow{s(\sigma(\leq),\leq)\cdot S}\Gc_{\sigma(T),\leq},
    \end{equation}
    where $S$ is the standard reordering isomorphism to obtain the standard ordering $\leq$ on $\sigma(T)$.
\end{definition}

\begin{remark}\label{fact:rmk:eq_str_id}
    We explain the identification of the above equivariant structure with the induced equivariant structure in the proof of Lemma \ref{fact:lemma:lambda_pieces_push_pull_identities}:
    \begin{itemize}
        \item For a given partition $\Xi=\{(B_1,\tilde{\alpha}_1), \dots, (B_k,\tilde{\alpha}_k)\}$ with associated index tree $T$, the order on the $B_i$ in the definition of $\Gc_{T,\leq}$ together with the standard order on the index tree $T$ fix a specific choice of $\Gc_\Xi$, which is related to other possible choices by pulling it back along the action morphisms of elements in $S_\Xi$.
        \item The isomorphisms in \eqref{fact:eq:eq_str_def_morphisms} correspond to the isomorphisms \eqref{qst:eq:eq_str_H_class_isoms} to pullbacks of the components along the action morphisms of elements in $S_\Xi$. The first isomorphism \eqref{fact:eq:eq_str_def_morphism_1} corresponds to the action of $\prod_i S_{b(\alpha_i)}$, and the second isomorphism $s(\sigma(\leq),\leq)\cdot S$ corresponds to the action of $S^B_\lambda$.
        \item To match the sign in \eqref{fact:eq:eq_str_def_morphisms} under the isomorphism \eqref{fact:eq:quotient_pushforward_equals_G_lambda}, we must also introduce the corresponding sign into the $S_\Xi$-equivariant structure of $\Gc_\Xi$.
    \end{itemize}
\end{remark}

Note that the above definition of $\Gc_{\Xi}$ and $\Gc_{\lambda}$ agrees with the one of Definition \ref{fact:def:G_partition_sheaf}. Now we can define the following complex, which is the central object in the proof of Theorem \ref{fact:thm:fact}. While it is defined with trivial differential for the current study of its K-theory class, we will later equip it with a non-trivial differential.

\begin{definition}\label{fact:def:tree_complex}
    Let $I$ be a factorization index semigroup, and let $\Fc_{\beta}$ be a factorizable system of $S_{b(\beta)}$-equivariant (2-periodic) complexes of sheaves on $X^{b(\beta)}$. For $\alpha\in I$, $k\geq 0$, we define the complex of sheaves on $X^{b(\alpha)}$
    \begin{equation*}
        \Gc_{\alpha}^{-k} := \bigoplus_{T\in\Tc(\alpha,k)} \Fc_{T,\leq_T},
    \end{equation*}
    where we recall that $\Tc(\alpha,k)$ is the set of $\alpha$-index trees with exactly $k$ non-leaf nodes, and $\leq_T$ is the standard ordering of a given index tree. We define the (double) complex
    \begin{equation*}
        \Gc_{\alpha}:=(\Gc_{\alpha}^\bullet,0)
    \end{equation*}
    with trivial differential.
\end{definition}

We equip this complex with an $S_{b(\alpha)}$-equivariant structure as follows.

\begin{definition}\label{fact:def:tree_complex_eq_str}
    The $S_{b(\alpha)}$-equivariant structure on is induced as in Definition \ref{fact:def:partition_sheaf_via_tree}. An element $\sigma$ of $S_{b(\alpha)}$ permutes the partitions $\Xi=\{(B_1,\tilde{\alpha}_1), \dots, (B_k,\tilde{\alpha}_k)\}$ of $[\alpha]$ by $\sigma\left(\Xi\right)= \{(\sigma(B_1),\tilde{\alpha}_1), \dots, (\sigma(B_k),\tilde{\alpha}_k)\}$. This induces permutations $\sigma(T,\leq)=(\sigma(T),\sigma(\leq))$ of the index trees $(T,\leq)$ by permuting the labels while preserving the ordering. The $S_{b(\alpha)}$-equivariant structure on $\Gc_\alpha$ is defined as the sum over $T\in\Tc(\alpha,k)$ of the compositions
    \begin{equation*}
        \sigma^*\left(\Gc_{T,\leq}\right) \xrightarrow{\sim} \Gc_{\sigma(T),\sigma(\leq)} \xrightarrow{s(\sigma(\leq),\leq)\cdot S}\Gc_{\sigma(T),\leq},
    \end{equation*}
    where the first isomorphism is the canonical isomorphism induced by the $S_{b(\beta)}$-equivariant structure of the $\Fc_{\beta}$ on $X^{b(\beta)}$, and $S$ is the standard reordering isomorphism to obtain the standard ordering $\leq$ on $\sigma(T)$. The sign $s(\sigma(\leq),\leq)$ makes the complex a complex of $S_{b(\alpha)}$-equivariant sheaves.
\end{definition}

For the proof of Theorem \ref{fact:thm:fact}, we need to express the K-theory class of the (2-periodic) complexes of sheaves $\Fc_{\beta}$ of the factorizable system in simpler terms. The next lemma allows us to do exactly that by establishing a relation between the K-theory classes of $\Fc_{\beta}$ and the complexes $\Gc_{\alpha}$ defined above.

Note here, that $\Gc_{\alpha}$ is a double complex. Its K-theory class is the K-theory class of its total complex, which is the same as a signed sum of the classes of the complexes $\Gc_{\alpha}^k$.

\begin{lemma}\label{fact:lemma:complex_ktheory_class}
    In $K_{S_{b(\alpha)}}(X^{b(\alpha)})$ we have the identity
    \begin{equation*}
        [\Gc_{\alpha}] = [\Fc_{\alpha}] - \sum_{\lambda\in P(\alpha)\ non-trivial} [\Gc_{\lambda}],
    \end{equation*}
    where the last term is equipped with its standard equivariant structure, defined as in Definition \ref{fact:def:G_partition_sheaf}.
\end{lemma}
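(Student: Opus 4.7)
The plan is to expand $[\Gc_{\alpha}] = \sum_{k \geq 0}(-1)^k[\Gc_{\alpha}^{-k}]$ as the alternating sum of K-theory classes of the terms of the double complex (which has trivial differential, so its total class is this alternating sum). The $k=0$ summand is the contribution of the unique one-node index tree and equals $[\Fc_{\alpha}]$. So the lemma reduces to establishing
\begin{equation*}
    \sum_{k \geq 1}(-1)^k[\Gc_{\alpha}^{-k}] \;=\; -\sum_{\lambda \in P(\alpha)\text{ non-trivial}} [\Gc_{\lambda}].
\end{equation*}

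The key structural observation is a recursive decomposition of index trees. Every $T \in \Tc(\alpha,k)$ with $k \geq 1$ has a root with at least two children, and thus induces a non-trivial partition $\Xi = \{(B_1,\tilde{\alpha}_1),\ldots,(B_m,\tilde{\alpha}_m)\}$ of $[\alpha]$ at the level of root children. Cutting $T$ at the root yields a bijection
\begin{equation*}
    \Tc_\Xi(\alpha,k) \;\longleftrightarrow\; \Big\{(T_1,\ldots,T_m) : T_i \in \Tc((\abs{B_i},\tilde{\alpha}_i),k_i),\ \textstyle\sum_i k_i = k-1\Big\},
\end{equation*}
where single-leaf subtrees $T_i$ (contributing $k_i = 0$) are allowed. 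Under this bijection $\Fc_{T,\leq_T}$ is, by the very definition of $\boxtimes$-pullbacks, the external product $\boxtimes_i \Fc_{T_i,\leq_{T_i}}$ along the projections $X^{b(\alpha)} \to X^{\abs{B_i}}$ picked out by the subsets $B_i$. This uses two compatibilities of the standard ordering with the tree decomposition: that $\leq_T$ restricted to the leaves within the subtree $T_i$ agrees with $\leq_{T_i}$, and that the outer order of the blocks $B_i$ matches the order defining $\Gc_\Xi$.

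Plugging the bijection into the alternating sum and splitting $(-1)^k = -\prod_i(-1)^{k_i}$ (using $\sum k_i = k-1$) gives
\begin{equation*}
    \sum_{k \geq 1}(-1)^k[\Gc_{\alpha}^{-k}] = -\sum_{\Xi \text{ non-trivial}} \prod_{i=1}^m \Big(\sum_{k_i \geq 0}(-1)^{k_i} \sum_{T_i \in \Tc((\abs{B_i},\tilde{\alpha}_i),k_i)} [\Fc_{T_i,\leq_{T_i}}]\Big),
\end{equation*}
and the inner product is, by Definition \ref{fact:def:tree_complex}, $\boxtimes_i[\Gc_{(\abs{B_i},\tilde{\alpha}_i)}] = [\Gc_\Xi]$. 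Regrouping the non-trivial partitions $\Xi \in P[\alpha]$ by their underlying multiset partition $\lambda \in P(\alpha)$ converts $\sum_{\Xi \text{ non-trivial}}[\Gc_\Xi]$ into $\sum_{\lambda \text{ non-trivial}}[\Gc_\lambda]$ by Definition \ref{fact:def:partition_sheaf_via_tree}.

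The step I expect to be the main obstacle is verifying that this equality holds in $S_{b(\alpha)}$-equivariant K-theory, not merely after forgetting equivariance. One must trace the signs $s(\sigma(\leq),\leq)$ built into the equivariant structures of Definitions \ref{fact:def:tree_complex_eq_str} and \ref{fact:def:partition_sheaf_via_tree} through the bijection. Given $\sigma \in S_{b(\alpha)}$, its action on a decomposed tree $(T;T_1,\ldots,T_m)$ produces $(\sigma T;\sigma T_{\tau^{-1}(1)},\ldots,\sigma T_{\tau^{-1}(m)})$ for a uniquely determined outer permutation $\tau$ of the root children. Factoring $\sigma$ as an outer reordering of the root children followed by permutations internal to each subtree factors the sign $s(\sigma(\leq_T),\leq_T)$ as the product of the analogous signs for the subtrees and the outer sign appearing in the equivariant structure on $\Gc_\lambda$. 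Checking this sign-matching is the heart of the proof and is precisely the combinatorial reason the sign $s(\sigma(\leq),\leq)$ is inserted into the equivariant structures in the first place.
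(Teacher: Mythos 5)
Your proposal is correct and takes essentially the same route as the paper: expand $[\Gc_\alpha]$ as the alternating sum over index trees, cut each tree at the root to produce the non-trivial partition $\Xi$ together with the subtrees, use $\Fc_{T,\leq_T} = \boxtimes_i\Fc_{T_i,\leq_{T_i}}$ and $\sum_i k_i = k-1$ to distribute the sign $(-1)^k = -\prod_i(-1)^{k_i}$, identify the inner product with $[\Gc_\Xi]$, and regroup by $\lambda$. The paper organizes exactly the same decomposition through a filtration of the complex by number of root children, identifying the subquotients $Q^\Xi_{l}\Gc_\alpha[1]\cong \Gc_\Xi$ as complexes before passing to K-theory — arguably a cleaner way to see that the equivariant structures match, since the filtration is manifestly $S_{b(\alpha)}$-equivariant — but the underlying root-cutting bijection and sign bookkeeping are identical to yours, and your concluding discussion of how $\sigma$ factors into an outer reordering of root children and internal permutations is precisely the verification the paper relies on when it asserts the equivariant structures are induced the same way on both sides.
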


\begin{proof}
    By definition $\Gc_\lambda = \bigoplus_{\Xi\in P^\lambda[\alpha]} \Gc_{\Xi}$, so our strategy will be to find a filtration of $\Gc_{\alpha}$ that allows us to split its K-theory class into pieces of the form $-[\Gc_{\Xi}]$.

    Consider the filtration $F_l\Gc_{\alpha}$ of subcomplexes of $\Gc_{\alpha}$ consisting of sums over all trees with $\leq l$ root children. Let $Q_l\Gc_{\alpha}$ be the quotient complexes of the filtration, so the subcomplexes exactly consisting of sums over all trees with exactly $l$ root children. We get
    \begin{equation*}
        [\Gc_{\alpha}] = [\Fc_{\alpha}] + \sum_{l> 0}[Q_l\Gc_{\alpha}].
    \end{equation*}
    Note that the $S_{b(\alpha)}$-equivariant structure is preserved by the above filtration, so this equality holds in $K_{S_{b(\alpha)}}(X^{b(\alpha)})$.

    We can split this further into parts with fixed partition at the level of root children. Given $\Xi\in P[\alpha]$ of length $l$, we let $Q_l^{\Xi}\Gc_{\alpha}$ be the subcomplex consisting of the sums over all trees with partition $\Xi$ at the level of root children. For a partition $\lambda \in P(\alpha)$, define
    \begin{equation*}
        Q_l^{\lambda}\Gc_{\alpha} := \bigoplus_{\Xi\in P^\lambda[\alpha]} Q_l^{\Xi}\Gc_{\alpha},
    \end{equation*}
    with its natural $S_{b(\alpha)}$-equivariant structure defined analogously to Definition \ref{fact:def:partition_sheaf_via_tree}. By definition of the $S_{b(\alpha)}$-equivariant structure of $\Gc_{\alpha}$, we get an equality
    \begin{eqnarray}\label{fact:eq:k_class_filtration_eq}
        [\Gc_{\alpha}] =& [\Fc_{\alpha}] + \sum_{l> 0}\sum_{\lambda\in P_l(\alpha)} [Q_l^\lambda\Gc_{\alpha}]\nonumber\\
        =& [\Fc_{\alpha}] + \sum_{\lambda\in P(\alpha)\ non-trivial} [Q_{l(\lambda)}^\lambda\Gc_{\alpha}]
    \end{eqnarray}
    in $K_{S_{b(\alpha)}}\left(X^{b(\alpha)}\right)$. 
    
    Now we examine the pieces $Q_l^{\Xi}\Gc_{\alpha}$ further. As the differential of $\Gc_\alpha$ is defined as $0$ for now, we can ignore it in this proof. Note that these have no $S_{b(\alpha)}$-equivariant structure, but the $S_{b(\alpha)}$-equivariant structure of $Q_l^{\lambda}\Gc_{\alpha}$ is induced in the same way as the one of $\Gc_{\lambda}$. We want to show
    \begin{equation*}
        Q_{l(\Xi)}^{\Xi}\Gc_{\alpha}[1] = \Gc_{\Xi}.
    \end{equation*}
    Let $l=l(\Xi)$. The complex $Q_{l}^{\Xi}\Gc_{\alpha}$ is given by
    \begin{equation*}
        Q_{l}^{\Xi}\Gc_{\alpha}=\bigoplus_{T\in \Tc_{\Xi}(\alpha,\bullet)} \Fc_{T,\leq_T}.
    \end{equation*}
    Let $\Xi=\{(B_1,\tilde{\alpha}_1),\dots,(B_l,\tilde{\alpha}_l)\}$ be ordered by the standard ordering. Then we have 
    \begin{equation*}
        \bigoplus_{T\in \Tc_{\Xi}(\alpha,k)} \Fc_{T,\leq_T} = \bigoplus_{k_1+\cdots+k_l\leq k-1}\bigoplus_{\substack{T_i\in \Tc((\abs{B_i},\tilde{\alpha}_i),k_i)\\1\leq i\leq l}} \Fc_{T_1,\leq_{T_1}}\boxtimes\cdots\boxtimes \Fc_{T_l,\leq_{T_l}}
    \end{equation*}
    Here we take $k_1+\cdots+k_l\leq k-1$ as we remove the root node in the process of passing from left to right. As a complex this is the same as shifting, so we obtain
    \begin{equation*}
        Q_{l}^{\Xi}\Gc_{\alpha}[1] = \displaystyle\bigoplus_{\substack{k_1+\cdots+k_l\leq\bullet\\T_i\in \Tc\left(\left(\abs{B_i},\tilde{\alpha}_i\right),k_i\right)\\1\leq i\leq l}} \Fc_{T_1,\leq_{T_1}}\boxtimes\cdots\boxtimes \Fc_{T_l,\leq_{T_l}}
    \end{equation*}
    We can distribute to get
    \begin{equation*}
        Q_{l}^{\Xi}\Gc_{\alpha}[1] = \displaystyle\left(\bigoplus_{\substack{k_1,\ T_1\in \Tc\left(\left(\abs{B_1},\tilde{\alpha}_1\right),k_1\right)}} \Fc_{T_1,\leq_{T_1}}\right)\boxtimes\cdots\boxtimes \left(\bigoplus_{\substack{k_l,\ T_l\in \Tc\left(\left(\abs{B_l},\tilde{\alpha}_l\right),k_l\right)}} \Fc_{T_l,\leq_{T_l}}\right),
    \end{equation*}
    where the degree $k$ part on the right-hand side is exactly given by the terms where $k_1+\cdots+k_l\leq k$. But this is exactly
    \begin{equation*}
        Q_{l}^{\Xi}\Gc_{\alpha}[1] = \Gc_{\left(\abs{B_1},\tilde{\alpha}_1\right)}\boxtimes\cdots\boxtimes \Gc_{\left(\abs{B_l},\tilde{\alpha}_l\right)}= \Gc_{\Xi}
    \end{equation*}
    identified as complexes. By summing over partitions $\Xi$ with $\abs{\Xi}=\lambda$ we get an $S_{b(\alpha)}$-equivariant isomorphism of complexes of sheaves
    \begin{equation*}
        Q_{l}^\lambda\Gc_{\alpha}[1] = \Gc_{\lambda},
    \end{equation*}
    since the $S_{b(\alpha)}$-equivariant structure is induced in the same way on both sides. In particular, we get $[Q_{l}^\lambda\Gc_{\alpha}] = -[\Gc_{\lambda}]$. Inserting this into \eqref{fact:eq:k_class_filtration_eq} gives the desired equality in $K_{S_{b(\alpha)}}(X^{b(\alpha)})$.
\end{proof}

The previous Lemma \ref{fact:lemma:complex_ktheory_class} tells us that in order to understand the generating series of K-theory classes of a factorizable system, we can express this in terms of the K-theory classes of the $G_{\alpha}$. To prove Theorem \ref{fact:thm:fact}, we equip $\Gc_\alpha$ with a differential and study the support of the resulting complex. Note that modifying the differential leaves the K-theory class unchanged, and hence the above Lemma \ref{fact:lemma:complex_ktheory_class} still holds for the resulting complex.

\subsubsection{Level Systems}\label{fact:sec:good_replacement}

The terms of $\Gc_\alpha$ are $\Fc_\Xi$ for $[\alpha]$-partitions $\Xi$ associated to leaves of various $\alpha$-index trees. We want to construct the differential of $\Gc_\alpha$ from the factorization morphisms $\phi$ by splitting $[\alpha]$-partitions into smaller parts. To do this, we need to extend our morphisms to be defined over the whole domain. We now make this precise.

\begin{definition}
    Let $\Xi_1$ be the $[\alpha]$-partition $\{(B^1_1,\tilde{\alpha}^1_1), \dots, (B^1_k,\tilde{\alpha}^1_k)\}$, and let $\Xi_2$ be the partition $\{(B^2_1,\tilde{\alpha}^2_1), \dots, (B^2_l,\tilde{\alpha}^2_l)\}$ of $[\alpha]$. We say $\Xi_1$ is \textbf{subordinate} to $\Xi_2$, denoted $\Xi_1\leq\Xi_2$, if any $B^1_i$ is contained in some $B^2_j$, and additionally for $B^2_j=\bigcup_{i\in A} B^1_i$ (such a union must exist by the previous condition) we have $\sum_{i\in A}\tilde{\alpha}^1_i = \tilde{\alpha}^2_j$.
\end{definition}

Instead of using the entire factorizable systems, we use a part of it to define a system in the following sense.

\begin{definition}\label{fact:def:level_system}
    We consider the scheme $X^{b(\alpha)}$ together with its $S_{b(\alpha)}$-action. For each element $B$ of $P[b(\alpha)]$, we a get locally closed subset $X_B$ of points $(x_1,\cdots,x_{b(\alpha)})\in X$ such that $x_i=x_j$ if and only if $i$ and $j$ are in the same set of $B$. We have a partial order on $P[b(\alpha)]$, for which $A \leq B$ if $B$ is a refinement of $A$. $P[b(\alpha)]$ carries a natural $S_{b(\alpha)}$-action with $\sigma(X_B)=X_{\sigma(B)}$. Define
    \begin{equation*}
        U_A \coloneqq \bigcup_{B\geq A} X_B.
    \end{equation*}
    For use below, for $A$ and $B$ in $P[b(\alpha)]$, we write $A\cap B\in P[b(\alpha)]$ for the partition obtained by taking all intersections of sets in $A$ and $B$. For every $A\in P[b(\alpha)]$, we can define an equivalence relation on $P[b(\alpha)]$ by $B_1\sim_A B_2$ if and only if $B_1\cap A$ is the same partition as $B_2\cap A$.

    Now consider $P[\alpha]$, which carries a natural $S_{b(\alpha)}$-action. This also comes with a partial order, for which $\Xi_1\leq \Xi_2$ if $\Xi_1$ is subordinate to $\Xi_2$ (note the flip of the order compared to the order on $P[b(\alpha)]$). For every $B\in P[b(\alpha)]$ we can define an equivalence relation on $P[\alpha]$ by $\Xi_1 \sim_B \Xi_2$ if and only if the partitions $B(\Xi_1)\cap B$ and $B(\Xi_2)\cap B$ agree as partitions of $[b(\alpha)]$. The $S_{b(\alpha)}$-action respects the partial ordering and the equivalence relations. Moreover, we see immediately that for $A\leq B$ in $P[b(\alpha)]$, $\Xi_1 \sim_A \Xi_2$ implies $\Xi_1 \sim_B \Xi_2$, and that for $\Xi_1\leq \Xi_2\leq \Xi_3$, $\Xi_1 \sim_B \Xi_3$ holds if and only if both $\Xi_1 \sim_B \Xi_2$ and $\Xi_2 \sim_B \Xi_3$ hold.

    Given the above data, we define an $\alpha$\textbf{-level system} to be a collection of $S_{b(\alpha)}$-equivariant complexes of coherent sheaves $\left\{\Fc_\Xi\right\}_{\Xi\in P[\alpha]}$ on $X^{b(\alpha)}$, together with, for each $\Xi_1\leq \Xi_2$ in $P[\alpha]$ and $B\in P[b(\alpha)]$ such that $\Xi_1 \sim_B \Xi_2$, a morphism 
    \begin{equation*}
        \phi_{\Xi_1 \Xi_2 B}: \restr{\Fc_{\Xi_1}}{U_B} \to \restr{\Fc_{\Xi_2}}{U_B}
    \end{equation*}
    compatible with the $S_{b(\alpha)}$-equivariant structure, such that
    \begin{enumerate}[(a)]
        \item Any two morphisms $\phi_{\Xi_1 \Xi_2 A}$ and $\phi_{\Xi_1 \Xi_2 A}$ agree on $U_A\cap U_B$ whenever they exist. As a consequence, the morphisms glue to
        \begin{equation*}
            \phi_{\Xi_1 \Xi_2}: \restr{\Fc_{\Xi_1}}{U_{\Xi_1 \Xi_2}} \to \restr{\Fc_{\Xi_2}}{U_{\Xi_1 \Xi_2}},
        \end{equation*}
        where $U_{\Xi_1 \Xi_2}\coloneqq \textstyle\bigcup_{B:\Xi_1 \sim_B \Xi_2}U_B$.
        \item For any $\Xi_1\leq \Xi_2\leq \Xi_3$ we have over $U_{\Xi_1 \Xi_2}\cap U_{\Xi_2 \Xi_3}=U_{\Xi_1 \Xi_3}$
        \begin{equation*}
            \phi_{\Xi_2 \Xi_3}\circ \phi_{\Xi_1 \Xi_2} = \phi_{\Xi_1 \Xi_3}.
        \end{equation*}
        \item $\phi_{\Xi \Xi}$ is the identity for any $\Xi$ in $P[\alpha]$.
    \end{enumerate}

    We additionally assume that given $\Xi_2\in P[\alpha]$ with a refinement $B$ of $B(\Xi_2)$, for every $A$ with $B\sim_A B(\Xi_2)$
    \begin{equation}\label{fact:eq:level_system_isomorphism_property}
        \bigoplus_{\Xi_1\leq \Xi_2, B(\Xi_1)=B}\phi_{\Xi_1 \Xi_2 A}: \restr{\left(\bigoplus_{\Xi_1\leq \Xi_2, B(\Xi_1)=B}\Fc_{\Xi_1}\right)}{U_A} \to \restr{\Fc_{\Xi_2}}{U_A}
    \end{equation}
    is an isomorphism.

    Morphisms between $\alpha$-level systems $\left(\Fc,\phi\right)$ and $\left(\Fc',\phi'\right)$ are given by morphisms $\Fc_\Xi \to \Fc'_\Xi$ that commute with the $\phi$ and $\phi'$ in the obvious way on the open subschemes, where they are defined. Denote the category of $\alpha$-level systems by $\syst^\alpha$. 
\end{definition}

\begin{remark}
    Note that an $\alpha$-level system is an $S_{b(\alpha)}$-equivariant system in the sense of \cite{kr} with the exception of one axiom and a modified isomorphism requirement \eqref{fact:eq:level_system_isomorphism_property}. The missing axiom is used to prove their isomorphism property is preserved by the functors $D$ in Definition \ref{fact:def:D_I_strictness_proof} below. We will show that our modified isomorphism property \eqref{fact:eq:level_system_isomorphism_property} is preserved by these functors in Lemma \ref{fact:lemma:D_I_preserve_isom_property} using a modified version of the missing axiom, which is satisified in our case.
\end{remark}

A factorizable system gives us an $\alpha$-level system for each $\alpha$ in $I$ as follows. The definition of the differentials of $\Gc_\alpha$ will only depend on this level system, rather than the entire factorizable system.

\begin{example}\label{fact:ex:fact_level_system}
    Now take the given $S_{b(\beta)}$-equivariant factorizable system $\left\{\Fc_\beta\right\}_{\beta\in I}$ on $X^{b(\beta)}$ with morphisms $\phi_{\beta_1\beta_2}$ for any $\beta_1+\beta_2=\beta$ in $I$. Now consider again our fixed $\alpha$ for which we study $\Gc_\alpha$. We want to define an $\alpha$-level system from the factorizable system $\left\{\Fc_\beta\right\}_{\beta\in I}$. Given a partition $\Xi$ in $P[\alpha]$, we have the complex of coherent sheaves
    \begin{equation*}
        \Fc_\Xi
    \end{equation*}
    on $X^{b(\alpha)}$. This defines the complexes of sheaves in our level system. Any pair of subordinate partitions $\Xi_1\leq \Xi_2$ in $P[\alpha]$ together with $B\in P[b(\alpha)]$ such that $\Xi_1 \sim_B \Xi_2$, gives us a morphism
    \begin{equation*}
        \phi_{\Xi_1 \Xi_2 B}: \restr{\Fc_{\Xi_1}}{U_B} \to \restr{\Fc_{\Xi_2}}{U_B}.
    \end{equation*}
    The morphisms $\phi_{\Xi_1 \Xi_2 B}$ are constructed by tensor product,so we may assume $\Xi_2$ is just $[\alpha]$. In this case, $\Xi_1 \sim_B \Xi_2=[\alpha]$ just means $B$ is a refinement of $B(\Xi_1)$. So, we may assume $B=B(\Xi_1)$. But then, using the associativity property of the factorizable system $\left\{\Fc_\beta\right\}_{\beta\in I}$, the morphisms $\phi_{\beta_1\beta_2}$ of the factorizable system compose to give the desired morphism $\phi_{\Xi_1 \Xi_2 B}$.

    The axioms of a level system follow from the construction of the $\phi_{\Xi_1 \Xi_2 B}$, together with the equivariance requirements and the associativity and commutativity axioms of a factorizable system in Definition \ref{def:factorizable_system_general}. The isomorphism property \eqref{fact:eq:level_system_isomorphism_property} follows from the isomorphism property \eqref{fact:eq:sum_isom_fact} of the factorizable system.
\end{example}

\begin{definition}\label{fact:def:strict_level_system}
    We say the $\alpha$-level system is \textbf{strict} if for every $\Xi_1\leq \Xi_2$ in $P[\alpha]$ there exist morphisms $\overline{\phi}_{\Xi_1 \Xi_2}:\Fc_{\Xi_1}\to \Fc_{\Xi_2}$, which restrict to $\phi_{\Xi_1 \Xi_2}$ on $U_{\Xi_1 \Xi_2}$ and such that for any $\Xi_1\leq \Xi_2\leq \Xi_3$ we have
    \begin{equation*}
        \overline{\phi}_{\Xi_2 \Xi_3}\circ \overline{\phi}_{\Xi_1 \Xi_2} = \overline{\phi}_{\Xi_1 \Xi_3}.
    \end{equation*}
\end{definition}

We want to find a way to replace the level system of Example \ref{fact:ex:fact_level_system} by a strict one without changing its K-theory class. For this, the following constructions are central.

\begin{definition}\label{fact:def:D_I_strictness_proof}
    Take a fixed $A\in P[b(\alpha)]$ and let $j_A$ be the locally closed embedding of $X_A$ into $X^{b(\alpha)}$. Let $(\Fc,\phi)$ be an $\alpha$-level system. We define a strict $\alpha$-level system $\left(D_A(\Fc),D_A\phi\right)$ as follows. We show that this is well-defined in Lemma \ref{fact:lemma:D_I_preserve_isom_property} below.
    
    For $\Xi\in P[\alpha]$, set
    \begin{equation*}
        D_A(\Fc)_\Xi \coloneqq \im\left(\bigoplus_{\Xi'\sim_A\Xi,\Xi'\leq \Xi}\Fc_{\Xi'}\to\bigoplus_{\Xi'\sim_A\Xi,\Xi'\leq \Xi}j_{A *}j_A^*\Fc_{\Xi'}\to j_{A *}j_A^*\Fc_{\Xi}\right),
    \end{equation*} 
    where the first morphism consists of adjunction morphisms and the second one is the sum of all $j_{A *}\phi_{\Xi' \Xi A}$.

    For $\Xi_1\leq \Xi_2$ in $P[\alpha]$ and $B\in P[b(\alpha)]$ we define
    \begin{equation*}
        D_A(\phi)_{\Xi_1\Xi_2 B}: \restr{D_A(\Fc)_{\Xi_1}}{U_B} \to \restr{D_A(\Fc)_{\Xi_2}}{U_B}
    \end{equation*}
    as follows.
    \begin{itemize}
        \item If $B\not\leq A$, then $U_B\cap \overline{X}_A=\emptyset$, so that $\restr{D_A(\Fc)_{\Xi_1}}{U_B}$ and $\restr{D_A(\Fc)_{\Xi_2}}{U_B}$ vanish. Then we set $D_A(\phi)_{\Xi_1\Xi_2 B}=0$.
        \item If $B\leq A$, but $\Xi_1\not\sim_A \Xi_2$, then we set $D_A(\phi)_{\Xi_1\Xi_2 B}=0$.
        \item If $B\leq A$ and $\Xi_1\sim_A \Xi_2$, then the commutative diagram
        \begin{equation}\label{fact:eq:D_morphisms_comm_diagram}
            \begin{tikzcd}
                \bigoplus_{\Xi'\sim_A\Xi_1,\Xi'\leq \Xi_1}\Fc_{\Xi'} \ar[r]\ar[d] & j_{A *}j_A^*\Fc_{\Xi_1} \ar[d,"j_{A *}\phi_{\Xi_1 \Xi_2 A}"] \\
                \bigoplus_{\Xi'\sim_A\Xi_2,\Xi'\leq \Xi_2}\Fc_{\Xi'} \ar[r] & j_{A *}j_A^*\Fc_{\Xi_2}
            \end{tikzcd}
        \end{equation}
        induces a morphism between the images of the morphisms of the top and bottom rows, which we set as the morphism $D_A(\phi)_{\Xi_1\Xi_2 B}$.
    \end{itemize}
    Note that we put no restriction on $B\in P[b(\alpha)]$, so the morphisms $D_A(\phi)_{\Xi_1\Xi_2}$ are defined globally making the level system $D_A(\Fc,\phi)$ strict. That the $D_A(\phi)_{\Xi_1\Xi_2 B}$ glue and give well-defined morphisms is shown in \cite{kr}.

    For each $\Xi\in P[\alpha]$ we get a morphism $I_{A,\Xi}:\Fc_\Xi \to D_A(\Fc)_\Xi$ by inclusion of $\Fc_\Xi$ in $\bigoplus_{\Xi'\sim_A\Xi,\Xi'\leq \Xi}\Fc_{\Xi'}$ in the definition of $D_A(\Fc)_\Xi$.
\end{definition}

\begin{lemma}\label{fact:lemma:D_I_preserve_isom_property}
    $D_A$ gives a well-defined functor $\syst^\alpha\to \syst^\alpha$ and $I_A$ defines a morphism of $\alpha$-level systems. In particular, $D_A$ preserves the isomorphism property \eqref{fact:eq:level_system_isomorphism_property}.
\end{lemma}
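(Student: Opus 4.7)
The plan is to verify, in order, that $(D_A(\Fc), D_A\phi)$ is an $\alpha$-level system, that $D_A$ is functorial in $(\Fc,\phi)$ and $I_A$ is a morphism of level systems, and finally that $D_A(\Fc)$ inherits the isomorphism property \eqref{fact:eq:level_system_isomorphism_property}. The first two items reduce to diagram chases involving the commutative square \eqref{fact:eq:D_morphisms_comm_diagram}; the third is the substantive content of the lemma.

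For the level-system axioms of Definition \ref{fact:def:level_system} applied to $(D_A(\Fc), D_A\phi)$: axiom $(c)$ is immediate since the induced map on the image of an identity is the identity; axiom $(b)$ follows by pasting two instances of \eqref{fact:eq:D_morphisms_comm_diagram} horizontally and invoking axiom $(b)$ for the original system; and axiom $(a)$ splits into the three cases appearing in the definition of $D_A(\phi)_{\Xi_1\Xi_2 B}$, with compatibility on overlaps coming from the fact that $j_{A*}j_A^*$ commutes with restriction to any $U_B$. The $S_{b(\alpha)}$-equivariant structure on $D_A(\Fc)_\Xi$ is induced by the natural identifications $\sigma^* D_A(\Fc)_\Xi \cong D_{\sigma^{-1}A}(\Fc)_{\sigma^{-1}\Xi}$ furnished by the equivariance of $(\Fc,\phi)$. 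Strictness is automatic since each $D_A(\phi)_{\Xi_1\Xi_2 B}$ is defined globally on $X^{b(\alpha)}$. Functoriality of $D_A$ and the morphism property of $I_A$ follow from the naturality of image, adjunction, and direct sums.

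The main obstacle is showing that $D_A(\Fc)$ satisfies \eqref{fact:eq:level_system_isomorphism_property}. Fix $\Xi_2 \in P[\alpha]$, a refinement $B$ of $B(\Xi_2)$, and $A'$ with $B \sim_{A'} B(\Xi_2)$. If $A' \not\leq A$, then $U_{A'} \cap \overline{X}_A = \emptyset$ and both sides vanish. Otherwise $A' \leq A$, and on $U_{A'}$ the adjunction $\Fc \to j_{A*}j_A^*\Fc$ restricts to an isomorphism, so $D_A(\Fc)_\Xi|_{U_{A'}}$ is identified with the image inside $\Fc_\Xi|_{U_{A'}}$ of $\bigoplus_{\Xi' \sim_A \Xi,\, \Xi' \leq \Xi}\Fc_{\Xi'}|_{U_{A'}}$ under $\bigoplus_{\Xi'}\phi_{\Xi'\Xi A'}$. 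I would then reindex the double family $\{(\Xi_1, \Xi_1') : \Xi_1 \leq \Xi_2,\ B(\Xi_1) = B,\ \Xi_1' \leq \Xi_1,\ \Xi_1' \sim_A \Xi_1\}$, and apply the isomorphism axiom \eqref{fact:eq:level_system_isomorphism_property} of $(\Fc,\phi)$ twice: first to split $\Fc_{\Xi_2}|_{U_{A'}}$ along refinements of $B(\Xi_2)$ equal to $B(\Xi_1)$ for some intermediate $\Xi_1$, and then to split each $\Fc_{\Xi_1}|_{U_{A'}}$ along refinements at level $B$. Composing these two isomorphisms and passing to images recovers the claimed isomorphism.

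The hard step is this combinatorial bookkeeping: one must verify that the subordination order $\leq$ restricted to a $\sim_A$-equivalence class in $P[\alpha]$ has the join-semilattice structure required to iterate the isomorphism axiom, and that the reindexing matches on both sides of \eqref{fact:eq:D_morphisms_comm_diagram} without spurious overcounting when passing from direct sums to images. This is a purely set-theoretic manipulation of partitions in $P[\alpha]$, independent of the sheaves involved, but it is the step where the modified isomorphism property \eqref{fact:eq:level_system_isomorphism_property} (as opposed to the one from \cite{kr}) must be used precisely.
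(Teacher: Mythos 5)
Your overall structure — reduce to the isomorphism property, localize to $U_{A'}$, identify $D_A(\Fc)_\Xi$ with an image, and reindex a double family — is in the right spirit and matches the outline of the paper's proof. However, two points break the argument as written.

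First, the claim that on $U_{A'}$ with $A' \leq A$ the adjunction $\Fc \to j_{A*}j_A^*\Fc$ restricts to an isomorphism is false. The sheaf $j_{A*}j_A^*\Fc$ is supported on $\overline{X_A} = \bigcup_{B \leq A} X_B$, while $U_{A'} = \bigcup_{B \geq A'} X_B$; for $A' < A$ strictly, $U_{A'}$ contains strata $X_B$ with $B \geq A'$ and $B \not\leq A$ (e.g.\ the discrete partition whenever $A$ is not itself discrete), on which $j_{A*}j_A^*\Fc$ vanishes but $\Fc$ need not. So you cannot identify $D_A(\Fc)_\Xi|_{U_{A'}}$ with an image \emph{inside} $\Fc_\Xi|_{U_{A'}}$. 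The paper instead keeps the computation inside $j_{A*}j_A^*\Fc_{\Xi_2}$ and works with the commutative square \eqref{fact:eq:D_sum_cd}: the right vertical $\bigoplus j_{A*}\phi_{\Xi_1\Xi_2 A}$ is an isomorphism by the original isomorphism property at level $A$, and the whole argument is about showing the image of the bottom row is no larger than the image of the top row. Your claim, if true, would make the rest of the argument roughly go through, but as stated it is a step that fails.

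Second, the reindexing of the double family silently uses that $I$ is closed under addition (the lemma lives in a section where $I$ is assumed to be a factorization index \emph{semigroup}). Concretely, for a given $\Xi' \sim_A \Xi_2$ one must realize every $\Xi''$ with $\Xi'' \leq \Xi'$, $B(\Xi'') = B \cap B(\Xi')$ as a summand of some $\Xi_1 \leq \Xi_2$ with $B(\Xi_1) = B$ — that such a $\Xi_1$ (uniquely determined by $\Xi''$ and $B$) actually lies in $P_I[\alpha]$ requires closure of $I$ under addition. This is exactly the hypothesis that separates the modified isomorphism property \eqref{fact:eq:level_system_isomorphism_property} from the one in \cite{kr}, so it deserves to be named explicitly. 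Relatedly, the worry you raise about a join-semilattice structure on $\sim_A$-classes is not the relevant obstruction; what has to be checked is that the two applications of \eqref{fact:eq:level_system_isomorphism_property} — one at level $A$ on the right column of \eqref{fact:eq:D_sum_cd}, and one at level $C$ on each $\Fc_{\Xi'}$ with refinement $B \cap B(\Xi')$ — compose compatibly with the adjunction maps, which follows from axioms (a) and (b) of the level system together with the reindexing identity.
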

\begin{proof}
    As our level systems are special cases of equivariant systems in the sense of \cite{kr}, their proofs apply to show $D_A$ is a functor and $I_A$ a morphism. The only thing left to check is that our isomorphism property \eqref{fact:eq:level_system_isomorphism_property} is preserved by $D_A$. Take $\Xi_2\in P[\alpha]$ with a refinement $B$ of $B(\Xi_2)$ and fix $C\in P[b(\alpha)]$ with $B\sim_C B(\Xi_2)$. We want to show that
    \begin{equation}
        \bigoplus_{\Xi_1\leq \Xi_2, B(\Xi_1)=B}D_A(\phi)_{\Xi_1 \Xi_2 C}: \restr{\left(\bigoplus_{\Xi_1\leq \Xi_2, B(\Xi_1)=B}D_A(\Fc)_{\Xi_1}\right)}{U_C} \to \restr{D_A(\Fc)_{\Xi_2}}{U_C}
    \end{equation}
    is an isomorphism. Note that by $B\sim_C B(\Xi_2)$, for all $\Xi_1$ in the sum above, we have $\Xi_1\sim_C \Xi_2$. Hence, in the definition of $D_A(\phi)_{\Xi_1 \Xi_2 C}$ we are either in the first case, where we trivially get an isomorphism as both sides are $0$, or in the third case, where $C\leq A$ and $\Xi_1\sim_A \Xi_2$. Let us now consider this third case.

    We take the direct sum and insert it into the commutative diagram \eqref{fact:eq:D_morphisms_comm_diagram} to get on $U_C$
    \begin{equation}\label{fact:eq:D_sum_cd}
        \begin{tikzcd}
            \bigoplus_{\substack{\Xi_1\leq \Xi_2, B(\Xi_1)=B\\\Xi''\sim_A\Xi_1,\Xi''\leq \Xi_1}}\Fc_{\Xi''} \ar[r]\ar[d] & \bigoplus_{\substack{\Xi_1\leq \Xi_2\\ B(\Xi_1)=B}}j_{A *}j_A^*\Fc_{\Xi_1} \ar[d,"\bigoplus_{\substack{\Xi_1\leq \Xi_2\\ B(\Xi_1)=B}}j_{A *}\phi_{\Xi_1 \Xi_2 A}"] \\
            \bigoplus_{\Xi'\sim_A\Xi_2,\Xi'\leq \Xi_2}\Fc_{\Xi'} \ar[r] & j_{A *}j_A^*\Fc_{\Xi_2}.
        \end{tikzcd}
    \end{equation}
    Clearly, the image of the morphism in the lower row contains the image of the morphism in the upper row under the isomorphism on the right side. Take $\Xi'\sim_A\Xi_2$ with $\Xi'\leq \Xi_2$ but $\Xi'\not\leq \Xi_1$. We want to show that the image of $\Fc_{\Xi'}$ under the morphism in the lower row is also contained in the image of the morphism in the upper row. Consider the following part of the direct sum in the upper left corner
    \begin{equation*}
        \bigoplus_{\substack{\Xi_1\leq \Xi_2, B(\Xi_1)=B\\\Xi''\sim_A\Xi_1,\Xi''\leq \Xi_1\\ B(\Xi'')=B\cap B(\Xi')\\\Xi''\leq \Xi'}}\Fc_{\Xi''}=\bigoplus_{\substack{\Xi''\leq \Xi_2,\Xi''\sim_A\Xi_2\\ B(\Xi'')=B\cap B(\Xi')\\\Xi''\leq \Xi'}}\Fc_{\Xi''}.
    \end{equation*}
    Here the equality holds, because: we sum over all $\Xi_1$, so that all $\Xi''$ on the right-hand side are also present on the left-hand side, using that $I$ is closed under addition; the conditions $B(\Xi_1)=B$ and $\Xi''\leq \Xi_1$ determine $\Xi_1$ uniquely given $\Xi''$; and we have $\Xi_1\sim_A \Xi_2$ by assumption. Now note that for $\Xi''$ in this direct sum
    \begin{equation*}
        B(\Xi')\cap C=B(\Xi_2)\cap B(\Xi')\cap C=B\cap B(\Xi')\cap C=B(\Xi'')\cap C,
    \end{equation*}
    where the first equation holds because $\Xi'$ is subordinate to $\Xi_2$, the second one holds by the assumption $B\sim_C B(\Xi_2)$, and the last one holds by the condition on $\Xi''$ in the above sum. Hence, we have $\Xi''\sim_C \Xi'$. By $C\leq A$ and $\Xi'\sim_A \Xi_2$ by assumption, we get $\Xi''\sim_A\Xi_2$ automatically. So, the above direct sum is just
    \begin{equation*}
        \bigoplus_{\Xi''\leq \Xi', B(\Xi'')=B\cap B(\Xi')}\Fc_{\Xi''}.
    \end{equation*}
    Since we have $\Xi''\sim_C \Xi'$ for every summand, the isomorphism property \eqref{fact:eq:level_system_isomorphism_property} gives us that
    \begin{equation}\label{fact:eq:D_sum_partial_isom}
        \bigoplus_{\Xi''\leq \Xi', B(\Xi'')=B\cap B(\Xi')}\phi_{\Xi'' \Xi' C}: \restr{\left(\bigoplus_{\Xi''\leq \Xi', B(\Xi'')=B\cap B(\Xi')}\Fc_{\Xi''}\right)}{U_C} \to \restr{\Fc_{\Xi'}}{U_C}
    \end{equation}
    is an isomorphism. This commutes with adjunction and by assumptions (a) and (b) of the level system $(\Fc,\phi)$, it commutes with the $\phi_A$ in the commutative diagram \eqref{fact:eq:D_sum_cd}. But that means, using the isomorphism \eqref{fact:eq:D_sum_partial_isom}, the image of $\Fc_{\Xi'}$ under the morphism in the lower row of \eqref{fact:eq:D_sum_cd} is also contained in the image of the morphism in the upper row of \eqref{fact:eq:D_sum_cd}. Therefore, the desired isomorphism property \eqref{fact:eq:level_system_isomorphism_property} holds for $\left(D(\Fc),D(\phi)\right)$.
\end{proof}

\begin{lemma}\label{fact:lemma:strict_generation_ktheory}
    Taking K-theory classes of level systems to be defined as the collection of K-theory classes $\left(\left[\Fc_\Xi\right]\right)_\Xi$, we have that the K-theory of level systems is generated by strict level systems.
\end{lemma}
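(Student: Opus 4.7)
My plan is to argue by Noetherian induction on the support of a level system, using the functors $D_A$ of Definition \ref{fact:def:D_I_strictness_proof} to peel off strict pieces successively. By Lemma \ref{fact:lemma:D_I_preserve_isom_property}, for every $A \in P[b(\alpha)]$ the level system $D_A(\Fc, \phi)$ is strict, and $I_A : \Fc \to D_A(\Fc)$ is a morphism of level systems. The strategy is to choose $A$ cleverly so that $I_A$ is an isomorphism on a large open subset of $X^{b(\alpha)}$, thereby reducing the problem to level systems supported on a smaller closed subset. The first step would be to equip $\syst^\alpha$ with the exact structure in which a sequence is exact when each component is exact as a sequence of sheaves on $X^{b(\alpha)}$, so that additivity $[\Fc] = [\Fc'] + [\Fc'']$ applies to short exact sequences of level systems.

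The key step is the following: take $A^* \in P[b(\alpha)]$ to be the finest partition (into singletons), so that $X_{A^*}$ is the open dense stratum of pairwise distinct points and $U_{A^*} = X_{A^*}$. I would check that over $U_{A^*}$, the natural inclusion $\Fc_\Xi \hookrightarrow \bigoplus_{\Xi' \sim_{A^*} \Xi, \Xi' \leq \Xi} \Fc_{\Xi'}$ followed by the surjection onto $D_{A^*}(\Fc)_\Xi$ is an isomorphism. The isomorphism property \eqref{fact:eq:level_system_isomorphism_property} applied with $\Xi_2 = \Xi$ and $B = A^*$ ensures that the other summands $\Fc_{\Xi'}$ (with $\Xi' \neq \Xi$) are absorbed into the image of $\Fc_\Xi$ under the factorization maps $\phi_{\Xi' \Xi A^*}$ over $U_{A^*}$. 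Consequently $I_{A^*}$ restricts to an isomorphism over $U_{A^*}$, and the componentwise kernel $K$ and cokernel $C$ of $I_{A^*}$ are level systems supported on the proper closed complement $Z = X^{b(\alpha)} \setminus U_{A^*}$.

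Given this, in the Grothendieck group of $\syst^\alpha$ we have the identity
\begin{equation*}
    [\Fc] = [D_{A^*}(\Fc)] - [C] + [K],
\end{equation*}
where $[D_{A^*}(\Fc)]$ is the class of a strict level system. Since $K$ and $C$ have strictly smaller support, the Noetherian induction hypothesis (with base case being the zero system, where the statement is trivial) gives that their classes are $\Z$-linear combinations of classes of strict level systems. This proves that $[\Fc]$ lies in the subgroup generated by strict level systems.

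The main obstacle I anticipate is verifying that $K$ and $C$ actually inherit the structure of $\alpha$-level systems (with compatible $\phi$-morphisms satisfying axioms (a), (b), (c) and the isomorphism property \eqref{fact:eq:level_system_isomorphism_property}), rather than merely being componentwise systems of sheaves. The factorization morphisms $\phi_{\Xi_1 \Xi_2 B}$ on $K$ and $C$ should come by functoriality of kernels and cokernels applied to the morphism $I_{A^*}$ between the two level systems $\Fc$ and $D_{A^*}(\Fc)$, but checking that the isomorphism property \eqref{fact:eq:level_system_isomorphism_property} descends to these kernels and cokernels is likely the delicate point — one may need to verify a local splitting using the strict structure of $D_{A^*}(\Fc)$, analogous to the argument used for $D_A$ in Lemma \ref{fact:lemma:D_I_preserve_isom_property}.
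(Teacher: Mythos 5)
Your approach is structurally the same as the paper's: both use the four-term exact sequence
\begin{equation*}
0 \to \ker(I_A) \to (\Fc,\phi) \xrightarrow{I_A} D_A(\Fc,\phi) \to \cok(I_A)\to 0
\end{equation*}
and the resulting K-theory identity $[\Fc] = [D_A(\Fc)] - [\cok(I_A)] + [\ker(I_A)]$, and the paper itself only supplies the check that the modified isomorphism property \eqref{fact:eq:level_system_isomorphism_property} survives the passage to $D_A$, kernel, and cokernel, deferring the rest of the descent to \cite{kr}. But there are two points worth flagging.

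First, the step you anticipate as the delicate one is in fact the easy one. Since $I_A$ is a morphism of level systems and kernel/cokernel are formed for each $\Xi$ individually, the commutative square relating the isomorphism property for $\Fc$ and for the (already strict) $D_A(\Fc)$ induces the analogous diagrams for $\ker(I_A)$ and $\cok(I_A)$; over the relevant $U_A$ the two middle horizontal arrows are isomorphisms, so the induced arrows on the kernels and cokernels are as well. There is no ``local splitting'' to verify. This is exactly why the paper dismisses this part in one sentence.

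Second, and more seriously, the Noetherian induction as you set it up does not close. You fix $A^*$ to be the finest partition, so $U_{A^*}=X_{A^*}$ is the open stratum of pairwise distinct points, and indeed $I_{A^*}$ is an isomorphism over $U_{A^*}$. But after one application, $\ker(I_{A^*})$ and $\cok(I_{A^*})$ are supported on the big diagonal $Z = X^{b(\alpha)}\setminus X_{A^*}$. The functor $D_{A^*}$ annihilates any level system supported away from $X_{A^*}$: each component is an image inside $j_{A^* *}j_{A^*}^*\Fc_\Xi$, which vanishes. So applying the same identity to $\ker(I_{A^*})$ gives the tautology $[\ker]=0-0+[\ker]$, and the ``induction hypothesis'' is never actually invoked on a strictly simpler object. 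To make the descent work you must vary $A$: for a level system supported in a deeper stratum you have to choose a coarser partition $A$ for which $X_A$ meets (and is dense in a component of) that support, and run the argument relative to that stratum. That is exactly the content of the ``smaller in a way \cite{kr} make precise'' that the paper delegates; you cannot dispense with it by fixing $A^*$ once and for all.
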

\begin{proof}
    As our level systems are special cases of equivariant systems in the sense of \cite{kr}, their proof of this generation applies. It remains to check that our modified isomorphism property \eqref{fact:eq:level_system_isomorphism_property} is preserved by the steps in their proof. Their proof uses the exact sequence
    \begin{equation*}
        0 \to \ker\left(I_A\right) \to \left(\Fc,\phi\right) \xrightarrow{I_A} D_A\left(\Fc,\phi\right) \to \cok\left(I_A\right)\to 0,
    \end{equation*}
    which allows us to express the K-theory class $\left[\Fc,\phi\right]$ as
    \begin{equation*}
        \left[\Fc,\phi\right]= \left[D_A\left(\Fc,\phi\right)\right] - \left[\cok\left(I_A\right)\right] + \left[\ker\left(I_A\right)\right].
    \end{equation*}
    Now $D_A\left(\Fc,\phi\right)$ is strict by definition, and both $\cok\left(I_A\right)$ and $\ker\left(I_A\right)$ are "smaller" in a way \cite{kr} make precise. Then a Noetherian induction argument finishes the proof.

    For us, it simply remains to check that $D_A\left(\Fc,\phi\right)$, $\cok\left(I_A\right)$, and $\ker\left(I_A\right)$ all satisfy the modified isomorphism property \eqref{fact:eq:level_system_isomorphism_property}. For $D_A\left(\Fc,\phi\right)$ we already checked this in the previous Lemma \ref{fact:lemma:D_I_preserve_isom_property}. Since kernels and cokernels are taken for each $\Xi$ individually, $\cok\left(I_A\right)$ and $\ker\left(I_A\right)$ also satisfy \eqref{fact:eq:level_system_isomorphism_property}.
\end{proof}

\subsubsection{Acyclicity}\label{fact:sec:combinatorics}

Note that the complex $\Gc_\alpha$ was defined only using the elements of the $\alpha$-level system defined in Example \ref{fact:ex:fact_level_system}, not the entire data of the factorizable system. By Lemma \ref{fact:lemma:strict_generation_ktheory} we may assume that this $\alpha$-level system is strict, while still satisfying the computation of the K-theory class of $\Gc_\alpha$ in Lemma \ref{fact:lemma:complex_ktheory_class}. Note that using Lemma \ref{fact:lemma:strict_generation_ktheory} means that we may no longer use that the $\Fc_\Xi$ were constructed as $\boxtimes$ of pieces $\Fc_\beta$.

In the constructions that follow we have to consider the root node, leaves, and the following different types of nodes of an index tree.
\begin{definition}
    Nodes in an index tree can be of the following different (but not disjoint) types:
    \begin{itemize}
        \item A node is \textbf{exceptional} if it is not a leaf, but all of its children are leaves. Note that the root can be exceptional.
        \item A node is \textbf{ordinary} if it is neither the root, nor a leaf, nor exceptional.
        \item A node is \textbf{relevant} if it is either ordinary, or exceptional, but not a root. 
    \end{itemize}
\end{definition}

 We need to define some signs to keep track of the ordering under tree modifications.

\begin{definition}
    Let $(T,\leq)$ be an ordered index tree, and let $v$ be a node of $T$. We set $s_\leq(v)=(-1)^k$, where $k$ is the number of relevant nodes in $T$ strictly smaller than $v$.
\end{definition}

We now consider possible contractions of an index tree. When we consider an $\alpha$-index tree as a way to track sequences of subordinate partitions of $[\alpha]$, then the contractions are exactly the operations, which modify one part of this sequence.

\begin{definition}
    Let $T$ be an $\alpha$-index tree and let $v$ be a relevant node of $T$. We define the \textbf{ordinary contraction} $O(T,v)$ to be the $\alpha$-index tree, which is obtained from $T$ by deleting $v$ and connecting all of its children directly to its parent.

    Let $T$ be an $\alpha$-index tree and let $v$ be an exceptional node of $T$. Let $v_1,\dots,v_k$ be the children of $v$ with associated labels $(B_1,\tilde{\alpha}_1),\dots,(B_k,\tilde{\alpha}_k)$. We define the \textbf{exceptional contraction} $E(T,v)$ to be the $\alpha$-index tree, which is obtained from $T$ by deleting all children of $v$ and labelling $v$, which is now a leaf, by $(\bigcup_{i=1}^k B_i,\sum_{i=1}^k \tilde{\alpha}_i)$ of all its children's labels.

    If $T$ has an order $\leq$, denote by $\leq_v$ the order obtained by restriction to the ordinary or exceptional contraction.
\end{definition}

\begin{definition}
    Let $(\Fc,\phi)$ be a strict $\alpha$-level system and $(T,\leq)$ an ordered $\alpha$-index tree.

    Given a relevant node $v$ in $T$, we define the \textbf{ordinary contraction homomorphism} to be
    \begin{equation*}
        o(T,\leq,v):\Fc_{T,\leq} \xrightarrow{s_\leq(v)\mathrm{id}} \Fc_{O(T,v),\leq_v}.
    \end{equation*}

    Given an exceptional node $v$ in $T$, let $(E(T,v),\leq_v)$ be the exceptional contraction. Let $\Xi_1$ be the partition of $[\alpha]$ associated to the leaves of the index tree $T$, and let $\Xi_2$ be the partition associated to the leaves of $E(T,v)$. Note that $\Xi_1$ is subordinate to $\Xi_2$ by construction, and that $\leq$ and $\leq_v$ order the partitions in a compatible manner. We define the \textbf{exceptional contraction homomorphism} to be
    \begin{equation*}
        e(T,v): \Fc_{T,\leq}=\Fc_{\Xi_1} \xrightarrow{-s_\leq(v)\phi_{\Xi_1\Xi_2}} \Fc_{\Xi_2}=\Fc_{E(T,v),\leq_v}.
    \end{equation*}
\end{definition}

With these contraction homomorphisms we can define the following complex, which we also denote $\Gc_\alpha$ by abuse of notation. Note here that the terms of the complex below are the same as for the complex from Definition \ref{fact:def:tree_complex}, except that the $\Fc_{T,\leq_T}$ have been replaced using the results of Section \ref{fact:sec:good_replacement} to make $(\Fc,\phi)$ a strict $\alpha$-level system. So we may no longer assume $\Fc_{T,\leq_T}$ to be a tensor product of various $\Fc_\beta$, but have to treat them like abstract $S_{b(\alpha)}$-equivariant (2-periodic) complexes of sheaves on $X^{b(\alpha)}$ with certain homomorphisms $\phi_{\Xi_1\Xi_2}$, which we can use to define the differential.

\begin{definition}\label{fact:def:tree_complex_differential}
    Let $I$ be a factorization index semigroup. Take $\alpha\in I$, and let $(\Fc,\phi)$ be a strict $\alpha$-level system. For $k\geq 0$, we define the complex of sheaves on $X^{b(\alpha)}$
    \begin{equation*}
        \Gc_{\alpha}^{-k} := \bigoplus_{T\in\Tc(\alpha,k)} \Fc_{T,\leq_T},
    \end{equation*}
    where we recall that $\Tc(\alpha,k)$ is the set of $\alpha$-index trees with $k$ non-leaf nodes, and $\leq_T$ is the standard ordering of a given index tree. We define the (double) complex
    \begin{equation*}
        \Gc_{\alpha}:=(\Gc_{\alpha}^\bullet,\mathrm{d}),
    \end{equation*}
    where the differential is given by the sum of all possible ordinary and exceptional contraction homomorphisms.
\end{definition}

\begin{lemma}
    $(\Gc_{\alpha}^\bullet,\mathrm{d})$ is a complex, i.e. $\mathrm{d}^2=0$.
\end{lemma}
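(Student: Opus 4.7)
The plan is to verify $d^2 = 0$ by exhibiting a sign-reversing involution on the ordered pairs of successive contractions contributing to $d \circ d$ applied to each summand $\Fc_{T,\leq_T}$ of $\Gc_\alpha^\bullet$. Unfolding the definition, $d^2 \Fc_{T,\leq_T}$ decomposes as a sum over pairs $(c_1, c_2)$, where $c_1$ is either an ordinary or exceptional contraction of $T$ at a relevant node $v_1$ producing some intermediate tree $T'$, and $c_2$ is a contraction of $T'$ at a relevant node $v_2$. The goal is to pair these pair-terms so that paired contributions coincide at the sheaf level but carry opposite signs.

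The main case is when $v_1$ and $v_2$ are distinct non-comparable nodes of $T$, neither an ancestor of the other. Here one checks that contracting at $v_1$ and then at $v_2$ yields the same $\alpha$-index tree as contracting at $v_2$ and then at $v_1$, and that the resulting morphisms $\Fc_{T,\leq_T} \to \Fc_{\text{final},\leq}$ agree up to sign. For two ordinary contractions, both composites are scalar multiples of the identity, so the sheaves literally coincide. For one or two exceptional contractions, the composition axiom $\phi_{\Xi_2 \Xi_3} \circ \phi_{\Xi_1 \Xi_2} = \phi_{\Xi_1 \Xi_3}$ of the strict $\alpha$-level system forces the two composites to be equal (up to sign). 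The sign cancellation comes from the identity
\begin{equation*}
    s_{\leq_{v_1}}(v_2)\, s_\leq(v_1) = -\, s_{\leq_{v_2}}(v_1)\, s_\leq(v_2),
\end{equation*}
which follows because the two orderings differ by the Koszul transposition of the relevant nodes $v_1$ and $v_2$, and because $\leq_{v_1}$ agrees with $\leq$ outside $v_1$ so the counts of relevant nodes strictly smaller than $v_2$ differ by one.

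The subtler situation is when $v_1$ and $v_2$ are in an ancestor-descendant relation. Since exceptional contractions can only be performed at nodes all of whose children are leaves, this restricts the possible configurations: if $v_1$ is the parent of $v_2$ then $v_1$ must be ordinary (so $c_1$ is ordinary), while $v_2$ may be either relevant type. After contracting $v_1$, the descendant $v_2$ retains its children and hence its type, so the pair $(c_1, c_2)$ always pairs with the reverse $(c_2, c_1)$, with the same Koszul sign analysis applying once one tracks whether contracting $v_2$ first alters the relevance status of $v_1$'s other descendants. The one genuine subtlety is when an ordinary contraction at $v_2$ promotes $v_1$ from ordinary to exceptional (this happens when $v_2$ was $v_1$'s only non-leaf child besides possibly leaves); here one verifies that the new exceptional contraction available at $v_1$ in $T'$ precisely matches the ordinary contraction at $v_1$ in $T$ followed by the appropriate operation, using the definition of the exceptional contraction as $\phi_{\Xi_1 \Xi_2}$ and strictness of $(\Fc, \phi)$.

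The step I expect to be the main obstacle is this last bookkeeping: tracking how each contraction changes the type (ordinary/exceptional/leaf/root) of each other relevant node in the tree, verifying that the induced orderings $\leq_v$ produce the signs $\pm s_\leq(v)$ that exactly match the Koszul pattern, and ensuring that in the ancestor-descendant case the involution is well-defined with the correct sign. Once this combinatorial check is complete, summing over all $T \in \Tc(\alpha, k)$ gives $d^2 = 0$.
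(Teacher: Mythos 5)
Your overall strategy --- indexing $d^2$ contributions by ordered pairs of successive contractions and exhibiting a sign-reversing involution --- is the right one, and the paper itself defers this lemma's proof to \cite{kr} as ``pure tree combinatorics and sign computations,'' so what follows is a check of your proposal on its own terms. Your handling of the non-comparable case is correct, as is the Koszul sign identity you quote.

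The gap is in your treatment of the nested case. Take $v_1$ the parent of a relevant $v_2$, so $v_1$ is ordinary in $T$. The $d^2$ terms you cannot pair by swapping are exactly the composites (contraction at $v_2$, then exceptional at $v_1$): these exist because contracting at $v_2$ can promote $v_1$ from ordinary to exceptional, while the would-be reverse (exceptional at $v_1$, then $v_2$) does not exist in $T$. You acknowledge this subtlety, but the pairing you propose --- that such a term ``matches the ordinary contraction at $v_1$ in $T$ followed by the appropriate operation'' --- is not available: after an ordinary contraction at $v_1$ in $T$, the node $v_1$ is deleted and its children are reattached at $v_1$'s parent, and no single further contraction can produce the target tree of your problematic composite, which has $v_1$ present as a \emph{leaf} labeled $(B_{v_1},\tilde{\alpha}_{v_1})$. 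So this proposed partner does not exist.

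The correct partner keeps the first contraction at $v_2$ and flips its \emph{type}: $(\text{ordinary at }v_2,\ \text{exceptional at }v_1)$ cancels against $(\text{exceptional at }v_2,\ \text{exceptional at }v_1)$. Both have the same source $\Fc_{\Xi}$ and the same target (the tree with $v_1$ a leaf labeled $(B_{v_1},\tilde{\alpha}_{v_1})$), and their composites evaluate to
\begin{equation*}
  -\,s_\leq(v_2)\,s_{\leq_{v_2}}(v_1)\,\phi_{\Xi\Xi''}
  \qquad\text{and}\qquad
  +\,s_\leq(v_2)\,s_{\leq_{v_2}}(v_1)\,\phi_{\Xi\Xi''},
\end{equation*}
where the relative sign comes from the opposite built-in signs on ordinary ($s_\leq$) versus exceptional ($-s_\leq$) contraction homomorphisms, and the equality of the underlying morphisms uses $\phi_{\Xi'\Xi''}\circ\phi_{\Xi\Xi'}=\phi_{\Xi\Xi''}$, i.e.\ the strictness axiom. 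In other words the involution is not globally ``swap the two contraction sites''; on the subset of terms whose site-swap does not exist it is ``flip the type of the first contraction,'' and strictness is used precisely to identify those two composites. This also means the blanket statement ``the pair $(c_1,c_2)$ always pairs with the reverse $(c_2,c_1)$'' must be dropped. Once you install this corrected pairing, the rest of your argument goes through.
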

\begin{proof}
    The proof relies purely on tree combinatorics and sign computations, so the proof in \cite{kr} works verbatim.
\end{proof}

When modifying the $\alpha$-level system to be strict using Lemma \ref{fact:lemma:strict_generation_ktheory}, we preserved the K-theory class, so Lemma \ref{fact:lemma:complex_ktheory_class} still holds for this redefinition of $\Gc_\alpha$.

\subsubsection{Acyclicity}

We can now study the support of the complex $\Gc_\alpha$ with its differentials.

\begin{lemma}\label{fact:lemma:S_support_lemma_G_complex}
    Let $U$ be the non-stacky locus as an open subscheme of $X$, and let $S$ be its complement. Assume that the factorizable system $\Fc_{\alpha}$ is given such that for any $\alpha\in I\setminus \Delta_I$, $\restr{\Fc_{\alpha}}{U^{b(\alpha)}}$ vanishes. Then, for $\alpha\in I\setminus \Delta_I$, the complex $\Gc_{\alpha}$ constructed above also vanishes on $U^{b(\alpha)}$.
\end{lemma}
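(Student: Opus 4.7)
The plan is to trace the vanishing hypothesis for $\Fc_\alpha$ from $\sym^{b(\alpha)}(U)$ through the pullback, the level-system formalism, and the strict replacement. The key combinatorial observation is that for $\alpha \in I \setminus \Delta_I$ and any $[\alpha]$-partition $\Xi = \{(B_1, \tilde{\alpha}_1), \dots, (B_k, \tilde{\alpha}_k)\}$, at least one block $(\abs{B_i}, \tilde{\alpha}_i)$ also lies in $I \setminus \Delta_I$. Indeed, in the splitting $\Nr_0(\Xc) \cong \Z \oplus \widetilde{\Nr}_0(\Xc)$, the subset $\Delta_I$ consists precisely of classes whose $\widetilde{\Nr}_0(\Xc)$-component vanishes; additivity forces $\tilde{\alpha} = \sum_i \tilde{\alpha}_i$, so if every block were in $\Delta_I$ then $\alpha$ would be too, contradicting the hypothesis.

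Next, I would apply this to the (not-yet-strictified) $\alpha$-level system from Example \ref{fact:ex:fact_level_system}, where for each partition $\Xi$ of $[\alpha]$ the sheaf $\Fc_\Xi$ is the exterior tensor product $\Fc_{(\abs{B_1},\tilde{\alpha}_1)} \boxtimes \cdots \boxtimes \Fc_{(\abs{B_k},\tilde{\alpha}_k)}$ on $X^{b(\alpha)}$. Under the projections of Definition \ref{fact:def:partition_sheaf_via_tree}, the open $U^{b(\alpha)}$ decomposes as $U^{b(\alpha_1)} \times \cdots \times U^{b(\alpha_k)}$. Pulling the standing vanishing hypothesis back along $X^{b(\alpha)} \to \sym^{b(\alpha)}(X)$ gives $\Fc_{\alpha_i}|_{U^{b(\alpha_i)}} = 0$ for any $\alpha_i \in I \setminus \Delta_I$; combined with the first step, this shows at least one tensor factor of $\Fc_\Xi|_{U^{b(\alpha)}}$ vanishes, hence $\Fc_\Xi|_{U^{b(\alpha)}} = 0$ for every $\Xi \in P[\alpha]$.

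Finally, I would check that this vanishing is preserved by the strict replacement of Lemma \ref{fact:lemma:strict_generation_ktheory}, so that the strict level system used to define $\Gc_\alpha$ in Definition \ref{fact:def:tree_complex_differential} still satisfies $\Fc_\Xi|_{U^{b(\alpha)}} = 0$. For the functor $D_A$ of Definition \ref{fact:def:D_I_strictness_proof}, $D_A(\Fc)_\Xi$ is by construction the image of a morphism between a direct sum of the $\Fc_{\Xi'}$ and $j_{A*}j_A^* \Fc_\Xi$; each of these restricts to $0$ on $U^{b(\alpha)}$ once all $\Fc_{\Xi'}$ do, and the same argument applies to the kernel and cokernel of $I_A$, which are formed componentwise. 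The Noetherian induction in Lemma \ref{fact:lemma:strict_generation_ktheory} therefore produces a strict replacement whose components still vanish on $U^{b(\alpha)}$. Consequently, each summand $\Fc_{T,\leq_T}$ of each $\Gc_\alpha^{-k}$ vanishes on $U^{b(\alpha)}$, giving the lemma. The main obstacle I anticipate is verifying that ``vanishing on a specified open of $X^{b(\alpha)}$'' is genuinely preserved at every stage of the Noetherian induction rather than just at the endpoints; this seems manageable since all constructions involved ($D_A$, $I_A$, kernels, cokernels, and $j_{A*}j_A^*$ restricted to an open) are left-exact or compatible with restriction, but the bookkeeping requires attention.
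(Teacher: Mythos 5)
Your proposal is correct and follows the same approach as the paper's proof: show that every $\Fc_\Xi$ in the (non-strictified) $\alpha$-level system vanishes on $U^{b(\alpha)}$ because at least one block of any $[\alpha]$-partition must lie in $I \setminus \Delta_I$, then verify that $D_A$, $I_A$, kernels, and cokernels in the strictification procedure preserve this vanishing. You make the combinatorial observation (that all blocks in $\Delta_I$ would force $\tilde{\alpha}=0$) slightly more explicit than the paper does, but the argument is identical.
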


\begin{proof}
    Take $\alpha\in I\setminus \Delta_I$. First note that any partition of $\alpha$ must have at least one element in $I\setminus \Delta_I$, so all elements $\Fc_{\Xi}$ of the $\alpha$-level system of Example \ref{fact:ex:fact_level_system} vanish on $U^{b(\alpha)}$. The functors $D_A$ and morphisms $I_A$ of Definition \ref{fact:def:D_I_strictness_proof} preserve such vanishing on an open subscheme $U^{b(\alpha)}$ of $X^{b(\alpha)}$. Hence, the procedure in Lemma \ref{fact:lemma:strict_generation_ktheory} preserves this vanishing, so that the the K-theory class of $(\Fc,\phi)$ is a sum of K-theory classes of strict level systems vanishing on $U^{b(\alpha)}$. Therefore, the complex $\Gc_{\alpha}$ also vanishes on $U^{b(\alpha)}$.
\end{proof}

\begin{lemma}\label{fact:lemma:complex_support_diagonal}
    Away from the small diagonal $X\hookrightarrow X^{b(\alpha)}$, the complex $\Gc_{\alpha}$ is acyclic.
\end{lemma}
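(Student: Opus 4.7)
The statement is local on $X^{b(\alpha)}$, so the plan is to fix a point $x=(x_1,\dots,x_{b(\alpha)})$ off the small diagonal and let $B_0\in P[b(\alpha)]$ record which coordinates of $x$ coincide. Since $x$ avoids the small diagonal, $\abs{B_0}\geq 2$, and the open subscheme $U_{B_0}$ from Definition \ref{fact:def:level_system} is a neighbourhood of $x$. It therefore suffices to prove that $\Gc_\alpha$ is acyclic on $U_{B_0}$.

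After invoking Lemma \ref{fact:lemma:strict_generation_ktheory} to replace the level system $(\Fc,\phi)$ of Example \ref{fact:ex:fact_level_system} by a strict one (which does not alter the K-theory class of $\Gc_\alpha$ and is what allows the differential of Definition \ref{fact:def:tree_complex_differential} to be defined on the nose), I would use the isomorphism property \eqref{fact:eq:level_system_isomorphism_property} applied to each $\Xi\in P[\alpha]$ with the refinement $B = B(\Xi)\cap B_0$ of $B(\Xi)$ (which satisfies $B\sim_{B_0} B(\Xi)$ tautologically), to obtain the canonical decomposition
\[
\restr{\Fc_{\Xi}}{U_{B_0}} \;\cong\; \bigoplus_{\substack{\Xi'\leq\Xi\\ B(\Xi')=B(\Xi)\cap B_0}} \restr{\Fc_{\Xi'}}{U_{B_0}}.
\]
Compatibility of the level-system morphisms $\phi$ with this splitting, guaranteed by axioms (a)--(c) of Definition \ref{fact:def:level_system}, then lets us reduce on $U_{B_0}$ to the subcomplex of $\Gc_\alpha$ spanned by $B_0$\emph{-subordinate} $\alpha$-index trees---those whose every leaf block $B_\ell$ lies inside a single block of $B_0$.

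The last step is to exhibit a contracting homotopy on this subcomplex by tree surgery. Fix a block $B'\in B_0$. For a $B_0$-subordinate tree $T$, call a non-leaf node $v$ a $B'$\emph{-cut} if $B_v = B'$, and say $T$ is $B'$\emph{-separated} if the root of $T$ has a child which is a $B'$-cut. If $T$ is not $B'$-separated, then by $B_0$-subordination the children of the root split into those with $B_{v_i}\subseteq B'$ and those with $B_{v_i}\subseteq [b(\alpha)]\setminus B'$, the first group covering $B'$ exactly; since $I$ is a factorization index semigroup and hence closed under addition, one can insert a new non-leaf node $v^\ast$ between the root and the first group, carrying the valid label $\bigl(\abs{B'},\sum_{i:\,B_{v_i}\subseteq B'}\tilde\alpha_{v_i}\bigr)\in I$. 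Define a degree $+1$ operator $h$ by sending $(T,\leq_T)$ to this enlarged tree (equipped with its standard ordering) when $T$ is not $B'$-separated, and to $0$ otherwise, transported through the decomposition above. One then checks $\dd h + h\dd = \id$: the ordinary contraction at $v^\ast$ inside $\dd h(T)$ recovers $(T,\leq_T)$ itself, while every other contraction term in $\dd h$ pairs off with a matching term of $h\dd$. The principal obstacle I anticipate is the sign bookkeeping: the signs $s_{\leq}(v)$ appearing in the ordinary and exceptional contraction homomorphisms must align with the way the standard ordering $\leq_T$ shifts under insertion or contraction of a $B'$-cut, a purely combinatorial verification mirroring the scheme case of \cite{kr}, with the additional care needed for the labels $(B_v,\tilde\alpha_v)$ in place of the integer labels used there. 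Granted the homotopy identity, $\Gc_\alpha|_{U_{B_0}}$ is null-homotopic and hence acyclic.
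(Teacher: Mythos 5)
You have the right setting (work near $U_{B_0}$ after making the level system strict, exploit the isomorphism property \eqref{fact:eq:level_system_isomorphism_property}), but the heart of your proof --- the contracting homotopy by tree surgery --- doesn't go through as stated, and the ``reduction to $B_0$-subordinate trees'' conceals exactly the place where the paper has to do real work.

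The concrete failure is in the definition of $h$. If $T$ is not $B'$-separated and the set of root children covering $B'$ consists of a \emph{single leaf} $\ell$ with $B_\ell = B'$, inserting $v^*$ above that group would create a non-leaf node with one child, which is not a valid $\alpha$-index tree. This situation is not pathological: it occurs already for the smallest non-$B'$-separated trees (root with one leaf in $B'$ and other children outside). And this is exactly where the factorization structure enters nontrivially: a tree $T'$ whose node $d(T')$ is a leaf with $B_{d(T')}$ meeting both sides of the split is the target not of a single reversible move, but of a whole \emph{family} of exceptional contractions, one for each decomposition $\tilde\alpha_1+\tilde\alpha_2 = \tilde\alpha_{d(T')}$ in $I$; individually these are not invertible, only their \emph{sum} is, by the isomorphism property \eqref{fact:eq:level_system_isomorphism_property}. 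A contracting homotopy of the kind you propose, in which $h$ picks out one canonical ``un-contraction'', cannot produce the identity $\dd h + h\dd = \id$ because the matching term in $h\dd$ or $\dd h$ is a sum over splittings, not a single summand. This is precisely case (CA)/(A) in the paper's Lemma~\ref{fact:lemma:bijection_grading_tree}.

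The earlier step also needs justification: decomposing each $\restr{\Fc_\Xi}{U_{B_0}}$ via \eqref{fact:eq:level_system_isomorphism_property} replaces the \emph{coefficient} at the tree $T$ by a direct sum of smaller $\Fc_{\Xi'}$'s, but it does not by itself produce a decomposition of the complex $\Gc_\alpha$ into a part spanned by $B_0$-subordinate trees and a complementary part --- the differential couples the pieces, and $B_0$-subordination is not preserved by the coefficient splitting without reorganizing cohomological degrees (a split leaf would have to become a new non-leaf node). The paper's proof circumvents both issues at once by choosing a \emph{two}-block partition $\overline{B}=\{\overline{B}_1,\overline{B}_2\}$ separating $p$ (rather than the full coincidence pattern $B_0$), then filtering $\Gc_\alpha$ by the somewhat elaborate function $\Upsilon$ into $J = 2^{[\alpha]}_I\times\Z^4$, showing every contraction weakly decreases $\Upsilon$, and proving the subquotients are acyclic because each is a two-term complex whose differential is a sum of isomorphic contractions --- including, in the exceptional case, the sum over splittings that \eqref{fact:eq:level_system_isomorphism_property} makes an isomorphism. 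Your instinct that the result should be a ``null-homotopy argument'' is morally right, but it must be implemented as a filtration argument of this kind to handle the exceptional contractions; the paper's $\Upsilon$ is doing that bookkeeping for you.
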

\begin{proof}
    First note that acyclicity is independent of the $S_{b(\alpha)}$-equivariant structure, so we may only work with the underlying complexes of sheaves here. Secondly, there is nothing to show for $\alpha$ with $b(\alpha)=1$, so we may assume $b(\alpha)>1$.  
    
    Now let $p\in X^{b(\alpha)}$ be a point away from the small diagonal. That means, there exists a partition $\overline{B}=\{\overline{B}_1,\overline{B}_2\}$ of $[b(\alpha)]$ (we assume it is standard ordered) such that $p$ is in the the open subscheme $U_{\overline{B}_1\overline{B}_2} = U_{\overline{B}}$ in $X^{b(\alpha)}$.

    In order to prove acyclicity at $p$, we want to construct a map
    \begin{equation*}
        \Upsilon:\Tc(\alpha)\to J
    \end{equation*}
    from $\Tc(\alpha)$ to a totally ordered set $(J,\leq)$, such that for every contraction $T\to T'$ we have $\Upsilon(T)\geq \Upsilon(T')$. Such a map gives us a filtration of $\Gc_\alpha$ with subquotients of the form, each for some $j\in J$,
    \begin{equation*}
        Q_j\Gc_{\alpha} = \bigoplus_{T\in\Upsilon^{-1}(j)} \Fc_{T,\leq_T},
    \end{equation*}
    with differentials given by the sum of all contractions preserving $\Upsilon$. We want to find such a function $\Upsilon$ so that all such subquotients are acyclic. As extensions of acyclic complexes are acyclic, this would imply acyclicity of $\Gc_\alpha$ at $p$. Note that the function $\Upsilon$ may depend on $p$.

    Set
    \begin{equation*}
        2^{[\alpha]}_I \coloneqq \left\{(B,\tilde{\beta})\ |\ \Xi\in P_I[\alpha],\ (B,\tilde{\beta})\in\Xi\right\}.
    \end{equation*}
    Write $(B_2,\tilde{\beta}_2)\subseteq(B_1,\tilde{\beta}_1)$ if $B_2\subseteq B_1$ and $(B_2,\tilde{\beta}_2)\in 2^{\left[\left(\abs{B_1},\tilde{\beta}_1\right)\right]}_I$. Choose a total ordering on $2^{[\alpha]}_I$, such that for any $(B_1,\tilde{\beta}_1)\in 2^{[\alpha]}_I$ and $(B_2,\tilde{\beta}_2)\subseteq(B_1,\tilde{\beta}_1)$ we have $(B_2,\tilde{\beta}_2)\geq (B_1,\tilde{\beta}_1)$. We take our totally ordered set to be
    \begin{equation*}
        J\coloneqq 2^{[\alpha]}_I\times \Z^4
    \end{equation*}
    with the lexicographical total order. Now we define the desired function
    \begin{equation*}
        \Upsilon=(\Upsilon_0,\Upsilon_1,\Upsilon_2,\Upsilon_3,\Upsilon_4):\Tc(\alpha)\to J.
    \end{equation*}
    Let $T\in\Tc(\alpha)$ be an $\alpha$-index tree. Set
    \begin{equation*}
        B(T)\coloneqq \left\{\left(B_v,\tilde{\alpha}_v\right)\ |\ v\in T\right\}\subset 2^{[\alpha]}_I,
    \end{equation*}
    and consider the subset $R(T)\subseteq B(T)$ where $B_v$ is neither contained in $\overline{B}_1$ nor $\overline{B}_2$. Take $d(T)$ to be the node of $T$ for which $\left(B_{d(T)},\tilde{\alpha}_{d(T)}\right)\in 2^{[\alpha]}_I$ is in $R(T)$ and maximal in $R(T)$ with respect to the total order on $2^{[\alpha]}_I$. Since the root node is always contained in $R(T)$, this is well-defined. We set
    \begin{equation*}
        \Upsilon_0(T)\coloneqq \left(B_{d(T)},\tilde{\alpha}_{d(T)}\right).
    \end{equation*}

    For two nodes $v,v'$ in $T$, write $v\prec v'$ if $v$ is a proper descendant of $v'$. To define the other components, note first that for every descendant $v\prec d(T)$, we have $\left(B_v,\tilde{\alpha}_v\right)\subsetneq \left(B_{d(T)},\tilde{\alpha}_{d(T)}\right)$ and in particular $\left(B_{v},\tilde{\alpha}_{v}\right) > \left(B_{d(T)},\tilde{\alpha}_{d(T)}\right)$ by our choice of the total order on $2^{[\alpha]}_I$. Hence, $\left(B_{v},\tilde{\alpha}_{v}\right)$ cannot be in $R(T)$ by the defining maximality assumption of $d(T)$. This means, every descendant $v\prec d(T)$ satisfies either 
    \begin{equation}\label{fact:eq:dT_desc_inclusions}
        B_v\subseteq \overline{B}_1\cap B_{d(T)}\text{ or }B_v\subseteq \overline{B}_2\cap B_{d(T)}.
    \end{equation}
    This lets us define
    \begin{align*}
        \Upsilon_1(T) &\coloneqq \abs{\left\{v\in T\ |\ v\not\preceq d(T)\right\}},\\
        \Upsilon_2(T) &\coloneqq \abs{\left\{v\prec d(T)\ |\ B_v\subsetneq \overline{B}_1\cap B_{d(T)}\right\}},\\
        \Upsilon_3(T) &\coloneqq \Upsilon_2(T)\cdot\abs{\left\{v\prec d(T)\ |\ B_v\subseteq \overline{B}_2\cap B_{d(T)}\right\}},\\
        \Upsilon_4(T) &\coloneqq \abs{\left\{v\prec d(T)\ |\ B_v\subsetneq \overline{B}_2\cap B_{d(T)}\right\}}.
    \end{align*}

    Now we examine this map and its induced filtration on $\Gc_\alpha$. First, we show that for every contraction $T\to T'$ we have $\Upsilon(T)\geq \Upsilon(T')$. Take a contraction $T\to T'$. Then we have $B(T')\subseteq B(T)$, giving us $\Upsilon_0(T)\geq\Upsilon_0(T')$ immediately. Assume $\Upsilon_0(T)=\Upsilon_0(T')$, which just means $\left(B_{d(T)},\tilde{\alpha}_{d(T)}\right)=\left(B_{d(T')},\tilde{\alpha}_{d(T')}\right)$. Now we consider where the contraction could be. If it contracts a node $v\not\preceq d(T)$, then $\Upsilon_1(T)> \Upsilon_1(T')$ directly by definition. If $d(T)$ is contracted, then by $\left(B_{d(T)},\tilde{\alpha}_{d(T)}\right)=\left(B_{d(T')},\tilde{\alpha}_{d(T')}\right)$, the contraction must be exceptional, so $\Upsilon_1(T)=\Upsilon_1(T')$. But in this case, we also directly get $\Upsilon_i(T')=0$ for $i=2,3,4$, which gives $\Upsilon(T)\geq \Upsilon(T')$. Contracting a node $v\prec d(T)$ also immediately gives $\Upsilon_1(T)=\Upsilon_1(T')$. In this case, we get
    \begin{equation*}
        \left\{\left(B_{v'},\tilde{\alpha}_{v'}\right)\ |\ v'\prec d(T')\right\}\subsetneq 
        \left\{\left(B_{v},\tilde{\alpha}_{v}\right)\ |\ v\prec d(T)\right\},
    \end{equation*} 
    which implies $\Upsilon_i(T)\geq\Upsilon_i(T')$ for $i=2,3,4$. In particular, we see that for a contraction $T\to T'$ with $\Upsilon(T)=\Upsilon(T')$, the contraction must be either
    \begin{equation}\label{fact:eq:upsilon_pres_contr_cases}
        \text{exceptional at }d(T)\text{ or at }v\prec d(T).
    \end{equation}

    Hence, $\Upsilon$ defines a filtration on $\Gc_\alpha$ with subquotients of the form, each for some $j\in J$,
    \begin{equation*}
        Q_j\Gc_{\alpha} = \bigoplus_{T\in\Upsilon^{-1}(j)} \Fc_{T,\leq_T},
    \end{equation*}
    with differentials given by the sum of all contractions preserving $\Upsilon$. The following Lemma \ref{fact:lemma:bijection_grading_tree} shows that the differential of $Q_j\Gc_{\alpha}$, which is built from the sums of $\Upsilon$-preserving contraction homomorphisms, is an isomorphism at $p$, making the subquotients $Q_j\Gc_{\alpha}$ acyclic at $p$. This means $\Gc_{\alpha}$ is acyclic at $p$.
\end{proof}

\begin{lemma}\label{fact:lemma:bijection_grading_tree}
    Take notation as in the above proof of Lemma \ref{fact:lemma:complex_support_diagonal}. The contractions preserving $\Upsilon$ come in collections of contractions $T_{1,k}\to T_2$, such that
    \begin{itemize}
        \item no $T_{1,k}$ has an $\Upsilon$-preserving contraction $T''\to T_{1,k}$, 
        \item $T_{1,k}$ has no other $\Upsilon$-preserving contractions $T_{1,k}\to T'$, 
        \item $T_2$ has no $\Upsilon$-preserving contraction $T_2\to T'$, and
        \item $T_{1,k}\to T_2$ are all $\Upsilon$-preserving contractions into $T_2$.
    \end{itemize}
    The sum of the associated contraction homomorphisms
    \begin{equation*}
        \bigoplus_k \Fc_{T_{1,k}} \to \Fc_{T_2}
    \end{equation*}
    is an isomorphism at $p$.
\end{lemma}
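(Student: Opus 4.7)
The strategy is to sharpen the characterization of $\Upsilon$-preserving contractions beyond \eqref{fact:eq:upsilon_pres_contr_cases} and then match the resulting bipartite collections with a single application of the isomorphism property \eqref{fact:eq:level_system_isomorphism_property}.

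First I would refine the dichotomy in \eqref{fact:eq:upsilon_pres_contr_cases}. By \eqref{fact:eq:dT_desc_inclusions}, an exceptional contraction at $v \prec d(T)$ removes the at-least-two leaf children of $v$, whose $B$'s lie in $\overline{B}_1 \cap B_{d(T)}$ or $\overline{B}_2 \cap B_{d(T)}$; a careful accounting shows that such a contraction strictly decreases one of $\Upsilon_2$ or $\Upsilon_4$ and so cannot be $\Upsilon$-preserving. Hence the only $\Upsilon$-preserving contractions are exceptional contractions at $v = d(T)$. After such a contraction, $d(T')$ is a leaf with no descendants, so $\Upsilon_2(T') = \Upsilon_4(T') = 0$; imposing the same on $T$ together with \eqref{fact:eq:dT_desc_inclusions} and the exceptionality of $d(T)$ (at least two leaf children) forces $d(T)$ to have exactly two leaf children with labels $(B_0^1, \tilde{\gamma}^1), (B_0^2, \tilde{\gamma}^2)$, where $B_0 = B_{d(T)}$, $B_0^l \coloneqq B_0 \cap \overline{B}_l$, and $\tilde{\gamma}^1 + \tilde{\gamma}^2 = \tilde{\alpha}_{d(T)}$; both $B_0^l$ are non-empty because $B_0$ straddles $\overline{B}$ by $d(T) \in R(T)$.

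Second, since each $\Upsilon$-preserving contraction makes $d$ a leaf in the target, the target admits no further $\Upsilon$-preserving contraction out. The graph of $\Upsilon$-preserving contractions is therefore bipartite: no chains exist, and sources are disjoint from targets. Grouping contractions by their shared target $T_2$ produces the claimed collections, and conditions (1)--(3) of the lemma follow automatically. For a target $T_2$ with $d(T_2)$ a leaf labeled $(B_0, \tilde{\alpha}_0)$, the $T_{1,k}$'s are indexed by the pairs $(\tilde{\gamma}^1, \tilde{\gamma}^2)$ with $\tilde{\gamma}^1 + \tilde{\gamma}^2 = \tilde{\alpha}_0$ and $(\abs{B_0^l}, \tilde{\gamma}^l) \in I$, and their leaf partitions $\Xi_1^k$ differ from the leaf partition $\Xi_2$ of $T_2$ only by replacing $(B_0, \tilde{\alpha}_0)$ with $\{(B_0^1, \tilde{\gamma}^1), (B_0^2, \tilde{\gamma}^2)\}$. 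The underlying tree structure is identical across the $T_{1,k}$, so $s_{\leq_{T_{1,k}}}(d(T_{1,k}))$ is a constant $\epsilon$ independent of $k$.

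Finally, each contraction homomorphism $T_{1,k} \to T_2$ equals $-\epsilon\, \phi_{\Xi_1^k \Xi_2}$, so the total sum is $-\epsilon \bigoplus_k \phi_{\Xi_1^k \Xi_2}$. I would conclude by applying \eqref{fact:eq:level_system_isomorphism_property} to $\Xi_2$ with $B$ the refinement of $B(\Xi_2)$ splitting only $B_0$ into $\{B_0^1, B_0^2\}$ and with $A = \overline{B}$. A direct check gives $B \sim_{\overline{B}} B(\Xi_2)$, and the set of $\Xi_1$ appearing in \eqref{fact:eq:level_system_isomorphism_property} matches our $\Xi_1^k$'s bijectively via the label pair $(\tilde{\gamma}^1, \tilde{\gamma}^2)$, so the sum is an isomorphism on $U_{\overline{B}} \ni p$. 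The main obstacle will be the bookkeeping in the first step: establishing the rigid two-child structure is exactly what ensures the sum matches a single instance of \eqref{fact:eq:level_system_isomorphism_property} rather than a combination across multiple refinements $B$.
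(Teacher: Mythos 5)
Your proposal contains a genuine gap. You claim that ``the only $\Upsilon$-preserving contractions are exceptional contractions at $v=d(T)$,'' but your argument only rules out \emph{exceptional} contractions at $v\prec d(T)$. You do not address \emph{ordinary} contractions at $v\prec d(T)$, and these can indeed preserve $\Upsilon$. In particular, an ordinary contraction at a relevant node $v\prec d(T)$ with $B_v=\overline{B}_1\cap B_{d(T)}$ removes a node $v$ that is \emph{not} counted by $\Upsilon_2$ (since $\Upsilon_2$ only counts nodes with $B_v\subsetneq\overline{B}_1\cap B_{d(T)}$, a proper inclusion), and $v$ contributes to neither $\Upsilon_3$ nor $\Upsilon_4$; its children stay in the tree as descendants of $d(T)$, so none of $\Upsilon_0,\dots,\Upsilon_4$ change. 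Similarly, an ordinary contraction at $v\prec d(T)$ with $B_v=\overline{B}_2\cap B_{d(T)}$ can preserve $\Upsilon$ once $\Upsilon_2=0$. These are exactly the cases (B1) and (B2) that the paper's proof isolates and that you omit.

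As a result, your ``bipartiteness'' argument -- which relies on $d$ becoming a leaf in every target -- is not established, and your final matching to a single instance of \eqref{fact:eq:level_system_isomorphism_property} only covers the collections arising from the case you identified. The omission is not fatal to the theorem (the ordinary contraction homomorphisms $o(T,\leq,v)$ are $\pm\mathrm{id}$ by definition, so the missing collections are single contractions $T_1\to T_2$ whose associated homomorphism is already a global isomorphism), but as written your proof incorrectly asserts that the collections you describe exhaust all $\Upsilon$-preserving contractions. To repair it, you must enumerate the ordinary-contraction cases, verify each is mutually exclusive with the others (the paper does this by noting case (B1) forces $\Upsilon_2\geq 2$, case (B2) forces $\Upsilon_2=\Upsilon_3=0$ and $\Upsilon_4\geq 2$, and case (A) forces $\Upsilon_2=\Upsilon_3=\Upsilon_4=0$), and check bipartiteness across all three cases before invoking the isomorphism property on the exceptional-at-$d(T)$ collections.
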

\begin{proof}
    Take a tree $T$ and study its possible $\Upsilon$-preserving contractions. In the previous proof in \eqref{fact:eq:upsilon_pres_contr_cases}, we noted that a contraction $T\to T'$ preserving $\Upsilon$ must be either
    \begin{enumerate}[(A)]
        \item exceptional at $d(T)$, or
        \item a contraction at some $v\prec d(T)$.
    \end{enumerate}
    In case (A), $d(T')$ is the leaf that $d(T)$ is contracted to, so that $\Upsilon_i(T')=0$ for $i=2,3,4$. As $\Upsilon$ is preserved, that means $\Upsilon_i(T)=0$ for $i=2,3,4$. By definition, this means every $v\prec d(T)$ must have $B_v=\overline{B}_1\cap B_{d(T)}$ or $B_v=\overline{B}_1\cap B_{d(T)}$. But that means $d(T)$ must have exactly two children $v_1$ and $v_2$, both leaves, with
    \begin{equation*}
        B_{v_i}=\overline{B}_i\cap B_{d(T)}.
    \end{equation*}

    In case (B), $\Upsilon_2(T)=\Upsilon_2(T')$ implies that the contraction is either
    \begin{enumerate}[(B1)]
        \item ordinary at a node $v$ with $B_v=\overline{B}_1\cap B_{d(T)}$, or
        \item at a node $v$ with $B_v\subseteq\overline{B}_2\cap B_{d(T)}$ by \eqref{fact:eq:dT_desc_inclusions}.
    \end{enumerate}
    In case (B2), we have 
    \begin{equation*}
        \abs{\left\{v\prec d(T)\ |\ B_v\subseteq \overline{B}_2\cap B_{d(T)}\right\}}>\abs{\left\{v\prec d(T')\ |\ B_v\subseteq \overline{B}_2\cap B_{d(T')}\right\}},
    \end{equation*}
    but this, together with $\Upsilon_2(T)=\Upsilon_2(T')$ and $\Upsilon_3(T)=\Upsilon_3(T')$, by definition of $\Upsilon_3$, implies $\Upsilon_i(T)=\Upsilon_i(T')=0$ for $i=2,3$. But then $\Upsilon_4(T)=\Upsilon_4(T')$ implies that the contraction must be ordinary at a node $v$ with $B_v=\overline{B}_2\cap B_{d(T)}$. We now show that these cases cannot happen at the same time. First, since both (B1) and (B2) are ordinary contractions at proper descendants of $d(T)$ neither of them can occur at the same time as (A). Secondly, if (B1) occurs, then, as the contraction is ordinary at a node $v\prec d(T)$ with $B_v=\overline{B}_1\cap B_{d(T)}$, it must have descendants $v'$ with $B_{v'}\subsetneq\overline{B}_1\cap B_{d(T)}$, such that $\Upsilon_2(T)\neq 0$, contradicting case (B2). Analogously, if (B2) occurs, then by $\Upsilon_2(T)= 0$, there must be a leaf $v_1\prec d(T)$ with $B_{v_1}=\overline{B}_1\cap B_{d(T)}$. In summary, we have three distinct possible cases for contractions preserving $\Upsilon$:
    \begin{enumerate}
        \item[(A)] An exceptional contraction at $d(T)$ if $d(T)$ is exceptional with two children.
        \item[(B1)] An ordinary contraction at a relevant $v\prec d(T)$ with $B_v=\overline{B}_1\cap B_{d(T)}$ if one exists.
        \item[(B2)] An ordinary contraction at a relevant $v_2\prec d(T)$ with $B_{v_2}=\overline{B}_2\cap B_{d(T)}$ if one exists and there is a leaf $v_1\prec d(T)$ with $B_{v_1}=\overline{B}_1\cap B_{d(T)}$.
    \end{enumerate}
    If a tree $T$ is in any of the above cases, and it admits an $\Upsilon$-preserving contractions from another tree $T'$, the tree $T'$ again has to satisfy one of the above cases. We can check case-wise that this is not possible, so no tree $T$ in the above cases admits an $\Upsilon$-preserving contraction $T'\to T$. If $T$ falls into neither of these categories, then we have three distinct possibilities
    \begin{enumerate}
        \item[(CA)] $d(T)$ is a leaf.
        \item[(CB1)] $d(T)$ is not a leaf, but no $v\prec d(T)$ has $B_v=\overline{B}_1\cap B_{d(T)}$.
        \item[(CB2)] There is a leaf $v_1\prec d(T)$ with $B_{v_1}=\overline{B}_1\cap B_{d(T)}$, but no $v_2\prec d(T)$ with $B_{v_2}=\overline{B}_2\cap B_{d(T)}$.
    \end{enumerate}
    In each of the cases (CB1) and (CB2) there exists a unique tree $T'$ that fits into cases (B1) and (B2) respectively, together with a contraction $T'\to T$. Here we use that $I$ is closed under addition. In case (CA), consider $B_{d(T)}$ and $\tilde{\alpha}_{d(T)}$. For every splitting $\tilde{\alpha}_1+\tilde{\alpha}_2=\tilde{\alpha}_{d(T)}$, there exist a unique tree $T'$ of type (A) with a contraction to $T$. The tree $T'$ is built from $T$ by attaching two children $v_1,v_2$ to the leaf $d(T)$ with $B_{v_i}=\overline{B}_i\cap B_{d(T)}$ and $\tilde{\alpha}_{v_i}=\tilde{\alpha}_i$. Here we use that, by definition $B_{d(T)}$ contains at least two elements, one in $\overline{B}_1$ and one in $\overline{B}_2$, so that such splittings $\tilde{\alpha}_1+\tilde{\alpha}_2=\tilde{\alpha}_{d(T)}$ exist by $I$ being a factorization index set. These are all possible trees $T'$ contracting to $T$ while preserving $\Upsilon$.
    
    It remains to check that the associated (sums of) contraction homomorphisms are isomorphisms. Take $T\to T'$ an $\Upsilon$-preserving contraction of case (B1) or (B2). Because the contractions in cases (B1) and (B2) are ordinary, their associated contraction homomorphism $\Fc_{T'}\to \Fc_{T}$ are global isomorphisms.

    Consider a tree $T'$ in case (CA), together with all contractions $T_{\tilde{\alpha}_1,\tilde{\alpha}_2}\to T'$ in case (A), indexed by splittings $\tilde{\alpha}_1+\tilde{\alpha}_2=\tilde{\alpha}_{d(T')}$. Take $\Xi'$ to be the partition of $[\alpha]$ defined by $T'$ and $\Xi_{\tilde{\alpha}_1,\tilde{\alpha}_2}$ the partition of $[\alpha]$ defined by $T_{\tilde{\alpha}_1,\tilde{\alpha}_2}$. Then by construction $B\left(\Xi_{\tilde{\alpha}_1,\tilde{\alpha}_2}\right)=B(\Xi')\cap \overline{B}$, so by definition $B\left(\Xi_{\tilde{\alpha}_1,\tilde{\alpha}_2}\right)\sim_{\overline{B}}B(\Xi')$. Then the sum of contraction homomorphisms
    \begin{equation*}
        \bigoplus_{\tilde{\alpha}_1+\tilde{\alpha}_2=\tilde{\alpha}_{d(T')}} \Fc_{T_{\tilde{\alpha}_1,\tilde{\alpha}_2}} \to \Fc_{T'}
    \end{equation*}
    is an isomorphism at $p$ by the isomorphism property \eqref{fact:eq:level_system_isomorphism_property} of the level system, because $p$ is in $U_{\overline{B}}$ by assumption.
\end{proof}

Now we can prove Theorem \ref{fact:thm:fact}.

\begin{proof}[Proof of Theorem \ref{fact:thm:fact}]
    Given a factorizable system $\Fc_{\alpha}$ on $\sym^{b(\alpha)}(X)$, by Lemma \ref{fact:lemma:comp_setup_cd} and Example \ref{fact:ex:sym_to_Xb}, we can pull back to $[X^{b(\alpha)}/S_{b(\alpha)}]$ to get a factorizable system of $S_{b(\alpha)}$-equivariant (2-periodic) complexes of sheaves $\overline{\Fc}_{\alpha}$ on $X^{b(\alpha)}$. Let $\overline{\Gc}_{\alpha}$ be the $S_{b(\alpha)}$-equivariant complexes defined $\overline{\Fc}_{\alpha}$ in Definition \ref{fact:def:tree_complex}. By definition of the plexthystic expontential and Lemma \ref{fact:lemma:complex_ktheory_class}, we have
    \begin{eqnarray*}
        \pexpI{\sum_{\alpha\in I}q^{\alpha} [\overline{\Gc}_{\alpha}]} =& \sum_{\alpha\in I} q^{\alpha} \sum_{\lambda\in P(\alpha)} [\overline{\Gc}_\lambda]\\
        =& \sum_{\alpha\in I} q^{\alpha} \sum_{\Xi\in P[\alpha]} [\overline{\Gc}_{\Xi}]\\
        =& 1 + \sum_{\alpha\in I} q^{\alpha} [\overline{\Fc}_{\alpha}].
    \end{eqnarray*}
    Now we push forward along $p:[X^{b(\alpha)}/S_{b(\alpha)}] \to \sym^{b(\alpha)}(X)$ again to get
    \begin{equation}\label{fact:eq:proof_ktheory_id}
        1 + \sum_{\alpha\in I} q^{\alpha} [p_*p^*\Fc_{\alpha}] = 1 + \sum_{\alpha\in I} q^{\alpha} p_*[\overline{\Fc}_{\alpha}] = p_*\pexpI{\sum_{\alpha\in I}q^{\alpha} [\overline{\Gc}_{\alpha}]}
    \end{equation}
    by Definition \ref{fact:def:pexp}. Now $p_*p^*\Fc_{\alpha}$ is just $\Fc_{\alpha}$ by general properties of the coarse moduli space $p$. 
    
    Now we want to study the support of the complex $\overline{\Gc}_\alpha$. We may focus on each $\alpha$ individually. Each $\overline{\Gc}_\alpha$ may be constructed from just the $\alpha$-level system of Example \ref{fact:ex:fact_level_system} instead of the entire factorizable system. By Lemma \ref{fact:lemma:strict_generation_ktheory}, we may assume this level system is strict and equip $\overline{\Gc}_\alpha$ with a differential as in Definition \ref{fact:def:tree_complex_differential} while preserving the K-theory class identity \eqref{fact:eq:proof_ktheory_id}.
    
    By Lemma \ref{fact:lemma:complex_support_diagonal}, $\overline{\Gc}_{\alpha}$ is supported on the small diagonal. Hence, the pushforwards $p_*\overline{\Gc}_{\alpha}$ are also supported on the small diagonal. So, by d\'evissage \cite[Appl. 6.4.2]{weibel_kbook}, there exist classes $[\Gc_{\alpha}]$ on $X$ such that
    \begin{equation*}
        i_*[\Gc_{\alpha}] = p_*[\overline{\Gc}_{\alpha}],
    \end{equation*}
    where $i:X \to \sym^{b(\alpha)}(X)$ is the diagonal embedding. This gives us
    \begin{align*}
        1 + \sum_{\alpha\in I} q^{\alpha} [\Fc_{\alpha}] &= p_*\pexpI{\sum_{\alpha\in I}q^{\alpha} [\overline{\Gc}_{\alpha}]}\\
        &= \pexpI{\sum_{\alpha\in I}q^{\alpha} p_*[\overline{\Gc}_{\alpha}]}\\
        &= \pexpI{\sum_{\alpha\in I}q^{\alpha} i_*[\Gc_{\alpha}]}\\
        &= \pexpI{\sum_{\alpha\in I}q^{\alpha} [\Gc_{\alpha}]},
    \end{align*}
    where we used the identity \eqref{fact:eq:proof_ktheory_id} for the first equality, Lemma \ref{fact:lemma:lambda_pieces_push_pull_identities}(a) for the second equality, d\'evissage for the third equality, and the definition of the plethystic exponential in Definition \ref{fact:def:pexp} for the last equality.

    Additionally, if the factorizable system $\Fc_{\alpha}$ satisfies that for $\alpha\in I\setminus \Delta_I$, $\restr{\Fc_{\alpha}}{\sym^{b(\alpha)}(U)}$ vanishes, then $\restr{\overline{\Fc}_{\alpha}}{U^{b(\alpha)}}$ also vanishes and Lemma \ref{fact:lemma:S_support_lemma_G_complex} shows that $\overline{\Gc}_\alpha$ vanishes on $U^{b(\alpha)}$ for $\alpha\in I\setminus \Delta_I$. Note that the intersection of the small diagonal with the complement of $U^{b(\alpha)}$ is just $S$. So, by d\'evissage \cite[Appl. 6.4.2]{weibel_kbook} as above, the classes $[\Gc_{\alpha}]$ are pushed forward from $S$ in this case.
\end{proof}

\subsection{Compatible Factorization}

We now want to compare factorizations of sheaves on different orbifolds in some way. We have the following example in mind.

\begin{example}
    We consider $\Xc=[\C^3/\mu_r]$ with $\mu_r$ acting with weights $(1,r-1,0)$. Let $Y$ be a crepant resolution of $\Xc$. Let $U$ be the non-stacky locus of $\Xc$. It also embeds an open subscheme in $Y$. This gives us the following commutative diagram
    \begin{equation*}
        \begin{tikzcd}
            \hilb^n(Y) \arrow[d] & \hilb^n(U) \arrow[l] \arrow[r, "\sim"] \arrow[d] & \hilb^{(n,\dots,n)}(U) \arrow[r] \arrow[d] & \hilb^{(n,\dots,n)}(\Xc) \arrow[d] \\
            \sym^n(Y) & \sym^n(U) \arrow[l] \arrow[r, "\sim"] & M_{(n,\dots,n)}(U) \arrow[r] & M_{(n,\dots,n)}(\Xc).
        \end{tikzcd}
    \end{equation*}
    Now the twisted virtual structure sheaves $\hilb^n(Y)$ and $\hilb^{(n,\dots,n)}(\Xc)$ are compatible under this identification, meaning they pull back to isomorphic sheaves on $\hilb^n(U)$. In this section, we develop machinery to compare the classes $\left[\restr{\Gc^Y_n}{U}\right]$, $\left[\Gc^U_n\right]$, $\left[\Gc^U_{(n,\dots,n)}\right]$, and $\left[\restr{\Gc^\Xc_{(n,\dots,n)}}{U}\right]$ of Theorem \ref{fact:thm:fact}. 
\end{example}

Consider an open embedding $j:\Yc \to \Xc$ of stacks. Take $Y$ and $X$ to be the coarse moduli spaces of $\Yc$ and $\Xc$ respectively. They come with an open embedding $Y\to X$, which we also denote by $j$ by abuse of notation. Assume that $j_*$ restricts to an embedding of factorization index semigroups $I_j: I_{\Yc}\hookrightarrow I=I_{\Xc}$, so that
\begin{equation*}
    \begin{tikzcd}
        I_\Yc \ar[r, hookrightarrow, "I_j"]\ar[d, hookrightarrow] & I_\Xc \ar[d, hookrightarrow]\\
        \Nr_0(\Yc) \ar[r, "j_*"]\ar[dr,"b"] & \Nr_0(\Xc)\ar[d,"b"]\\
        & \Nr_0(Y)=\Nr_0(X)=\Z 
    \end{tikzcd}
\end{equation*}
commutes. We assume that if $\alpha_1+\alpha_2$ is in $I_\Yc$, then $\alpha_1\in I_\Yc$ or $\alpha_2\in I_\Yc$ implies the other is as well. We assume further that for $\alpha\in I_\Xc$, the pullback of K-theory classes $j^*\alpha$ lands in $I_\Yc$. From now on we identify $I_\Yc$ with a subset of $I=I_\Xc$ via the above embedding.

We consider factorizable systems in the sense of Example \ref{fact:ex:sym_coarse}. We have an open embedding morphism
\begin{equation*}
    M(j):\sym(Y) \to \sym(X),
\end{equation*}
such that, for any $b\in \Z$, the component $\sym^b(Y)$ gets sent to the component $\sym^b(X)$.

For $b_1,b_2\in \Z$, we naturally get a morphism $U(j)$ making the diagram
\begin{equation}
    \begin{tikzcd}
        \sym^{b_1}(Y)\times \sym^{b_2}(Y) \ar[d, "M(j)\times M(j)", open] & U^Y_{b_1 b_2}\ar[d,"U(j)", open]\ar[l, open']\ar[r, open] & \sym^{b_1+b_2}(Y)\ar[d,"M(j)", open]\\
        \sym^{b_1}(X)\times \sym^{b_2}(X) & U^X_{b_1 b_2}\ar[l, open']\ar[r, open] & \sym^{b_1+b_2}(X)
    \end{tikzcd}
\end{equation}
commute, with both squares cartesian.

\begin{definition}
    In the above setup, given two factorizable systems $\{\Ec_{\alpha}\}_{\alpha\in I_\Yc}$ on $\sym^{b(\alpha)}(Y)$ and $\{\Fc_{\alpha}\}_{\alpha\in I}$ on $\sym^{b(\alpha)}(X)$, we call them \textbf{compatibly factorizable} with respect to $j$ if for any $\alpha\in I\setminus I_\Yc$, we have $M(j)^*\Fc_{\alpha}=0$, and for any $\alpha\in I_\Yc\subseteq I$, we have an isomporphism $\delta_\alpha:\Ec_{\alpha}\xrightarrow{\sim}M(j)^*\Fc_{\alpha}$, so that the factorization morphisms $\phi$ of Definition \ref{def:factorizable_system_general} are compatible with pullback, meaning the following diagram commutes
    \begin{equation}\label{eq:compa_fact_seq}
        \begin{tikzcd}
            \restr{\left(\Ec_{\alpha_1}\boxtimes \Ec_{\alpha_2}\right)}{U^Y_{b(\alpha_1)b(\alpha_2)}} \arrow[d, "\phi^\Ec_{\alpha_1\alpha_2}"]\arrow[r, "\delta_{\alpha_1}\boxtimes\delta_{\alpha_2}"]& \restr{M(j)^*\left(\Fc_{\alpha_1}\boxtimes \Fc_{\alpha_2}\right)}{U^Y_{b(\alpha_1)b(\alpha_2)}}\arrow[d, "U(j)^*\phi^\Fc_{\alpha_1\alpha_2}"]\\
            \restr{\Ec_{\alpha_1+\alpha_2}}{U^Y_{b(\alpha_1)b(\alpha_2)}} \arrow[r, "\delta_{\alpha_1+\alpha_2}"]& \restr{M(j)^*\left(\Fc_{\alpha_1+\alpha_2}\right)}{U^Y_{b(\alpha_1)b(\alpha_2)}}.
        \end{tikzcd}
    \end{equation}
\end{definition}

\begin{remark}\label{fact:rmk:comp_cond_automatic_hilb}
    Note, that the condition that for any $\alpha\in I\setminus I_\Yc$, we have $M(j)^*\Fc_{\alpha}=0$ is automatically satisfied if the factorization index set is the one from Lemma \ref{fact:lemma:hilb_fact_index_set} and the factorizable systems are pushed forward from $\hilb(\Yc)$ and $\hilb(\Xc)$ respectively, because for $\alpha\in I\setminus I_\Yc$, $\hilb^\alpha(\Yc)$ is the empty set.
\end{remark}

For compatible factorizable systems, we can compare the resulting systems from Theorem \ref{fact:thm:fact}.

\begin{theorem}\label{thm:fact:comp_fact}
    In the above setup, let $\{\Ec_{\alpha}\}_{\alpha\in I_\Yc}$ on $\sym^{b(\alpha)}(Y)$ and $\{\Fc_{\alpha}\}_{\alpha\in I}$ on $\sym^{b(\alpha)}(X)$ be two factorizable systems, which are compatibly factorizable with respect to $j$. Then the classes $\left[\Gc^\Ec_{\alpha}\right]$ and $\left[\Gc^\Fc_{\alpha}\right]$ from Theorem \ref{fact:thm:fact} satisfy $j^*\left[\Gc^\Fc_{\alpha}\right]=\left[\Gc^\Ec_{\alpha}\right]$ for $\alpha$ in $I_\Yc$.
\end{theorem}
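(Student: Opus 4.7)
The strategy is to lift the compatibility to the $S_{b(\alpha)}$-covers $Y^{b(\alpha)}$ and $X^{b(\alpha)}$, match the tree complexes $\overline{\Gc}_\alpha$ of Definition \ref{fact:def:tree_complex_differential} via this lift, and then descend to classes on $Y$ and $X$ by d\'evissage, exactly as in the proof of Theorem \ref{fact:thm:fact}. First I would pull back both factorizable systems along the quotient morphisms from $[Y^{b(\alpha)}/S_{b(\alpha)}]$ and $[X^{b(\alpha)}/S_{b(\alpha)}]$, using Lemma \ref{fact:lemma:comp_setup_cd}(b) and Example \ref{fact:ex:sym_to_Xb}, to obtain $S_{b(\alpha)}$-equivariant factorizable systems $\overline{\Ec}_\alpha$ on $Y^{b(\alpha)}$ and $\overline{\Fc}_\alpha$ on $X^{b(\alpha)}$. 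The open embedding $j^{b(\alpha)}:Y^{b(\alpha)}\hookrightarrow X^{b(\alpha)}$ is $S_{b(\alpha)}$-equivariant, $(j^{b(\alpha)})^*\overline{\Fc}_\alpha=0$ for $\alpha\in I\setminus I_\Yc$, and $\delta_\alpha$ pulls back to an $S_{b(\alpha)}$-equivariant isomorphism $\overline{\delta}_\alpha:\overline{\Ec}_\alpha\xrightarrow{\sim}(j^{b(\alpha)})^*\overline{\Fc}_\alpha$ for $\alpha\in I_\Yc$, compatible with factorization morphisms by \eqref{eq:compa_fact_seq}.

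The heart of the argument is a termwise comparison of the tree complexes for $\alpha\in I_\Yc$. Each summand $\overline{\Fc}_{T,\leq_T}$ of $\overline{\Gc}^\Fc_\alpha$ is a box product indexed by the leaf labels of the $\alpha$-index tree $T$. Using the standing hypothesis that $\alpha_1+\alpha_2\in I_\Yc$ and $\alpha_1\in I_\Yc$ imply $\alpha_2\in I_\Yc$, every $\alpha$-index tree either has all leaf labels in $I_\Yc$ (in which case it coincides with an $\alpha$-index tree on the $\Yc$ side, and the $\overline{\delta}_\beta$ assemble into an isomorphism $(j^{b(\alpha)})^*\overline{\Fc}_{T,\leq_T}\cong\overline{\Ec}_{T,\leq_T}$), or it has a leaf label in $I\setminus I_\Yc$, forcing $(j^{b(\alpha)})^*\overline{\Fc}_{T,\leq_T}=0$. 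Summing over $\Tc(\alpha,k)$ yields an $S_{b(\alpha)}$-equivariant termwise isomorphism of graded sheaves
\begin{equation*}
    (j^{b(\alpha)})^*\overline{\Gc}^\Fc_\alpha \;\cong\; \overline{\Gc}^\Ec_\alpha.
\end{equation*}

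To promote this to an isomorphism of complexes, I would check that it intertwines the differentials of Definition \ref{fact:def:tree_complex_differential}, which are signed sums of ordinary and exceptional contraction homomorphisms. The ordinary contractions are identities and pull back tautologically; the exceptional ones are the level-system morphisms $\phi_{\Xi_1\Xi_2}$, whose compatibility under $(j^{b(\alpha)})^*$ is precisely the content of \eqref{eq:compa_fact_seq} extended by associativity to the level system of Example \ref{fact:ex:fact_level_system}. The subtle point is that the differentials are only defined after replacing the level system by a strict one via Lemma \ref{fact:lemma:strict_generation_ktheory}, so one must argue that the strictification commutes with $(j^{b(\alpha)})^*$ at the level of K-theory classes. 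Since the functors $D_A$ of Definition \ref{fact:def:D_I_strictness_proof} are built from adjunctions, images, and pushforwards along the locally closed embeddings $X_A\hookrightarrow X^{b(\alpha)}$, all of which base-change well against the open embedding $j^{b(\alpha)}$, this commutativity holds; carrying out this verification carefully is the main technical obstacle.

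Finally, the proof concludes exactly as in Theorem \ref{fact:thm:fact}: by Lemma \ref{fact:lemma:complex_support_diagonal} both complexes are acyclic off the respective small diagonals, and d\'evissage \cite[Appl. 6.4.2]{weibel_kbook} produces the classes $[\Gc^\Ec_\alpha]$ on $Y$ and $[\Gc^\Fc_\alpha]$ on $X$ as the unique classes whose diagonal pushforwards realise the K-theory classes on the covers. The isomorphism of complexes above, combined with flat base-change for the open embedding $j$ through the quotient morphisms and uniqueness in d\'evissage, yields $j^*[\Gc^\Fc_\alpha]=[\Gc^\Ec_\alpha]$, as required.
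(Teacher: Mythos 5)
Your proposal is correct and follows essentially the same route as the paper's proof: pull both systems back to the $S_{b(\alpha)}$-covers, use the compatibility isomorphisms $\delta$ together with the vanishing of $M(j)^*\Fc_\alpha$ for $\alpha\notin I_\Yc$ to match the level systems (and hence the tree complexes) over $Y^{b(\alpha)}$, observe that the strictification functors $D_A$ and morphisms $I_A$ of Lemma~\ref{fact:lemma:strict_generation_ktheory} preserve this compatibility since they base-change well against the open embedding, and conclude by d\'evissage. The only cosmetic difference is that you phrase the middle step as a termwise isomorphism of tree complexes first and then handle the differentials, whereas the paper matches the level systems directly before building the complexes, but the content is the same.
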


\begin{proof}
    The proof of this theorem consists of checking functoriality of the construction of the $\left[\Gc_{\alpha}\right]$ under pullback. The first step in proving Theorem \ref{fact:thm:fact} was to pull back to $[X^{b(\alpha)}/S_{b(\alpha)}]$ to get factorizable systems $\overline{\Ec}_{\alpha}$ and $\overline{\Fc}_{\alpha}$ of $S_{b(\alpha)}$-equivariant (2-periodic) complexes of sheaves on $Y^{b(\alpha)}$ and $X^{b(\alpha)}$ respectively. This step preserves the isomorphisms $\delta$.

    The complexes $\overline{\Gc}^{\overline{\Ec}}_{\alpha}$ and $\overline{\Gc}^{\overline{\Fc}}_{\alpha}$ are constructed from the $\alpha$-level systems associated to the factorizable systems in Example \ref{fact:ex:fact_level_system}. By \eqref{eq:compa_fact_seq} the morphisms of the level systems are compatible with the isomorphisms $\delta$, which hence induce isomorphisms between the level systems on $Y^{b(\alpha)}$ for $\alpha$ in $I_\Yc$. Note here, that for $\alpha\notin I_\Yc$, we have $M(j)^*\Fc_{\alpha}=0$, which ensures that the terms of both systems are non-zero only for the same partitions. By Lemma \ref{fact:lemma:strict_generation_ktheory}, we may assume the level systems are strict. The proof of Lemma \ref{fact:lemma:strict_generation_ktheory} is a noetherian induction using the functor $D_A$ and morphism $I_A$ defined in Definition \ref{fact:def:D_I_strictness_proof}. Both of these preserve the isomorphisms between the level systems on $Y^{b(\alpha)}$ for $\alpha$ in $I_\Yc$. Hence, for $\alpha$ in $I_\Yc$, the resulting complexes $\overline{\Gc}^{\overline{\Ec}}_{\alpha}$ and $\overline{\Gc}^{\overline{\Fc}}_{\alpha}$ are isomorphic over $Y^{b(\alpha)}$.

    The final step of the proof of Theorem \ref{fact:thm:fact} is using d\'evissage \cite[Appl. 6.4.2]{weibel_kbook} to get the classes $\left[\Gc^\Ec_{\alpha}\right]$ and $\left[\Gc^\Fc_{\alpha}\right]$ on $Y$ and $X$ respectively. This respects the isomorphisms over $Y^{b(\alpha)}$, giving us $j^*\left[\Gc^\Fc_{\alpha}\right]=\left[\Gc^\Ec_{\alpha}\right]$ for $\alpha$ in $I_\Yc$.
\end{proof}
\section{Factorization for Orbifolds}\label{sect:factorization_application}

\subsection{Factorizability of Virtual Structure Sheaves}\label{sec:fact_vir_str_sheaf}

\subsubsection{Untwisted Virtual Structure Sheaves}
Now that we have developed the necessary factorization machinery for moduli spaces of zero-dimensional sheaves on orbifolds, we show that the (twisted) virtual structure sheaves we consider yield factorizable systems, and hence that Corollary \ref{fact:cor:comp_pexp_fact} lets us simplify our generating series of interest.

As tensor products of factorizable systems are again factorizable, we can deal separately with the virtual structure sheaves $\Oc^\vir$ and its twists.

\begin{proposition}\label{appl:prop:fact_vir_str_sheaf}
    Let $\Xc$ be a smooth three-dimensional orbifold. Consider the factorization index set $I\subset \Nr_0(\Xc)$ of Lemma \ref{fact:lemma:hilb_fact_index_set}. Then
    \begin{equation*}
        \left\{\Oc^\vir_{\hilb^\alpha(\Xc)}\right\}_{\alpha\in I}
    \end{equation*}
    is a factorizable system on $\hilb^\alpha(\Xc)$ in the sense of Definition \ref{def:factorizable_system_general} and Example \ref{fact:ex:hilb}.
\end{proposition}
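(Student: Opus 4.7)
The plan is to construct the factorization morphisms $\phi_{\alpha_1\alpha_2}$ with $\boxdot=\boxtimes$ by exploiting the fact that, over the open locus $\overline{U}_{\alpha_1\alpha_2}\subseteq \hilb^{\alpha_1}(\Xc)\times \hilb^{\alpha_2}(\Xc)$ of pairs of substacks with disjoint support, the universal families assemble into a universal family for $\alpha_1+\alpha_2$ via disjoint union. Concretely, writing $\Zc_1$ and $\Zc_2$ for the pullbacks of the two universal families to $\overline{U}_{\alpha_1\alpha_2}\times\Xc$, the disjointness of supports means $\Zc_1\cap\Zc_2=\emptyset$, so $\Zc_1\sqcup \Zc_2$ is a flat family of substacks of class $\alpha_1+\alpha_2$. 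This induces a canonical morphism $s:\overline{U}_{\alpha_1\alpha_2}\to \hilb^{\alpha_1+\alpha_2}(\Xc)$ fitting into the cartesian diagram of Example \ref{fact:ex:hilb}, and $s^*\Ic_{\Zc}\cong \Ic_{\Zc_1\sqcup \Zc_2}$.

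The key computation is to show that $s$ pulls back the obstruction theory $\Eb_{\alpha_1+\alpha_2}=\pi_*(\homc(\Ic,\Ic)_0)[2]$ of \cite{ht13_atiyah_obstruction_theory} to the direct sum $\pi_1^*\Eb_{\alpha_1}\boxplus \pi_2^*\Eb_{\alpha_2}$. Using the triangle $\Ic\to \Oc\to \Oc_\Zc$ and the orthogonality $R\homc(\Oc_{\Zc_1},\Oc_{\Zc_2})=0=R\homc(\Oc_{\Zc_2},\Oc_{\Zc_1})$ guaranteed by disjoint supports, one obtains on the nose an identification
\begin{equation*}
    R\homc(\Ic_{\Zc_1\sqcup \Zc_2},\Ic_{\Zc_1\sqcup \Zc_2})_0 \cong R\homc(\Ic_{\Zc_1},\Ic_{\Zc_1})_0 \oplus R\homc(\Ic_{\Zc_2},\Ic_{\Zc_2})_0,
\end{equation*}
and flat base change for $\pi$ then converts this into the desired direct-sum decomposition of perfect obstruction theories, compatible with the truncated cotangent complex morphisms. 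I would be careful to check that the identification of Atiyah-class obstruction theories respects the splitting, which is the one genuinely technical point and the main expected obstacle; I would verify this by showing that on $\overline{U}_{\alpha_1\alpha_2}$ the morphism $s$ factors through the ``disjoint union'' map $\hilb^{\alpha_1}(\Xc)\times\hilb^{\alpha_2}(\Xc)\dashrightarrow \hilb^{\alpha_1+\alpha_2}(\Xc)$ (defined on this open locus), and that the Atiyah class of a disjoint union of families is the direct sum of the two Atiyah classes on disjoint open neighborhoods.

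Given the splitting $s^*\Eb_{\alpha_1+\alpha_2}\cong \Eb_{\alpha_1}\boxplus \Eb_{\alpha_2}$ on $\overline{U}_{\alpha_1\alpha_2}$, the compatibility of the construction of virtual structure sheaves from Section \ref{sec:vir_str_sh} with direct sums of obstruction theories (the intrinsic normal cone of a product of moduli spaces with the sum obstruction theory is the product of intrinsic normal cones, and the corresponding $\cTor$ computation factors as an exterior tensor product) yields a canonical isomorphism
\begin{equation*}
    \phi_{\alpha_1\alpha_2}:\restr{\left(\Oc^\vir_{\hilb^{\alpha_1}(\Xc)}\boxtimes \Oc^\vir_{\hilb^{\alpha_2}(\Xc)}\right)}{\overline{U}_{\alpha_1\alpha_2}} \xrightarrow{\sim} \restr{s^*\Oc^\vir_{\hilb^{\alpha_1+\alpha_2}(\Xc)}}{\overline{U}_{\alpha_1\alpha_2}}.
\end{equation*}
Since $\indgffct=\id$ in Example \ref{fact:ex:hilb}, the sum in \eqref{fact:eq:sum_isom_fact} has a single term, so the isomorphism property of Definition \ref{def:factorizable_system_general} is immediate from this.

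It remains to verify associativity and commutativity. Both reduce to canonical statements about ideal sheaves of disjoint unions: for three pairwise disjoint families $\Zc_1,\Zc_2,\Zc_3$, the triple disjoint union is associative and commutative up to the standard reordering isomorphisms $S$, and the Atiyah-class splittings above are constructed from sums indexed by the components $\Zc_i$, so they intertwine with $S$ by construction. Hence diagrams \eqref{fact:eq:U_switching_comm_diag} and \eqref{fact:eq:gen_comm_diagram} and the associativity square commute tautologically, completing the verification that $\{\Oc^\vir_{\hilb^\alpha(\Xc)}\}_{\alpha\in I}$ is factorizable.
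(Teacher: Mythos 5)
Your proposal is correct and follows essentially the same route as the paper: reduce to showing that the Atiyah-class obstruction theories are $\boxplus$-factorizable by splitting $R\homc(\Ic_{\tilde\Zc},\Ic_{\tilde\Zc})_0$ over the disjoint-support locus (using orthogonality of $\Oc_{\Zc_1}$ and $\Oc_{\Zc_2}$) and then invoke the multiplicativity of Behrend--Fantechi virtual structure sheaves under direct sums of obstruction theories (the adaptation of \cite[Prop.~5.7]{bf97_intrinsic_normal_cone}) to transport the splitting to the $\Oc^\vir$. Your flagged concern about compatibility of the Atiyah-class map with the splitting is a reasonable one that the paper also treats briefly; the paper's additional emphasis that the splitting requires the \emph{traceless} part $(-)_0$ is worth keeping in mind as the one place where something could silently fail without it.
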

\begin{proof}
    By adapting \cite[Prop. 5.7]{bf97_intrinsic_normal_cone} to virtual sheaves, we see that additivity of obstruction theories results in multiplicativity of virtual structure sheaves, in the sense that the virtual structure sheaf defined by $\Eb_{\alpha_1} \boxplus \Eb_{\alpha_2}$ on $\hilb^{\alpha_1}(\Xc)\times\hilb^{\alpha_2}(\Xc)$ comes with a natural isomorphism
    \begin{equation*}
        \Oc^\vir_{\Eb_{\alpha_1} \boxplus \Eb_{\alpha_2}} \cong \Oc^\vir_{\Eb_{\alpha_1}}\boxtimes \Oc^\vir_{\Eb_{\alpha_2}}.
    \end{equation*}
    The resulting isomorphisms naturally preserve associativity and commutativity in the sense of Definition \ref{def:factorizable_system_general}, so that it suffices to show the obstruction theories $\Eb_\alpha$ make a $\boxplus$-factorizable system. More specifically, we want isomorphisms
    \begin{equation}\label{appl:eq:obstruction_theories_oplus_fact}
        \restr{\left(\Eb_{\alpha_1} \boxplus \Eb_{\alpha_1}\right)}{U_{\alpha_1 \alpha_2}} \xrightarrow{\sim} \restr{\Eb_{\alpha_1+\alpha_2}}{U_{\alpha_1 \alpha_2}}
    \end{equation}
    which are associative and commutative as in Definition \ref{def:factorizable_system_general}.

    Take $\alpha_1+\alpha_2=\alpha$ in $I$ and consider the universal family $\Zc$ of $\hilb^{\alpha}(\Xc)$, pulled back to $U_{\alpha_1 \alpha_2}\times\Xc$. We denote this by $\widetilde{\Zc}$. Over $U_{\alpha_1 \alpha_2}\times\Xc$ this splits as $\widetilde{\Zc} = \widetilde{\Zc}_1 \sqcup \widetilde{\Zc}_2$, where $\widetilde{\Zc}_i$ are disjoint families of substacks of $\Xc$ of class $\alpha_i$. Consider the following commutative diagram.
    \begin{equation}\label{appl:eq:hilb_family_splitting_diagram}
        \begin{tikzcd}[column sep=small]
            & \widetilde{\Zc}=\widetilde{\Zc}_1 \sqcup \widetilde{\Zc}_2 \ar[d, closed]\ar[r, open] & \Zc\ar[d, closed] \\
            \hilb^{\alpha_1}(\Xc)\times\hilb^{\alpha_2}(\Xc)\times\Xc \ar[d] & U_{\alpha_1 \alpha_2}\times\Xc \ar[l, open']\ar[r, open] \ar[d] & \hilb^{\alpha}(\Xc)\times\Xc \ar[d] \\
            \hilb^{\alpha_1}(\Xc)\times\hilb^{\alpha_2}(\Xc) & U_{\alpha_1 \alpha_2} \ar[l, open']\ar[r, open] & \hilb^{\alpha}(\Xc).
        \end{tikzcd}
    \end{equation}
    The composition
    \begin{equation*}
        \begin{tikzcd}[column sep=small]
            \widetilde{\Zc}_i \ar[r, hookrightarrow] & U_{\alpha_1 \alpha_2}\times\Xc \ar[r,open] & \hilb^{\alpha_1}(\Xc)\times \hilb^{\alpha_1}(\Xc)\times\Xc \ar[r, "p_{i 3}"] & \hilb^{\alpha_i}(\Xc)\times \Xc
        \end{tikzcd}
    \end{equation*}
    is just the universal family $\Zc_i$ restricted to the image of $U_{\alpha_1 \alpha_2}$ in $\hilb^{\alpha_i}(\Xc)$. Using this, we obtain a natural isomorphism
    \begin{equation}\label{appl:eq:pullback_E_i}
        j^*p_i^* \Eb_{\alpha_i} \cong Rp_{U *} R\homc\left(\Ic_{\tilde{\Zc}_i},\Ic_{\tilde{\Zc}_i}\right)_0[2],
    \end{equation}
    where $p_U$ is the projection of $U_{\alpha_1 \alpha_2}\times\Xc$ onto the first factor. We also have a natural isomorphism
    \begin{equation}\label{appl:eq:pullback_E}
        j'^* \Eb_{\alpha_1+\alpha_2} \cong Rp_{U *} R\homc\left(\Ic_{\tilde{\Zc}},\Ic_{\tilde{\Zc}}\right)_0[2].
    \end{equation}
    Using disjointness of $\widetilde{\Zc}_1$ and $\widetilde{\Zc}_2$, we get a canonical natural isomorphism
    \begin{equation*}
        R\homc\left(\Ic_{\tilde{\Zc}},\Ic_{\tilde{\Zc}}\right)_0 \cong R\homc\left(\Ic_{\tilde{\Zc}_1},\Ic_{\tilde{\Zc}_1}\right)_0\oplus R\homc\left(\Ic_{\tilde{\Zc}_2},\Ic_{\tilde{\Zc}_2}\right)_0,
    \end{equation*}
    which yields the desired isomorphisms \eqref{appl:eq:obstruction_theories_oplus_fact} by combining with \eqref{appl:eq:pullback_E_i} and \eqref{appl:eq:pullback_E}. Note here, that this isomorphism couldn't exist without taking the traceless part. Using similar decompositions of the universal family, the required commutativity and associativity properties for these isomorphisms can be shown analogously. 
\end{proof}

\subsubsection{Twisted Virtual Structure Sheaves}

Now we see that the twisted virtual structure sheaf of Definition \ref{setup:def:twisted_vir_str_sh} make a factorizable system. In contrast to the untwisted virtual structure sheaf, we have to pass to the coarse space $X$ to prove factorizability of the twist.

\begin{proposition}\label{appl:lemma:fact_twist_vir}
    Let $\Xc$ be a smooth quasi-projective three-dimensional Calabi--Yau orbifold with coarse space $X$. Consider the factorization index set $I\subset \Nr_0(\Xc)$ of Lemma \ref{fact:lemma:hilb_fact_index_set}. Then 
    \begin{equation*}
        \left\{\widehat{\Oc}^\vir_{\hilb^\alpha(\Xc)}\right\}_{\alpha\in I}
    \end{equation*}
    is a factorizable system in the sense of Definition \ref{def:factorizable_system_general} and Example \ref{fact:ex:hilb}.
\end{proposition}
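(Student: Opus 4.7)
The plan is to reduce the claim to the factorizability of the two tensor factors of $\widehat{\Oc}^\vir$. By Proposition \ref{appl:prop:fact_vir_str_sheaf}, the untwisted virtual structure sheaves $\{\Oc^\vir_{\hilb^\alpha(\Xc)}\}_{\alpha\in I}$ already form a $\boxtimes$-factorizable system. An easy check from the axioms in Definition \ref{def:factorizable_system_general} shows that a termwise tensor product of two $\boxtimes$-factorizable systems is again $\boxtimes$-factorizable, the factorization morphism $\phi$ being the tensor product of the two given ones. It therefore suffices to show that the twist
\begin{equation*}
    \Tc_\alpha \coloneqq \det\!\left(p_{\Zc *}(\Oc_\Zc)\right)^{-1}[b(\alpha)]
\end{equation*}
(resp.\ its toric variant with an additional $\kappa^{1/2}$) defines a factorizable system of shifted line bundles on $\hilb^\alpha(\Xc)$.

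For this, I would work on $U_{\alpha_1\alpha_2}\subset \hilb^{\alpha_1}(\Xc)\times\hilb^{\alpha_2}(\Xc)$ using diagram \eqref{appl:eq:hilb_family_splitting_diagram}. Over $U_{\alpha_1\alpha_2}\times\Xc$ the pulled-back universal family splits as $\widetilde{\Zc}=\widetilde{\Zc}_1\sqcup\widetilde{\Zc}_2$, and each $\widetilde{\Zc}_i$ is the pullback of the corresponding universal family $\Zc_i$ along the projection to $\hilb^{\alpha_i}(\Xc)$. Pushing forward to $U_{\alpha_1\alpha_2}$ and using flat base change, the disjointness of the families gives a direct sum decomposition of locally free sheaves
\begin{equation*}
    \left.p_{\Zc *}(\Oc_\Zc)\right|_{U_{\alpha_1\alpha_2}} \cong \pr_1^*\,p_{\Zc_1 *}(\Oc_{\Zc_1})\,\oplus\,\pr_2^*\,p_{\Zc_2 *}(\Oc_{\Zc_2}),
\end{equation*}
of ranks $b(\alpha_1)$ and $b(\alpha_2)$ by Lemma \ref{setup:lemma:twist_loc_free_rank}. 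Taking determinants and using $b(\alpha_1)+b(\alpha_2)=b(\alpha)$ (and, in the toric case, splitting the factor $\kappa^{-b(\alpha)/2}$ multiplicatively), I get a natural isomorphism
\begin{equation*}
    \phi^\Tc_{\alpha_1\alpha_2}:\ \left.(\Tc_{\alpha_1}\boxtimes\Tc_{\alpha_2})\right|_{U_{\alpha_1\alpha_2}} \xrightarrow{\ \sim\ }\ \left.\Tc_{\alpha_1+\alpha_2}\right|_{U_{\alpha_1\alpha_2}}.
\end{equation*}
The sum-splitting condition \eqref{fact:eq:sum_isom_fact} reduces, at the level of line bundles, to this isomorphism in each component, since on $U_{\alpha_1\alpha_2}$ the universal family has exactly one way of splitting into two disjoint families of the prescribed classes.

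Associativity is straightforward: a ternary splitting $\widetilde{\Zc}=\widetilde{\Zc}_1\sqcup\widetilde{\Zc}_2\sqcup\widetilde{\Zc}_3$ over $U_{\alpha_1\alpha_2\alpha_3}$ gives two identifications of $\det p_{\Zc *}(\Oc_\Zc)$ as a threefold tensor product which agree because direct sum of locally free sheaves is associative. The main obstacle will be commutativity \eqref{fact:eq:gen_comm_diagram}: swapping the two factors $\det p_{\Zc_1 *}(\Oc_{\Zc_1})\otimes\det p_{\Zc_2 *}(\Oc_{\Zc_2})$ produces a Koszul sign $(-1)^{b(\alpha_1)b(\alpha_2)}$, while swapping the two shifted determinant line bundles $\Tc_{\alpha_1}\boxtimes\Tc_{\alpha_2}$ in the category of $\Z/2$-graded complexes produces an additional sign $(-1)^{b(\alpha_1)b(\alpha_2)}$ from the graded commutativity of $\boxtimes$. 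These two signs cancel, which is precisely the reason for including the shift $[b(\alpha)]$ in Definition \ref{setup:def:twisted_vir_str_sh}, and hence \eqref{fact:eq:gen_comm_diagram} commutes on the nose. Once this sign cancellation is verified, combining the two factorizable systems by tensor product yields the desired factorization structure on $\{\widehat{\Oc}^\vir_{\hilb^\alpha(\Xc)}\}_{\alpha\in I}$.
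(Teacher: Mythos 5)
Your proposal is correct and follows essentially the same line of argument as the paper: reduce to the determinant twist via stability of factorization under tensor products, use the splitting $\widetilde{\Zc}=\widetilde{\Zc}_1\sqcup\widetilde{\Zc}_2$ over $U_{\alpha_1\alpha_2}$ and push-pull to get the direct-sum decomposition of $p_{\Zc*}\Oc_\Zc$, then take determinants and observe that the Koszul sign $(-1)^{b(\alpha_1)b(\alpha_2)}$ from commuting the determinants is cancelled by the sign produced by the shift $[b(\alpha)]$. The paper phrases the intermediate step as first establishing a $\boxplus$-factorizable system and then applying $\det$, but the content of the argument, including the rank identification via Lemma \ref{setup:lemma:twist_loc_free_rank} and the sign cancellation, is the same.
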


\begin{proof}
    Factorization properties are stable under tensor products, and the desired property for $\Oc^\vir_{\hilb^\alpha(\Xc)}$ has already been shown in the proof of Proposition \ref{appl:prop:fact_vir_str_sheaf}. 

    Take $\alpha_1+\alpha_2=\alpha$ in $I$ and consider the diagram \eqref{appl:eq:hilb_family_splitting_diagram} with cartesian squares. We want to compare the sheaves $j^*p_i^* p_{\Zc_{i} *}(\Oc_{\Zc_{i}})$ with $j^* p_{\Zc *}(\Oc_{\Zc})$, where $\Zc_{i}$ are the universal families of $\hilb^{\alpha_i}(\Xc)$ and $\Zc$ is the universal family of $\hilb^\alpha(\Xc)$. Repeatedly applying the push-pull formula yields natural isomorphisms
    \begin{align*}
        j^*p_i^* p_{\Zc_{i} *}\left(\Oc_{\Zc_{i}}\right) &\cong  p_{U_{\alpha_1 \alpha_2} *}\left(\Oc_{\tilde{\Zc}_{i}}\right),\\
        j^* p_{\Zc *}\left(\Oc_{\Zc}\right) &\cong p_{U_{b_1b_2} *}\left(\Oc_{\tilde{\Zc}}\right).
    \end{align*}
    As illustrated in the diagram \eqref{appl:eq:hilb_family_splitting_diagram}, $\widetilde{\Zc}$ splits into $\widetilde{\Zc}_{1} \sqcup \widetilde{\Zc}_{2}$. This gives us natural isomorphisms
    \begin{equation*}
        p_{U_{b_1b_2} *}\left(\Oc_{\tilde{\Zc}}\right) \cong p_{U_{\alpha_1 \alpha_2} *}\left(\Oc_{\tilde{\Zc}_{1}}\right)\oplus p_{U_{\alpha_1 \alpha_2} *}\left(\Oc_{\tilde{\Zc}_{2}}\right).
    \end{equation*}
    Because this natural isomorphism is essentially just the splitting of a sheaf into its restriction to connected components of its support, these isomorphisms satisfy the associativity and commutativity assumptions of Definition \ref{def:factorizable_system_general} with $\boxplus$. Multiplying by the constant factor $\kappa^{\frac{1}{2}}$ in the equivariant case is compatible with these isomorphisms. Taking determinants preserves these isomorphisms, but the commutativity diagram \eqref{fact:eq:gen_comm_diagram} only commutes up to the sign $(-1)^{\rk \left(p_{\Zc_{1} *}(\Oc_{\Zc_{1}})\right)\rk \left(p_{\Zc_{2} *}(\Oc_{\Zc_{2}})\right)}$, which by Lemma \ref{setup:lemma:twist_loc_free_rank} is exactly $(-1)^{b(\alpha_1)b(\alpha_2)}$. The degree shift of $b(\alpha_i)$ of our 2-periodic complexes in Definition \ref{setup:def:twisted_vir_str_sh} introduces an additional sign $(-1)^{b(\alpha_1)b(\alpha_2)}$ in the commutativity of the diagram \eqref{fact:eq:gen_comm_diagram}, which exactly cancels the previous sign, and hence makes the twist factorizable.
\end{proof}

\begin{remark}
    The strategy in the above proof to first show $\boxplus$-factorizability and then taking determinants works more generally and is implicitly already used in \cite[5.3.10]{ok15}. The shifts must be introduced, because taking determinants makes the commutativity diagram only commute up to sign, which can be fixed using these shifts.
\end{remark}

\begin{remark}\label{appl:rmk:sign}
    The shift, which is required to make the diagrams commute, coincides with signs in the literature. For example, in \cite[Cor. 6.2]{Cao_2023}, the same sign $(-1)^{n_0}$ is introduced as a sign of the formal variable $q_0$ instead of being part of the twisted virtual structure sheaf as in our Definition \ref{setup:def:twisted_vir_str_sh}. The above lemma shows that this sign is exactly the sign making the system of twisted virtual structure sheaves factorizable.
\end{remark}

\begin{lemma}
    Let $\Xc$ be a smooth three-dimensional orbifold, and let $U$ be its non-stacky locus. Consider the factorization index set $I\subset \Nr_0(\Xc)$ of Lemma \ref{fact:lemma:hilb_fact_index_set}. By Proposition \ref{appl:lemma:fact_twist_vir}, Lemma \ref{fact:lemma:comp_setup_cd} and Example \ref{fact:ex:hilb_to_sym}, we have the factorizable system
    \begin{equation*}
        \left\{\Fc_\alpha \coloneqq \xi_{\alpha *}\HC_{\alpha *}\widehat{\Oc}^\vir_{\hilb^\alpha(\Xc)}\right\}_{\alpha\in I}.
    \end{equation*}
    It satisfies that for any $\alpha\in I\setminus \Delta_I$, $\Fc_{\alpha}$ vanishes on $\sym^{b(\alpha)}(U)$.
\end{lemma}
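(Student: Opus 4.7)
The plan is to show that the preimage $\eta_\alpha^{-1}\bigl(\sym^{b(\alpha)}(U)\bigr)$ under $\eta_\alpha = \xi_\alpha \circ \HC_\alpha : \hilb^\alpha(\Xc) \to \sym^{b(\alpha)}(X)$ is empty whenever $\alpha \in I \setminus \Delta_I$. Since $\Fc_\alpha$ is defined as the pushforward $\eta_{\alpha *}\widehat{\Oc}^\vir_{\hilb^\alpha(\Xc)}$ and $\eta_\alpha$ is proper (by Example \ref{fact:ex:hilb_to_sym}), proper base change will then immediately give that $\restr{\Fc_\alpha}{\sym^{b(\alpha)}(U)} = 0$.

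First I would identify the preimage. A point in $\sym^{b(\alpha)}(U)$ corresponds to a $0$-dimensional sheaf on $X$ with set-theoretic support in $U$. Since $\pi:\Xc\to X$ is bijective on geometric points and restricts to the identity on $U$, a substack $\Zc\subset\Xc$ has $\pi_*\Oc_\Zc$ supported in $U$ if and only if $\Zc$ is set-theoretically supported in $U$, i.e.\ $\Zc$ is a closed substack of $U$. Therefore $\eta_\alpha^{-1}\bigl(\sym^{b(\alpha)}(U)\bigr)$ is exactly the locus of substacks of $U\subset\Xc$ of class $\alpha$.

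Next I would show that no such substack can exist unless $\alpha \in \Delta_I \cup \{0\}$. The scheme $U$ is open in the connected coarse space $X$ and thus itself connected. Fix a point $p \in U$; then for any reduced closed point $q \in U$, the classes $[\Oc_p]$ and $[\Oc_q]$ agree in $\Nr_0(\Xc)$, since the Euler pairing with any perfect complex takes the same value on both (one can deform $p$ to $q$ within the connected component, or equivalently pair against $\Oc_\Xc$ and use connectedness). Consequently, any $0$-dimensional substack $\Zc \subseteq U$ has class $[\Oc_\Zc] = b(\alpha)\,[\Oc_p] \in \Delta_I$, by dévissage along a composition series of $\Oc_\Zc$ whose successive quotients are skyscrapers at points of $U$.

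Combining the two steps, if $\alpha \in I \setminus \Delta_I$ then no closed substack of $U$ has class $\alpha$, so the preimage $\eta_\alpha^{-1}\bigl(\sym^{b(\alpha)}(U)\bigr)$ is empty; hence $\Fc_\alpha$ vanishes on $\sym^{b(\alpha)}(U)$. The only point requiring any care is the invariance of $[\Oc_p]$ on the connected open $U$, which I would justify via the locally-constant behaviour of classes of skyscraper sheaves under flat families of reduced points (equivalently, that $p\mapsto [\Oc_p]\in\Nr_0(\Xc)$ factors through $\pi_0(U)$).
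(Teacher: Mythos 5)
Your proof is correct and follows essentially the same route as the paper: identify the preimage of $\sym^{b(\alpha)}(U)$ under $\eta_\alpha$ with $\hilb^\alpha(U)$ via the cartesian diagram formed by passing to the open locus, note that any $0$-dimensional substack of the scheme $U$ has class in $\Delta_I$ (so $\hilb^\alpha(U)=\emptyset$ for $\alpha\notin\Delta_I$), and conclude by base change. The only difference is that you spell out why classes of skyscrapers at points of a connected $U$ all coincide, a step the paper simply asserts ``by definition''; that extra detail is a welcome clarification, not a deviation.
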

\begin{proof}
    Set $\eta_\alpha = \xi_\alpha\circ \HC_\alpha$. We have the cartesian diagram
    \begin{equation*}
        \begin{tikzcd}
            \hilb^\alpha(U) \ar[r,open]\ar[d,"\eta_\alpha^U"] & \hilb^\alpha(\Xc) \ar[d,"\eta_\alpha"] \\
            \sym^{b(\alpha)}(U) \ar[r,open] & \sym^{b(\alpha)}(X).
        \end{tikzcd}
    \end{equation*}
    This gives us
    \begin{equation*}
        \restr{\Fc_\alpha}{\sym^{b(\alpha)}(U)} \cong \left(\eta_\alpha^U\right)_*\left(\restr{\widehat{\Oc}^\vir_{\hilb^\alpha(\Xc)}}{\hilb^\alpha(U)}\right),
    \end{equation*}
    but any substack of $U$ has class in $\Delta_I$ by definition. So, for a class $\alpha\in I\setminus \Delta_I$, $\hilb^\alpha(U)$ is emtpy, proving $\restr{\Fc_\alpha}{\sym^{b(\alpha)}(U)}$ vanishes. 
\end{proof}

\subsection{Orbifold Generating Series}

\subsubsection{Orbifold Generating Series}
For a given smooth quasi-projective Calabi--Yau orbifold $\Xc$ of dimension three, we apply the factorization theorem to simplify its generating series of (equivariant) Donaldson--Thomas invariants. Let $I$ be the factorization index set of Lemma \ref{fact:lemma:hilb_fact_index_set}. As $\hilb^{\alpha}(\Xc)$ is empty for $\alpha\notin I$ by definition, it suffices to sum over $\alpha\in I$ in the generating series $\Zs(\Xc)$. As seen above, the pushforwards of the twisted virtual structure sheaves $\widehat{\Oc}^{vir}$ on $\hilb^\alpha(\Xc)$ to $\sym^{b(\alpha)}(X)$ yield a factorizable system of sheaves. 

We assume from now on that $I$ is closed under addition. Note that the factorization index set of Lemma \ref{fact:lemma:hilb_fact_index_set} is closed under addition for orbifolds of the form $\Xc=\Xc'\times \C$, which we are mainly interested in. By Theorem \ref{fact:thm:fact}, there are classes
\begin{equation*}
    [\Gc_\alpha]\in K(X)\text{ for $\alpha\in \Delta_I$,}\quad [\Gc_\alpha]\in K(S)\text{ for $\alpha\notin \Delta_I$,}
\end{equation*}
such that the generating series equals
\begin{align*}
    \Zs(\Xc) &= 1+\sum_{\alpha\in I} q^{\alpha} \chi\left(\hilb^{\alpha}(\Xc),\widehat{\Oc}^{\vir}_{\hilb^{\alpha}(\Xc)}\right)\\
    &= \pexp{\sum_{\alpha\in \Delta_I}q^{\alpha} \chi\left(X,\Gc_{\alpha}\right) + \sum_{\alpha\in I\setminus\Delta_I}q^{\alpha} \chi\left(S,\Gc_{\alpha}\right)}.
\end{align*}

\subsubsection{Crepant Resolutions and Compatible Factorization}
Recall that we are working with a fixed coarse moduli space $\pi:\Xc\to X$. Let $\nu:Y\to X$ be a crepant resolution. Let $U$ be the non-stacky locus and let $S$ be its complement in the coarse moduli space. Note that $U$ also openly embeds into $Y$. Let $E$ be its complement in $Y$. Take a fixed point $p$ in $U$ to define $\Delta_I=\N [\Oc_p]$. We can compute the equivariant K-theoretic generating function for $\hilb^n(Y)$ using Nekrasov's formula \cite[Theorem 3.3.6]{ok15}. We have the following identification
\begin{equation*}
    \hilb^n(Y) \hookleftarrow \hilb^n(U) \xrightarrow{\sim} \hilb^{n[\Oc_p]}(U) \hookrightarrow \hilb^{n[\Oc_p]}(\Xc).
\end{equation*} 
The obstruction theories agree under this isomorphism, since the deformation theory of sheaves restricts naturally to open substacks. This gives us the required isomorphisms for compatible factorization, so the twisted virtual structure sheaves are compatibly factorizable along the embeddings $U\hookrightarrow Y$ and $U\hookrightarrow\Xc$ respectively. Pushforward along the open embeddings embeds the factorization index semigroup $I_U=\N$ into $I_Y=\N$ by the identity and into $I_\Xc$ by $n\mapsto n[\Oc_p]$.

Again, we assume that $I_\Xc$ is closed under addition. Applying Theorem \ref{thm:fact:comp_fact}, we get
\begin{equation*}
    \left[\restr{\Gc_{n[\Oc_p]}^\Xc}{U}\right] = \left[\restr{\Gc_{n}^Y}{U}\right].
\end{equation*}
Hence, the class $\left[\Gc_{n[\Oc_p]}^\Xc\right] - \left[\nu_*\Gc_{n}^Y\right]$ is supported on $S$. By d\'evissage \cite[Appl. 6.4.2]{weibel_kbook}, it must be the pushforward of some class $\left[\Gc_{n[\Oc_p]}^S\right]\in K(S)$. By invariance of $\chi$ under pushforwards, we obtain
\begin{equation*}
    \chi\left(X,\Gc_{n[\Oc_p]}^\Xc\right) = \chi\left(Y,\Gc_{n}^Y\right) + \chi\left(S, \Gc_{n[\Oc_p]}^S\right).
\end{equation*}

Putting all these parts together, we obtain the following form.

\begin{proposition}\label{pp:prop:general_open_locus_form}
    Let $\pi:\Xc\to X$ be the coarse moduli space, and let $\nu:Y\to X$ be a crepant resolution. Let $U$ be the non-stacky locus and let $S$ be its complement in the coarse moduli space. Assume that the factorization index set $I_\Xc$ of Lemma \ref{fact:lemma:hilb_fact_index_set} is closed under addition. We have the equation
    \begin{eqnarray*}
        Z(\Xc) =& \pexp{\sum_{n>0}q^{n[\Oc_p]} \left(\chi\left(Y,\Gc_{n}^Y\right)+\chi\left(S,\Gc^S_{n[\Oc_p]}\right)\right) + \sum_{\alpha\in I\setminus \Delta_I} q^{\alpha} \chi\left(S,\Gc_{\alpha}^\Xc\right)}\\
        =& Z(Y,q^{[\Oc_p]}) \cdot \pexp{\sum_{n>0}q^{n[\Oc_p]} \chi\left(S,\Gc^S_{n[\Oc_p]}\right) + \sum_{\alpha\in I\setminus \Delta_I} q^{\alpha} \chi\left(S,\Gc_{\alpha}^\Xc\right)}
    \end{eqnarray*}
    where $Z(Y,q)$ is the equivariant K-theoretic generating function for the Hilbert schemes of points on $Y$. 
\end{proposition}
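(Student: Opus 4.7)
The plan is essentially to assemble the identities already established in the preceding discussion, so the proof will be mostly bookkeeping rather than new work. First I would apply Corollary \ref{fact:cor:comp_pexp_fact} directly to the factorizable system $\{\xi_{\alpha *}\HC_{\alpha *}\widehat{\Oc}^\vir_{\hilb^\alpha(\Xc)}\}_{\alpha\in I_\Xc}$ on $\Xc$, which by Proposition \ref{appl:lemma:fact_twist_vir} satisfies the factorizability hypothesis, and moreover vanishes on $\sym^{b(\alpha)}(U)$ for $\alpha\in I_\Xc\setminus \Delta_I$ by the last lemma of Section \ref{sec:fact_vir_str_sheaf}. This yields classes $[\Gc_\alpha^\Xc]$, supported on $S$ for $\alpha\notin \Delta_I$, and the identity
\begin{equation*}
    \Zs(\Xc)=\pexp{\sum_{n>0}q^{n[\Oc_p]}\chi(X,\Gc^\Xc_{n[\Oc_p]})+\sum_{\alpha\in I_\Xc\setminus\Delta_I}q^\alpha\chi(S,\Gc^\Xc_\alpha)}.
\end{equation*}

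Next I would handle the diagonal part. Apply Corollary \ref{fact:cor:comp_pexp_fact} to $Y$ (a smooth CY3 scheme, where the factorization index semigroup is $\N$) to obtain classes $[\Gc^Y_n]\in K(Y)$ with $\Zs(Y,q)=\pexp{\sum_{n>0}q^n \chi(Y,\Gc^Y_n)}$, which recovers Nekrasov's generating series. Then use the compatible factorization setup of Theorem \ref{thm:fact:comp_fact} along the open embeddings $U\hookrightarrow Y$ and $U\hookrightarrow \Xc$, noting that the twisted virtual structure sheaves agree on $\hilb^n(U)$ under the identification $\hilb^n(U)\cong \hilb^{n[\Oc_p]}(U)$ (the obstruction theories are defined intrinsically by $R\homc(\Ic,\Ic)_0$ and restrict naturally to open substacks); the hypotheses of compatible factorization on the factorization index sets are immediate, and the vanishing condition on $\Fc_\alpha$ for $\alpha\notin I_\Yc$ is automatic by Remark \ref{fact:rmk:comp_cond_automatic_hilb}. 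Theorem \ref{thm:fact:comp_fact} then yields $j^*[\Gc^\Xc_{n[\Oc_p]}]=j^*[\nu_*\Gc^Y_n]$ as classes on $U$, so the difference $[\Gc^\Xc_{n[\Oc_p]}]-[\nu_*\Gc^Y_n]$ is supported on $S$ and by dévissage equals $i_{S*}[\Gc^S_{n[\Oc_p]}]$ for some $[\Gc^S_{n[\Oc_p]}]\in K(S)$. Pushforward invariance of $\chi$ then gives
\begin{equation*}
    \chi(X,\Gc^\Xc_{n[\Oc_p]})=\chi(Y,\Gc^Y_n)+\chi(S,\Gc^S_{n[\Oc_p]}).
\end{equation*}

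To finish, I substitute this identity into the first expression for $\Zs(\Xc)$, obtaining the first equality of the proposition. The second equality then follows by splitting the plethystic exponential via the multiplicativity statement in Lemma \ref{fact:lemma:comp_formula}: since $I_\Xc$ is closed under addition by hypothesis, $\chi\circ\pexpI{-}$ is multiplicative in its argument, so
\begin{equation*}
    \pexp{\sum_{n>0}q^{n[\Oc_p]}\chi(Y,\Gc^Y_n)+R}=\Zs(Y,q^{[\Oc_p]})\cdot\pexp{R}
\end{equation*}
for any remainder $R$, and we take $R$ to be the sum of the two $S$-supported terms.

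I do not expect a genuine obstacle here; the only subtlety worth checking carefully is that all the compatibility hypotheses of Theorem \ref{thm:fact:comp_fact} hold simultaneously for the pair $(U,Y)$ and $(U,\Xc)$ (in particular the closure assumptions on $I_\Yc\hookrightarrow I_\Xc$ under sums and under pullback), and that the twisted virtual structure sheaves give genuinely isomorphic, not merely quasi-isomorphic, factorizable systems on $\hilb^n(U)$, so that the isomorphisms $\delta_\alpha$ required in diagram \eqref{eq:compa_fact_seq} really exist and are compatible with the factorization morphisms $\phi$ constructed in Propositions \ref{appl:prop:fact_vir_str_sheaf} and \ref{appl:lemma:fact_twist_vir}.
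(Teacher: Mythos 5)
Your proposal reproduces the paper's own argument: first factorize $\Zs(\Xc)$ via Corollary \ref{fact:cor:comp_pexp_fact} with the $S$-support statement, then compare with $\Zs(Y)$ on the non-stacky locus using Theorem \ref{thm:fact:comp_fact} and d\'evissage to define $[\Gc^S_{n[\Oc_p]}]$, and finally split off the $Y$-part by multiplicativity of $\pexp{-}$. The bookkeeping matches the paper and the hypotheses you flag at the end are indeed the ones the paper verifies.
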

\section{Rigidity}\label{sect:rigidity}

\subsection{Orbifold Computation}
Now we specialize to our specific situation with $\mu_r$ acting on $\C^3$ with weight $(1,r-1,0)$. Let $\Ts=(\C^*)^3$ act by $(t_1,t_2,t_3)$ on $\C^3$. In this section, we will prove the following theorem.
\begin{theorem}\label{rig:thm:main_thm}
    The $\Ts$-equivariant K-theoretic degree $0$ DT generating series for $[\C^3/\mu_r]$, with $\mu_r$ acting on $\C^3$ with weight $(1,r-1,0)$, is
    \begin{equation*}
        \Zs\left([\C^3/\mu_r],q_0,\dots,q_{r-1}\right) = \pexp{\Fs_r(q)+\Fs_r^\col(q_0,\dots,q_{r-1})},
    \end{equation*}
    where
    \begin{align*}
        \Fs(q) &\coloneqq \frac{[t_2t_3][t_1t_3][t_1t_2]}{[t_1][t_2][t_3]}\frac{1}{[\kappa^{1/2}q][\kappa^{1/2}q^{-1}]},\\
        \Fs_r(q) &\coloneqq \sum_{k=0}^{r-1} \Fs\left(t_1^{r-k}t_2^{-k},t_1^{-r+k+1}t_2^{k+1},t_3,q\right),\\
        \Fs_r^\col(q) &\coloneqq \frac{[t_1t_2]}{[t_3]}\frac{1}{[\kappa^{1/2}q][\kappa^{1/2}q^{-1}]}\left(\sum_{0< i \leq j < r} \left(q_{[i,j]}+q_{[i,j]}^{-1}\right)\right),
    \end{align*}
    where $q=q_0\cdots q_{r-1}$, $q_{[i,j]}=q_i\cdots q_j$, $[w]=w^{1/2}-w^{-1/2}$, and $\pexp{\cdot}$ denotes the plethystic exponential. Note that the factorization index set $I$ of Lemma \ref{fact:lemma:hilb_fact_index_set} is closed under addition in this case.
\end{theorem}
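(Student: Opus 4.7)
The plan is to combine two pieces: the orbifold factorization developed in Sections~\ref{fact:sec:fact}--\ref{sect:factorization_application} together with a semi-equivariant slicing computation in a specific limit, joined by the rigidity principle of Okounkov. First, I would apply Proposition~\ref{pp:prop:general_open_locus_form} to $\Xc = [\C^3/\mu_r]$. The non-stacky locus $U$ has complement $S = \{0\} \times \C$, and $\Xc$ admits a crepant resolution $Y = \widetilde{[\C^2/\mu_r]} \times \C$, the minimal resolution of the $A_{r-1}$ singularity crossed with a line. A point $p$ in $U$ has class $[\Oc_p] = (1,1,\dots,1) \in \Z^r = \Nr_0(\Xc)$, so $q^{[\Oc_p]} = q = q_0 \cdots q_{r-1}$. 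The proposition gives
\begin{equation*}
    \Zs(\Xc) = \Zs(Y,q) \cdot \pexp{\sum_{n>0} q^{n} \chi(S,\Gc^S_n) + \sum_{\alpha \in I\setminus \Delta_I} q^{\alpha} \chi(S,\Gc_\alpha^\Xc)},
\end{equation*}
where all terms on the right other than $\Zs(Y,q)$ are supported on $S = \C$.

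Next, I would identify $\Zs(Y,q)$. The toric variety $Y$ has $r$ torus-fixed points, indexed by $k = 0,\dots,r-1$, with tangent weights $(t_1^{r-k}t_2^{-k}, t_1^{-r+k+1}t_2^{k+1}, t_3)$. Applying Nekrasov's formula \cite[Thm.~3.3.6]{ok15} at each fixed point and multiplicativity of the plethystic exponential under addition (Lemma~\ref{fact:lemma:comp_formula}, since $I_Y$ is closed under addition), one obtains
\begin{equation*}
    \Zs(Y,q) = \pexp{\Fs_r(q)}.
\end{equation*}
This settles the first factor exactly as in the claimed formula. The remaining task is to show that the stacky contribution equals $\pexp{\Fs_r^\col(q_0,\dots,q_{r-1})}$, i.e.\ that the residual generating series captures only colored partitions with non-trivial coloring and matches the stated closed form.

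To identify the stacky part, I would invoke the rigidity principle. The K-theoretic refinement ensures that each class $\chi(S,\Gc_\alpha^\Xc)$, being computed at the single $\Ts$-fixed point $0 \in \C$, has a normal-bundle factor $1/[t_3]$ together with a Laurent function of the $t_i$ whose structure (by the symmetric obstruction theory) forces the remaining dependence, up to explicit prefactors that can be read off from Proposition~\ref{ppc:partition_limit_contribution}, to be a function of $\kappa$ alone. Consequently, it suffices to compute the limit
\begin{equation*}
    \overrightarrow{\Zs}\left([\C^3/\mu_r],\dv{q}\right) := \lim_{\substack{t_1,t_3 \to 0 \\ |t_1| \ll |t_3|,\ \kappa\ \text{fixed}}} \Zs\left([\C^3/\mu_r],\dv{q}\right)
\end{equation*}
and verify that, after dividing by the corresponding limit of $\pexp{\Fs_r(q)}$, the remaining factor equals the limit of $\pexp{\Fs_r^\col(\dv{q})}$.

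Finally, to compute $\overrightarrow{\Zs}$, I would adapt Young's slicing machinery from \cite{young10}. In the chosen limit, Proposition~\ref{ppc:partition_limit_contribution} expresses the contribution of each fixed colored plane partition as a product of slice weights, with each slice $\lambda$ contributing a factor $\kappa^{\pm|\lambda|/2}$ in addition to Young's combinatorial weight. Introducing the limit weight operator $K_\pm\ket{\lambda} = (\kappa^{1/2})^{\pm |\lambda|}\ket{\lambda}$ into Young's infinite wedge formalism and computing the expectation value of the resulting product of vertex operators $\Gamma_\pm$, weight operators $Q_i$, and $K_\pm$, via their commutation relations, yields a closed expression for $\overrightarrow{\Zs}$ matching $\pexp{\overrightarrow{\Fs}_r(q) + \overrightarrow{\Fs}_r^\col(\dv{q})}$. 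Combined with rigidity, this upgrades to the full equivariant identity. The main obstacle will be the slicing step: a fully equivariant slicing is not available, and one must carefully verify that in the chosen limit the weight of each colored plane partition really does factor slice-by-slice and that the inserted $K_\pm$ operators interact correctly with $\Gamma_\pm$ and $Q_i$ so that the resulting operator identity reproduces exactly the stated closed form with the correct $[\kappa^{1/2}q^{\pm 1}]$ denominators and colored numerators $\sum_{0 < i \leq j < r}(q_{[i,j]} + q_{[i,j]}^{-1})$.
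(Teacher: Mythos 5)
Your proposal is correct and follows essentially the same route as the paper: apply the compatible factorization result (Proposition~\ref{pp:prop:general_open_locus_form}) against the crepant resolution $Y=\Ac_{r-1}\times\C$, identify $\Zs(Y,q)=\pexp{\Fs_r(q)}$ via Nekrasov's formula on the $r$ toric charts, invoke rigidity to reduce the residual $S$-supported factor to a function of $\kappa$ alone, and then determine that factor by computing the limit $\overrightarrow{\Zs}$ via Young's slicing with an inserted operator $K_\pm\ket{\lambda}=\kappa^{\pm|\lambda|/2}\ket{\lambda}$ in the infinite-wedge formalism. The one place your sketch is looser than the paper is the rigidity step: the paper first extracts the prefactor $[t_1t_2]/[t_3]$ (using the $t_1t_2$-coefficient bound from Maulik--Nekrasov--Okounkov--Pandharipande) and then runs a boundedness/induction argument over the factorization index set to show each $h_{\dv{n}}$ lies in $\Z[\kappa^{\pm1}]$, rather than reading this off directly from the symmetric obstruction theory.
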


The proof occupies the remainder of the section. As $\mu_r$ acts trivially on the crepant resolution of $[\C^3/\mu_r]$ is given by $Y=\Ac_{r-1}\times\C$, where $\Ac_{r-1}$ is the minimal resolution of $[\C^2/\mu_r]$. A toric description of $\Ac_{r-1}$ can be found in \cite[\S 10.1]{cls_toric}. From this description we obtain $r$ charts of $Y$ isomorphic to $\C^3$ with weights $\left(t_1^{r-k}t_2^{-k},t_1^{-r+k+1}t_2^{k+1},t_3\right)$, where $0\leq k\leq r-1$. Localization together with Nekrasov's formula \cite[Theorem 3.3.6]{ok15} then lets us compute
\begin{equation*}
    \Zs(Y,q) = \pexp{\Fs_r(q)}.
\end{equation*}

By computing the Euler characteristics in Proposition \ref{pp:prop:general_open_locus_form} via localization on $S$, and by inserting the above formula for $\Zs(Y,q)$, we obtain the following.

\begin{proposition}\label{pp:prop:loc_open_locus_form}
    For $\mu_r$ acting on $\C^3$ with weight $(1,1-1,0)$, we have
    \begin{equation}\label{pp:eq:loc_open_locus_form}
        \Zs\left([\C^3/\mu_r],\dv{q}\right) = \pexp{\Fs_r(q)}\cdot \pexp{\frac{[t_1t_2]}{[t_3]}\sum_{\dv{n}\in I} \dv{q}^\dv{n} h_{\dv{n}}},
    \end{equation}
    with $h_{\dv{n}}$ in $\Z[t_1^{\pm 1/2},t_2^{\pm 1/2},t_3^{\pm 1/2}]$.
\end{proposition}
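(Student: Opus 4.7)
The plan is to apply Proposition \ref{pp:prop:general_open_locus_form} and then evaluate each resulting term by $\Ts$-localization. Since $\Xc = [\C^2/\mu_r]\times\C$ splits off a $\C$-factor, Lemma \ref{fact:lemma:hilb_fact_index_set} ensures the factorization index set $I$ is closed under addition, so Proposition \ref{pp:prop:general_open_locus_form} applies and gives
\begin{equation*}
    \Zs(\Xc) \,=\, \Zs\bigl(Y,q^{[\Oc_p]}\bigr)\cdot\pexp{\sum_{n>0} q^{n[\Oc_p]} \chi\bigl(S, \Gc^S_{n[\Oc_p]}\bigr) + \sum_{\alpha\in I\setminus\Delta_I} q^\alpha \chi\bigl(S, \Gc^\Xc_\alpha\bigr)},
\end{equation*}
with $p$ chosen in the non-stacky locus $U$.

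First I would compute $\Zs(Y,q)$. The crepant resolution $Y = \Ac_{r-1}\times\C$ is toric with $r$ isolated $\Ts$-fixed points (one per top-dimensional cone of the fan of $\Ac_{r-1}$; cf.\ \cite[\S 10.1]{cls_toric}), and at the $k$-th fixed point ($0\le k\le r-1$) the $\Ts$-tangent weights are $(t_1^{r-k}t_2^{-k},\, t_1^{-r+k+1}t_2^{k+1},\, t_3)$. Applying $\Ts$-localization on $Y$ and inserting the K-theoretic Nekrasov formula \cite[Theorem 3.3.6]{ok15} for $\C^3$ at each chart produces $\Zs(Y,q) = \pexp{\Fs_r(q)}$.

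Next I would reduce the remaining exponent to a single-point calculation. With $\mu_r$ acting by $(1,r-1,0)$, the non-stacky locus $U\subset\Xc$ is the complement of the $t_3$-axis, so the singular stratum $S\subset X$ of the coarse space is the image of that axis, i.e., $\C$ with $\Ts$-weight $t_3$, and it has a unique $\Ts$-fixed point at the origin. Thomason localization then gives
\begin{equation*}
    \chi(S, \Fc) \,=\, \frac{\Fc|_0}{1 - t_3^{-1}} \,=\, \frac{t_3^{1/2}\,\Fc|_0}{[t_3]}
\end{equation*}
for any $\Fc\in K_\Ts(S)$, which already produces the desired $1/[t_3]$ factor. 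To also pull out the common $[t_1t_2]$, so that the residues $h_{\dv{n}} \coloneqq t_3^{1/2}\,\Gc_{\dv{n}}|_0/[t_1t_2]$ lie in $\Z[t_1^{\pm 1/2},t_2^{\pm 1/2},t_3^{\pm 1/2}]$, I need to show that each restriction $\Gc_{\dv{n}}|_0$ vanishes under the specialization $t_1t_2 = 1$.

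The main obstacle is establishing this $[t_1t_2]$-divisibility. The classes $\Gc_{\dv{n}}$ are produced from the tree complex of Definition \ref{fact:def:tree_complex_differential} applied to the factorizable system $\{\eta_{\alpha*}\widehat{\Oc}^\vir_{\hilb^\alpha(\Xc)}\}_{\alpha\in I}$, followed by the d\'evissage step from $X^{b(\alpha)}$ to $X$ in the proof of Theorem \ref{fact:thm:fact}. By $\Ts$-localization on $\hilb^\alpha(\Xc)$, each pushforward is a sum over colored plane partitions $\pi$ of local contributions $(-1)^{\abs{\pi^G}}\prod_{w\in W(\pi)}[\kappa/w]/[w]$, and for every non-empty $\pi$ whose Hilbert-Chow image lies over a point of $S$ the weight set $W(\pi)$ contains an outgoing $t_3$-direction weight that produces a factor $[\kappa/t_3]/[t_3] = [t_1t_2]/[t_3]$ vanishing precisely at $t_1t_2 = 1$. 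Propagating this vanishing through the combinatorial splittings defining $\Gc_{\dv{n}}$ and through the d\'evissage yields the required divisibility; alternatively one could try to deduce it abstractly from the $\kappa$-twisted Serre self-duality of the CY3 obstruction theory. Once established, repackaging the resulting identities gives the formula of the proposition.
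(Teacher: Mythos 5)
Your overall strategy matches the paper's: insert the crepant resolution series $\Zs(Y,q)$ into Proposition~\ref{pp:prop:general_open_locus_form}, localize on $S\cong\C$ (with $\Ts$-weight $t_3$) to produce the $1/[t_3]$ prefactor, and then argue that the residual numerators are divisible by $[t_1t_2]$. The first two steps are fine. The gap is in the third step, the $[t_1t_2]$-divisibility, which you leave unproved.

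Specifically, you propose to ``propagate'' the vanishing at $t_1t_2=1$ ``through the combinatorial splittings defining $\Gc_{\dv{n}}$ and through the d\'evissage.'' This is both unnecessary and not established: the tree complexes and d\'evissage steps are not designed to make divisibility properties of localization weights transparent. The paper instead short-circuits this entirely via invertibility of the plethystic exponential: since $\pexp{\Fs_r(q)}$ has $[t_1t_2]$ as an overall factor of each summand of $\Fs_r$, and since $\pexp{-}$ (and its inverse) is additive/multiplicative in a way that preserves divisibility of nonconstant coefficients, it suffices to show that \emph{every} localization weight $\hat{a}(\pi)$ appearing in $\Zs([\C^3/\mu_r],\dv{q})$ vanishes at $t_1t_2=1$. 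This reduction avoids examining the internal structure of the $\Gc_{\dv{n}}$'s altogether. You also state that each relevant $W(\pi)$ ``contains an outgoing $t_3$-direction weight'' without justification; what is actually needed is that the $\Ts$-character of $\Tb^\vir_{\Xc,\pi}$ has negative coefficient at $t_1t_2$ (so that the $[t_1t_2]=[\kappa/t_3]$ factor sits in the numerator of $\hat a(\pi)$ rather than the denominator). The paper proves this by identifying $\Tb^\vir_{\Xc,\pi}$ with the $\mu_r$-invariant part of $\Tb^\vir_{\C^3,\pi}$ via \eqref{setup:eq:obstruction_theory_fixed_part}, citing \cite[Lemma 5]{maulik2005gromovwitten} for the negative $t_1t_2$-coefficient on $\C^3$, and using that $t_1t_2$ is $\mu_r$-invariant for the action with weights $(1,r-1,0)$. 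Finally, your ``alternative'' suggestion of deducing the divisibility abstractly from $\kappa$-twisted Serre self-duality does not go through: self-duality only relates the coefficients of $w$ and $\kappa/w$, and cannot by itself single out $t_1t_2$ as a weight with strictly negative coefficient --- that fact uses the specific geometry of $\C^3$ and the $\mu_r$-action on the first two coordinates.
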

\begin{proof}
    We insert the $\Zs(Y,q)$ of the crepant resolution computed above into Proposition \ref{pp:prop:general_open_locus_form}. Then we note that $S$ is the coarse moduli space of $[\pt/\mu_r]\times\C$, which is just $\C$, with $\Ts$ acting just by $t_3$. So, localization on $S$ gives us
    \begin{equation*}
        \Zs\left([\C^3/\mu_r],\dv{q}\right) = \pexp{\Fs_r(q)}\cdot \pexp{\frac{1}{1-t_3^{-1}}\sum_{\dv{n}\in I} \dv{q}^\dv{n} \tilde{h}_{\dv{n}}},
    \end{equation*}
    for some $\tilde{h}_{\dv{n}}$ in $\Z[t_1^{\pm 1/2},t_2^{\pm 1/2},t_3^{\pm 1/2}]$.

    To show the proposition, we claim that all $\tilde{h}_{(n_0,n_1)}$ are divisible by $t_1t_2-1$. By invertibility of the plethystic exponential, it suffices to prove that any localization weight in $\Zs\left([\C^3/\mu_r],q_0,q_1\right)$ is divisible by $t_1t_2-1$. By the localization weights $\hat{a}(\pi)$, which we computed in Section \ref{sec:colored_vertex}, we need to check that $t_1t_2$ has negative coefficient in the class of the virtual tangent space at $\pi$. By the natural identification \eqref{setup:eq:obstruction_theory_fixed_part}, the virtual tangent space at $\pi$ is exactly the $\mu_r$-fixed part of the virtual tangent space of $\hilb(\C^3)$ at the fixed point corresponding to the underlying plane partition of $\pi$. Now \cite[Lemma 5]{maulik2005gromovwitten} shows that the virtual tangent space of $\hilb(\C^3)$ at each fixed point has negative coefficient of $t_1t_2$. Taking the $\mu_r$-invariant part retains this negative coefficient.
\end{proof}

Applying Okounkov's rigidity principle, we can refine this proposition as follows.

\begin{lemma}\label{pp:lemma:rigidity}
    The functions $h_{\dv{n}}$ in Proposition \ref{pp:prop:loc_open_locus_form} are in $\Z[\kappa^{\pm 1}]$, that is, they are only dependent on $\kappa$, not $t_1,t_2,t_3$.
\end{lemma}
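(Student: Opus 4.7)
The plan is to invoke Okounkov's rigidity principle. The twist in the definition of $\widehat{\Oc}^\vir$ was chosen precisely so that each $\Ts$-localized contribution $\hat{a}(\pi) = \prod_{w \in W(\pi)} [\kappa/w]/[w]$ from Section \ref{sec:colored_vertex} is a product of Calabi-Yau factors. A direct calculation shows $[\kappa/w]/[w] \to -\kappa^{\pm 1/2}$ as $w \to 0, \infty$ with $\kappa$ held fixed. Thus, along any sufficiently generic cocharacter $\phi: \C^* \to \Ts_0 := \ker(\kappa: \Ts \to \C^*)$ of the two-dimensional Calabi-Yau subtorus, each coefficient of $\dv{q}^{\dv{n}}$ in $\Zs([\C^3/\mu_r], \dv{q})|_{\phi(z)}$ has finite limits as $z \to 0, \infty$. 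The same holds for $\pexp{\Fs_r(q)}$ by the analogous localization on the crepant resolution $Y$, and for $[t_1 t_2]/[t_3] = [\kappa/t_3]/[t_3]$ by direct inspection.

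The constraint on $h_{\dv{n}}$ would be extracted by induction on $\dv{n}$ in some linear ordering of $I$ refining addition. By \eqref{pp:eq:loc_open_locus_form} and invertibility of the plethystic exponential (with constant term $1$), the series $\sum_{\dv{n}} \dv{q}^{\dv{n}} \frac{[t_1 t_2]}{[t_3]} h_{\dv{n}}$ has finite-limit $\dv{q}^{\dv{n}}$-coefficients along $\phi$ whenever $h_{\dv{n}'}$ does for all $\dv{n}' < \dv{n}$, which is the inductive hypothesis. Since $[\kappa/t_3]/[t_3]$ has nonzero finite limits along generic $\phi$, dividing through forces $h_{\dv{n}}|_{\phi(z)}$ to have finite limits as $z \to 0, \infty$ as well.

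The conclusion follows algebraically: parametrizing $\Ts$ as $(s, u, \kappa)$ via $t_1 = s$, $t_2 = u$, $t_3 = \kappa/(su)$ identifies $\Ts_0$ with $\mathrm{Spec}\, \Z[s^{\pm 1}, u^{\pm 1}]$. A Laurent polynomial in $s^{\pm 1/2}, u^{\pm 1/2}, \kappa^{\pm 1/2}$ whose restriction along two independent generic cocharacters of $\Ts_0$ has finite limits at $z = 0, \infty$ must have degree zero in both $s$ and $u$, hence lies in $\Z[\kappa^{\pm 1/2}]$. A parity check on the weight decomposition of $\widehat{\Oc}^\vir$ at each $\Ts$-fixed point (each $w$ paired with $\kappa/w$ in $W(\pi)$) rules out half-integer powers of $\kappa$, yielding $h_{\dv{n}} \in \Z[\kappa^{\pm 1}]$.

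The main technical obstacle is the inductive step: one must confirm that the $\dv{q}^{\dv{n}}$-coefficient of $\sum_{\dv{n}} \dv{q}^{\dv{n}} \frac{[t_1 t_2]}{[t_3]} h_{\dv{n}}$, obtained by expanding the explicit plethystic exponential of Lemma \ref{fact:lemma:comp_formula} and solving for the leading term, isolates $\frac{[t_1 t_2]}{[t_3]} h_{\dv{n}}$ modulo a finite sum of products of $h_{\dv{n}'}$ with $\dv{n}' < \dv{n}$, so that the inductive hypothesis applies. Additionally, for each fixed $\dv{n}$ the generic $\phi$ must be chosen to avoid the finitely many tangent weights appearing in any colored plane partition of class $\leq \dv{n}$; this is possible since the Zariski-open locus of valid $\phi$ is nonempty.
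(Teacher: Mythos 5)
Your approach is essentially the paper's: observe that $[\kappa/w]/[w]$ has finite limits under limits of the $t_i$ fixing $\kappa$, deduce that all localization contributions and the factor $[t_1t_2]/[t_3]$ stay bounded, induct in $I$ to force the same for $h_{\dv{n}}$, and conclude $h_{\dv{n}}$ depends only on $\kappa$. Your cocharacter-of-$\Ts_0$ formulation and your explicit observation that $[\kappa/t_3]/[t_3]$ has \emph{nonzero} finite limits (so one may divide through) are if anything slightly more careful than the paper's phrasing, which glosses over the latter point.

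However, the final ``parity check'' is wrong and should be deleted. You claim the pairing $w\leftrightarrow\kappa/w$ in $W(\pi)$ rules out half-integer powers of $\kappa$, but $W(\pi)$ consists of one representative from each such pair, so $|W(\pi)|$ need not be even; more to the point, the limit contribution is $(-\kappa^{1/2})^{\ind(\pi)}$ with $\ind(\pi)=\sum_{\Box\in\pi^G}\mathrm{sgn}(i_2-i_1)$, which can certainly be odd (e.g.\ a single row of length $r+1$ along the $x_1$-axis for $G=\mu_r$ gives $\ind=-1$). Indeed, the final answer in Theorem \ref{rig:thm:main_thm} visibly contains $\kappa^{1/2}$, so half-integer powers occur. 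The statement's ``$\Z[\kappa^{\pm 1}]$'' versus ``$\Z[\kappa^{\pm 1/2}]$'' is a notational convention: the paper works implicitly in a cover of $\Ts$ where all square roots of characters exist (see the remark following Definition \ref{setup:def:twisted_vir_str_sh}), and the operative conclusion the paper actually uses is ``only dependent on $\kappa$.'' The argument up to that point is what is needed; inventing an additional parity constraint contradicts the theorem being proved.
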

\begin{proof}
    We follow the proof of \cite[Prop. 3.5.11]{ok15}. For any $\Ts$-weight $w$, the fraction
    \begin{equation*}
        \frac{[\kappa/w]}{[w]}
    \end{equation*}
    remains bounded for any limit $t_i^{\pm 1}\to \infty$ that keeps $\kappa$ fixed. That means all terms of the first factor in \eqref{pp:eq:loc_open_locus_form} as well as the local contributions $\hat{a}(\pi)$ to $\Zs([\C^3/\mu_r])$ from Section \ref{sec:colored_vertex} and $\frac{[t_1t_2]}{[t_3]}$ in the second factor of \eqref{pp:eq:loc_open_locus_form} remain bounded under any such limit. 

    If we can prove that all $h_{\dv{n}}$ from Proposition \ref{pp:prop:loc_open_locus_form} remain bounded under any limit $t_i^{\pm 1}\to \infty$ that keeps $\kappa$ fixed, then as Laurent polynomials, they only depend on $\kappa$ and not on the $t_i$. To show this, we work by induction in $I$. For any $I$-indecomposable element $\dv{n}$ in $I$, the coefficient of $\dv{q}^\dv{n}$ in the right-hand side of \eqref{pp:eq:loc_open_locus_form} is just
    \begin{equation*}
        \frac{[t_1t_2]}{[t_3]}h_\dv{n},
    \end{equation*}
    plus a term coming from the first factor of the right-hand side of \eqref{pp:eq:loc_open_locus_form} if $\dv{n}\in\Delta_I$. As the other terms of the equation resulting from taking $\dv{q}^\dv{n}$-coefficients in \eqref{pp:eq:loc_open_locus_form}, as well as $\frac{[t_1t_2]}{[t_3]}$, are bounded under limits $t_i^{\pm 1}\to \infty$ keeping $\kappa$ fixed, so is $h_\dv{n}$. In particular, we have shown this property for $\dv{n}$ with $\abs{\dv{n}}=1$.

    Assume the desired limit property is shown for all $I$-indecomposable $\dv{n}$ and all $\dv{n}$ with $\abs{\dv{n}}<N$. If $\dv{n}$ in $I$ is not $I$-indecomposable with $\abs{\dv{n}}=N$, the $\dv{q}^\dv{n}$-coefficient in the right-hand side of \eqref{pp:eq:loc_open_locus_form} consists of one term
    \begin{equation*}
        \frac{[t_1t_2]}{[t_3]}h_\dv{n},
    \end{equation*}
    plus other terms, which are products of $\frac{[t_1t_2]}{[t_3]}$, $h_\dv{n}$ with $\abs{\dv{n}}<N$, and terms from the first factor of \eqref{pp:eq:loc_open_locus_form}. All of these other terms, $\dv{q}^\dv{n}$-coefficient in the left-hand side of \eqref{pp:eq:loc_open_locus_form}, as well as $\frac{[t_1t_2]}{[t_3]}$ remain bounded under limits $t_i^{\pm 1}\to \infty$ keeping $\kappa$ fixed. So, the same holds for $h_\dv{n}$.
\end{proof}

\subsection{Equivariant Limit Computation}\label{rig:sec:eq_lim_comp}
We want to compute the equivariant K-theoretic Donaldson--Thomas partition function from the result of Proposition \ref{pp:prop:loc_open_locus_form}. To determine the functions $h_{\dv{n}}$ in Proposition \ref{pp:prop:loc_open_locus_form}, using Lemma \ref{pp:lemma:rigidity}, it suffices to compute a limit in the $\Ts$-parameters which fixes $\kappa$. We modify the computation of colored plane partition counts in \cite{young10} to compute this limit.

Following the proof of Nekrasov's formula in \cite[3.5.12]{ok15}, we compute the limit in the equivariant parameters $t_1,t_2,t_3$, which sends
\begin{equation}\label{rig:eq:limit}
    t_1,t_3\to 0,\ \lvert t_1 \rvert\ll\lvert t_3\rvert,\ \kappa\ \mathrm{fixed}.
\end{equation}
We denote the limit of the generating series under this limit of equivariant parameters by
\begin{equation*}
    \overrightarrow{\Zs}\left([\C^3/\mu_r],\dv{q}\right).
\end{equation*}
In order to compute this limit using the argument in \cite{young10}, we need the following result from \cite[3.5.17]{ok15}.

\begin{proposition}\label{ppc:partition_limit_contribution}
    The contribution of the colored partition $\pi$ to the $\Ts$-equivariant K-theoretic count in the above specified limit of its parameters is $\left(-\kappa^{1/2}\right)^{\ind(\pi)}$, where 
    \begin{equation*}
        \ind(\pi) = \sum_{\Box=(i_1,i_2,i_3)\in\pi^G}\mathrm{sgn}(i_2-i_1),
    \end{equation*}
    where we recall that $\pi^G$ denotes the $0$-colored boxes of $\pi$.
\end{proposition}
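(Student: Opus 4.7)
The strategy is to reduce to the scheme case $\C^3$ via the identification \eqref{setup:eq:obstruction_theory_fixed_part} and invoke the limit computation from \cite[App.~A]{nekrasov2014membranes} (see also \cite[3.5.17]{ok15}). First I would analyze the asymptotics of a single weight $w \in W(\pi)$ in the product $\hat{a}(\pi) = \prod_{w\in W(\pi)}[\kappa/w]/[w]$. Writing $w = t_1^a t_2^b t_3^c = \kappa^b t_1^{a-b} t_3^{c-b}$, the prescribed limit $t_1, t_3 \to 0$ with $|t_1| \ll |t_3|$ and $\kappa$ fixed sends $|w| \to 0$ when $(a-b, c-b)$ is lexicographically positive, $|w| \to \infty$ when lexicographically negative, and leaves $w = \kappa^b$ bounded when $a = b = c$. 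A short computation of the leading behavior of the numerator and denominator then gives $\lim[\kappa/w]/[w] = -\kappa^{1/2}$, $-\kappa^{-1/2}$, or a function of $\kappa$ only, respectively.

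Second, by \eqref{setup:eq:obstruction_theory_fixed_part}, the character $\Eb_\Xc^\vee|_{[I_\pi]}$ equals the $\mu_r$-invariant part of $\Eb_{\C^3}^\vee|_{[I_{\tilde\pi}]}$, where $\tilde\pi$ denotes the uncolored plane partition underlying $\pi$. Since $\kappa$ has $\mu_r$-weight $1 + (r-1) + 0 \equiv 0 \pmod{r}$, the symmetric decomposition is $\mu_r$-equivariant, so $W(\pi)$ consists exactly of those $w \in W(\tilde\pi)$ which are $\mu_r$-invariant; concretely, a monomial $t_1^a t_2^b t_3^c$ is $\mu_r$-invariant iff $a \equiv b \pmod{r}$. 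The third and central step is to invoke the box-by-box reorganization of \cite[App.~A]{nekrasov2014membranes}: for $\C^3$ this regroups the limit contributions of all weights of $\Eb_{\C^3}^\vee|_{[I_{\tilde\pi}]}$ into a product over boxes, each $\Box = (i_1, i_2, i_3) \in \tilde\pi$ supplying a factor $(-\kappa^{1/2})^{\mathrm{sgn}(i_2 - i_1)}$, with the twist sign absorbed along the way and diagonal boxes $i_1 = i_2$ contributing trivially. Restricting the character to $\mu_r$-invariants picks out precisely the packets coming from boxes with $i_1 \equiv i_2 \pmod{r}$, i.e.\ from $\pi^G$, while the shifted twist of Definition \ref{setup:def:twisted_vir_str_sh} restricts to the sign $(-1)^{|\pi^G|}$. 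Combined with step one, this yields $(-\kappa^{1/2})^{\ind(\pi)}$.

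The principal obstacle is the rigorous matching of $\mu_r$-invariant weights in Nekrasov's box-packet decomposition with the $0$-colored boxes of $\pi^G$. Heuristically, each box $\Box \in \tilde\pi$ in the reorganization contributes a $\mu_r$-homogeneous packet of weights that is $\mu_r$-invariant precisely when $\Box$ is $0$-colored; making this precise requires either re-expressing $\hat{a}(\pi)$ in box coordinates and extracting the $\mu_r$-invariant summands directly, or tracking the $\mu_r$-weight through the character identity of \cite[App.~A]{nekrasov2014membranes}. Both routes ultimately rest on the simple observation that the $\mu_r$-weight of $t_1^a t_2^b t_3^c$ is $a - b \pmod{r}$, so the pairings underlying the Nekrasov vertex decompose cleanly under the $\mu_r$-action into packets indexed by colored boxes.
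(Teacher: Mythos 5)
Your overall strategy is the same as the paper's: reduce to the $\C^3$ index computation of~\cite[App.~A]{nekrasov2014membranes} via the identification \eqref{setup:eq:obstruction_theory_fixed_part} and pass to $\mu_r$-invariants, using the observation that the $\mu_r$-weight of $t_1^at_2^bt_3^c$ is $a-b \pmod r$. Your step one (the asymptotics of a single factor $[\kappa/w]/[w]$ under the limit \eqref{intro:eq:limit}) and step two (identifying $W(\pi)$ with the $\mu_r$-invariant weights in $W(\tilde\pi)$) both match the paper.

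The gap is in step three. You invoke a ``box-by-box reorganization'' or ``box-packet decomposition'' of~\cite[App.~A]{nekrasov2014membranes} and then ``restrict to $\mu_r$-invariants,'' as if each box $\Box\in\tilde\pi$ contributed a $\mu_r$-homogeneous packet of weights that could simply be filtered by color. But~\cite[App.~A]{nekrasov2014membranes} does not provide such a character-level decomposition; what it provides is an \emph{index identity}. Concretely, in the subtorus $\As=\{\kappa=1\}$ one has
\begin{equation*}
    \Tb_{\C^3,\pi}^\vir = (1-t_3)\Nc_2(Z_\pi) + t_3^k V - t_3^{-k}V^\vee,
\end{equation*}
and the box formula $\ind(\tilde\pi)=\sum_{\Box\in\tilde\pi}\mathrm{sgn}(i_2-i_1)$ holds only because the residual part $(1-t_3)\Nc_2(Z_\pi)$ contributes zero index, which is established via its symplectic structure. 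Restricting that index-vanishing to the $\mu_r$-invariant subspace is not automatic: one must know that the symplectic matching of attracting and repelling weights on $\Nc_2(Z_\pi)$ is $\mu_r$-equivariant (it is, because $\mu_r$ acts only through $t_1,t_2$ and the pairing twists by $t_3^{-1}$), and that $\Nc_2(Z_\pi)^{\As}=0$. You gesture at the right mechanism in your closing sentence about ``pairings underlying the Nekrasov vertex decompos[ing] cleanly under the $\mu_r$-action,'' but these pairings are never identified as the symplectic form on $\Nc_2$, and no argument for $\ind\left(((1-t_3)\Nc_2(Z_\pi))^{\mu_r}\right)=0$ is given. Without this, the reduction from $\ind(\Tb_{\Xc,\pi}^\vir)$ to the box sum over $\pi^G$ is unsupported. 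This is precisely the step the paper flags as the reason one ``cannot directly use the result of~\cite[App.~A]{nekrasov2014membranes}'' but must follow and adapt its proof.

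One further small point: the proposition's ``contribution'' is $\hat{a}(\pi)$ alone, without the twist sign $(-1)^{|\pi^G|}$, which is introduced separately in the application in Section~\ref{rig:sec:eq_lim_comp}. Your proposal folds $(-1)^{|\pi^G|}$ into the $(-\kappa^{1/2})^{\ind(\pi)}$ result, but that identity already holds for $\hat{a}(\pi)$ by itself (since $|W|\equiv\ind(\pi)\pmod 2$); the extra sign is not absorbed.
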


\begin{proof}
    As computed in Section \ref{sec:colored_vertex}, the contribution of the colored partition $\pi$ to the $\Ts$-equivariant K-theoretic count is
    \begin{equation*}
        \hat{a}(\pi) = \prod_{w\in W} \frac{[\kappa/w]}{[w]},
    \end{equation*}
    where $W$ is a subset of the $\Ts$-weights of the virtual tangent space at $\pi$, such that $\Tb_{\Xc,\pi}^\vir = \sum_{w\in W} \left(w-\frac{\kappa}{w}\right)$. The limits of $\frac{[\kappa/w]}{[w]}$ can be computed
    \begin{equation*}
        \frac{[\kappa/w]}{[w]} \to \begin{cases}
            -\kappa^{-1/2}, & w\to\infty,\\
            -\kappa^{1/2}, & w\to 0.
        \end{cases}
    \end{equation*}
    Hence, the limit of $\hat{a}(\pi)$ becomes
    \begin{eqnarray*}
        \hat{a}(\pi) \to (-\kappa^{1/2})^{\ind(\pi)},
    \end{eqnarray*}
    where we set
    \begin{equation*}
        \ind(\pi) \coloneqq \abs{\left\{w\in W\ |\ w\to 0\right\}}-\abs{\left\{w\in W\ |\ w\to \infty\right\}}.
    \end{equation*}
    Now we want to find the above expression of this index. Note first, that, since $\kappa$ is fixed by the limit, the weights $w$ and $\frac{\kappa}{w}$ of the virtual tangent space have opposite limits. To compute this limit, we want to use the index computation in \cite[Appendix A]{nekrasov2014membranes}. For a $\Ts$-representation $V$, given as a linear combination of weights $V=\sum_w c_w w$ with $c_w\in \C$, their index is defined as
    \begin{eqnarray*}
        \ind(V) &\coloneqq \sum_w c_w \ind(w),\\
        \ind(w) &\coloneqq \begin{cases}
            1, & w\to 0,\\
            0, & w\to \infty.
        \end{cases}
    \end{eqnarray*}
    Inserting $\Tb_{\Xc,\pi}^\vir = \sum_{w\in W} \left(w-\frac{\kappa}{w}\right)$, we have
    \begin{equation*}
        \ind(\pi) = \ind\left(\Tb_{\Xc,\pi}^\vir\right).
    \end{equation*}
    By \eqref{setup:eq:obstruction_theory_fixed_part}, $\Tb_{\Xc,\pi}^\vir $ is the $\mu_r$-equivariant part of $\Tb_{\C^3,\pi}^\vir$ and \cite[Appendix A]{nekrasov2014membranes} show $\ind\left(\Tb_{\C^3,\pi}^\vir\right)=\ind\left(t_3^kV-t_3^{-k}V^\vee\right)$ for $k$ sufficiently large, but such that still $\abs{t_1}\ll \abs{t_3}^k$. Here, $V$ is the character of $\Oc_{Z_\pi}$. 

    We want to show a similar identity for the $\mu_r$-invariant parts $\ind\left(\Tb_{\Xc,\pi}^\vir\right)=\ind\left(t_3^kV^{\mu_r}-t_3^{-k}(V^\vee)^{\mu_r}\right)$. Note that the claim of the proposition follows immediately from such an identity, because
    \begin{equation*}
        V^{\mu_r} = \sum_{\Box=(i_1,i_2,i_3)\in\pi^G} t_1^{-i_1}t_2^{-i_2}t_3^{-i_3},
    \end{equation*}
    and
    \begin{equation*}
        t_3^k t_1^{-i_1}t_2^{-i_2}t_3^{-i_3} \to \begin{cases}
            0, & , i_2\geq i_1\\
            \infty, &, i_2 < i_1,
        \end{cases}
    \end{equation*}
    which gives us 
    \begin{equation*}
        \ind\left(\Tb_{\Xc,\pi}^\vir\right)=\ind(t_3^kV^{\mu_r}-t_3^{-k}(V^\vee)^{\mu_r}) = \sum_{\Box=(i_1,i_2,i_3)\in\pi^G}\mathrm{sgn}(i_2-i_1).
    \end{equation*}
    
    To prove $\ind(\Tb_{\Xc,\pi}^\vir)=\ind(t_3^kV^{\mu_r}-t_3^{-k}(V^\vee)^{\mu_r})$, we cannot directly use the result of \cite[Appendix A]{nekrasov2014membranes}, but we will follow their proof. Note first, that since, the $\kappa$ is fixed in the limit, the index is independent of the value of $\kappa$, so may work with the subtorus $\As=\{\kappa=1\}\subset \Ts$. In \cite[A.1.3]{nekrasov2014membranes}, they identify the $\As$-characters
    \begin{equation*}
        \Tb_{\C^3,\pi}^\vir = (1-t_3)\Nc_2(Z_\pi) + t_3^kV - t_3^{-k}V^\vee,
    \end{equation*}
    where $\Nc_2(Z_\pi)$ is the character of the tangent space of a moduli space of framed torsion-free sheaves on $\C^2$. There is a symplectic form on this tangent space, which matches weights $t_1^a t_2^b t_3^c$ with $t_1^{-a} t_2^{-b} t_3^{-c-1}$, and hence attracting and repelling weights of its character, except for possibly the matched weights $1$ and $t_3^{-1}$. Then \cite[Lemma A.1]{nekrasov2014membranes} shows that $\Nc_2(Z_\pi)^A=0$, and hence neither weight $1$ or $t_3$ can come up in the character. The symplectic form is $\mu_r$-invariant, so it matches $\mu_r$-invariant weights with $\mu_r$-invariant weights. Here, we used that $\mu_r$ only acts on the first two coordinates. Using $\Nc_2(Z_\pi)^A=0$ and the matchings of weights by the symplectic form $\omega$, we can match weights of $\Nc_2(Z_\pi)$ and $t_3\Nc_2(Z_\pi)$ as follows
    \begin{equation*}
        \begin{tikzcd}
            \Nc_2(Z_\pi): & t_1^a t_2^b t_3^c \ar[r, leftrightarrow, "\omega"]\ar[dr, leftrightarrow, "\cdot^{-1}"]\ar[d,"\cdot t_3"] & t_1^{-a}t_2^{-b}t_3^{-c-1} \ar[d, "\cdot t_3"]\ar[dl, leftrightarrow, "\cdot^{-1}"]\\
            t_3\Nc_2(Z_\pi): & t_1^a t_2^b t_3^{c+1} & t_1^{-a}t_2^{-b}t_3^{-c}.
        \end{tikzcd}
    \end{equation*}
    Since $\omega$ matches attracting and repelling weights in $\Nc_2(Z_\pi)$, attracting and repelling weights in $t_3\Nc_2(Z_\pi)$ are also matched. This matches $\mu_r$-invariant weights with $\mu_r$-invariant ones, so $\ind\left(((1-t_3)\Nc_2(Z_\pi))^{\mu_r}\right)=0$.
\end{proof}

The above result allows us to adjust the weight operator in the argument of \cite{young10} and compute the limit of the generating series to finish the proof of Theorem \ref{rig:thm:main_thm}. We recall now the setup of \cite{young10}, which is based on the setup in \cite[Appendix A]{ok01}. We consider a plane partition as a collection of slices, which are partitions $\{(a,b)\in \Z_{\geq 0}^2\ |\ (a+k,a,b)\}$ consisting of the boxes with a fixed $i_1-i_2=k$. With our specific $\mu_r$-action a colored plane partition is then sliced into monochrome slices. This is pictured in Figure \ref{fig:slicing} below.

\begin{figure}[!ht]
    \centering
    \includegraphics[width=\linewidth]{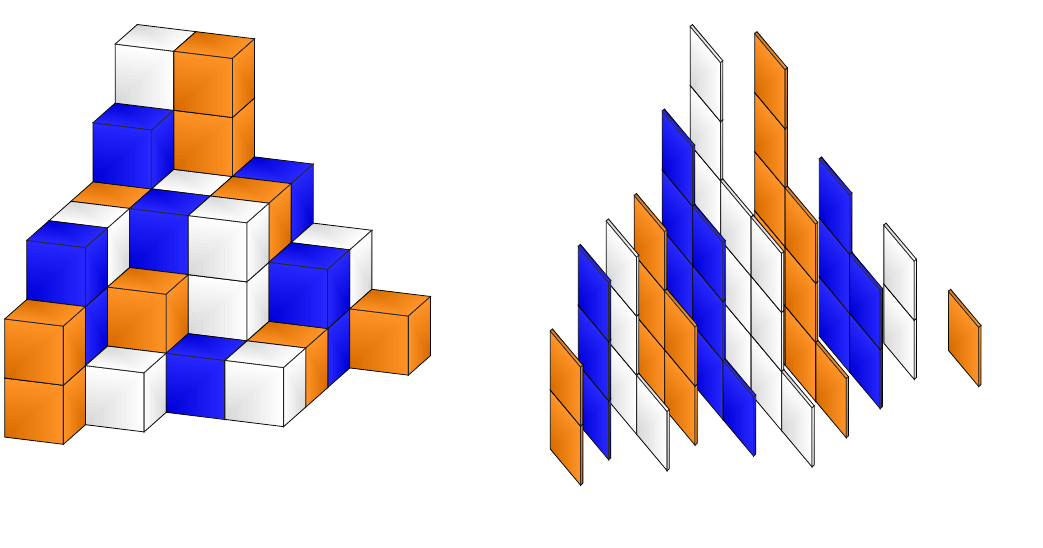}
    \caption{Slicing of a $\mu_3$-colored plane partition.}
    \label{fig:slicing}
\end{figure}

This slicing can be used for counting by starting with empty partitions on the right and left side and iteratively applying operators, which give weighted sums of possible next slices moving inwards. We explain now how to formally set this up. We work in to charge-zero subspace of the infinite wedge space $\big(\extp^{\infty/2}\big)_0V$, where $V$ is a vector space with a basis labelled by the elements of $\Z+\frac{1}{2}$. The vector space $\big(\extp^{\infty/2}\big)_0V$ has an orthonormal basis given by partitions $\ket{\lambda}$.

There are operators
\begin{equation*}
    \Gamma_\pm(x) = \mathrm{exp}\left(\sum_l \frac{x^l}{l}\alpha_{\pm l}\right)
\end{equation*}
acting on $\big(\extp^{\infty/2}\big)_0V$, which act on a partition $\ket{\lambda}$ by giving a weighted sum of possible partitions in the next smaller/bigger slice of a plane partition if the current one is $\lambda$.

To compute the partition function, we consider weight operators
\begin{equation*}
    Q_i\ket{\lambda} = q_i^{\lvert\lambda\rvert}\ket{\lambda},
\end{equation*}
whose role it will be to count the contribution to the partition function of each slice. We now modify the argument as follows. In the limit of the equivariant parameters specified in \eqref{rig:eq:limit}, by Proposition \ref{ppc:partition_limit_contribution}, each slice $\lambda$ has a fixed contribution $1$ if the slice doesn't have color $0$, as only the boxes in $\pi^G$ contribute, and the slices of color $0$ have a fixed contribution
\begin{eqnarray*}
    (-1)^{\lvert\lambda\rvert}(-\kappa^{1/2})^{\lvert\lambda\rvert}, &\mathrm{if }i_2\geq i_1,\\
    (-1)^{\lvert\lambda\rvert}(-\kappa^{1/2})^{-\lvert\lambda\rvert}, &\mathrm{if }i_2< i_1.
\end{eqnarray*}
Here multiplied by the additional sign $(-1)^{\lvert\lambda\rvert}$. This comes from the sign $(-1)^{n_0}$ in the contribution of each partition, which comes from our definition of the twisted virtual structure sheaf and is usually manually introduced, as discussed in Remark \ref{appl:rmk:sign}. Since we consider slices $\lambda$ with fixed $i_1-i_2$ this contribution is well-defined for each slice. We use this to define an additional equivariant limit weight operator
\begin{equation*}
    K_\pm\ket{\lambda} = (\kappa^{1/2})^{\pm\lvert\lambda\rvert}\ket{\lambda}.
\end{equation*}

We can now use the operators to define
\begin{equation*}
    \bar{A}_\pm(x) = \Gamma_\pm(x) Q_{r-1} \cdots \Gamma_\pm(x) Q_1 \Gamma_\pm(x)K_\pm Q_0,
\end{equation*}
so that
\begin{equation}\label{rig:eq:limit_operator_comp_setup}
    \overrightarrow{\Zs}([\C^3/\mu_r],\dv{q}) = \bra{\phi}\cdots \bar{A}_+(1)\bar{A}_+(1)\bar{A}_+(1)\bar{A}_-(1)\bar{A}_-(1)\bar{A}_-(1)\cdots\ket{\phi}
\end{equation}
computes the desired limit of the partition function, where $\phi$ denotes the empty partition. To compute this partition function, we want to compute the commutators of the operators involved and then reorder them. Note that we wrote an infinite product of operators, which should be interpreted as follows. For any given finite order, we can make this a finite product of operators, such that \eqref{rig:eq:limit_operator_comp_setup} computes the limit generating series up to that given finite order.

Now we compute commutators of the various operators, so that we can reorder them in \eqref{rig:eq:limit_operator_comp_setup} to compute the generating series. The commutator
\begin{equation}\label{rig:eq:gamma_commutator}
    \left[\Gamma_+(a),\Gamma_-(b)\right] = \frac{1}{1-ab}
\end{equation}
was already computed in \cite[Lemma 3.3]{young10}. Moreover, Young computes
\begin{equation*}
    \Gamma_+(x)Q_i=Q_i\Gamma_+(xq_i),\quad Q_i\Gamma_-(x)=\Gamma_-(xq_i)Q_i.
\end{equation*}
Using the same formula \cite[(4)]{young10}, we can similarly compute
\begin{equation*}
    \Gamma_+(x)K_+=K_+\Gamma_+\left(x\kappa^{1/2}\right),\quad K_-\Gamma_-(x)=\Gamma_-\left(x\kappa^{-1/2}\right)K_-.
\end{equation*}
Finally, by definition, the operators $Q_i$ and $K_\pm$ commute with each other. Writing $q=q_0\cdots q_{r-1}$ and $Q=Q_0\cdots Q_{r-1}$, we can then rewrite $\bar{A}_\pm(x)$ as follows.
\begin{eqnarray*}
    \bar{A}_+(x) =& QK_+\Gamma_+\left(xq\kappa^{1/2}\right)\Gamma_+\left(xq_{[0,r-2]}\kappa^{1/2}\right)\cdots\Gamma_+\left(xq_0\kappa^{1/2}\right),\\
    \bar{A}_-(x) =& \Gamma_-(x)\Gamma_-\left(xq_{r-1}\right)\cdots \Gamma_-\left(xq_{[2,r-1]}\right)\Gamma_-\left(xq_{[1,r-1]}\right)K_-Q.
\end{eqnarray*}
We set
\begin{eqnarray*}
    A_+(x) :=& \Gamma_+\left(xq\kappa^{1/2}\right)\Gamma_+\left(xq_{[0,r-2]}\kappa^{1/2}\right)\cdots\Gamma_+\left(xq_0\kappa^{1/2}\right),\\ 
    A_-(x) :=& \Gamma_-(x)\Gamma_-\left(xq_{r-1}\right)\cdots \Gamma_-\left(xq_{[2,r-1]}\right)\Gamma_-\left(xq_{[1,r-1]}\right).
\end{eqnarray*}
Now iteratively using the commutators of $\Gamma_\pm$  from \eqref{rig:eq:gamma_commutator} we can compute
\begin{eqnarray}
    A_+(x)A_-(y)=&\Gamma_+\left(xq_{[0,r-1]}\kappa^{1/2}\right)\Gamma_+\left(xq_{[0,r-2]}\kappa^{1/2}\right)\cdots\Gamma_+\left(xq_{[0,0]}\kappa^{1/2}\right)\cdot\nonumber\\ 
    &\Gamma_-\left(y\frac{q}{q_{[0,r-1]}}\right)\Gamma_-\left(y\frac{q}{q_{[0,r-2]}}\right)\cdots \Gamma_-\left(y\frac{q}{q_{[0,1]}}\right)\Gamma_-\left(y\frac{q}{q_{[0,0]}}\right)\nonumber\\
    =& A_-(y)A_+(x) \prod_{i,j=0}^{r-1}\left(1-xyq\frac{q_{[0,i]}}{q_{[0,j]}}\kappa^{1/2}\right)^{-1}.\label{rig:eq:comm_A}
\end{eqnarray}
To simplify notation, we write
\begin{equation*}
    C_r(x,y) := \prod_{i,j=0}^{r-1}\left(1-xyq\frac{q_{[0,i]}}{q_{[0,j]}}\kappa^{1/2}\right)^{-1}.
\end{equation*}
Now we can compute the partition function in the same way as \cite{young10}. Note that as a product of $\Gamma_\pm$, moving $Q$ and $K_\pm$ past the $A_\pm$ works in the same way as simply for $\Gamma_\pm$. After moving them to the outside, they act trivially on the empty partition.
\begin{eqnarray*}
    \overrightarrow{\Zs}\left([\C^3/\mu_r],\dv{q}\right) =& \bra{\phi}\cdots \bar{A}_+(1)\bar{A}_+(1)\bar{A}_+(1)\bar{A}_-(1)\bar{A}_-(1)\bar{A}_-(1)\cdots\ket{\phi}\\
    =& \bra{\phi}\cdots QK_+A_+(1)QK_+A_+(1)A_-(1)K_-QA_-(1)K_-Q\cdots\ket{\phi}\\
    =& \bra{\phi}\cdots A_+\left(q^2(\kappa^{1/2})^2\right)A_+\left(q(\kappa^{1/2})\right)A_+(1)\cdot\\
    & A_-(1)A_-(q(\kappa^{-1/2}))A_-\left(q^2(\kappa^{-1/2})^2\right)\cdots\ket{\phi}.
\end{eqnarray*}
Now we can move the operators $A_\pm$ past each other using \eqref{rig:eq:comm_A} to get
\begin{eqnarray*}
    \overrightarrow{\Zs}\left([\C^3/\mu_r],\dv{q}\right) =& \prod_{a,b=0}^{\infty}C_r\left(q^a(\kappa^{1/2})^a,q^b(\kappa^{-1/2})^b\right)\bra{\phi}A_-(1)A_-\left(q(\kappa^{-1/2})\right)\cdot\\
    &A_-\left(q^2(\kappa^{-1/2})^2\right)\cdots A_+\left(q^2(\kappa^{1/2})^2\right)A_+\left(q(\kappa^{1/2})\right)A_+(1)\ket{\phi}\\
    =& \prod_{a,b=0}^{\infty}C_r\left(q^a(\kappa^{1/2})^a,q^b(\kappa^{-1/2})^b\right),
\end{eqnarray*} 
where we used that the operators $A_\pm$ act trivially on the empty partition $A_+(x)\ket{\phi}=\ket{\phi}$ and $\bra{\phi}A_-(x)=\bra{\phi}$, as in \cite[p. 14]{young10}. Now we can simplify this expression using standard formulas for the plethystic exponential. These formulas can be derived similar to the proof of Lemma \ref{fact:lemma:comp_formula}. They are also discussed in \cite[p. 4]{dos20}. We compute
\begin{eqnarray}
    \overrightarrow{\Zs}\left([\C^3/\mu_r],\dv{q}\right) =& \prod_{a,b=0}^{\infty} \prod_{i,j=0}^{r-1}\left(1-q^{a+b+1}\frac{q_{[0,i]}}{q_{[0,j]}}\kappa^{\frac{i-j+1}{2}}\right)^{-1}\nonumber\\
    =&\prod_{i,j=0}^{r-1}\pexp{\frac{\kappa^{1/2}q}{(1-\kappa^{1/2}q)(1-\kappa^{-1/2}q)}\frac{q_{[0,i]}}{q_{[0,j]}}}\nonumber\\
    =&\pexp{\frac{-\kappa^{1/2}}{[\kappa^{1/2}q][\kappa^{1/2}q^{-1}]}\sum_{i,j=0}^{r-1}\frac{q_{[0,i]}}{q_{[0,j]}}}.\label{rig:eq:limit_final_comp}
\end{eqnarray}
By splitting into parts with $i=j$, $i<j$, and $i>j$, we see that 
\begin{equation*}
    \sum_{i,j=0}^{r-1}\frac{q_{[0,i]}}{q_{[0,j]}}=r+\sum_{0< i \leq j < r} q_{[i,j]}+q_{[i,j]}^{-1},
\end{equation*}
so that \eqref{rig:eq:limit_final_comp} is exactly the desired limit 
\begin{equation*}
    \pexp{\frac{-\kappa^{1/2}}{[\kappa^{1/2}q][\kappa^{1/2}q^{-1}]}\left(r+\sum_{0< i \leq j < r} q_{[i,j]}+q_{[i,j]}^{-1}\right)}
\end{equation*}
of $\pexp{\Fs_r(q)+\Fs_r^\col(q_0,\dots,q_{r-1})}$. This determines the functions $h_{\dv{n}}$ in Proposition \ref{pp:prop:loc_open_locus_form} by rigidity, see Lemma \ref{pp:lemma:rigidity}, completing the proof of Theorem \ref{rig:thm:main_thm}.

\appendix
\section{Quotient DM Stacks}\label{app:quotient_stack}
We state and prove a few simple facts about quotient DM stacks for use in Lemma \ref{fact:lemma:comp_formula}. We consider the following setup. Let $U$ be a scheme, and let $i:H\hookrightarrow G$ be finite groups acting on $U$, such that
\begin{equation*}
    \begin{tikzcd}
        G\times U \ar[r,"\mu_G"] & U\\
        H\times U \ar[ur,"\mu_H"]\ar[u, hookrightarrow, "i\times\id"] & 
    \end{tikzcd}
\end{equation*}
commutes. We get the following induced morphism of DM stacks.

\begin{proposition}\label{qst:prop:morphism}
    There is a morphism of DM stacks
    \begin{equation*}
        \eta:[U/H]\to [U/G],
    \end{equation*}
    induced by the inclusion $i:H\hookrightarrow G$.
\end{proposition}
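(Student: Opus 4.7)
The plan is to construct $\eta$ from a morphism of groupoids and invoke the standard fact that a morphism of groupoid objects in schemes induces a morphism of the associated quotient stacks.

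First, I would recall the presentations of the source and target: $[U/H]$ is the stack associated to the action groupoid $H \times U \rightrightarrows U$ with source $s_H = \pr_2$ (projection) and target $t_H = \mu_H$ (action), and analogously $[U/G]$ corresponds to $G \times U \rightrightarrows U$ with $s_G = \pr_2$, $t_G = \mu_G$. Next I would write down the candidate morphism of groupoids: on objects take $\id_U : U \to U$, and on morphisms take $i \times \id_U : H \times U \to G \times U$. Compatibility with the source maps is immediate from $\pr_2 \circ (i \times \id_U) = \pr_2$, and compatibility with the target maps is exactly the content of the commutative square in the setup, namely $\mu_G \circ (i \times \id_U) = \mu_H$. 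Compatibility with the identity sections and with the composition (multiplication) maps $H \times H \times U \to H \times U$ and $G \times G \times U \to G \times U$ follows from $i$ being a group homomorphism.

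Having a morphism of groupoid objects in schemes, I would then appeal to the general principle that passing to associated stacks is functorial: such a morphism induces a morphism between the stackifications, and since $[U/H]$ and $[U/G]$ are precisely these stackifications, we obtain the desired $\eta : [U/H] \to [U/G]$. Concretely on $T$-points, $\eta$ sends an $H$-torsor $P \to T$ together with an $H$-equivariant map $\phi : P \to U$ to the induced $G$-torsor $P \times^H G \to T$ (with $H$ acting via $i$) equipped with the $G$-equivariant map $P \times^H G \to U$ obtained from $\phi$ using the compatibility $\mu_G \circ (i \times \id_U) = \mu_H$.

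There is no real obstacle here: the content of the statement is bookkeeping, and the only input needed beyond the definitions is the commutativity assumption in the setup, which supplies exactly the compatibility of $(i \times \id_U)$ with the target (action) maps. I would therefore keep the proof short, reducing to the functoriality of the quotient stack construction applied to the evident groupoid morphism $(\id_U, i \times \id_U)$.
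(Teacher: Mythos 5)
Your proof is correct, but you take a slightly different route from the paper. The paper constructs $\eta$ directly as a morphism of categories fibered in groupoids, writing down the effect on $T$-points: an $H$-torsor $P\to T$ with $H$-equivariant $P\to U$ is sent to $G\times_H P$ with the induced $G$-equivariant map, and the commutativity of the setup square is what makes that induced map well defined. You instead build $\eta$ at the level of presentation groupoids --- the evident pair $(\id_U,\, i\times\id_U)$ from $H\times U\rightrightarrows U$ to $G\times U\rightrightarrows U$, checking compatibility with source, target, unit, and multiplication --- and then invoke functoriality of the quotient-stack (stackification) construction. These two constructions are related by the standard equivalence between a quotient stack and the stackification of its action groupoid, and you in fact close the loop by also recording the torsor-level description $P\mapsto P\times^H G$ that the functoriality produces, which matches the paper's formula $G\times_H P$. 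The groupoid route has the advantage that all verifications are equalities of scheme morphisms rather than checks involving descent or torsor constructions, at the cost of citing the black box that stackification is functorial; the paper's direct description is self-contained and is also the form used downstream in the proof of Proposition~\ref{qst:prop:induced_eq_str}, which is presumably why the paper chose it.
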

\begin{proof}
    An object of $[U/G]$ over a scheme $T$ is a principal $G$-bundle $P\to T$ and a $G$-equivariant morphism $P\to U$. The morphism $\eta$ is then concretely given by
    \begin{equation*}
        \left(T,P\to T,P\xrightarrow[H]{} U\right)\mapsto \left(T, G\times_H P, G\times_H P \xrightarrow[G]{} U\right).
    \end{equation*}
    Here $G\times_H P\coloneqq [(G\times P)/H]$ with the anti-diagonal $H$-action $h\cdot (g,p)=(gh,h^{-1}p)$, and $G\times_H P \xrightarrow[G]{} U$ is the composition $\mu_G\circ \left(\id\times(P\xrightarrow[H]{} U)\right)$, which descends to $G\times_H P$ as it is $H$-invariant.
\end{proof}

The pushforward along this morphism is just the induced representation construction.

\begin{proposition}\label{qst:prop:induced_eq_str}
    Let $n=[G:H]$ and take a full set of representatives $g_1,\dots,g_n$ of left cosets in $G/H$. Given an $H$-equivariant sheaf $\Ec$ on $U$, we have
    \begin{equation*}
        \eta_* \Ec \cong \bigoplus_{i=1}^n g_i \Ec
    \end{equation*}
    as $G$-equivariant sheaves on $U$.
\end{proposition}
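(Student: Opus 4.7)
The plan is to compute $\eta_*\Ec$ by flat base change along the atlas $p_G: U \to [U/G]$, which identifies sheaves on $[U/G]$ with $G$-equivariant sheaves on $U$.

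First I would describe $[U/H]$ as $[(G \times_H U)/G]$, where $G \times_H U \coloneqq (G \times U)/H$ is formed using the $H$-action $h \cdot (g,u) = (gh^{-1}, hu)$ and carries the residual $G$-action by left multiplication on the first factor. Under this isomorphism, $\eta$ corresponds to the $G$-equivariant morphism $G \times_H U \to U$, $(g,u) \mapsto gu$. Forming the cartesian square
\begin{equation*}
\begin{tikzcd}
\widetilde U \ar[r, "\tilde\eta"] \ar[d, "\tilde p"] & U \ar[d, "p_G"] \\
{[U/H]} \ar[r, "\eta"] & {[U/G]}
\end{tikzcd}
\end{equation*}
then identifies $\widetilde U \cong G \times_H U$ with $\tilde\eta$ the action map. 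The coset decomposition $G = \bigsqcup_{i=1}^n g_i H$ gives $\widetilde U \cong \bigsqcup_{i=1}^n U_i$ with $U_i \cong U$ via $u \mapsto (g_i, u)$. Under this decomposition, $\tilde\eta|_{U_i}: U \to U$ becomes the automorphism $u \mapsto g_i u$, while $\tilde p|_{U_i}$ is just the atlas $p_H: U \to [U/H]$, since $(g_i, u)$ and $(1, u)$ lie in the same $G$-orbit in $G \times_H U$.

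Next I would apply flat base change along the étale cover $p_G$ to obtain
\begin{equation*}
p_G^* \eta_* \Ec \cong \tilde\eta_* \tilde p^* \Ec \cong \bigoplus_{i=1}^n (g_i)_* \Ec,
\end{equation*}
where on each component $U_i$ the pullback $\tilde p^*\Ec$ restricts to the underlying sheaf of $\Ec$ (the $H$-equivariant structure becoming descent datum), and pushforward along the automorphism $g_i$ produces $g_i\Ec$. Since $p_G^*$ realizes the equivalence between sheaves on $[U/G]$ and $G$-equivariant sheaves on $U$, this identifies $\eta_*\Ec$ with $\bigoplus_i g_i\Ec$ as sheaves on $U$; it remains to match the $G$-equivariant descent data.

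Finally, the induced $G$-equivariant structure comes from the natural $G$-action on $\widetilde U = G \times_H U$ by left multiplication on the first factor. For $g \in G$ and each $i$, writing $g g_i = g_{\sigma(i)} h_i$ with $h_i \in H$ and $\sigma \in S_n$, the action sends $(g_i, u) \in U_i$ to $(g_{\sigma(i)}, h_i u) \in U_{\sigma(i)}$. Transporting through the above isomorphism and invoking the $H$-equivariant structure of $\Ec$ to interpret the $h_i$-twist, this recovers precisely the induced-representation formula for the $G$-action on $\bigoplus_i g_i\Ec$. The main obstacle will be in this last step: carefully bookkeeping the two identifications of $\widetilde U_i$ with $U$ (one via $u \mapsto (g_i, u)$ internal to $G \times_H U$, the other via $\tilde\eta|_{U_i} = g_i$) so that the $G$-equivariance arising from base change matches the standard induced representation construction. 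This is routine but notationally delicate, and needs to be done with enough care to serve the applications in Lemma \ref{fact:lemma:lambda_pieces_push_pull_identities}.
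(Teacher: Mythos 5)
Your proposal is correct and follows essentially the same route as the paper: form the cartesian square with $G\times_H U$ over $U\to[U/G]$ and $[U/H]\to[U/G]$, decompose $G\times_H U$ into $n$ copies of $U$ via the coset representatives $g_i$, apply flat base change, and track the induced $G$-equivariant structure using $gg_i = g_{\sigma(i)}h_i$. The paper arrives at the same cartesian square by first writing the atlas square for $[U/G]$ with $G\times U$ and then quotienting by the anti-diagonal $H$-action, but the content is identical.
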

\begin{proof}
    We consider the following cartesian diagram\cite[Section 2.3.7]{alper_moduli}
    \begin{equation}\label{app:eq:G_cart_diag}
        \begin{tikzcd}
            G\times U \ar[r,"\mu"]\ar[d,"p_2"] & U\ar[d, "\pi_G"]\\
            U \ar[r, "\pi_G"] & \left[U/G\right],
        \end{tikzcd}
    \end{equation}
    where $\pi_G$ is the quotient morphism $U\to[U/G]$. All morphisms are $H$-equivariant with respect to $H$ acting via $\mu_H$ on $U$ on the left side, acting anti-diagonally on $G\times U$, and trivially on $U$ and $[U/G]$ on the right side. Quotienting by $H$ then gives us the cartesian diagram
    \begin{equation}\label{app:eq:GH_cart_diag}
        \begin{tikzcd}
            G\times_H U \ar[r,"\bar{\eta}"]\ar[d,"p_2"] & U\ar[d,"\pi_G"]\\
            \left[U/H\right] \ar[r, "\eta"] & \left[U/G\right].
        \end{tikzcd}
    \end{equation}
    $\eta_*\Ec$ as a $G$-equivariant sheaf on $U$ is just $\pi_G^*\eta_*\Ec$ with its natural $G$-equivariant structure induced by the cartesian diagram \eqref{app:eq:G_cart_diag}. By the cartesian diagram \eqref{app:eq:GH_cart_diag}, this is isomorphic to $p_2^*\Ec$, with its natural $G$-equivariant structure, pushed forward along the $G$-equivariant morphism $\bar{\eta}$.
    
    As $g_1,\dots,g_n$ is a full set of representatives, any $g\in G$ has a presentation as $g=g_{i(g)}h(g)$. This gives us a natural isomorphism
    \begin{equation*}
        G\times_H U \xrightarrow{\sim} \bigsqcup_{i=1}^n g_i U,\ [g,u]\mapsto g_{i(g)} \left(h(g)\cdot u\right).
    \end{equation*}
    Again using that $g_1,\dots,g_n$ is a full set of representatives, for any $g\in G$ and $g_i$, there exists a $j(i,g)$ and an $h(i,g)\in H$, such that $gg_i = g_{j(i,g)}h(i,g)$. The above isomorphism induces the $G$-action $g\cdot g_i u = g_{j(i,g)} (h(i,g)\cdot u)$ on the right-hand side.

    Under the above isomorphism, $\bar{\eta}$ becomes just the action morphism $g_iU \to U, g_iu\mapsto g_i\cdot u$ on each component. Under this isomorphism $p_2^*\Ec$ is just a copy of $\Ec$ on every component. The natural $G$-equivariant structure of $p_2^*\Ec$ under this isomorphism is the following. For every $g\in G$ and $g_i$ the action $g: \bigsqcup_{i=1}^n g_i U \xrightarrow{\sim} \bigsqcup_{i=1}^n g_i U$ restricts to $h(i,g):g_i U \xrightarrow{\sim} g_{j(i,g)}U$. On every component $g_iU$ the isomorphism $g^*p_2^*\Ec\cong p_2^*\Ec$ is given by the isomorphism
    \begin{equation*}
        h(i,g)^*\Ec \cong \Ec,
    \end{equation*}
    coming from the $H$-equivariant structure of $\Ec$ on $U$. Pushing this forward along $\bar{\eta}$ yields exactly the desired isomorphism
    \begin{equation*}
        \bar{\eta}_*p_2^* \Ec \cong \bigoplus_{i=1}^n g_i\Ec,
    \end{equation*}
    with the $G$-equivariant structure given by the isomorphisms
    \begin{equation}\label{qst:eq:eq_str_H_class_isoms}
        h(i,g)^*\left(g_{j(i,g)}\Ec\right) \cong g_i\Ec.
    \end{equation}
    Note that the $g_i$ in $\bigoplus_{i=1}^n g_i\Ec$ are purely formal for tracking the various copies of $\Ec$.
\end{proof}

\printbibliography{}

@article{young10,
    author = {Jim Bryan and Benjamin Young},
    title = {{Generating functions for colored 3D Young diagrams and the Donaldson-Thomas invariants of orbifolds}},
    volume = {152},
    journal = {Duke Mathematical Journal},
    number = {1},
    publisher = {Duke University Press},
    pages = {115 -- 153},
    year = {2010},
    doi = {10.1215/00127094-2010-009},
    URL = {https://doi.org/10.1215/00127094-2010-009}
}

@misc{ok15,
    doi = {10.48550/ARXIV.1512.07363},
    url = {https://arxiv.org/abs/1512.07363},
    author = {Okounkov, Andrei},
    title = {Lectures on K-theoretic computations in enumerative geometry},
    publisher = {arXiv},
    year = {2015}
}

@article{ok01,
    author = {Andrei Okounkov},
    title = {Infinite wedge and random partitions},
    volume = {7},
    journal = {Selecta Mathematica},
    number = {57},
    year = {2001},
    doi = {10.1007/PL00001398},
    URL = {https://doi.org/10.1007/PL00001398}
}

@article{dos20, 
    title={Enumerating coloured partitions in 2 and 3 dimensions}, 
    volume={169}, 
    DOI={10.1017/S0305004119000252}, 
    number={3}, 
    journal={Mathematical Proceedings of the Cambridge Philosophical Society}, 
    publisher={Cambridge University Press}, 
    author={DAVISON, BEN and ONGARO, JARED and SZENDRŐI, BALÁZS}, 
    year={2020}, 
    pages={479–505}
}

@misc{kr, 
    title={Proof of a Magnificent Conjecture},
    author={Kool, Martijn and Rennemo, Jørgen Vold}, 
    note={In Preparation}
}

@book{hl10, 
    place={Cambridge}, 
    edition={2}, 
    series={Cambridge Mathematical Library}, 
    title={The Geometry of Moduli Spaces of Sheaves}, 
    DOI={10.1017/CBO9780511711985}, 
    publisher={Cambridge University Press}, 
    author={Huybrechts, Daniel and Lehn, Manfred}, 
    year={2010}, 
    collection={Cambridge Mathematical Library}
}

@article{os08_quot_dm,
      author = {Martin Olsson and Jason Starr},
      title = {Quot Functors for Deligne-Mumford Stacks},
      journal = {Communications in Algebra},
      volume = {31},
      number = {8},
      pages = {4069--4096},
      year = {2003},
      publisher = {Taylor \& Francis},
      doi = {10.1081/AGB-120022454},
      URL = {https://doi.org/10.1081/AGB-120022454},
      eprint = {https://doi.org/10.1081/AGB-120022454}
}

@Article{ht13_atiyah_obstruction_theory,
      author={Huybrechts, Daniel and Thomas, Richard P.},
      title={Deformation-obstruction theory for complexes via Atiyah and Kodaira--Spencer classes},
      journal={Mathematische Annalen},
      year={2010},
      month={03},
      day={01},
      volume={346},
      number={3},
      pages={545-569},
      issn={1432-1807},
      doi={10.1007/s00208-009-0397-6},
      url={https://doi.org/10.1007/s00208-009-0397-6}
}

@article{bf97_intrinsic_normal_cone,
	doi = {10.1007/s002220050136},
	url = {https://doi.org/10.1007%2Fs002220050136},
	year = 1997,
	month = {3},
	publisher = {Springer Science and Business Media {LLC}},
	volume = {128},
	number = {1},
	pages = {45--88},
	author = {K. Behrend and B. Fantechi},
	title = {The intrinsic normal cone},
	journal = {Inventiones Mathematicae}
}

@misc{levine23_twist,
      title={Orienting the Hilbert scheme of points on a spin threefold}, 
      author={Marc Levine},
      year={2023},
      eprint={2305.15513},
      archivePrefix={arXiv},
      primaryClass={math.AG}
}

@misc{nironi09_duality,
      title={Grothendieck Duality for Deligne-Mumford Stacks}, 
      author={Fabio Nironi},
      year={2009},
      eprint={0811.1955},
      archivePrefix={arXiv},
      primaryClass={math.AG}
}

@article{hall2013openness,
      url = {https://doi.org/10.1515/crelle-2014-0057},
      title = {Openness of versality via coherent functors},
      author = {Jack Hall},
      pages = {137--182},
      volume = {2017},
      number = {722},
      journal = {Journal für die reine und angewandte Mathematik (Crelles Journal)},
      doi = {doi:10.1515/crelle-2014-0057},
      year = {2017}
}

@Article{alper2022existence,
      author={Alper, Jarod and Halpern-Leistner, Daniel and Heinloth, Jochen},
      title={Existence of moduli spaces for algebraic stacks},
      journal={Inventiones mathematicae},
      year={2023},
      month={12},
      day={01},
      volume={234},
      number={3},
      pages={949-1038},
      issn={1432-1297},
      doi={10.1007/s00222-023-01214-4},
      url={https://doi.org/10.1007/s00222-023-01214-4}
}

@Article{thomas2001holomorphic,
      Title                    = {A holomorphic {C}asson invariant for {C}alabi-{Y}au 3-folds, and bundles on {$K3$} fibrations},
      Author                   = {Thomas, R. P.},
      Journal                  = {J. Differential Geom.},
      Year                     = {2000},
      Number                   = {2},
      Pages                    = {367–438},
      Volume                   = {54},
      Fjournal                 = {Journal of Differential Geometry},
      ISSN                     = {0022-040X},
      Mrclass                  = {14J32 (14J60 32J17 32Q25)},
      Mrnumber                 = {1818182},
      Mrreviewer               = {Balázs Szendrői},
      Url                      = {http://projecteuclid.org/euclid.jdg/1214341649}
}

@article{nekrasov2014membranes,
      author = {{Nekrasov}, Nikita and {Okounkov}, Andrei},
      title = "{Membranes and Sheaves}",
      journal = {arXiv e-prints},
      keywords = {Mathematics - Algebraic Geometry, High Energy Physics - Theory, Mathematical Physics},
      year = 2014,
      month = apr,
      eid = {arXiv:1404.2323},
      pages = {arXiv:1404.2323},
      doi = {10.48550/arXiv.1404.2323},
      archivePrefix = {arXiv},
      eprint = {1404.2323},
      primaryClass = {math.AG},
      adsurl = {https://ui.adsabs.harvard.edu/abs/2014arXiv1404.2323N},
      adsnote = {Provided by the SAO/NASA Astrophysics Data System}
}

@article{bryan2010orbifold,
      title = {The orbifold topological vertex},
      journal = {Advances in Mathematics},
      volume = {229},
      number = {1},
      pages = {531-595},
      year = {2012},
      issn = {0001-8708},
      doi = {https://doi.org/10.1016/j.aim.2011.09.008},
      url = {https://www.sciencedirect.com/science/article/pii/S0001870811003203},
      author = {Jim Bryan and Charles Cadman and Ben Young},
      keywords = {Donaldson–Thomas, Orbifold, Calabi–Yau},
}

@article{bridgeland2000mukai,
  title     = "The {McKay} correspondence as an equivalence of derived categories",
  author    = "Bridgeland, Tom and King, Alastair and Reid, Miles",
  journal   = "J. Amer. Math. Soc.",
  publisher = "American Mathematical Society (AMS)",
  volume    =  14,
  number    =  3,
  pages     = "535--554",
  month     =  mar,
  year      =  2001,
  language  = "en"
}

@article{beentjes2018proof,
  title     = "A proof of the {Donaldson--Thomas} crepant resolution conjecture",
  author    = "Beentjes, Sjoerd Viktor and Calabrese, John and Rennemo, J{\o}rgen Vold",
  journal   = "Invent. Math.",
  publisher = "Springer Science and Business Media LLC",
  month     =  apr,
  year      =  2022
}

@misc{calabrese2014crepant,
      title={On the Crepant Resolution Conjecture for Donaldson-Thomas Invariants}, 
      author={John Calabrese},
      year={2014},
      eprint={1206.6524},
      archivePrefix={arXiv},
      primaryClass={math.AG}
}

@article{Cirafici_2022,
	doi = {10.1007/s11005-022-01579-2},
	url = {https://doi.org/10.1007%2Fs11005-022-01579-2},
	year = 2022,
	month = {9},
	publisher = {Springer Science and Business Media {LLC}},
	volume = {112},
	number = {5},
	author = {Michele Cirafici},
	title = {On the M2{\textendash}Brane Index on Noncommutative Crepant Resolutions},
	journal = {Letters in Mathematical Physics}
}

@article{Cao_2023,
	doi = {10.1090/tran/9027},
	url = {https://doi.org/10.1090%2Ftran%2F9027},
	year = 2023,
	month = {9},
	publisher = {American Mathematical Society ({AMS})},
	author = {Yalong Cao and Martijn Kool and Sergej Monavari},
	title = {A Donaldson-Thomas crepant resolution conjecture on Calabi-Yau 4-folds},
	journal = {Transactions of the American Mathematical Society}
}

@article{Borisov_2017,
	doi = {10.2140/gt.2017.21.3231},
	url = {https://doi.org/10.2140%2Fgt.2017.21.3231},
	year = 2017,
	month = {8},
	publisher = {Mathematical Sciences Publishers},
	volume = {21},
	number = {6},
	pages = {3231--3311},
	author = {Dennis Borisov and Dominic Joyce},
	title = {Virtual fundamental classes for moduli spaces of sheaves on Calabi{\textendash}Yau four-folds},
	journal = {Geometry {\&} Topology}
}

@article{oh2022counting,
      author = {Jeongseok Oh and Richard P. Thomas},
      title = {{Counting sheaves on Calabi–Yau 4-folds, I}},
      volume = {172},
      journal = {Duke Mathematical Journal},
      number = {7},
      publisher = {Duke University Press},
      pages = {1333 -- 1409},
      keywords = {Donaldson–Thomas theory, enumerative algebraic geometry},
      year = {2023},
      doi = {10.1215/00127094-2022-0059},
      URL = {https://doi.org/10.1215/00127094-2022-0059}
}

@article{graber1997localization,
  title     = "Localization of virtual classes",
  author    = "Graber, T and Pandharipande, R",
  journal   = "Invent. Math.",
  publisher = "Springer Science and Business Media LLC",
  volume    =  135,
  number    =  2,
  pages     = "487--518",
  month     =  jan,
  year      =  1999
}

@Article{maulik2004gromovwitten,
  Title                    = {Gromov-{W}itten theory and {D}onaldson-{T}homas theory. {I}},
  Author                   = {Maulik, D. and Nekrasov, N. and Okounkov, A. and Pandharipande, R.},
  Journal                  = {Compos. Math.},
  Year                     = {2006},
  Number                   = {5},
  Pages                    = {1263–1285},
  Volume                   = {142},
  Doi                      = {10.1112/S0010437X06002302},
  Fjournal                 = {Compositio Mathematica},
  ISSN                     = {0010-437X},
  Mrclass                  = {14N35 (14J32)},
  Mrnumber                 = {2264664},
  Mrreviewer               = {Hsian-Hua Tseng},
  Url                      = {https://doi.org/10.1112/S0010437X06002302}
}

@Article{maulik2005gromovwitten,
  Title                    = {Gromov-{W}itten theory and {D}onaldson-{T}homas theory. {II}},
  Author                   = {Maulik, D. and Nekrasov, N. and Okounkov, A. and Pandharipande, R.},
  Journal                  = {Compos. Math.},
  Year                     = {2006},
  Number                   = {5},
  Pages                    = {1286–1304},
  Volume                   = {142},
  Doi                      = {10.1112/S0010437X06002314},
  Fjournal                 = {Compositio Mathematica},
  ISSN                     = {0010-437X},
  Mrclass                  = {14N35 (14C05)},
  Mrnumber                 = {2264665},
  Mrreviewer               = {Hsian-Hua Tseng},
  Url                      = {https://doi.org/10.1112/S0010437X06002314}
}

@article{fantechi2009smooth,
      url = {https://doi.org/10.1515/crelle.2010.084},
      title = {Smooth toric Deligne-Mumford stacks},
      author = {Barbara Fantechi and Etienne Mann and Fabio Nironi},
      pages = {201--244},
      volume = {2010},
      number = {648},
      journal = {Journal für die reine und angewandte Mathematik},
      doi = {doi:10.1515/crelle.2010.084},
      year = {2010}
}

@article{Borisov_2004,
	doi = {10.1090/s0894-0347-04-00471-0},
	url = {https://doi.org/10.1090%2Fs0894-0347-04-00471-0},
	year = 2004,
	month = {11},
	publisher = {American Mathematical Society ({AMS})},
	volume = {18},
	number = {1},
	pages = {193--215},
	author = {Lev Borisov and Linda Chen and Gregory Smith},
	title = {The orbifold Chow ring of toric Deligne-Mumford stacks},
	journal = {Journal of the American Mathematical Society}
}

@article{ricolfi2020equivariant,
  title     = "The equivariant Atiyah class",
  author    = "Ricolfi, Andrea T",
  journal   = "C. R. Math. Acad. Sci. Paris",
  publisher = "Cellule MathDoc/CEDRAM",
  volume    =  359,
  number    =  3,
  pages     = "257--282",
  month     =  apr,
  year      =  2021,
  language  = "en"
}

@misc{borisov2005ktheory,
      title={On the $K$-theory of smooth toric DM stacks}, 
      author={Lev A. Borisov and R. Paul Horja},
      year={2005},
      eprint={math/0503277},
      archivePrefix={arXiv},
      primaryClass={math.AG}
}

@article{thomason92loc,
      author = {R. W. Thomason},
      title = {{Une formule de Lefschetz en $K$-théorie équivariante algébrique}},
      volume = {68},
      journal = {Duke Mathematical Journal},
      number = {3},
      publisher = {Duke University Press},
      pages = {447 -- 462},
      year = {1992},
      doi = {10.1215/S0012-7094-92-06817-7},
      URL = {https://doi.org/10.1215/S0012-7094-92-06817-7}
}

@article{ciocanfontanine2007virtual,
  title     = "Virtual fundamental classes via dg--manifolds",
  author    = "Ciocan-Fontanine, Ionu{\c t} and Kapranov, Mikhail",
  journal   = "Geom. Topol.",
  publisher = "Mathematical Sciences Publishers",
  volume    =  13,
  number    =  3,
  pages     = "1779--1804",
  month     =  mar,
  year      =  2009
}

@book{weibel_kbook,
      title={The K-book: An Introduction to Algebraic K-theory},
      author={Charles A. Weibel},
      year={2013},
      publisher={American Mathematical Society ({AMS})},
      series={Graduate Studies in Mathematics},
      volume={145}
}

@book{cls_toric,
      title={Toric Varieties},
      author={David A. Cox and John B. Little and Henry K. Schenck },
      year={2011},
      publisher={American Mathematical Society ({AMS})},
      series={Graduate Studies in Mathematics},
      volume={124}
}

@ARTICLE{bcz_nef_divisors_for_moduli_of_complexes_with_cpt_supp,
      author = {{Bayer}, Arend and {Craw}, Alastair and {Zhang}, Ziyu},
      title = "{Nef divisors for moduli spaces of complexes with compact support}",
      journal = {arXiv e-prints},
      keywords = {Mathematics - Algebraic Geometry, 14D20 (Primary), 14F05, 14J28, 18E30 (Secondary)},
      year = 2016,
      month = feb,
      eid = {arXiv:1602.00863},
      pages = {arXiv:1602.00863},
      doi = {10.48550/arXiv.1602.00863},
      archivePrefix = {arXiv},
      eprint = {1602.00863},
      primaryClass = {math.AG},
      adsurl = {https://ui.adsabs.harvard.edu/abs/2016arXiv160200863B},
      adsnote = {Provided by the SAO/NASA Astrophysics Data System}
}

@misc{alper_moduli,
      title={Lecture Notes: Stacks and Moduli}, 
      author={Jarod Alper},
      year={2023},
      url={https://sites.math.washington.edu/~jarod/moduli.pdf},
      note={Accessed: 12.01.24}
}

@ARTICLE{ross_toric_ocr,
      author = {{Ross}, Dustin},
      title = "{Donaldson-Thomas Theory and Resolutions of Toric Transverse A-Singularities}",
      journal = {arXiv e-prints},
      keywords = {Mathematics - Algebraic Geometry, Mathematical Physics, Mathematics - Combinatorics},
      year = 2014,
      month = sep,
      eid = {arXiv:1409.7011},
      pages = {arXiv:1409.7011},
      doi = {10.48550/arXiv.1409.7011},
      archivePrefix = {arXiv},
      eprint = {1409.7011},
      primaryClass = {math.AG},
      adsurl = {https://ui.adsabs.harvard.edu/abs/2014arXiv1409.7011R},
      adsnote = {Provided by the SAO/NASA Astrophysics Data System}
}

@article{hall_cohom_base_change_stacks,
      title     = {Cohomology and base change for algebraic stacks},
      author    = {Hall, Jack},
      journal   = {Math. Z.},
      publisher = {Springer Science and Business Media LLC},
      volume    = {278},
      number    = {1-2},
      pages     = {401--429},
      month     = {10},
      year      = {2014}
}

@incollection{kresch_geom_dm,
      address = {Providence, Rhode Island},
      editor = {D Abramovich and A Bertram and L Katzarkov and R Pandharipande and M Thaddeus},
      number = {80,1},
      author = {A Kresch},
      booktitle = {Algebraic Geometry: Seattle 2005},
      note = {Algebraic Geometry, Seattle 2005 : 2005 Summer Research Institute, July 25-August 12, 2005, University of Washington, Seattle},
      pages = {259--271},
      volume = {80},
      year = {2009},
      title = {On the geometry of Deligne-Mumford stacks},
      publisher = {American Mathematical Society},
      series = {Proceedings of symposia in pure mathematics},
      url = {https://doi.org/10.5167/uzh-21342},
      issn = {0082-0717},
      isbn = {978-0-8218-4702-2},
      language = {english}
}

\end{document}